\documentclass[a4paper,10pt]{article}

\usepackage{geometry}
\usepackage{hyperref}
\usepackage{theoremref}

\usepackage{times}
\usepackage{amsthm}
\usepackage{scalerel}
\usepackage{amsfonts}
\usepackage{amssymb}
\usepackage{mathdots}
\usepackage{color}
\usepackage{mathrsfs}
\usepackage{multirow}

\usepackage{amsmath}
\usepackage{bm}
\usepackage{stmaryrd}
\SetSymbolFont{stmry}{bold}{U}{stmry}{m}{n}

\usepackage{tikz-cd}
\usepackage{tikz}
\usetikzlibrary{matrix,arrows,decorations.pathmorphing}
\usetikzlibrary{positioning}
\usepackage[all]{xy}
\usepackage{graphicx}
\usepackage{subfigure}

\usepackage[toc,page]{appendix}

\usepackage[usestackEOL]{stackengine}
\usepackage{dutchcal}

\newcommand{\diag}{\tmop{diag}}

\newcommand{\e}{\mathrm{e}}

\newtheorem{thm}{Theorem}[section]
\newtheorem{lem}[thm]{Lemma}
\newtheorem{cor}[thm]{Corollary}
\newtheorem{pro}[thm]{Proposition}

\theoremstyle{definition}

\newtheorem{rmk}[thm]{Remark}
\newtheorem{defi}[thm]{Definition}

\numberwithin{equation}{section}

\usepackage[normalem]{ulem}

\newcommand{\assign}{:=}

\newcommand{\tmop}[1]{\ensuremath{\operatorname{#1}}}

\newcommand{\udots}{{\mathinner{
\mskip1mu\raise1pt\vbox{\kern7pt\hbox{.}}
\mskip2mu\raise4pt\hbox{.}
\mskip2mu\raise7pt\hbox{.}\mskip1mu}}}

\newenvironment{prf}
    {\proof}
    {\hspace*{\fill}$\Box$\medskip}
    
\usepackage{mathtools}
\usepackage{empheq}
\usepackage{cite}

\hypersetup{
    colorlinks=true,
    linkcolor=blue,
    filecolor=magenta,    
    urlcolor=cyan,
    citecolor=cyan,}

\title{Monodromy and Isomonodromy for Linear $q$-Difference Systems with Coefficient Matrix $Ax+B$}

\author{Jinghong Lin, Qian Tang and Xiaomeng Xu}
\date{August 2026}

\newcommand{\Addresses}{%
  \begingroup
  \bigskip
  \footnotesize

  \noindent\textsc{
  Academy of Mathematics and Systems Science,
  Chinese Academy of Sciences,
  Beijing 100190, China}\par\nopagebreak
  \textit{E-mail address}:
  \texttt{linjinghong@amss.ac.cn}\par

  \medskip

  \noindent\textsc{
  Department of Mathematical Sciences,
  Tsinghua University,
  Beijing 100084, China}\par\nopagebreak
  \textit{E-mail address}:
  \texttt{919130201@qq.com}\par

  \medskip

  \noindent\textsc{
  School of Mathematical Sciences \&
  Beijing International Center for Mathematical Research,
  Peking University,
  Beijing 100871, China}\par\nopagebreak
  \textit{E-mail address}:
  \texttt{xxu@bicmr.pku.edu.cn}\par

  \endgroup
}

\begin{document}

\maketitle

\abstract{
In this paper,
	we construct canonical fundamental solutions
	for linear $q$-difference systems with coefficient matrix $Ax+B$,
	develop the associated isomonodromy theory,
	and study the differential limit as $q\to1$.
We determine the asymptotic behavior
	of generic isomonodromic deformations,
	and derive explicit formulas for their monodromy data
	in terms of their asymptotic leading terms.
We also show that
	the differential limit as $q\to1$ recovers
	the canonical fundamental solutions,
	the monodromy data, and
	the isomonodromic deformations
	of the corresponding differential systems.
}

\tableofcontents

\section{Introduction}

In this paper,
	we study the monodromy data of
	$n\times n$ linear $q$-difference systems
	with coefficient matrix $Ax+B$.
For comparison with the differential case,
	we write these systems throughout the paper
	in the following form:
\begin{align}\label{Eq:intro}
\frac{\mathrm{d}_q}{\mathrm{d}_q x}Y(x)
	:= \frac{Y(qx) - Y(x)}{qx - x}
	= \left(U + \frac{\Phi}{x} \right) \cdot Y(x),\qquad
U=\diag(u_1,\ldots,u_n),
\end{align}
	where $Y(x)$ is an $n\times n$ matrix-valued function.
We construct the canonical fundamental solutions of
	\eqref{Eq:intro}
	at $x=0$ and $x=\infty$
	(see Section~\ref{Sect:CanSol}).
The connection matrix between these
	canonical fundamental solutions,
	together with the local exponent data,
	defines the monodromy data
	(see Section~\ref{Sect:qMonodromy}).
	
A deformation of \eqref{Eq:intro}
	is called isomonodromic
	if the monodromy data are preserved.
For the deformations considered here,
	the eigenvalues of $U$ are the deformation parameters,
	and $\Phi$ is determined
	as an $n\times n$ matrix-valued function of these parameters.
Equivalently,
	the isomonodromy equations arise as
	the compatibility conditions for
	a system of linear $q$-difference equations.
For \eqref{Eq:intro},
	the deformation equations for $\Phi$ and
	its diagonalizing matrix $W$,
	with respect to the deformation parameters $u_1,\ldots,u_n$,
	take the following form:
\begin{equation}
\label{Intro:qiso}
\left\{
\begin{aligned}
\frac{\partial_q}{\partial_q u_k}\Phi
&=
	\left(
	\frac{
		V_{n;k}^{(n)}(u;\Phi,\alpha_k)-I
	}{
		(q-1)u_k
	}
	\cdot\Phi
	-
	\Phi\cdot
	\frac{
		V_{n;k}^{(n)}(u;\Phi,\alpha_k)-I
	}{
		(q-1)u_k
	}
	\right)
	\cdot V_{n;k}^{(n)}(u;\Phi,\alpha_k)^{-1},
	\\
\frac{\partial_q}{\partial_q u_k}W
&=
	\frac{
		V_{n;k}^{(n)}(u;\Phi,\alpha_k)-I
	}{
		(q-1)u_k
	}
	\cdot W,
\end{aligned}
\right.
\qquad
k=1,\ldots,n.
\end{equation}
(See Section~\ref{Sect:qisoeq}
	for the compatibility and further properties).
Here the exponent data
	$\boldsymbol{\alpha} = \diag
	(\alpha_1,\ldots,\alpha_n)$
	form part of the monodromy data and satisfy
\begin{align*}
	\mathrm{Spec}\bigl(
	U^{-1}(I+(q-1)\Phi)
	\bigr)
	&=
	\left\{
	u_1^{-1}q^{\alpha_1},
	\ldots,
	u_n^{-1}q^{\alpha_n}
	\right\}.
\end{align*}
The matrix $V_{n;k}^{(n)}(u;\Phi,\alpha_k)$
	is defined by
\begin{align*}
V_{n;k}^{(n)}(u;\Phi,\alpha_k)_{\hat{k}\hat{k}}
	&=
	I,
	\\
V_{n;k}^{(n)}(u;\Phi,\alpha_k)_{k\hat{k}}
	&=
	q^{-\alpha_k}\Phi_{k\hat{k}}
	\frac{(q-1)u_k}{
		u_kI-q^{-1}U_{\hat{k}\hat{k}}
	},
	\\
V_{n;k}^{(n)}(u;\Phi,\alpha_k)_{\hat{k}k}
	&=
	\frac{(q-1)q^{-\alpha_k}u_k}{
		q^{-\alpha_k}u_k
		\bigl(I+(q-1)\Phi_{\hat{k}\hat{k}}\bigr)
		-U_{\hat{k}\hat{k}}
	}
	\Phi_{\hat{k}k},
	\\
V_{n;k}^{(n)}(u;\Phi,\alpha_k)_{kk}
	&=
	1+
	\bigl(V_{n;k}^{(n)}\bigr)_{k\hat{k}}
	\bigl(V_{n;k}^{(n)}\bigr)_{\hat{k}k}.
\end{align*}
For an $n\times n$ matrix $M$,
	the $k$-th column with its $k$-th entry removed
	is denoted by $M_{\hat{k}k}$.
The $k$-th row with its $k$-th entry removed
	is denoted by $M_{k\hat{k}}$.
The submatrix obtained by deleting
	the $k$-th row and the $k$-th column
	is denoted by $M_{\hat{k}\hat{k}}$.
Our main purpose is to establish
	the isomonodromy theory for these linear $q$-difference systems
	and use this theory to solve
	the monodromy problem.

We study the asymptotic behavior of
	solutions of the isomonodromy equations \eqref{Intro:qiso}
	and use their asymptotic leading terms
	to reconstruct generic isomonodromic deformations.
Moreover, these asymptotic leading terms allow us to reconstruct
	the monodromy data of
	the original linear $q$-difference system \eqref{Eq:intro}
	with generic coefficients.
In the differential limit as $q\to1$,
	the canonical fundamental solutions and monodromy data
	of the linear $q$-difference system \eqref{Eq:intro}
	as well as the generic solutions of
	the isomonodromy equations \eqref{Intro:qiso}
	converge to the corresponding objects
	in the differential setting.
	
\subsection{Background and Motivation}
	
The theory of monodromy-preserving deformations
	goes back to Riemann's study of
	linear differential equations with prescribed monodromy,
	and was developed by Schlesinger, Fuchs, and Garnier
	\cite{iwasaki_moduli_1991}.
Sato, Miwa, and Jimbo studied
	solutions of the Schlesinger equations
	with prescribed asymptotic leading terms
	under an eigenvalue condition
	\cite{sato_holonomic_1979}.
Jimbo, Miwa, and Ueno developed
	a general theory of monodromy-preserving deformations
	for linear differential systems with rational coefficients
	and singularities of arbitrary Poincar\'e rank
	\cite{jimbo_monodromy_1981}.

The theory of $q$-difference equations
	is a cousin theory of differential equations
	and has also attracted considerable attention.
As early as 1913,
	Birkhoff tried to treat differential,
	$h$-difference and $q$-difference equations
	in a unified way by formulating
	a generalized Riemann problem.
Since monodromy matrices cannot be defined
	for $q$-difference equations
	in the same direct way as in the differential case,
	Birkhoff introduced the connection matrix
	as an analog of monodromy data
	\cite{birkhoff1913generalized}.
Ramis, Zhang, and others developed
	the theories of $q$-Gevrey asymptotics
	and $q$-Borel--Laplace summation
	for linear $q$-difference equations
	\cite{zhang_developpements_1999,zhang2000theta,
	marotte2000multi,ramis2002gevrey}.
These methods are among the main analytic tools
	used in this paper.
Ramis, Sauloy, and Zhang later established
	the local analytic classification
	of irregular linear $q$-difference equations
	and essentially completed Birkhoff's program
	for $q$-difference equations
	\cite{ramis_local_2013}.
	
The $q$-isomonodromy equations studied here
	are nonlinear $q$-difference equations
	arising as compatibility conditions
	for linear $q$-difference systems
	and thus have a natural integrable structure.
Jimbo and Sakai derived $q$-Painlev\'e VI
	from connection-preserving deformations
	of a rank-two linear $q$-difference system
	and recovered Painlev\'e VI in the differential limit as $q\to1$
	\cite{jimbo_q-analog_1996}.
Kakei and Kikuchi subsequently obtained
	a $3\times3$ formulation of $q$-Painlev\'e VI
	from the multi-component $q$-KP hierarchy
	\cite{kakei_q-analogue_2006}.
Mano later studied the asymptotic behavior
	of a class of solutions near boundary points
	and expressed the associated connection matrix
	as a product of two explicitly computable
	connection matrices for simpler
	linear $q$-difference systems
	\cite{mano_asymptotic_2010}.

Sauloy established the $q\to1$ confluence
	of canonical fundamental solutions and connection matrices
	for regular singular systems
	\cite{sauloy2000regular}.
For irregular systems,
	the confluence problem is more delicate
	because the corresponding differential formal solutions
	are generally divergent.
The confluence problem for basic hypergeometric functions
	has been studied by Zhang, Morita, and Adachi
	\cite{zhang_sommation_2002,
	morita_connection_2013,
	adachi_q-borel_2019}.
Related confluence results
	for broader classes of $q$-Gevrey formal solutions
	were obtained by Di Vizio--Zhang and Dreyfus
	\cite{di_vizio_q_2009,dreyfus_confluence_2015}.

The $q$-isomonodromy equations studied in this paper
	are precisely the general $n\times n$ version
	of the equations considered in
	\cite{kakei_q-analogue_2006}.
Thus, when $n=3$,
	our results apply to $q$-Painlev\'e VI.
Under the shrinking condition,
	our monodromy formula for generic isomonodromic deformations
	recovers Mano's result
	\cite{mano_asymptotic_2010}.
As an application of our explicit monodromy formula,
	we establish the differential limit of
	generic isomonodromic deformations as $q\to1$.
The formula also gives the differential limit of the associated canonical fundamental solutions
	while avoiding a direct analysis
	in the irregular setting.
For the main technique used here,
	we refer the reader to
	\cite{xu_closure_2021,tang_boundary_nodate}.

Beyond our interest in the analytic theory
	of linear $q$-difference systems,
	a primary motivation for this work is to understand
	the Poisson geometry of
	the $q$-Riemann--Hilbert--Birkhoff
	($q$-RHB) correspondence.
We aim to establish
	a $q$-difference analog of Boalch's results
	on differential Riemann--Hilbert--Birkhoff
	(RHB) maps
	\cite{boalch_stokes_2001,boalch_symplectic_2001}.
In the differential limit as $q\to1$,
	the systems considered here converge to
	meromorphic differential systems
	with a pole of order two at infinity.
In the corresponding differential setting,
	Boalch proved that for each fixed $U$
	the monodromy map
	from $\mathfrak{gl}_n(\mathbb C)^*$
	to the Poisson--Lie group $G^*$ dual to
	$G=\mathrm{GL}_n(\mathbb C)$ is Poisson.
Here $\mathfrak{gl}_n(\mathbb C)^*$
	has its standard complex Poisson structure.
The group $G^*$ is identified with
	the space of monodromy/Stokes data
	and carries its canonical complex
	Poisson--Lie group structure
	scaled by a factor of $2\pi \mathrm{i}$.
The Poisson structure on $G^*$
	is determined by the standard classical $r$-matrix
	and is the semiclassical limit
	of the corresponding quantum group structure.

In our previous work
	\cite{tang_boundary_nodate,xu_closure_2021},
	we obtained explicit formulas
	for the differential monodromy data
	in terms of the asymptotic leading terms
	of the corresponding isomonodromic deformations.
These formulas give an explicit description
	of the differential RHB map
	and allow us to study its Poisson property
	through concrete identities for special functions.
The $q$-monodromy data studied here
	are $q$-analogs of
	the differential monodromy data.
This suggests asking whether
	the $q$-monodromy space carries
	a natural Poisson structure
	associated with an elliptic $r$-matrix.
Theorem~\ref{Thm}
	gives an explicit formula for the $q$-RHB map.
Theorem~\ref{Thm:qto1Confluence} shows that
	this formula recovers
	the corresponding differential RHB formula
	in the differential limit as $q\to1$.
The next step is to identify
	the relevant Poisson structures
	on the coefficient and $q$-monodromy spaces
	and to prove that this explicit map is Poisson.
We leave this problem to future work.

\subsection{Main Results}

In this subsection,
	we explain our main ideas and
	state our main results.

In Section~\ref{Sect:Basic},
	we introduce the canonical fundamental solutions
		$Y^{[\infty]}(x;U,\Phi;\boldsymbol{\beta})$
		in a form different from
		the classical Birkhoff--Guenther form
		\cite{birkhoff1941note}.
The monodromy data
	$(C(x),\Lambda;\boldsymbol{\alpha})$
	considered in this paper
	are defined by these solutions.
We will show that
	the canonical fundamental solutions and
	monodromy data
	defined in this way
	converge to the corresponding objects
	in the differential setting as $q\to1$.
We also determine the possible locations of
	the poles of the canonical fundamental solutions
	(see Proposition~\ref{Pro:SolY})
	and establish a uniqueness theorem
	for reconstruction from the monodromy data
	(see Proposition~\ref{Pro:qUni}).
These two results are key ingredients
	in the proofs of our main results.

In Section~\ref{Sect:qisoeq},
	we introduce the formal exponents
	$\boldsymbol{\alpha}$ as a necessary part of
	the monodromy data.
They are analogous to the formal monodromy
	of the corresponding differential system,
	but arise naturally only when we consider
	isomonodromic deformations.
By comparing the pole sets of
	the canonical fundamental solutions,
	we directly derive
	the isomonodromy equations \eqref{Intro:qiso}
	considered in this paper.
Theorems~\ref{Thm:qIsoMD} and~\ref{Thm:qiso}
	give the explicit form
	of the isomonodromy equations
	and their global solutions.
	
In Section~\ref{Sect:Asym},
	we use Picard iteration to construct
	isomonodromic deformations
	with a prescribed asymptotic leading term $(\Phi_0,W_0)$
	that satisfies the shrinking condition
	(see Theorem~\ref{Thm:AsymBehavior}).
We call these solutions
	shrinking solutions.

In Section~\ref{Sect:Dec},
	we analyze the pole sets of
	the canonical fundamental solutions
	and factor them into canonical fundamental solutions
	of simpler systems
	(see Equation
	\eqref{Eq:FirstFactorization}--
	\eqref{Eq:ThirdFactorization}).
This gives a factorization of
	the monodromy data of the shrinking solutions
	(see Theorem~\ref{Thm:qCaterpillar}).
We then derive explicit formulas for
	the monodromy data of the shrinking solutions.
	
In Section~\ref{Sect:Applications},
	we introduce the Gelfand--Zeitlin eigenvalue--minor coordinates
	associated with a prescribed asymptotic leading term
	$(\Phi_0,W_0)$, namely
\begin{subequations}\label{Eq:EMcorr}
\begin{align}
\operatorname{Spec}\bigl([\Phi_0^{[k]}]^q\bigr)
&=
\{\lambda_1^{(k)},\ldots,\lambda_k^{(k)}\},
&&
1\leqslant k\leqslant n,
\\
\eta_j^{(m)}
&:=
\frac{
	\det\bigl(
	\Phi_0-[\lambda_j^{(m)}]_qI
	\bigr)^{1,\ldots,m}_{1,\ldots,m-1,m+1}
}{
	\displaystyle
	\prod_{\substack{1\leqslant s\leqslant m\\s\neq j}}
	\bigl(
	[\lambda_s^{(m)}]_q-[\lambda_j^{(m)}]_q
	\bigr)
},
&&
1\leqslant j\leqslant m\leqslant n-1,
\\
\eta_j^{(n)}
&:=
(W_0)_{nj},
&&
1\leqslant j\leqslant n.
\end{align}
\end{subequations}
Here $M^{[k]}$ denotes
	the upper-left $k\times k$ submatrix of $M$.
We then start from generic monodromy data
	and invert the explicit monodromy formulas
	in these coordinates to reconstruct
	a shrinking solution with the same monodromy data
	(see Lemma~\ref{Lem:InverseMonodromy}).
The uniqueness theorem for reconstruction
	from the monodromy data
	then shows that the shrinking solutions
	form a generic family.
Theorems~\ref{Thm:AsymBehavior},
	\ref{Thm:qCaterpillar}, and
	\ref{Thm:GenericShrinking}
	can be summarized in the following theorem.
	
\begin{thm}\label{Thm}
Let $(\Phi(u),W(u);\boldsymbol{\alpha})$
	be a generic solution of
	the $q$-isomonodromy equations \eqref{Intro:qiso}.
Then there exist constant matrices
	$(\Phi_0,W_0)$
such that, for each $1\leqslant k\leqslant n-1$,
	the shrinking condition
\begin{align}
	\max_{1\leqslant i,j\leqslant k}
	\left|
		\operatorname{Re}
		\bigl(\lambda_i^{(k)}-\lambda_j^{(k)}\bigr)
		-
		\frac{\arg q}{\ln|q|}
		\operatorname{Im}
		\bigl(\lambda_i^{(k)}-\lambda_j^{(k)}\bigr)
	\right|
	<
	1-\varepsilon_k
\end{align}
holds for some $0<\varepsilon_k<1$,
	and the following limits hold:
\begin{subequations}
\begin{align}
\lim_{u_{k+1}\to\infty}\cdots\lim_{u_n\to\infty}
	[\delta_k\Phi(u)]^q
	&=
	[\delta_k\Phi_k(u)]^q,
	\\
\lim_{u_2\to\infty}\cdots\lim_{u_n\to\infty}
	u_1^{\operatorname{ad}\boldsymbol{\alpha}}
	\left(
	\mathop{\prod}\limits_{s=1}^{\overset{\longrightarrow}{n-1}}
	\left(
		\frac{u_{s+1}}{u_s}
	\right)^{
		\operatorname{ad}[\delta_s\Phi_s(u)]^q
	}
	\right)
	\Phi(u)
	&=
	\Phi_0,
	\\
\lim_{u_2\to\infty}\cdots\lim_{u_n\to\infty}
	u_1^{\boldsymbol{\alpha}}
	\left(
	\mathop{\prod}\limits_{s=1}^{\overset{\longrightarrow}{n-1}}
	\left(
		\frac{u_{s+1}}{u_s}
	\right)^{
		[\delta_s\Phi_s(u)]^q
	}
	\right)
	u_n^{-[\Phi(u)]^q}
	W(u)
	&=
	W_0.
\end{align}
\end{subequations}
The formal exponents
	$\boldsymbol{\alpha}
	=\operatorname{diag}(\alpha_1,\ldots,\alpha_n)$
satisfy
\begin{align}
q^{\alpha_k}
&=
\frac{
	\det\bigl(I+(q-1)\Phi_0^{[k]}\bigr)
}{
	\det\bigl(I+(q-1)\Phi_0^{[k-1]}\bigr)
},
\qquad
k=1,\ldots,n.
\end{align}
For $|q|<1$,
	the central connection matrix $C(x)$ satisfies
\begin{align}
\bigl(C(x)^{-1}\bigr)_{rj}
&=
\frac{1}{\eta_r^{(n)}}
\frac{
\theta_q\bigl(
-q^{-\lambda_r^{(n)}}(1-q)xu_j
\bigr)
}{
\theta_q\bigl(
-q^{-\alpha_j}(1-q)xu_j
\bigr)
}
\bigl((1-q)xu_j\bigr)^{
\alpha_j-\lambda_r^{(n)}
}
\sum_{\boldsymbol{i}\in\mathcal P_{rj}}
\frac{
\displaystyle
\prod_{\substack{1\leqslant\ell\leqslant j\\
\ell\neq i_j}}
\Gamma_q\bigl(
\lambda_\ell^{(j)}-\lambda_{i_j}^{(j)}
\bigr)
}{
\displaystyle
\prod_{1\leqslant\ell\leqslant j-1}
\Gamma_q\bigl(
\lambda_\ell^{(j-1)}-\lambda_{i_j}^{(j)}
\bigr)
}
\notag\\
&\qquad\times
\prod_{k=j+1}^{n}
	\Bigg(
	\frac{
	[\lambda_{i_k}^{(k)}]_q
	-
	[\lambda_{i_{k-1}}^{(k-1)}]_q
	}{
	\eta_{i_{k-1}}^{(k-1)}
	}
\prod_{\substack{1\leqslant\ell\leqslant k\\
\ell\neq i_k}}
\frac{
\Gamma_q\bigl(
\lambda_\ell^{(k)}-\lambda_{i_k}^{(k)}
\bigr)
}{
\Gamma_q\bigl(
\lambda_\ell^{(k)}
-\lambda_{i_{k-1}}^{(k-1)}
\bigr)
}
\prod_{\substack{1\leqslant\ell\leqslant k-1\\
\ell\neq i_{k-1}}}
\frac{
\Gamma_q\bigl(
\lambda_\ell^{(k-1)}
-\lambda_{i_{k-1}}^{(k-1)}
\bigr)
}{
\Gamma_q\bigl(
\lambda_\ell^{(k-1)}-\lambda_{i_k}^{(k)}
\bigr)
}
\notag\\
&\qquad\qquad\times
\frac{
\theta_q\left(
-q^{
\lambda_{i_{k-1}}^{(k-1)}
-\lambda_{i_k}^{(k)}
}
\frac{u_k}{u_j}
\right)
}{
\theta_q\left(-\frac{u_k}{u_j}\right)
}
\left(\frac{u_k}{u_j}\right)^{
\lambda_{i_{k-1}}^{(k-1)}
-\lambda_{i_k}^{(k)}
}
\Bigg).
\label{Eq:ExplicitQMonodromyEntry}
\end{align}
For $|q|>1$,
	the central connection matrix $C(x)$ satisfies
\begin{align}
C(x)_{jr}
&=
\eta_r^{(n)}
\frac{
\theta_{q^{-1}}\bigl(
q^{-\lambda_r^{(n)}}(q-1)xu_j
\bigr)
}{
\theta_{q^{-1}}\bigl(
q^{-\alpha_j}(q-1)xu_j
\bigr)
}
\bigl((q-1)xu_j\bigr)^{
\lambda_r^{(n)}-\alpha_j
}
\sum_{\boldsymbol{i}\in\mathcal P_{rj}}
\frac{
\displaystyle
\prod_{\substack{1\leqslant\ell\leqslant j\\
\ell\neq i_j}}
\Gamma_{q^{-1}}\bigl(
1+\lambda_{i_j}^{(j)}-\lambda_\ell^{(j)}
\bigr)
}{
\displaystyle
\prod_{1\leqslant\ell\leqslant j-1}
\Gamma_{q^{-1}}\bigl(
1+\lambda_{i_j}^{(j)}-\lambda_\ell^{(j-1)}
\bigr)
}
\notag\\
&\quad\times
\prod_{k=j+1}^{n}
\Bigg(
\frac{
\eta_{i_{k-1}}^{(k-1)}
}{
[\lambda_{i_k}^{(k)}]_q
-
[\lambda_{i_{k-1}}^{(k-1)}]_q
}
\prod_{\substack{1\leqslant\ell\leqslant k\\
\ell\neq i_k}}
\frac{
\Gamma_{q^{-1}}\bigl(
1+\lambda_{i_k}^{(k)}-\lambda_\ell^{(k)}
\bigr)
}{
\Gamma_{q^{-1}}\bigl(
1+\lambda_{i_{k-1}}^{(k-1)}
-\lambda_\ell^{(k)}
\bigr)
}
\prod_{\substack{1\leqslant\ell\leqslant k-1\\
\ell\neq i_{k-1}}}
\frac{
\Gamma_{q^{-1}}\bigl(
1+\lambda_{i_{k-1}}^{(k-1)}
-\lambda_\ell^{(k-1)}
\bigr)
}{
\Gamma_{q^{-1}}\bigl(
1+\lambda_{i_k}^{(k)}
-\lambda_\ell^{(k-1)}
\bigr)
}
\notag\\
&\qquad\qquad\times
\frac{
\theta_{q^{-1}}\left(
-q^{
\lambda_{i_k}^{(k)}
-\lambda_{i_{k-1}}^{(k-1)}
}
\frac{u_j}{u_k}
\right)
}{
\theta_{q^{-1}}\left(
-\frac{u_j}{u_k}
\right)
}
\left(\frac{u_j}{u_k}\right)^{
\lambda_{i_{k-1}}^{(k-1)}
-\lambda_{i_k}^{(k)}
}
\Bigg).
\label{Eq:ExplicitQMonodromyEntryGreater}
\end{align}
Here $\mathcal P_{rj}$ denotes the set of sequences
$\boldsymbol{i}
=
(i_j,i_{j+1},\ldots,i_n=r)$
such that
$1\leqslant i_k\leqslant k$.
\end{thm}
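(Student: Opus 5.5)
The theorem is a summary, so the proof assembles the three results named just before it: Theorem~\ref{Thm:AsymBehavior}, Theorem~\ref{Thm:qCaterpillar} and Theorem~\ref{Thm:GenericShrinking}.

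\textbf{Step 1: existence of shrinking solutions.} Fix a leading term $(\Phi_0,W_0)$ whose Gelfand--Zeitlin eigenvalues $\lambda^{(k)}$ satisfy the shrinking condition for each $k\leqslant n-1$. Theorem~\ref{Thm:AsymBehavior} gives, by Picard iteration in the variables $u_n,u_{n-1},\ldots,u_2$, a solution of \eqref{Intro:qiso} with exactly the prescribed iterated limits.

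The shrinking condition is what makes the factors $(u_{s+1}/u_s)^{\pm\operatorname{ad}[\delta_s\Phi_s]^q}$ grow strictly slower than $u_{s+1}/u_s$. This is the $q$-analogue of $|\operatorname{Re}(\lambda_i-\lambda_j)|<1$, rewritten for $q$-powers: $\operatorname{Re}$ and $\frac{\arg q}{\ln|q|}\operatorname{Im}$ together give $\ln|q^{\lambda}|/\ln|q|$. It ensures that the error terms of the iteration are summable.

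The formula for $q^{\alpha_k}$ follows from $\operatorname{Spec}(U^{-1}(I+(q-1)\Phi))$, which is invariant along the flow. One evaluates it in the iterated limit: as $u_n\to\infty$ the matrix becomes block triangular, and inductively the $k$-th eigenvalue is governed by the ratio of leading principal minors of $I+(q-1)\Phi_0$.

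\textbf{Step 2: the monodromy formula for shrinking solutions.} Since the monodromy data are constant along the deformation, $C(x)$ may be computed in the limit regime. Theorem~\ref{Thm:qCaterpillar} factors the canonical solutions, using Proposition~\ref{Pro:SolY} on pole locations and the factorizations \eqref{Eq:FirstFactorization}--\eqref{Eq:ThirdFactorization}. The result is that the connection matrix is a product of connection matrices of two kinds:

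\begin{itemize}
\item rank-one-type systems $\frac{d_q}{d_qx}Y=(E_{kk}+\Phi^{[k]}/x)Y$, which are $q$-confluent hypergeometric and have explicit $\Gamma_q$ and $\theta_q$ connection coefficients;
\item Fuchsian $q$-systems linking $\Phi^{[k-1]}$ and $\Phi^{[k]}$, whose connection coefficients carry the factors $\theta_q(-q^{\lambda_{i_{k-1}}^{(k-1)}-\lambda_{i_k}^{(k)}}u_k/u_j)/\theta_q(-u_k/u_j)$.
\end{itemize}

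Diagonalizing each $\Phi_0^{[k]}$ and writing the eigenvectors through the minors $\eta^{(m)}_j$ turns the matrix product into a sum over chains $\boldsymbol i\in\mathcal P_{rj}$, which gives \eqref{Eq:ExplicitQMonodromyEntry}. The case $|q|>1$ is obtained by the same computation with $q\mapsto q^{-1}$, using $\Gamma_{q^{-1}}$ and $\theta_{q^{-1}}$, and computing $C$ instead of $C^{-1}$. This produces \eqref{Eq:ExplicitQMonodromyEntryGreater}.

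\textbf{Step 3: genericity.} Given generic monodromy data $(C,\Lambda;\boldsymbol\alpha)$, Lemma~\ref{Lem:InverseMonodromy} inverts these formulas in the coordinates \eqref{Eq:EMcorr} and produces $(\Phi_0,W_0)$ satisfying the shrinking condition. Since adding integers to the $\lambda$'s does not change the $q$-quantities, the $\lambda$'s can be normalized into the strip. The associated shrinking solution then has the same monodromy data, and the uniqueness result Proposition~\ref{Pro:qUni} shows that it coincides with the given generic solution.

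\textbf{Main obstacle.} I expect the hardest part to be Step 2: controlling the pole structure that justifies the factorization, and doing the bookkeeping of $\Gamma_q$ and $\theta_q$ normalizations so that the chain sum matches exactly. Step 3's normalization within the strip also needs care, because $\varepsilon_k>0$ excludes boundary cases; this is precisely where the word "generic" is used.
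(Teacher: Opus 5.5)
Your skeleton matches the paper's: the theorem is obtained by combining three results.
\begin{enumerate}
\item Theorem~\ref{Thm:AsymBehavior} gives existence, the iterated limits, and the minor formula for $q^{\alpha_k}$.
\item Theorem~\ref{Thm:qCaterpillar}, together with Proposition~\ref{Pro:OneStepConnectionEntries} and Corollary~\ref{Cor:ConnectionFromEigenminorCoordinates}, gives the connection matrix.
\item Theorem~\ref{Thm:GenericShrinking}, via Lemma~\ref{Lem:InverseMonodromy} and Proposition~\ref{Pro:qUni}, gives genericity.
\end{enumerate}
However, your account of Step~2, which is where the explicit formula comes from, does not describe what Theorem~\ref{Thm:qCaterpillar} says. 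If you tried to carry it out as written, you would not reach the formula.

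\textbf{Step 2.} There are no Fuchsian $q$-systems in the factorization. Every factor is the connection matrix $C_{-xu_j}(xu_k;E_k,\delta_k\Phi_{0;k-1};W_{0;k})$ of a confluent system $\frac{\mathrm d_q}{\mathrm d_q\zeta}T=(E_k+\delta_k\Phi_{0;k-1}/\zeta)T$ at $\zeta=xu_k$. The limiting systems \eqref{Eq:TargetSysLim-x} and \eqref{Eq:TargetSysLim-zeta} are both irregular at infinity.

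The idea you are missing is the column-wise reassembled $q$-Borel summation of Definition~\ref{Def:ReassembledQBorelSum}. The $j$-th column of $C^{-1}$ is built from the sums of all factors taken in the single direction $-xu_j$. This is what gives the pole structure used to prove \eqref{Eq:SecondFactorization} in Proposition~\ref{Lem:SecondFactorization}.

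The factor $\theta_q(-q^{\lambda^{(k-1)}_{i_{k-1}}-\lambda^{(k)}_{i_k}}u_k/u_j)/\theta_q(-u_k/u_j)$ is therefore not a Fuchsian connection coefficient. It is the direction dependence of the $k$-th confluent factor (argument $xu_k$, direction $-xu_j$), read off from \eqref{Eq:OneStepConnectionFirstColumns} with $s=j$. These one-step entries are an external input from the cited $q$-Stokes paper; the factorization does not produce them.

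The chain sum then follows from the block structure:
\begin{itemize}
\item For $k<j$ the factor $\operatorname{diag}(C^{-1},I_{n-k})$ fixes $E_j$, so only $k\geqslant j$ contribute.
\item The $k=j$ factor contributes \eqref{Eq:OneStepConnectionLastColumn}, and each $k>j$ factor contributes \eqref{Eq:OneStepConnectionFirstColumns}.
\item The $\eta$'s enter through the choice \eqref{Eq:W0kEigenminor} of $W_{0;k}$, which makes $\mathscr W_0=\operatorname{diag}(\eta^{(n)}_1,\ldots,\eta^{(n)}_n)$.
\item The $x$-dependent theta quotients telescope to the prefactor $\theta_q(-q^{-\lambda_r^{(n)}}(1-q)xu_j)/\theta_q(-q^{-\alpha_j}(1-q)xu_j)$.
\end{itemize}

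\textbf{Steps 1 and 3.} Two smaller corrections.

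In Step~1, $\operatorname{Spec}(U^{-1}(I+(q-1)\Phi))=\{u_k^{-1}q^{\alpha_k}\}$ moves with $u$; what is invariant is $\boldsymbol\alpha$. In the paper, $\boldsymbol\alpha$ is \emph{defined} by the minor formula. Lemma~\ref{Lem:RecStep1} then shows it remains the set of formal exponents, using the $q$-constancy in $u_n$ of the reduced characteristic polynomial $\widehat\chi$.

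In Step~3, adding an integer to $\lambda$ does change $q^{\lambda}$ and $[\lambda]_q$, and hence changes $\Phi_0$. The monodromy data determine only the spirals $q^{\lambda^{(k-1)}_r}q^{\mathbb Z}$, as zeros of a theta function. The shrinking condition alone leaves a common integer shift of all $\lambda^{(k-1)}_r$ free. The representatives are fixed by the shrinking condition together with the balance condition $\prod_r q^{\lambda^{(k-1)}_r}=q^{-\alpha_k}\prod_i q^{\lambda^{(k)}_i}$. It is this balance condition that makes the recovered $\Phi_0$ reproduce the given $\boldsymbol\alpha$, so that the shrinking solution has the prescribed data $(C(x),\Lambda;\boldsymbol\alpha)$ before Proposition~\ref{Pro:qUni} is applied.
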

\noindent
For the notation and definitions used here, see
	Sections~\ref{Sect:qNotation} and~\ref{Sect:Asym}.

Finally, Theorem~\ref{Thm:qto1Confluence} shows,
	by considering the differential limit of
	the asymptotic leading terms of
	generic isomonodromic deformations as $q\to1$,
	that the canonical fundamental solutions
	and the isomonodromic deformations
	also converge to the corresponding objects
	in the differential setting.
	
\subsection{Further Directions}

For $n=3$, the equations studied here include
	the $3\times3$ formulation of $q$-Painlev\'e VI
	\cite{kakei_q-analogue_2006}.
Thus our construction extends
	the associated isomonodromy and monodromy problems
	to higher rank.
Theorem~\ref{Thm:qto1Confluence} shows that
	the isomonodromic deformations
	and canonical fundamental solutions studied here
	converge as $q\to1$
	to the isomonodromic deformations
	and canonical fundamental solutions
	of the corresponding differential systems.
We consider two directions for further study.

The first direction concerns $q$-Painlev\'e VI.
Its geometric Riemann--Hilbert correspondence
	and monodromy spaces have been studied in
	\cite{ohyama_space_2020,
	joshi_monodromy_2023,
	roffelsen_q-painleve_2024}.
In the differential limit as $q\to1$,
	the $q$-Painlev\'e VI monodromy space
	was identified with
	the Painlev\'e VI monodromy manifold
	in \cite{joshi_segre_2026}.
Solutions of $q$-Painlev\'e VI
	that are meromorphic at the origin
	were studied in
	\cite{ohyama_analytic_2009,ohyama_meromorphic_2015}.
Special-function solutions of
	$q$-Painlev\'e equations were constructed in
	\cite{sakai_casorati_1998,
	kajiwara_hypergeometric_2004,
	tsuda_masuda_2006,
	joshi_shi_2011,joshi_shi_2012}.
Connections with gap probabilities
	and orthogonal polynomials
	have also been studied in
	\cite{ormerod_connection_2011,
	knizel_moduli_2016,
	hu_gap_2020}.
CFT and combinatorial formulas
	for the $\tau$-functions
	of $q$-Painlev\'e equations
	were obtained in
	\cite{jimbo_cft_2017,
	bershtein_qdeformed_2017,
	matsuhira_combinatorial_2019}.
It would be interesting to determine
	which of these results and constructions
	admit higher-rank analogs
	for the equations considered here.
One may also ask whether
	the explicit monodromy formula
	in Theorem~\ref{Thm}
	can be extended to suitable nongeneric cases
	and used to derive
	higher-rank special solutions,
	their connection formulas, and
	$\tau$-function expansions.
	
The second direction concerns representation theory.
In the differential setting,
	the Stokes matrices of certain systems
	with a pole of order two
	are related to Yangians and quantum groups
	\cite{tang2025stokes,xu_representations_2020}.
Related constructions for quantum supergroups
	have recently been obtained in
	\cite{li_stokes_2026}.
Explicit differential monodromy formulas
	of the type recovered here in the differential limit as $q\to1$
	are also related to
	the Gelfand--Zeitlin integrable system
	and crystal bases
	\cite{xu_closure_2021}.
The spectral coordinates used
	in the WKB analysis of Stokes matrices
	are related to cluster structures
	\cite{alekseev_wkb_2024}.
It is natural to ask whether
	the explicit $q$-monodromy formulas
	of Theorem~\ref{Thm}
	admit similar representation-theoretic interpretations.
	
A broader question is whether
	the method developed here
	extends beyond degree-one linear $q$-difference systems.
This requires extending both
	the factorization of canonical fundamental solutions
	into those of simpler systems
	and the reconstruction of generic isomonodromic deformations
	from their monodromy data.
The $h$-difference systems studied in
	\cite{borodin_isomonodromy_2004,
	arinkin_moduli_2006,arinkin_tau_2009}
	and the $q$-difference equations
	arising in quantum $K$-theory
	\cite{ruan_quantum_2022,
	ruan_level_2026,ruan_toric_2022}
	provide two natural settings
	for investigating extensions of our method.
For systems with polynomial coefficient matrices
	of higher degree,
	the first problem is whether
	their canonical fundamental solutions
	can still be factorized
	into those of simpler systems.
If such a factorization exists,
	one may then ask whether
	the monodromy data of these systems
	can be used to reconstruct
	generic isomonodromic deformations.
This would extend
	the inverse monodromy construction developed here
	to a more general class of systems.
	
\newpage

The logical relations among the main results
	are summarized in the following diagram.
	
\begingroup
\pgfdeclarelayer{diagrambackground}
\pgfsetlayers{diagrambackground,main}
\begin{center}
\begin{tikzpicture}[
	result/.style={
		font=\footnotesize,
		inner xsep=2pt,
		inner ysep=1.2pt,
		fill=white
	},
	theorem/.style={
		result,
		draw=blue,
		line width=0.45pt,
		rounded corners=1.5pt,
		inner xsep=3pt,
		inner ysep=1.8pt
	},
	dependency/.style={
		->,
		>=stealth,
		thin,
		draw=black!72,
		rounded corners=2pt
	}
]
\node[result] (OneStep) at (11,19.5) {Prop.~\ref{Pro:OneStepConnectionEntries}};
\node[result] (CanZero) at (-0.25,19.5) {Prop.~\ref{Can:zero}};
\node[result] (CanInf) at (1.75,19.5) {Prop.~\ref{Can:inf}};
\node[result] (BG) at (3.75,19.5) {Lem.~\ref{Lem:BG}};
\node[result] (EquiLAG) at (7.25,19.5) {Lem.~\ref{Lem:EquiLAG}};
\node[result] (UniLAG) at (5.5,19.5) {Lem.~\ref{Lem:UniLAG}};
\node[result] (Picard) at (9.5,19.5) {Lem.~\ref{Lem:Picard}};
\node[result] (QToOne) at (12.5,19.5) {Lem.~\ref{Lem:qto1}};
\node[result] (Basic) at (-0.25,18.25) {Cor.~\ref{Cor:Basic}};
\node[result] (SolY) at (3.75,18.25) {Prop.~\ref{Pro:SolY}};
\node[result] (RecStep) at (9.5,13) {Lem.~\ref{Lem:RecStep1}};
\node[result] (QBorel) at (-0.25,17) {Prop.~\ref{Pro:qBorelSummability}};
\node[result] (CTransform) at (1.75,14.25) {Prop.~\ref{Pro:CTransformation}};
\node[result] (BetaEq) at (3.75,17) {Lem.~\ref{Lem:betaEq}};
\node[result] (QUni) at (5.5,15.75) {Prop.~\ref{Pro:qUni}};
\node[result] (RecImportant) at (9.5,10.5) {Lem.~\ref{Lem:RecImprotant}};
\node[result] (CTheta) at (-0.25,13) {Lem.~\ref{Lem:CThetaStructure}};
\node[result] (QBasic) at (3.75,15.75) {Lem.~\ref{Lem:qBasic}};
\node[result] (SummedInf) at (-0.25,15.75) {Lem.~\ref{Lem:SummedZetaInf}};
\node[theorem] (QIsoMD) at (3.75,14.25) {{{\color{blue}Thm.}~\ref{Thm:qIsoMD}}};
\node[result] (TargetSys) at (1.75,13) {Lem.~\ref{Lem:TargetSysNew}};
\node[result] (UniVk) at (7.25,14.25) {Cor.~\ref{Cor:UniVk}};
\node[theorem] (QIso) at (7.25,11.75) {{{\color{blue}Thm.}~\ref{Thm:qiso}}};
\node[result] (CanZetaInf) at (2.5,10.5) {Lem.~\ref{Lem:CanZetaInf}};
\node[result] (CanXnZero) at (7.25,10.5) {Lem.~\ref{Lem:CanXn0}};
\node[result] (Comp) at (10.75,11.75) {Cor.~\ref{CompCond}};
\node[result] (SecondCommon) at (7.25,9.25) {Lem.~\ref{Lem:SecondCommonSolution}};
\node[result] (ZeroStability) at (4.5,10.5) {Lem.~\ref{Lem:ZeroCanonicalStability}};
\node[result] (Phin) at (9.5,8) {Lem.~\ref{Lem:Phin}};
\node[result] (ThirdFactor) at (7.25,8) {Prop.~\ref{Lem:ThirdFactorization}};
\node[result] (SecondFactor) at (3.5,6.75) {Prop.~\ref{Lem:SecondFactorization}};
\node[result] (Reck) at (9.5,6.75) {Cor.~\ref{Cor:Reck}};
\node[result] (CentralFirst) at (7.25,6.75) {Cor.~\ref{Cor:CentralConnectionFirstStep}};
\node[theorem] (Asym) at (9.5,5.5) {{{\color{blue}Thm.}~\ref{Thm:AsymBehavior}}};
\node[theorem] (QCaterpillar) at (7.25,5.5) {{{\color{blue}Thm.}~\ref{Thm:qCaterpillar}}};
\node[result] (QIsoConfluence) at (9.5,3) {Lem.~\ref{Lem:qIsoConfluence}};
\node[result] (ConnectionCoordinates) at (7.25,4.25) {Cor.~\ref{Cor:ConnectionFromEigenminorCoordinates}};
\node[result] (InverseConfluence) at (9.5,1.75) {Lem.~\ref{Lem:InverseMonodromyConfluence}};
\node[result] (CentralConfluence) at (12.5,1.75) {Cor.~\ref{Cor:CentralConnectionConfluence}};
\node[result] (InverseMonodromy) at (7.25,3) {Lem.~\ref{Lem:InverseMonodromy}};
\node[theorem] (GenericShrinking) at (7.25,1.75) {{{\color{blue}Thm.}~\ref{Thm:GenericShrinking}}};
\node[theorem] (MainTheorem) at (7.25,0) {{{\color{blue}Thm.}~\ref{Thm}}};
\node[theorem] (ConfluenceTheorem) at (9.5,0) {{{\color{blue}Thm.}~\ref{Thm:qto1Confluence}}};

\begin{pgfonlayer}{diagrambackground}
\draw[dependency] (Asym) -- (QCaterpillar);
\draw[dependency] (Asym) -- (QIsoConfluence);
\draw[dependency] (BG) -- (SolY);
\draw[dependency] (Basic) -- (CTransform);
\draw[dependency] (Basic) -- (QBorel);
\draw[dependency] (BetaEq) -- (QBasic);
\draw[dependency] (BetaEq) -- (TargetSys);
\draw[dependency] (CTheta) .. controls (-0.25,8.75) and (3.25,4.25) .. (ConnectionCoordinates);
\draw[dependency] (CTransform) -- (QIso);
\draw[dependency] (CanInf) -- (Basic);
\draw[dependency] (CanInf) -- (SolY);
\draw[dependency] (CanXnZero) -- (SecondCommon);
\draw[dependency] (CanXnZero) -- (ZeroStability);
\draw[dependency] (CanZero) -- (Basic);
\draw[dependency] (CanZero) -- (SolY);
\draw[dependency] (CanZetaInf) -- (SecondFactor);
\draw[dependency] (CentralConfluence) -- (ConfluenceTheorem);
\draw[dependency] (CentralFirst) -- (QCaterpillar);
\draw[dependency] (Comp) -- (Phin);
\draw[dependency] (ConnectionCoordinates) -- (InverseMonodromy);
\draw[dependency] (EquiLAG) -- (UniVk);
\draw[dependency] (GenericShrinking) -- (MainTheorem);
\draw[dependency] (InverseConfluence) -- (ConfluenceTheorem);
\draw[dependency] (InverseMonodromy) -- (GenericShrinking);
\draw[dependency] (OneStep) .. controls (13.25,3.5) and (9.5,4.25) .. (ConnectionCoordinates);
\draw[dependency] (Phin) -- (Reck);
\draw[dependency] (Picard) -- (RecStep);
\draw[dependency] (QBasic) -- (QIsoMD);
\draw[dependency] (QBorel) -- (SummedInf);
\draw[dependency] (QCaterpillar) .. controls (10.5,4.5) and (11.5,3.5) .. (CentralConfluence);
\draw[dependency] (QCaterpillar) -- (ConnectionCoordinates);
\draw[dependency] (QIso) -- (CanXnZero);
\draw[dependency] (QIso) -- (Comp);
\draw[dependency] (QIsoConfluence) -- (CentralConfluence);
\draw[dependency] (QIsoConfluence) -- (InverseConfluence);
\draw[dependency] (QIsoMD) -- (UniVk);
\draw[dependency] (QToOne) -- (CentralConfluence);
\draw[dependency] (QUni) -- (QIso);
\draw[dependency] (RecImportant) -- (Phin);
\draw[dependency] (RecStep) -- (RecImportant);
\draw[dependency] (RecStep) -- (TargetSys);
\draw[dependency] (Reck) -- (Asym);
\draw[dependency] (SecondCommon) -- (SecondFactor);
\draw[dependency] (SecondCommon) -- (ThirdFactor);
\draw[dependency] (SecondFactor) -- (CentralFirst);
\draw[dependency] (SolY) -- (BetaEq);
\draw[dependency] (SolY) -- (CTheta);
\draw[dependency] (SolY) -- (QUni);
\draw[dependency] (SolY) -- (SummedInf);
\draw[dependency] (SummedInf) .. controls (0.5,12.5) and (1.5,9.25) .. (SecondFactor);
\draw[dependency] (TargetSys) -- (CanXnZero);
\draw[dependency] (TargetSys) -- (CanZetaInf);
\draw[dependency] (ThirdFactor) -- (CentralFirst);
\draw[dependency] (UniLAG) -- (UniVk);
\draw[dependency] (UniVk) -- (QIso);
\draw[dependency] (ZeroStability) -- (SecondFactor);
\end{pgfonlayer}
\end{tikzpicture}
\end{center}
\endgroup

\section{Linear $q$-Difference Systems}
\label{Sect:Basic}

Throughout this paper, we assume that $q$ is nonzero and
$|q|\neq 1$.
In this section, we introduce
the canonical fundamental solutions and
the $q$-monodromy data of
the following \textbf{linear $q$-difference system}:
\begin{align}\label{q1}
	T_x Y(x) := Y(qx) = (A_1x + A_0) \cdot Y(x).
\end{align}
We then develop the basic theory needed in this paper.
General background on linear $q$-difference systems can be found in
\cite{ramis_local_2013, sauloy_basic_2024}.
To compare system \eqref{q1} with its
differential limit as $q\to1$,
we consistently write it in the following form:
\begin{align}\label{qLinear}
\frac{\mathrm{d}_q}{\mathrm{d}_q x}Y(x)
	:= \frac{Y(qx) - Y(x)}{qx - x}
	= \left(U + \frac{\Phi}{x} \right) \cdot Y(x).
\end{align}
We always assume that $U$ is diagonal, with equal diagonal entries grouped into blocks. More precisely,
\begin{itemize}
	\item We write
	\begin{align*}
	U & =
	\mathrm{diag}(
		u_1 I_{m_1}, \ldots, 
		u_n I_{m_n}) =
		u_1E_1+\cdots+u_nE_n,
	\end{align*}
	where $u_1,\ldots,u_n$ are pairwise distinct,
	with respective multiplicities $m_1,\ldots,m_n$,
	and
	\[
	E_k
	:=
	\mathrm{diag}
	\bigl(0,\ldots,\overset{k\text{-th}}{I_{m_k}},\ldots,0\bigr);
	\]
	
	\item We regard system \eqref{qLinear} as an $n\times n$ block system
	with block sizes $(m_1,\ldots,m_n)$ and use this block decomposition
	throughout. In particular, we write 
	\[
		\Phi
		=
		\begin{pmatrix}
			\Phi_{11} & \cdots & \Phi_{1n} \\
			\vdots & \ddots & \vdots \\
			\Phi_{n1} & \cdots & \Phi_{nn}
		\end{pmatrix},
		\qquad
		\Phi_{ij}\in\mathrm{Mat}_{m_i\times m_j}(\mathbb{C});
	\]

	\item System \eqref{qLinear} has total matrix size $m\times m$, where $m=m_1+\cdots+m_n$.
\end{itemize}
Since the functions considered below are generally multivalued,
we work on the logarithmic Riemann surface
$\widetilde{\mathbb{C}^{\ast}}$.
A point of this surface is specified by its modulus and an argument in
$\mathbb{R}$, rather than modulo $2\pi$.

The remainder of this section is organized as follows.
In Section~\ref{Sect:qNotation},
	we introduce the basic concepts and
	notation used throughout the paper.
In Section~\ref{Sect:CanSol},
	we construct the canonical fundamental solutions of
	the linear $q$-difference system \eqref{qLinear} and determine
	the locations of their possible poles.
In Section~\ref{Sect:qMonodromy},
	we introduce the $q$-monodromy data and
	study their uniqueness properties.
In Section~\ref{Sect:qBorel},
	we treat divergent canonical formal fundamental solutions by
	$q$-Borel summation.
Finally, in Section~\ref{Sect:ExampCan2},
	we present an explicit example in the $2\times2$ case.

\subsection{Notation}
\label{Sect:qNotation}

For a matrix $A$, we use the following notation:
\begin{itemize}
\item $A^{[k]}$ denotes
	the upper-left $k\times k$ submatrix of $A$;
\item $A_{\ast k}$ and $A_{k\ast}$
	denote the $k$-th column and the $k$-th row of $A$, 
	respectively;
\item $A_{\hat{k}\hat{k}}$ denotes the submatrix obtained from $A$ by deleting its $k$-th row and $k$-th column.
\end{itemize}
We also denote
\begin{align*}
	[A]_q &:=
	\frac{q^A - I}{q-1} :=
	\frac{\mathrm{e}^{A \ln q} - I}{q-1},\\
	[A]^q &:=
	\log_q\left(I+(q-1)A \right) :=
	\frac{ \ln\left(I+(q-1)A \right) }{ \ln q },\quad
	\text{for invertible $I+(q-1)A$}.
\end{align*}
The notation $[A]^q$ is generally multivalued, and we choose one of its
branches whenever needed. Throughout the paper, we fix the argument of
$q$. With this convention, the identities
\begin{align*}
	q^{[A]^q}=I+(q-1)A,\qquad
	[[A]^q]_q=A,
\end{align*}
hold for every chosen branch.

We shall also use the following terminology.
A matrix $A$ is called \textbf{$q$-nonresonant} if
$I+(q-1)A$ is invertible and
\begin{align*}
	\frac{1+(q-1)\lambda_i}
	{1+(q-1)\lambda_j}
	\notin q^{\mathbb{Z}\setminus\{0\}},\quad
	\forall \lambda_i,\lambda_j\in\mathrm{Spec}(A).
\end{align*}
For $a\in\widetilde{\mathbb{C}^{\ast}}$, the set
\[
	aq^{\mathbb{Z}}
	:=
	\{aq^k\mid k\in\mathbb{Z}\}
\]
is called the \textbf{$q$-spiral} through $a$.
We denote the space of $q$-spirals by
$\widetilde{\mathbb{C}^{\ast}}/q^{\mathbb{Z}}$.
A scalar- or matrix-valued function $C(x)$
is called a \textbf{$q$-constant} with respect to $x$ if
\[
	C(qx)=C(x).
\]

\begin{defi}\label{Def:q}
We use the following notation for
the \textbf{$q$-Pochhammer symbol},
the \textbf{$q$-Gamma function},
the \textbf{$q$-exponential function} and
the \textbf{Jacobi theta function}:
\begin{align*}
	(a;q)_\infty
	&:=
	\begin{cases}
		\displaystyle
		\prod_{k=0}^{\infty}(1-aq^k),
		& |q|<1,\\[6pt]
		\displaystyle
		\prod_{k=1}^{\infty}\frac{1}{1-aq^{-k}},
		& |q|>1,
	\end{cases}\\
	\Gamma_q(x)
	&:=
	\begin{cases}
		\displaystyle
		(1-q)^{1-x}
		\frac{(q;q)_\infty}{(q^x;q)_\infty},
		& |q|<1,\\[8pt]
		\displaystyle
		q^{\binom{x-1}{2}}
		(1-q^{-1})^{1-x}
		\frac{(q^{-1};q^{-1})_\infty}
		{(q^{-x};q^{-1})_\infty},
		& |q|>1,
	\end{cases}\\
	e_q(x)
	&:=
	\sum_{n=0}^{\infty}
	\frac{x^n}{[n]!_q},
	\qquad
	[n]!_q
	:=
	\prod_{k=1}^{n}[k]_q,\\
	\theta_q(x)
	&:=
	(q;q)_\infty
	(-x;q)_\infty
	\left(-\frac{q}{x};q\right)_\infty,
	\qquad
	|q|<1.
\end{align*}
In the definition of the $q$-Gamma function, we choose
$\arg(1-q)\in\left(-\frac{\pi}{2},\frac{\pi}{2}\right)$
when $|q|<1$ and
$\arg(1-q^{-1})\in
\left(-\frac{\pi}{2},\frac{\pi}{2}\right)$
when $|q|>1$.
\end{defi}

The preceding extension of the $q$-Pochhammer symbol
to the case $|q|>1$ is natural because the $q$-exponential function
has the same product representation for both $|q|<1$ and $|q|>1$:
\begin{align}
\label{Eq:eqisqP}
	e_q(x) =
	\frac{1}{((1-q)x;q)_\infty}.
\end{align}
This convention therefore allows us to treat these two cases
uniformly throughout the paper.
We shall also frequently use the following identities:
\begin{subequations}
\begin{alignat}{2}
\label{Eq:qP}
(qx;q)_\infty
	&=
	(1-x)^{-1}\cdot(x;q)_\infty,
&\qquad
(x;q^{-1})_\infty
	&=
	\frac{1}{(qx;q)_\infty},
\\
\Gamma_q(x+1)
	&=
	[x]_q\cdot\Gamma_q(x),
&\qquad
\Gamma_{q^{-1}}(x)
	&=
	q^{-\binom{x-1}{2}}\Gamma_q(x),
\\
e_q(qx)
	&=
	(1+(q-1)x)\cdot e_q(x),
&\qquad
e_{q^{-1}}(x)
	&=
	e_q(-x)^{-1},
\label{Eq:eq}\\
\theta_q(qx)
	&=
	x^{-1}\cdot\theta_q(x)
	=
	\theta_q(x^{-1}),
&\qquad
\theta_q(x)
	&=
	\sum_{k\in\mathbb{Z}}
	q^{\binom{k}{2}}x^k,
	\qquad |q|<1.
\end{alignat}
\end{subequations}
Consequently, for $|q|<1$ and $a,b\in\mathbb{C}$,
the meromorphic function
$x^{a-b}\frac{\theta_q(q^a x)}{\theta_q(q^b x)}$
on $\widetilde{\mathbb{C}^{\ast}}$
is a $q$-constant with respect to $x$.

\subsection{Canonical Fundamental Solutions}
\label{Sect:CanSol}

In this subsection, we introduce
	the canonical fundamental solutions
	$Y^{[0]}(x)$ and $Y^{[\infty]}(x)$ of
	the linear $q$-difference system \eqref{qLinear}
	at $x=0$ and $x=\infty$, respectively.
We also determine the possible locations of their poles
	on $\mathbb{C}^{\ast}$.
This pole structure will play a key role
	in this paper.

For more general linear systems of the form
\begin{align*}
	T_xY(x)=(A_Nx^N+\cdots+A_0)\cdot Y(x),
\end{align*}
	the classical theory of linear $q$-difference systems
	provides canonical fundamental solutions at $x=0$ and $x=\infty$
	\cite{birkhoff1913generalized,
	adams1928linear,
	birkhoff1941note,
	ramis_local_2013,
	sauloy_basic_2024}.
For system \eqref{qLinear},
	this construction takes the following form.

\begin{pro}\label{Can:zero}
Suppose that $\Phi$ is $q$-nonresonant and that
a branch $[\Phi]^q$ has been fixed.
Then there exists a unique formal fundamental solution of
    system \eqref{qLinear}
	of the following form:
	\begin{subequations}
    \begin{align}
    	\label{Solatzero}
    	Y^{[0]}(x;U,\Phi) & =
    		H^{[0]}(x;U,\Phi)\cdot
    		x^{[\Phi]^q}, \\
    	\label{Series:Solatzero}
    	H^{[0]}(x;U,\Phi) & =
    		I+\sum_{\nu=1}^\infty H_\nu^{[0]}(U,\Phi)\cdot x^{\nu}.
    \end{align}
	\end{subequations}
    We call \eqref{Solatzero}
    the \textbf{canonical fundamental solution} of
    the linear $q$-difference system \eqref{qLinear} at $x=0$.
	Moreover,
	the formal power series $H^{[0]}(x)$ is convergent.
\end{pro}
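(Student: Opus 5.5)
Substituting the ansatz \eqref{Solatzero} into \eqref{qLinear}, written as $Y(qx)=\bigl(I+(q-1)\Phi+(q-1)Ux\bigr)Y(x)$, reduces the proposition to a recursion for the coefficients $H^{[0]}_\nu$, which we then bound. Set $\Lambda:=I+(q-1)\Phi$, so that $q^{[\Phi]^q}=\Lambda$. Since $(qx)^{[\Phi]^q}=x^{[\Phi]^q}q^{[\Phi]^q}=x^{[\Phi]^q}\Lambda$, the system is equivalent to
\begin{align*}
H^{[0]}(qx)\,\Lambda=\bigl(\Lambda+(q-1)Ux\bigr)\,H^{[0]}(x).
\end{align*}
Comparing coefficients of $x^\nu$ gives, for $\nu\geqslant1$ and with $H_0^{[0]}=I$,
\begin{align*}
q^\nu H_\nu^{[0]}\Lambda-\Lambda H_\nu^{[0]}=(q-1)\,U H_{\nu-1}^{[0]}.
\end{align*}
The $\nu=0$ equation $\Lambda=\Lambda$ holds automatically, which is why the leading term $I$ is consistent.

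For existence and uniqueness, consider the linear operator $\mathcal L_\nu:X\mapsto q^\nu X\Lambda-\Lambda X$. Its eigenvalues are $q^\nu\mu_j-\mu_i$, where $\mu_i=1+(q-1)\lambda_i$ runs over the spectrum of $\Lambda$. These are all nonzero for $\nu\geqslant1$ precisely by $q$-nonresonance: $\Lambda$ is invertible, and $\mu_i/\mu_j\notin q^{\mathbb Z\setminus\{0\}}$. Hence each $H_\nu^{[0]}=\mathcal L_\nu^{-1}\bigl((q-1)UH_{\nu-1}^{[0]}\bigr)$ is uniquely determined. This gives a unique formal solution of the stated form for the fixed branch $[\Phi]^q$. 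Changing the branch changes $x^{[\Phi]^q}$ but not $\Lambda$, so the series $H^{[0]}$ itself does not depend on the branch.

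Convergence is the only real step, and it requires estimating $\|\mathcal L_\nu^{-1}\|$ as $\nu\to\infty$. I would split into two cases.

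\emph{Case $|q|>1$.} Write $\mathcal L_\nu(X)=q^\nu\bigl(X\Lambda-q^{-\nu}\Lambda X\bigr)$. As $\nu\to\infty$, the bracket tends to the invertible map $X\mapsto X\Lambda$. Hence $\|\mathcal L_\nu^{-1}\|\leqslant C|q|^{-\nu}$ for large $\nu$, and uniformly for all $\nu\geqslant1$ after enlarging $C$, since each $\mathcal L_\nu$ is invertible.

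\emph{Case $|q|<1$.} Here $\mathcal L_\nu\to -\bigl(X\mapsto\Lambda X\bigr)$, which is invertible, so $\|\mathcal L_\nu^{-1}\|\leqslant C$ uniformly in $\nu$.

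In either case the recursion yields $\|H_\nu^{[0]}\|\leqslant \bigl(C|q-1|\,\|U\|\bigr)^\nu\rho_\nu$, where $\rho_\nu=|q|^{-\nu(\nu+1)/2}$ or $\rho_\nu=1$ respectively. The series therefore has a positive radius of convergence (infinite when $|q|>1$). The bounded-inverse argument is the delicate point. Its essential input is that the nonresonance condition makes every $\mathcal L_\nu$ invertible, so that the uniform bound is genuine for all $\nu$ and not just asymptotic. For $|q|<1$ the convergence can also be seen from the functional equation: $H^{[0]}(x)=\bigl(\Lambda+(q-1)Ux\bigr)^{-1}H^{[0]}(qx)\Lambda$ extends a germ at $0$ meromorphically, but the coefficient estimate above already suffices.
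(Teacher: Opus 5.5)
Your proposal is correct and follows the paper's proof. Your recurrence $q^\nu H_\nu^{[0]}\Lambda-\Lambda H_\nu^{[0]}=(q-1)UH_{\nu-1}^{[0]}$ is the paper's $H_\nu^{[0]}([\nu]_qI+q^\nu\Phi)-\Phi H_\nu^{[0]}=UH_{\nu-1}^{[0]}$ multiplied by $q-1$, and your case-by-case bounds on $\|\mathcal L_\nu^{-1}\|$ spell out the paper's estimate $\|H_\nu^{[0]}\|<CR^\nu|[\nu]!_q|^{-1}$, which the paper states without detail.
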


\begin{prf}
For simplicity,
	we denote $H_\nu^{[0]}(U,\Phi)$
	by $H_\nu^{[0]}$.
By substituting \eqref{Solatzero} and
\eqref{Series:Solatzero} into \eqref{qLinear}, we obtain
\begin{align*}
	H^{[0]}_{\nu} \cdot 
	([\nu]_q I+q^\nu\Phi) -
	\Phi \cdot H^{[0]}_{\nu} & = 
		U H^{[0]}_{\nu-1},\quad
		\nu\geqslant 1,\qquad
		H^{[0]}_0 = I.
\end{align*}
Hence, when $\Phi$ is $q$-nonresonant,
this recurrence uniquely determines the sequence
$(H^{[0]}_\nu)_{\nu\in\mathbb{N}}$,
and there exist constants $C, R > 0$ such that
\begin{align*}
	\lVert H^{[0]}_{\nu} \rVert < C R^\nu |[\nu]!_q|^{-1}.
\end{align*}
As a result,
$H^{[0]}(x)$ converges in a neighborhood of $x=0$.
\end{prf}

\noindent
Although $[\Phi]^q$ in general cannot be chosen canonically,
this ambiguity does not affect the power series $H^{[0]}(x)$.
It only changes the factor $x^{[\Phi]^q}$
by right multiplication by a $q$-constant.
Thus, the lack of a canonical choice for $[\Phi]^q$
has no essential effect on this construction.

The canonical fundamental solution of system \eqref{qLinear}
at $x=\infty$ provided by the classical Birkhoff--Guenther theory
does not have a well-behaved differential limit as $q\to1$
and is not defined in this form when $u_k=0$ for some $k$.
For these reasons, we introduce
the canonical fundamental solution of system \eqref{qLinear}
at $x=\infty$ of the following form, which will exhibit
more natural analytic behavior.
A similar construction has appeared in
\cite{kakei_q-analogue_2006}.

\begin{pro}\label{Can:inf}
Fix a block-diagonal matrix:
\begin{align*}
	\boldsymbol{\beta}
	=
	\mathrm{diag}(\beta_1,\ldots,\beta_n).
\end{align*}
Suppose that
$u_i\notin u_jq^{\mathbb{Z}\setminus\{0\}}$
for all $i\neq j$.
If $u_k=0$ for some $k$, assume in addition that
$[\beta_k]_q=\Phi_{kk}$ is $q$-nonresonant.
Then there exists a unique formal fundamental solution of
system \eqref{qLinear}
of the following form:
\begin{subequations}
\begin{align}
	\label{Solatinf}
	Y^{[\infty]}(x;U,\Phi;\boldsymbol{\beta})
	&=
	H^{[\infty]}(x;U,\Phi;\boldsymbol{\beta})
	\cdot
	x^{\boldsymbol{\beta}}
	e_q(q^{-\boldsymbol{\beta}}Ux),\\
	\label{Series:Solatinf}
	H^{[\infty]}(x;U,\Phi;\boldsymbol{\beta})
	&=
	I+\sum_{\nu=1}^{\infty}
	H_\nu^{[\infty]}(U,\Phi;\boldsymbol{\beta})
	\cdot x^{-\nu}.
\end{align}
\end{subequations}
We call \eqref{Solatinf}
the \textbf{canonical fundamental solution
with formal monodromy $\boldsymbol{\beta}$}
of the linear $q$-difference system \eqref{qLinear}
at $x=\infty$.
Moreover, for each $k$ such that $u_k\neq0$, the corresponding
formal power series
\[
\begin{cases}
H^{[\infty]}(x)_{\ast k}, & |q|<1,\\
\left(H^{[\infty]}(x)^{-1}\right)_{k\ast}, & |q|>1,
\end{cases}
\]
is convergent.
\end{pro}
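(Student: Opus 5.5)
We substitute the ansatz \eqref{Solatinf}--\eqref{Series:Solatinf} into \eqref{qLinear}, derive a recursion for the coefficients $H^{[\infty]}_\nu$, check that it is uniquely solvable block by block, and then estimate it column by column (rows of the inverse for $|q|>1$) to get convergence.

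\textbf{The recursion.} Write $E(x)=x^{\boldsymbol\beta}e_q(q^{-\boldsymbol\beta}Ux)$. Since $\boldsymbol\beta$ and $U$ are block diagonal with scalar blocks in $U$, they commute. Using \eqref{Eq:eq} we get
\[
E(qx)=\bigl(q^{\boldsymbol\beta}+(q-1)Ux\bigr)E(x).
\]
System \eqref{qLinear} reads $Y(qx)=\bigl(I+(q-1)\Phi+(q-1)Ux\bigr)Y(x)$. Hence the ansatz is a solution iff
\[
H(qx)\bigl(q^{\boldsymbol\beta}+(q-1)Ux\bigr)=\bigl(I+(q-1)\Phi+(q-1)Ux\bigr)H(x).
\]
The coefficient of $x^{1}$ cancels identically because $H_0=I$. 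Comparing coefficients of $x^{-\nu}$ for $\nu\geqslant0$ gives
\[
(q-1)\bigl(q^{-\nu-1}H_{\nu+1}U-UH_{\nu+1}\bigr)
=\bigl(I+(q-1)\Phi\bigr)H_\nu-q^{-\nu}H_\nu q^{\boldsymbol\beta}.
\]

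\textbf{Existence and uniqueness.} I will solve this by induction on $\nu$, block by block.
\begin{itemize}
\item Off-diagonal block $(i,j)$: the left side is $(q-1)(q^{-\nu-1}u_j-u_i)(H_{\nu+1})_{ij}$. The scalar factor is nonzero by the hypothesis $u_i\notin u_jq^{\mathbb Z\setminus\{0\}}$. (If one of $u_i,u_j$ vanishes, the factor is still nonzero since they are distinct.)
\item Diagonal block $(k,k)$ with $u_k\neq0$: the factor is $(q-1)(q^{-\nu-1}-1)u_k\neq0$, so $(H_{\nu+1})_{kk}$ is determined by $H_\nu$. At $\nu=0$ this imposes no condition on $\beta_k$, which is why $\beta_k$ is a free parameter.
\item Diagonal block $(k,k)$ with $u_k=0$: the left side vanishes. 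At $\nu=0$ the equation forces $q^{\beta_k}=I+(q-1)\Phi_{kk}$, i.e. $[\beta_k]_q=\Phi_{kk}$.
\item For $\nu\geqslant1$ and $u_k=0$, the $(k,k)$ block becomes a Sylvester equation
\[
\bigl(I+(q-1)\Phi_{kk}\bigr)(H_\nu)_{kk}-q^{-\nu}(H_\nu)_{kk}\bigl(I+(q-1)\Phi_{kk}\bigr)=-(q-1)\sum_{j\neq k}\Phi_{kj}(H_\nu)_{jk}.
\]
The right side is known from earlier steps. The equation is uniquely solvable by the $q$-nonresonance of $\Phi_{kk}$: the spectra of the two multiplying operators never differ by a factor $q^{\nu}$, $\nu\neq0$.
\end{itemize}
The order of determination is therefore: at stage $\nu$, first the blocks $(H_\nu)_{kk}$ with $u_k=0$, then all other blocks of $H_{\nu+1}$. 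This gives existence and uniqueness of the formal solution.

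\textbf{Convergence, $|q|<1$.} Restrict the recursion to block column $k$ with $u_k\neq0$. The column-$k$ entries of $H_{\nu+1}$ are expressed through column $k$ of $H_\nu$ only. The divisors satisfy
\[
|q^{-\nu-1}u_k-u_i|\sim|q|^{-\nu-1}|u_k|,
\]
while the right side is at most of size $C|q|^{-\nu}\|(H_\nu)_{\ast k}\|$. This yields $\|(H_{\nu+1})_{\ast k}\|\leqslant C'\|(H_\nu)_{\ast k}\|$. For indices $i$ with $u_i=0$, $(H_\nu)_{ik}$ is instead obtained from the $(i,k)$ equation at level $\nu$, with divisor $q^{-\nu}u_k$ of the same size. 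So the column grows at most geometrically and converges near $\infty$.

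\textbf{Convergence, $|q|>1$, and the main obstacle.} Here the divisors $q^{-\nu-1}u_k-u_i$ tend to $-u_i$, which may be $0$, so columns need not converge. Instead I consider $G=H^{-1}$, which satisfies the transposed relation
\[
\bigl(q^{\boldsymbol\beta}+(q-1)Ux\bigr)G(x)=G(qx)\bigl(I+(q-1)\Phi+(q-1)Ux\bigr).
\]
Block row $k$ of this relation closes on itself. Solving it for the next coefficient, row $k$ now carries the divisor $(u_k-q^{\nu+1}u_j)$ or its analogue. This divisor is of size $|q|^{\nu}$ when $u_j\neq0$ and is $u_k\neq0$ when $u_j=0$. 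The same geometric bound then gives convergence of $(H^{-1})_{k\ast}$.

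Checking that the inverse relation has precisely these dominant divisors, including the entries where $u_j=0$, is the only delicate step; I expect it to mirror the column case after exchanging $q\leftrightarrow q^{-1}$.
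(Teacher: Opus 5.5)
Your proposal is correct and is essentially the paper's proof. It uses the same coefficient recurrence (yours is the paper's recurrence multiplied by $-(q-1)$), the same block-by-block solvability with a Sylvester equation for the $(k,k)$-block when $u_k=0$, the same column-$k$ geometric bound for $|q|<1$, and the inverse series for $|q|>1$.

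The step you flagged is easier than you describe. With $G=H^{-1}=\sum_\nu G_\nu x^{-\nu}$, the row-$k$ recursion is $(q-1)(G_{\nu+1})_{k\ast}\bigl(u_kI-q^{-\nu-1}U\bigr)=q^{-\nu}(G_\nu)_{k\ast}\bigl(I+(q-1)\Phi\bigr)-q^{\beta_k}(G_\nu)_{k\ast}$. So for $|q|>1$ the divisors are $u_k-q^{-\nu-1}u_j$, not $u_k-q^{\nu+1}u_j$; they are nonzero and tend to $u_k\neq0$ whether or not $u_j=0$. Since the right-hand side is $O(\|(G_\nu)_{k\ast}\|)$, the geometric bound follows at once.
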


\begin{prf}
For simplicity,
we denote
$H_\nu^{[\infty]}(U,\Phi;\boldsymbol{\beta})$
by $H_\nu^{[\infty]}$.
By applying \eqref{Eq:eq} and
substituting \eqref{Solatinf} and
\eqref{Series:Solatinf} into \eqref{qLinear}, we obtain
\begin{align}\label{Eq:HinfRec}
	U H^{[\infty]}_{\nu+1}
	-
	q^{-(\nu+1)}H^{[\infty]}_{\nu+1}U
	&=
	H^{[\infty]}_{\nu}
	\cdot[-\nu I+\boldsymbol{\beta}]_q
	-
	\Phi\cdot H^{[\infty]}_{\nu},
	\quad
	\nu\geqslant0,
	\qquad
	H^{[\infty]}_0=I.
\end{align}
The condition
$u_i\notin u_jq^{\mathbb{Z}\setminus\{0\}}$
uniquely determines every block of
$H^{[\infty]}_{\nu+1}$,
except possibly the $(k,k)$-block when $u_k=0$.
In this case, the condition
$[\beta_k]_q=\Phi_{kk}$
makes the $(k,k)$-block of the recurrence for $\nu=0$
identically satisfied, while the $q$-nonresonance of $\Phi_{kk}$
uniquely determines $(H^{[\infty]}_{\nu})_{kk}$
from the $(k,k)$-block of the recurrence for every $\nu\geqslant1$.
Hence, under the assumptions of the proposition,
this recurrence uniquely determines the sequence
$(H^{[\infty]}_\nu)_{\nu\in\mathbb{N}}$.

Taking the $k$-th block column
in \eqref{Eq:HinfRec} gives
\begin{align*}
(H_{\nu+1}^{[\infty]})_{\ast k}
&=
\frac{1}{U-q^{-(\nu+1)}u_kI}
\left(
(H_\nu^{[\infty]})_{\ast k}
[-\nu I+\beta_k]_q
-
\Phi
(H_\nu^{[\infty]})_{\ast k}
\right).
\end{align*}
When $|q|<1$ and $u_k\neq0$,
	the linear operators on the right-hand side are uniformly
	bounded with respect to $\nu$.
Therefore, there exist constants $C,R>0$ such that
\begin{align*}
\left\|
\left(H_\nu^{[\infty]}\right)_{\ast k}
\right\|
&\leqslant CR^\nu.
\end{align*}
The case $|q|>1$ is treated similarly by applying the preceding
argument to the inverse formal power series.
\end{prf}

On the other hand, the canonical fundamental solution considered
in the classical Birkhoff--Guenther theory
has a simpler pole structure, and
$Y^{[\infty]}(x;U,\Phi;\boldsymbol{\beta})$
is obtained from it by right multiplication by an explicit
$q$-constant.
It is therefore useful as an auxiliary function
in studying the basic properties of
$Y^{[\infty]}(x;U,\Phi;\boldsymbol{\beta})$.
For this purpose, we introduce the following lemma.

\begin{lem}\label{Lem:BG}
Suppose that
	$u_i\notin u_jq^{\mathbb{Z}\setminus\{0\}}$
	for all $i\neq j$,
	and $u_1,\ldots,u_n$ are all nonzero.
Then there exists a unique formal fundamental solution of
	system \eqref{qLinear}
	of the following form:
\begin{subequations}
\begin{align}
	\label{BGatinf}
	Y^{[\infty]}_{\mathrm{BG}}(x;U,\Phi)
	&=
	H^{[\infty]}_{\mathrm{BG}}(x;U,\Phi)
	\cdot
	x^{\frac{1}{2}(\log_qx-1)}
	x^{\log_q((q-1)U)},\\
	H^{[\infty]}_{\mathrm{BG}}(x;U,\Phi)
	&=
	I+\sum_{\nu=1}^{\infty}
	H^{[\infty]}_{\mathrm{BG},\nu}(U,\Phi)
	\cdot x^{-\nu}.
\end{align}
\end{subequations}
We call \eqref{BGatinf}
the \textbf{Birkhoff--Guenther canonical fundamental solution}
of the linear $q$-difference system \eqref{qLinear}
at $x=\infty$.
Moreover,
the formal power series
$H^{[\infty]}_{\mathrm{BG}}(x;U,\Phi)$
is convergent, and
\begin{subequations}
\begin{align}
	\label{YBGtoYbeta}
	Y^{[\infty]}(x;U,\Phi;\boldsymbol{\beta})
	&=
	Y^{[\infty]}_{\mathrm{BG}}(x;U,\Phi)
	\cdot
	\frac{
		x^{\boldsymbol{\beta}}
		e_q(q^{-\boldsymbol{\beta}}Ux)
		e_q\left(
		\frac{q}{(q-1)^2}
		U^{-1}q^{\boldsymbol{\beta}}x^{-1}
		\right)
	}{
		x^{\log_q((q-1)U)}
		x^{\frac{1}{2}(\log_qx-1)}
	},\\
	\label{BGtobeta}
	H^{[\infty]}(x;U,\Phi;\boldsymbol{\beta})
	&=
	H^{[\infty]}_{\mathrm{BG}}(x;U,\Phi)
	\cdot
	e_q\left(
		\frac{q}{(q-1)^2}
		U^{-1}q^{\boldsymbol{\beta}}x^{-1}
	\right).
\end{align}
\end{subequations}
\end{lem}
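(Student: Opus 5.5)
The plan has three steps: derive a recurrence for the Birkhoff--Guenther coefficients, prove it converges, and then compare with the solution of Proposition~\ref{Can:inf} through uniqueness.

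\textbf{Step 1: the recurrence.} Put $x^{\frac12(\log_qx-1)}x^{\log_q((q-1)U)}$ into \eqref{qLinear}, written as $Y(qx)=(I+(q-1)x U+(q-1)\Phi)Y(x)$. Set $\theta(x)=x^{\frac12(\log_q x-1)}$. Then $\theta(qx)=x\,\theta(x)$ and $(qx)^{\log_q((q-1)U)}=(q-1)U\,x^{\log_q((q-1)U)}$. Since everything is diagonal, this factor commutes with the exponential part, and the equation for $H=H^{[\infty]}_{\mathrm{BG}}$ becomes
\[
(q-1)x\,H(qx)\,U=\bigl((q-1)xU+I+(q-1)\Phi\bigr)H(x).
\]
Comparing coefficients of $x^{-\nu}$ gives
\[
UH_{\nu+1}-q^{-(\nu+1)}H_{\nu+1}U=-\tfrac{1}{q-1}\bigl(I+(q-1)\Phi\bigr)H_\nu,\qquad H_0=I.
\]
The $(i,j)$ block of the left side is $(u_i-q^{-\nu-1}u_j)(H_{\nu+1})_{ij}$. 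Because all $u_k\neq0$ and $u_i\notin u_jq^{\mathbb Z\setminus\{0\}}$, these factors are nonzero, so every block of $H_{\nu+1}$ is uniquely determined. This proves existence and uniqueness of the formal solution.

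\textbf{Step 2: convergence.} Here the Birkhoff--Guenther normalization is what makes the series converge. For $|q|<1$ the operator $X\mapsto UX-q^{-\nu-1}XU$ has inverse of norm $O(|q|^{\nu})$, because $|u_i-q^{-\nu-1}u_j|\sim|q|^{-\nu-1}|u_j|$ for large $\nu$. Hence $\|H_\nu\|\le C R^\nu|q|^{\nu(\nu+1)/2}$, and the series converges for all $x\ne0$. For $|q|>1$, I would instead write the recurrence with $U$ factored on the left: the inverse of $U-q^{-\nu-1}(\cdot)U$ is uniformly bounded, but the right-hand side has no growing factor, so a crude bound gives only $CR^\nu$. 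That still yields convergence for $|x|$ large, which suffices. Alternatively one can handle this case by passing to the inverse, or to the equation for $q^{-1}$, where the roles reverse.

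\textbf{Step 3: the comparison formulas \eqref{YBGtoYbeta} and \eqref{BGtobeta}.} Define $\widetilde Y$ as the right-hand side of \eqref{YBGtoYbeta}. The right factor $D(x)$ is diagonal, so it commutes with the diagonal exponential parts. It is also a $q$-constant. To see this, use \eqref{Eq:eq} and $\theta(qx)=x\theta(x)$, and write $a=q^{-\beta}U$, $c=\frac{q}{(q-1)^2}U^{-1}q^{\beta}$. The multiplier of the numerator under $x\mapsto qx$ is
\[
q^{\beta}\,(1+(q-1)ax)\bigl(1+(q-1)q^{-1}c x^{-1}\bigr)^{-1}.
\]
Expanding the last factor as $1+\frac{1}{(q-1)}U^{-1}q^{\beta}x^{-1}$ and simplifying, the multiplier equals $(q-1)Ux$, which is exactly the multiplier of the denominator. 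So $\widetilde Y$ solves \eqref{qLinear}, being a fundamental solution times a $q$-constant.

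Collecting factors, $\widetilde Y=H_{\mathrm{BG}}(x)\,e_q(cx^{-1})\,x^{\beta}e_q(q^{-\beta}Ux)$. Here $H_{\mathrm{BG}}(x)e_q(cx^{-1})$ is a power series in $x^{-1}$ with constant term $I$. By the uniqueness part of Proposition~\ref{Can:inf}, which applies because all $u_k\neq0$, $\widetilde Y=Y^{[\infty]}(x;U,\Phi;\boldsymbol\beta)$, and \eqref{BGtobeta} follows.

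I expect the main obstacle to be the $q$-constant verification in Step 3: getting the constant $\frac{q}{(q-1)^2}$ exactly right and handling the branches of $x^{\beta}$ and $x^{\log_q((q-1)U)}$ on $\widetilde{\mathbb C^*}$. The $|q|>1$ convergence bound in Step 2 also needs some care.
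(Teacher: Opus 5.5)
Your proof is correct. The paper states Lemma~\ref{Lem:BG} without a written proof, treating it as classical Birkhoff--Guenther theory, but the argument it implicitly relies on is yours: the coefficient recurrence with the norm bounds as in Propositions~\ref{Can:zero} and~\ref{Can:inf}; the shift relation $H^{[\infty]}_{\mathrm{BG}}(qx)=\bigl((q-1)Ux+I+(q-1)\Phi\bigr)H^{[\infty]}_{\mathrm{BG}}(x)\bigl((q-1)Ux\bigr)^{-1}$, which reappears in \eqref{ShiftH}; and uniqueness of the canonical solution at $x=\infty$, which yields \eqref{BGtobeta}.

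One small slip: when $U$ has multiplicities, $\boldsymbol{\beta}$ and your factor $D(x)$ are only block diagonal, not diagonal. Since $U$ is scalar on each block, all the factors still commute, so your $q$-constant computation is unchanged.
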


\noindent
Given an initial value at a fixed point $x_0$, the linear
$q$-difference system \eqref{qLinear} determines the values
of the corresponding solution only along the $q$-spiral
$x_0q^{\mathbb{Z}}$.
By contrast, the canonical fundamental solutions
given in \eqref{Solatzero} and \eqref{Solatinf}
are meromorphic functions of $x$ near $x=0$ and $x=\infty$,
respectively, on the logarithmic Riemann surface.
This difference can be understood in the sense that,
by prescribing the asymptotic behavior at the expansion points,
we uniquely extend the solutions on $q$-spirals
that satisfy the required conditions
to meromorphic solutions in neighborhoods of the expansion points.

The uniqueness of the canonical fundamental solutions in
Propositions~\ref{Can:zero} and~\ref{Can:inf}
directly yields the following corollary.

\begin{cor}\label{Cor:Basic}
For any $r\in\mathbb{C}^\ast$ and
$D\in\mathrm{GL}_m(\mathbb{C})$ satisfying $DU=UD$,
we have
\begin{align*}
	D^{-1}H^{[0]}(rx;U,\Phi)D
	&=
	H^{[0]}(x;rU,D^{-1}\Phi D),\\
	D^{-1}H^{[\infty]}(rx;U,\Phi;\boldsymbol{\beta})D
	&=
	H^{[\infty]}
	\bigl(x;rU,D^{-1}\Phi D;
	D^{-1}\boldsymbol{\beta}D\bigr).
\end{align*}
Writing $M^{-\top}:=(M^{-1})^\top$, we also have
\begin{subequations}\label{Eq:qInverseH}
\begin{align}
	H^{[0]}(x;U,\Phi;q)^{-\top}
	&=
	H^{[0]}(x;-U,-q\Phi^\top;q^{-1}),\\
	H^{[\infty]}(x;U,\Phi;\boldsymbol{\beta};q)^{-\top}
	&=
	H^{[\infty]}
	\bigl(x;-U,-q\Phi^\top;
	-\boldsymbol{\beta}^\top;q^{-1}\bigr).
\end{align}
\end{subequations}
\end{cor}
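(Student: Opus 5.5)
The plan is to prove each identity by exhibiting the transformed object as a formal (or convergent) solution of the appropriate system with the correct normalization. Uniqueness in Proposition~\ref{Can:zero} and Proposition~\ref{Can:inf} then forces equality. Equivalently, one can check that both sides satisfy the same recurrence with the same initial term $H_0=I$.

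\textbf{Scaling and conjugation.} Put $\tilde Y(x):=D^{-1}Y^{[0]}(rx;U,\Phi)D$. Replacing $x$ by $rx$ in \eqref{qLinear} gives
\[
\frac{\mathrm{d}_q}{\mathrm{d}_q x}Y(rx)=\Bigl(rU+\frac{\Phi}{x}\Bigr)Y(rx).
\]
Conjugating by $D$ and using $D^{-1}UD=U$ shows that $\tilde Y$ solves the system with data $(rU,D^{-1}\Phi D)$. Its form is
\[
\tilde Y=D^{-1}H^{[0]}(rx)D\cdot D^{-1}r^{[\Phi]^q}D\cdot x^{D^{-1}[\Phi]^qD}.
\]
The middle factor is a constant, hence a $q$-constant, so $D^{-1}H^{[0]}(rx)D$ is a power series with constant term $I$ that satisfies the recurrence of Proposition~\ref{Can:zero} for the new data. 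Note that $D^{-1}\Phi D$ is still $q$-nonresonant. Uniqueness of that recurrence gives the first identity.

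At $\infty$ the argument is the same. We use $(rx)^{\boldsymbol\beta}=r^{\boldsymbol\beta}x^{\boldsymbol\beta}$ and $e_q(q^{-\boldsymbol\beta}U rx)=e_q(q^{-\boldsymbol\beta}(rU)x)$. Conjugation by $D$ preserves the block structure, since $D$ commutes with $U$ and is therefore block diagonal when the $u_i$ are distinct. It also preserves the hypotheses $u_i\notin u_jq^{\mathbb Z\setminus\{0\}}$ (after scaling by $r$) and $[\beta_k]_q=\Phi_{kk}$. So it is cleanest to verify directly that $D^{-1}H^{[\infty]}_\nu D\, r^{-\nu}$ satisfies \eqref{Eq:HinfRec} with data $(rU,D^{-1}\Phi D,D^{-1}\boldsymbol\beta D)$:
\begin{itemize}
\item multiply \eqref{Eq:HinfRec} on both sides by $D^{\mp1}$;
\item multiply it by $r^{-\nu}$;
\item use that $D$ commutes with $U$ and $\boldsymbol\beta$ blockwise.
\end{itemize}

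\textbf{Inverse-transpose identities.} Let $G(x):=H(x)^{-\top}$. I will work directly with the recurrences, since this avoids any analytic issues. The equation for $H^{[0]}$ reads
\[
H(qx)\bigl(I+(q-1)\Phi\bigr)=\bigl(I+(q-1)(Ux+\Phi)\bigr)H(x).
\]
Inverting, transposing and substituting $x\mapsto q^{-1}x$ gives
\[
G(q^{-1}x)\bigl(I+(q-1)\Phi\bigr)^{-\top}=\bigl(I+(q-1)(Uq^{-1}x+\Phi)\bigr)^{-\top}G(x).
\]
Now write $p=q^{-1}$ and note $p-1=-(q-1)/q$. The system with data $(-U,-q\Phi^\top)$ and parameter $p$ has coefficient
\[
I+(p-1)\bigl(-Ux-q\Phi^\top\bigr)=I+(q-1)\Phi^\top+\tfrac{q-1}{q}Ux .
\]
The key step is to check that $G$ satisfies
\[
G(px)\bigl(I+(p-1)(-q\Phi^\top)\bigr)=\bigl(I+(p-1)(-Ux-q\Phi^\top)\bigr)G(x).
\]
This is equivalent to the previous relation after inverting both sides and using $I+(p-1)(-q\Phi^\top)=(I+(q-1)\Phi)^\top$. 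The point is an algebraic identity that lets the transposed coefficient move to the correct side. $G$ is a power series with constant term $I$, and the nonresonance condition transfers because $(I+(q-1)\Phi)^\top$ equals the new $I+(p-1)\Phi'$. Uniqueness then yields the identity for $H^{[0]}$.

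For $H^{[\infty]}$ I do the same with the exponential factor. Using $e_{q^{-1}}(x)=e_q(-x)^{-1}$ from \eqref{Eq:eq} and $x^{\boldsymbol\beta}$ with $\boldsymbol\beta\mapsto-\boldsymbol\beta^\top$, one gets
\[
\bigl(x^{\boldsymbol\beta}e_q(q^{-\boldsymbol\beta}Ux)\bigr)^{-\top}=x^{-\boldsymbol\beta^\top}e_{q^{-1}}\bigl(p^{\boldsymbol\beta^\top}(-U)x\bigr),
\]
which is exactly the required formal-monodromy factor for the data $(-U,-q\Phi^\top,-\boldsymbol\beta^\top;q^{-1})$. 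The hypotheses are preserved: $[-\beta_k^\top]_{p}=-q\Phi_{kk}^\top$ follows from $[\beta]_q=\Phi_{kk}$, via $[-\beta]_{q^{-1}}=q[\beta]_q$ up to sign.

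The main obstacle is the bookkeeping of the $q\leftrightarrow q^{-1}$ shift in the inverse-transpose identity, namely checking that inverting and transposing the shifted equation gives exactly the data $-q\Phi^\top$. For $H^{[\infty]}$ I would handle this purely at the level of the recurrence \eqref{Eq:HinfRec}, using the inverse series formally, so that no convergence is needed.
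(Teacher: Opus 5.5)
Your route is the paper's: the corollary is read off from the uniqueness in Propositions~\ref{Can:zero} and~\ref{Can:inf}. Your checks (the transformed object solves the transformed system, has normalization $I$, and the hypotheses transfer, e.g.\ $[-\beta_k^\top]_{q^{-1}}=-q[\beta_k]_q^\top=-q\Phi_{kk}^\top$) are exactly what is needed.

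There is one error in the inverse--transpose step at $x=0$: your displayed intermediate relation is wrong. Substituting $x\mapsto q^{-1}x$ into $G(qx)(I+(q-1)\Phi)^{-\top}=\bigl(I+(q-1)(Ux+\Phi)\bigr)^{-\top}G(x)$ puts $G(x)$ on the left and $G(q^{-1}x)$ on the right, the opposite of what you wrote. From your version, the target relation does not follow by ``inverting both sides''.

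The clean way avoids inverting coefficients altogether. Rewrite $H(qx)(I+(q-1)\Phi)=\bigl(I+(q-1)(Ux+\Phi)\bigr)H(x)$ as $(I+(q-1)\Phi)H(x)^{-1}=H(qx)^{-1}\bigl(I+(q-1)(Ux+\Phi)\bigr)$, transpose, and shift. This gives
\[
G(q^{-1}x)\bigl(I+(q-1)\Phi^\top\bigr)=\bigl(I+(q-1)(q^{-1}Ux+\Phi^\top)\bigr)G(x),
\]
which is precisely your target equation for the data $(-U,-q\Phi^\top;q^{-1})$.

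The same manipulation handles $H^{[\infty]}$ at the formal level, as you intend. Apply it to $H(qx)\bigl(q^{\boldsymbol\beta}+(q-1)Ux\bigr)=\bigl(I+(q-1)(Ux+\Phi)\bigr)H(x)$, which is equivalent to \eqref{Eq:HinfRec}. The new right factor is $q^{\boldsymbol\beta^\top}+(q^{-1}-1)(-U)x$.

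A minor point on the conjugation step: what you actually use is $D^{-1}[-\nu I+\boldsymbol\beta]_qD=[-\nu I+D^{-1}\boldsymbol\beta D]_q$, not that $D$ commutes with $\boldsymbol\beta$.
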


\noindent
The identities in \eqref{Eq:qInverseH} show that
it suffices to establish the theory for $|q|<1$
and determine the behavior of
$H^{[0]}(x)^{-1}$ and $H^{[\infty]}(x)^{-1}$,
since the corresponding theory for $|q|>1$
then follows automatically.

As we shall see, the formal power series
$H^{[0]}(x)$ and, when $u_1,\ldots,u_n$ are all nonzero,
$H^{[\infty]}(x)$
not only converge in neighborhoods of their respective expansion points
but also admit single-valued meromorphic continuations to
$\mathbb{C}^{\ast}$.

\begin{pro}\label{Pro:SolY}
Suppose that the assumptions of
Propositions~\ref{Can:zero} and~\ref{Can:inf} hold,
and that $u_1,\ldots,u_n$ are all nonzero.
The power series
$H^{[0]}(x;U,\Phi)$ and
$H^{[\infty]}(x;U,\Phi;\boldsymbol{\beta})$
defined in \eqref{Series:Solatzero} and
\eqref{Series:Solatinf}, respectively,
admit single-valued meromorphic continuations to
$\mathbb{C}^{\ast}$.
Their determinants are given by
\begin{subequations}\label{Det:H}
\begin{align}
\label{Det:zero}
\det H^{[0]}(x;U,\Phi)
&=
\det e_q\bigl((I+(q-1)\Phi)^{-1}Ux\bigr),\\
\label{Det:inf}
\det H^{[\infty]}(x;U,\Phi;\boldsymbol{\beta})
&=
\frac{
	\det\left(
		\dfrac{q}{1-q}
		U^{-1}(I+(q-1)\Phi)x^{-1};q
	\right)_\infty
}{
	\det\left(
		\dfrac{q}{1-q}
		U^{-1}q^{\boldsymbol{\beta}}x^{-1};q
	\right)_\infty
},\\
\label{Det:infBG}
\det H^{[\infty]}_{\mathrm{BG}}(x;U,\Phi) &=
	\det\left(
		\dfrac{q}{1-q}
		U^{-1}(I+(q-1)\Phi)x^{-1};q
	\right)_\infty.
\end{align}
\end{subequations}
The poles of these functions and their inverses
can occur only at the following points
\begin{subequations}\label{Pole:All}
\begin{align}
\label{Pole:H0}
\operatorname{Pole}\bigl(H^{[0]}(x)\bigr)
	&\subseteq
	\begin{cases}
		\dfrac{1}{1-q}
		\mathrm{Spec}\bigl(U^{-1}(I+(q-1)\Phi)\bigr)
		q^{-\mathbb{N}},
		& |q|<1,\\
		\varnothing,
		& |q|>1,
	\end{cases}\\
\label{Pole:H0inv}
\operatorname{Pole}\bigl(H^{[0]}(x)^{-1}\bigr)
	&\subseteq
	\begin{cases}
		\varnothing,
		& |q|<1,\\
		\dfrac{1}{1-q}
		\mathrm{Spec}\bigl(U^{-1}(I+(q-1)\Phi)\bigr)
		q^{\mathbb{N}+1},
		& |q|>1,
	\end{cases}\\
\label{Pole:Hinf}
\operatorname{Pole}
	\bigl(H^{[\infty]}(x)_{\ast k}\bigr)
	&\subseteq
	\begin{cases}
		\dfrac{1}{1-q}
		\mathrm{Spec}\bigl(u_k^{-1}q^{\beta_k}\bigr)
		q^{\mathbb{N}+1},
		& |q|<1,\\
		\dfrac{1}{1-q}
		\mathrm{Spec}\bigl(U^{-1}(I+(q-1)\Phi)\bigr)
		q^{-\mathbb{N}},
		& |q|>1,
	\end{cases}
	\qquad
	1\leqslant k\leqslant n,\\
\label{Pole:Hinfinv}
\operatorname{Pole}
	\bigl((H^{[\infty]}(x)^{-1})_{k\ast}\bigr)
	&\subseteq
	\begin{cases}
		\dfrac{1}{1-q}
		\mathrm{Spec}\bigl(U^{-1}(I+(q-1)\Phi)\bigr)
		q^{\mathbb{N}+1},
		& |q|<1,\\
		\dfrac{1}{1-q}
		\mathrm{Spec}\bigl(u_k^{-1}q^{\beta_k}\bigr)
		q^{-\mathbb{N}},
		& |q|>1,
	\end{cases}
	\qquad
	1\leqslant k\leqslant n,\\
\label{Pole:HBG}
\operatorname{Pole}
	\bigl(H^{[\infty]}_{\mathrm{BG}}(x)\bigr)
	&\subseteq
	\begin{cases}
		\varnothing,
		& |q|<1,\\
		\dfrac{1}{1-q}
		\mathrm{Spec}\bigl(U^{-1}(I+(q-1)\Phi)\bigr)
		q^{-\mathbb{N}},
		& |q|>1,
	\end{cases},\\
\label{Pole:HBGinv}
\operatorname{Pole}
	\bigl(H^{[\infty]}_{\mathrm{BG}}(x)^{-1}\bigr)
	&\subseteq
	\begin{cases}
		\dfrac{1}{1-q}
		\mathrm{Spec}\bigl(U^{-1}(I+(q-1)\Phi)\bigr)
		q^{\mathbb{N}+1},
		& |q|<1,\\
		\varnothing,
		& |q|>1,
	\end{cases}.
\end{align}
\end{subequations}
Here $\ast k$ and $k\ast$ denote the $k$-th block column
and the $k$-th block row, respectively.
Moreover, for generic data
$(U,\Phi;\boldsymbol{\beta})$, all the inclusions in
\eqref{Pole:All} become equalities.
\end{pro}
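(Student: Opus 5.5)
The plan is to treat $|q|<1$ in full and obtain $|q|>1$ from the inversion identities \eqref{Eq:qInverseH} of Corollary~\ref{Cor:Basic}. These exchange $q\leftrightarrow q^{-1}$, $U\mapsto -U$, $\Phi\mapsto -q\Phi^\top$, and turn statements about $H$ into statements about $H^{-\top}$. A direct check gives
\[
-U^{-1}\bigl(I+(q^{-1}-1)(-q\Phi^\top)\bigr)=-U^{-1}q\bigl(I+(q-1)\Phi^\top\bigr)^{\top\!\top}\cdot\ldots,
\]
that is, the spectra are carried to $\frac{1}{1-q^{-1}}\cdot(\text{spectrum})\cdot q^{\pm}$ by the appropriate shifts. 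So the $|q|>1$ lines of \eqref{Pole:All} are exactly the transposes of the $|q|<1$ lines, and the determinant formulas transform consistently.

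For $|q|<1$, the key tool is the $q$-difference equation satisfied by $H$ itself. For $H^{[0]}$, the equation $Y(qx)=(I+(q-1)x(U+\Phi/x))Y(x)$ becomes
\[
H^{[0]}(qx)\,(I+(q-1)\Phi)=\bigl(I+(q-1)\Phi+(q-1)Ux\bigr)H^{[0]}(x).
\]
Rewriting this as
\[
H^{[0]}(x)=\bigl(I+(q-1)\Phi+(q-1)Ux\bigr)^{-1}H^{[0]}(qx)\,(I+(q-1)\Phi)
\]
and iterating from the disc of convergence outward gives a meromorphic continuation to all of $\mathbb C^\ast$. It is single-valued because $H^{[0]}$ is a power series. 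The only new poles come from zeros of $\det\bigl(I+(q-1)\Phi+(q-1)Uxq^{j}\bigr)$, $j\ge0$, namely $x\in\frac{1}{1-q}\mathrm{Spec}(U^{-1}(I+(q-1)\Phi))q^{-\mathbb N}$. This gives \eqref{Pole:H0}. Running the same iteration for the inverse produces only polynomial factors $\bigl(I+(q-1)\Phi+(q-1)Uxq^j\bigr)$, so $H^{[0]}(x)^{-1}$ is entire on $\mathbb C^\ast$.

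Taking determinants gives a scalar first-order equation for $\det H^{[0]}$, namely $d(qx)=\det\bigl(I+(q-1)(I+(q-1)\Phi)^{-1}Ux\bigr)\,d(x)$. Its unique solution with $d(0)=1$ is $\det e_q((I+(q-1)\Phi)^{-1}Ux)$ by \eqref{Eq:eq}, which proves \eqref{Det:zero}.

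At infinity I would first handle $H^{[\infty]}_{\mathrm{BG}}$, a series in $x^{-1}$. Its equation is
\[
H_{\mathrm{BG}}(qx)\,(q-1)Ux=\bigl(I+(q-1)\Phi+(q-1)Ux\bigr)H_{\mathrm{BG}}(x),
\]
with $x^{\frac12(\log_q x-1)}$ producing the factor $x$. Solving for $H_{\mathrm{BG}}(x)$ in terms of $H_{\mathrm{BG}}(qx)$ requires no inversion except of $I+\frac{1}{(q-1)x}U^{-1}(I+(q-1)\Phi)$ when going in the other direction. Iterating $H(x)=\bigl(I+(q-1)\Phi+(q-1)Ux\bigr)^{-1}H(qx)(q-1)Ux$ inward is not the right direction. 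Instead I would continue toward $0$ via
\[
H(x)=\bigl(I+\tfrac{1}{(q-1)x}U^{-1}(I+(q-1)\Phi)\bigr)^{-1}\cdot U^{-1}\ldots,
\]
choosing the direction $x\mapsto q^{-1}x$ so that the series region near $\infty$ propagates to all of $\mathbb C^\ast$.

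Convergence of $H_{\mathrm{BG}}$ follows from the same recurrence estimate used in Proposition~\ref{Can:inf}, since here $q^{-\nu}$ appears favourably for all columns. The bookkeeping of which factor is inverted yields \eqref{Pole:HBG}, \eqref{Pole:HBGinv}, and the determinant \eqref{Det:infBG}. The determinant follows from the scalar equation $d(qx)=\det\bigl(I+\frac{1}{(q-1)x}U^{-1}(I+(q-1)\Phi)\bigr)^{-1}\ldots$, whose solution normalized at $\infty$ is the stated $q$-Pochhammer product.

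Next I would establish Lemma~\ref{Lem:BG}'s relation \eqref{BGtobeta} by uniqueness. The displayed quotient is a $q$-constant, diagonal and block-wise, so the product is again a formal solution of the prescribed shape. Then \eqref{BGtobeta} transfers everything to $H^{[\infty]}$. By \eqref{Eq:eqisqP}, $e_q\bigl(\frac{q}{(q-1)^2}U^{-1}q^{\boldsymbol\beta}x^{-1}\bigr)=\bigl(\frac{q}{1-q}U^{-1}q^{\boldsymbol\beta}x^{-1};q\bigr)_\infty^{-1}$. This factor acts diagonally on block column $k$ and has poles exactly at $\frac{1}{1-q}\mathrm{Spec}(u_k^{-1}q^{\beta_k})q^{\mathbb N+1}$, giving \eqref{Pole:Hinf}. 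Its inverse is entire, so \eqref{Pole:Hinfinv} is inherited from $H_{\mathrm{BG}}^{-1}$. Taking the determinant gives \eqref{Det:inf}.

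Genericity of equality is the step I expect to be most delicate, since cancellations must be excluded. I would argue through the determinant formulas: for generic data the zeros of the numerator and denominator in \eqref{Det:H} are distinct points, all simple, and lie on distinct spirals. A zero of $\det H^{-1}$ then forces a genuine pole of some entry. For the columnwise statements I would check that the residue of the scalar factor multiplies a column of $H_{\mathrm{BG}}$ that is generically nonvanishing at that point, which is an open dense condition.
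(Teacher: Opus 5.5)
Your route is the paper's. Set $A_0:=I+(q-1)\Phi$ and $A(x):=(q-1)Ux+A_0$. Use the shift identities $H^{[0]}(qx)A_0=A(x)H^{[0]}(x)$ and $H^{[\infty]}_{\mathrm{BG}}(qx)\,(q-1)Ux=A(x)H^{[\infty]}_{\mathrm{BG}}(x)$, iterate them away from where the series converge, solve the scalar equations for the determinants, and transfer to $H^{[\infty]}$ through \eqref{BGtobeta} and \eqref{Eq:eqisqP}. Two of your choices are harmless variations: rederiving \eqref{BGtobeta} by uniqueness (the paper quotes Lemma~\ref{Lem:BG}), and getting $|q|>1$ from \eqref{Eq:qInverseH} instead of iterating in the opposite direction.

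The $H^{[\infty]}_{\mathrm{BG}}$ step, however, is wrong as written. For $|q|<1$ the continuation from $\infty$ toward $0$ is
\[
H^{[\infty]}_{\mathrm{BG}}(x)=A(q^{-1}x)\,H^{[\infty]}_{\mathrm{BG}}(q^{-1}x)\,\bigl((q-1)q^{-1}Ux\bigr)^{-1},
\]
which inverts only the diagonal factor. That is exactly why \eqref{Pole:HBG} is empty and \eqref{Pole:HBGinv} lies on $\frac{1}{1-q}\mathrm{Spec}(U^{-1}A_0)q^{\mathbb{N}+1}$. Your formula with $\bigl(I+\frac{1}{(q-1)x}U^{-1}A_0\bigr)^{-1}$ expresses $H(x)$ through $H(qx)$. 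That is the outward iteration you had just rejected, and it is correct only for $|q|>1$. Likewise the determinant satisfies $d(qx)=\det\bigl(I+\frac{1}{(q-1)x}U^{-1}A_0\bigr)\,d(x)$ with no inverse; with the inverse you would get the reciprocal of \eqref{Det:infBG}.

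On the duality: $I+(q^{-1}-1)(-q\Phi^\top)=A_0^\top$ exactly, with no factor $q$. The one-step shift of the spirals comes from $\frac{1}{1-q^{-1}}=-\frac{q}{1-q}$, and with it the $|q|<1$ lines do map onto the $|q|>1$ lines for $H^{[0]}$ and $H^{[\infty]}$. But \eqref{Eq:qInverseH} says nothing about $H^{[\infty]}_{\mathrm{BG}}$. For \eqref{Pole:HBG}, \eqref{Pole:HBGinv} and \eqref{Det:infBG} with $|q|>1$ you must do one of two things:
\begin{itemize}
\item verify $H^{[\infty]}_{\mathrm{BG}}(x;U,\Phi;q)^{-\top}=H^{[\infty]}_{\mathrm{BG}}(x;-U,-q\Phi^\top;q^{-1})$. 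This holds by uniqueness: since $(q^{-1}-1)(-U)=q^{-1}(q-1)U$, the dual prefactor $x^{\frac12(\log_{q^{-1}}x-1)}x^{\log_{q^{-1}}((q^{-1}-1)(-U))}$ is the inverse of the original one, for matching branches.
\item iterate $H^{[\infty]}_{\mathrm{BG}}(x)=A(x)^{-1}H^{[\infty]}_{\mathrm{BG}}(qx)\,(q-1)Ux$ directly, as the paper does.
\end{itemize}

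Finally, the paper's proof does not address the generic-equality clause at all. Your determinant argument settles it for \eqref{Pole:H0} and \eqref{Pole:HBGinv}, since no cancellation is possible there. Your residue argument settles it for \eqref{Pole:Hinf} when $|q|<1$, using invertibility of $H^{[\infty]}_{\mathrm{BG}}$ at the point. The gap is the row-wise inclusion \eqref{Pole:Hinfinv}, whose right-hand side is the full set $\frac{1}{1-q}\mathrm{Spec}(U^{-1}A_0)q^{\mathbb{N}+1}$ for every $k$. There the determinant only shows that some row is singular; you still need that, generically, every row of the residue of $\bigl(H^{[\infty]}_{\mathrm{BG}}\bigr)^{-1}$ is nonzero.
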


\begin{prf}
Set
\[
	A_0:=I+(q-1)\Phi,
	\qquad
	A(x):=(q-1)Ux+A_0.
\]
From \eqref{Solatzero}, \eqref{BGatinf}, and \eqref{Eq:eq},
we obtain
\begin{subequations}\label{ShiftH}
\begin{alignat}{2}
	H^{[0]}(qx)
	&=
	A(x)\cdot H^{[0]}(x)\cdot A_0^{-1},
&\qquad
	H^{[\infty]}_{\mathrm{BG}}(qx)
	&=
	A(x)\cdot H^{[\infty]}_{\mathrm{BG}}(x)\cdot
	\bigl((q-1)Ux\bigr)^{-1},
\\
	H^{[0]}(qx)^{-1}
	&=
	A_0\cdot H^{[0]}(x)^{-1}\cdot A(x)^{-1},
&\qquad
	H^{[\infty]}_{\mathrm{BG}}(qx)^{-1}
	&=
	(q-1)Ux\cdot
	H^{[\infty]}_{\mathrm{BG}}(x)^{-1}\cdot
	A(x)^{-1}.
\end{alignat}
\end{subequations}
Taking determinants gives
\begin{align*}
	\frac{\det H^{[0]}(qx)}
	{\det H^{[0]}(x)} =
	\det\bigl(I+(q-1)A_0^{-1}Ux\bigr),\qquad
	\frac{\det H^{[\infty]}_{\mathrm{BG}}(qx)}
	{\det H^{[\infty]}_{\mathrm{BG}}(x)} =
	\det\left(
		I+\frac{1}{(q-1)x}U^{-1}A_0
	\right).
\end{align*}
Using \eqref{Eq:eqisqP}
and
$\underset{x\to 0}{\lim} \det H^{[0]}(x)
=\underset{x\to \infty}{\lim} \det H^{[\infty]}_{\mathrm{BG}}(x)
=1$,
we obtain
\begin{align*}
	\det H^{[0]}(x)=
	\det e_q(A_0^{-1}Ux),\qquad
	\det H^{[\infty]}_{\mathrm{BG}}(x)=
	\det\left(
		\frac{q}{1-q}U^{-1}A_0x^{-1};q
	\right)_\infty.
\end{align*}
This proves \eqref{Det:zero} and \eqref{Det:infBG}.
Applying \eqref{BGtobeta} in Lemma~\ref{Lem:BG}
then gives \eqref{Det:inf}.

The matrix $A(x)$ is singular precisely when
\[
	x\in
	\frac{1}{1-q}
	\mathrm{Spec}\bigl(U^{-1}A_0\bigr).
\]
Since $H^{[0]}(x)^{\pm1}$ and
$H^{[\infty]}_{\mathrm{BG}}(x)^{\pm1}$
are analytic at $x=0$ and $x=\infty$, respectively,
iterating \eqref{ShiftH} gives their single-valued
meromorphic continuations and
\eqref{Pole:H0}, \eqref{Pole:H0inv},
\eqref{Pole:HBG}, and \eqref{Pole:HBGinv}.
Applying \eqref{BGtobeta} in Lemma~\ref{Lem:BG},
together with \eqref{Eq:eqisqP},
then gives the meromorphic continuation of
$H^{[\infty]}(x)$ and
\eqref{Pole:Hinf}, \eqref{Pole:Hinfinv}.
\end{prf}

The iterative continuation argument used above for
$H^{[0]}$ and $H_{\mathrm{BG}}^{[\infty]}$
is standard for canonical gauge transformations of polynomial
$q$-difference systems; see
\cite{ohyama_space_2020} and
\cite{sauloy_basic_2024}.

\subsection{$q$-Monodromy Data}
\label{Sect:qMonodromy}

To define the $q$-monodromy data of the linear
$q$-difference system \eqref{qLinear},
we require that the canonical fundamental solutions
\eqref{Solatzero} and \eqref{Solatinf}
be well defined and convergent near
$x=0$ and $x=\infty$, respectively.
To ensure this, we impose the following conditions:
\begin{itemize}
	\item $\Phi$ is $q$-nonresonant,
		and we fix a branch of $[\Phi]^q$ throughout this section;
	\item $u_i\notin u_jq^{\mathbb{Z}\setminus\{0\}}$
	for all $i\neq j$, and
	$u_1,\ldots,u_n$ are all nonzero.
\end{itemize}
We work under these assumptions whenever the $q$-monodromy data
are considered in the remainder of this subsection.

Choose $W\in\mathrm{GL}_m(\mathbb C)$ and $\Lambda$ such that
\begin{align}\label{Jordan}
	W^{-1}\cdot [\Phi]^q \cdot W &= \Lambda,
\end{align}
where $\Lambda$ is the Jordan normal form of $[\Phi]^q$.
The corresponding \textbf{Floquet solution} at
$x=0$ is
\begin{align*}
	Y^{[0]}(x;U,\Phi)W &=
	H^{[0]}(x;U,\Phi)W\cdot x^\Lambda =
	\left(
		W+\sum_{\nu=1}^{\infty}
		H_\nu^{[0]}(U,\Phi)W\cdot x^\nu
	\right)\cdot x^\Lambda.
\end{align*}
The \textbf{central connection matrix} associated with
$W$ is
\begin{align}\label{Def:Cbeta}
	C(x;U,\Phi;\boldsymbol{\beta},W) &:=
	Y^{[\infty]}(x;U,\Phi;\boldsymbol{\beta})^{-1}
	\cdot
	Y^{[0]}(x;U,\Phi)W.
\end{align}
Since $Y^{[\infty]}(x)$ and $Y^{[0]}(x)$ are fundamental solutions of
the same linear $q$-difference system, $C(x)$ is a
$q$-constant with respect to $x$. In particular, it is constant
along each $q$-spiral.

\begin{rmk}
\(C(x;U,\Phi;\boldsymbol{\beta},W)
=
\begin{cases}
x^{-\boldsymbol{\beta}}
\dfrac{\theta_q((q-1)q^{-\boldsymbol{\beta}}Ux)}
{\theta_q((q-1)Ux)}
C(x;U,\Phi;0,W), & |q|<1,\\
x^{-\boldsymbol{\beta}}
\dfrac{\theta_{q^{-1}}((1-q^{-1})Ux)}
{\theta_{q^{-1}}((1-q^{-1})q^{-\boldsymbol{\beta}}Ux)}
C(x;U,\Phi;0,W), & |q|>1
\end{cases}
\).
\end{rmk}

\begin{pro}\label{Pro:CTransformation}
For any $r\in\widetilde{\mathbb C^\ast}$ and
$D\in\mathrm{GL}_m(\mathbb{C})$ satisfying $DU=UD$,
we have
\begin{subequations}\label{Eq:CTransformation}
\begin{align}
C(
	x;rU,D^{-1}\Phi D;
	D^{-1}\boldsymbol{\beta}D,D^{-1}Wr^\Lambda
) &=
D^{-1}r^{\boldsymbol{\beta}}\cdot
C(rx;U,\Phi;\boldsymbol{\beta},W),
\label{Eq:CScaling}\\
C(x;U,\Phi;\boldsymbol{\beta},W;q)^{-\top} &=
C(
	x;-U,-q\Phi^\top;
	-\boldsymbol{\beta}^\top,W^{-\top};q^{-1}
).
\label{Eq:CDuality}
\end{align}
\end{subequations}
\end{pro}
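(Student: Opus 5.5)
The plan is to verify both identities directly from the definition \eqref{Def:Cbeta} of $C$. The only inputs needed are the transformation rules for $H^{[0]}$ and $H^{[\infty]}$ in Corollary~\ref{Cor:Basic}, together with the identity $e_{q^{-1}}(y)=e_q(-y)^{-1}$ from \eqref{Eq:eq}. Throughout, the branches of $[\,\cdot\,]^q$ and of the multivalued powers on $\widetilde{\mathbb C^\ast}$ are chosen compatibly, as explained below.

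\emph{Scaling identity \eqref{Eq:CScaling}.}
\begin{enumerate}
\item Choose the branch $[D^{-1}\Phi D]^q:=D^{-1}[\Phi]^qD$. With this choice, Corollary~\ref{Cor:Basic} gives
\[
Y^{[0]}(x;rU,D^{-1}\Phi D)\,D^{-1}Wr^\Lambda
=D^{-1}H^{[0]}(rx;U,\Phi)\,x^{[\Phi]^q}\,W\,r^\Lambda .
\]
\item Since $x^{[\Phi]^q}W=Wx^\Lambda$ and $x^\Lambda r^\Lambda=(rx)^\Lambda$ on the logarithmic surface, the right-hand side equals $D^{-1}Y^{[0]}(rx;U,\Phi)W$.
\item At infinity, $DU=UD$ forces $D$ to be block diagonal. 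Hence $D^{-1}\boldsymbol\beta D$ is block diagonal, and
\[
D\,x^{D^{-1}\boldsymbol\beta D}\,e_q\bigl(q^{-D^{-1}\boldsymbol\beta D}rUx\bigr)
=x^{\boldsymbol\beta}\,e_q\bigl(q^{-\boldsymbol\beta}Urx\bigr)\,D .
\]
\item Combining this with Corollary~\ref{Cor:Basic} gives
\[
Y^{[\infty]}\bigl(x;rU,D^{-1}\Phi D;D^{-1}\boldsymbol\beta D\bigr)=D^{-1}\,Y^{[\infty]}(rx;U,\Phi;\boldsymbol\beta)\,r^{-\boldsymbol\beta}D .
\]
This uses $x^{\boldsymbol\beta}=(rx)^{\boldsymbol\beta}r^{-\boldsymbol\beta}$. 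It also uses that $r^{-\boldsymbol\beta}$ commutes with the block-diagonal factor $e_q(q^{-\boldsymbol\beta}Urx)$, because each block of that factor is a function of $\beta_k$ alone.
\item Inverting this and multiplying by the expression from step 2, the factors $DD^{-1}$ cancel. This leaves $D^{-1}r^{\boldsymbol\beta}C(rx;U,\Phi;\boldsymbol\beta,W)$, which is \eqref{Eq:CScaling}.
\end{enumerate}

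\emph{Duality identity \eqref{Eq:CDuality}.}
\begin{enumerate}
\item First compute
\[
[-q\Phi^\top]^{q^{-1}}=\log_{q^{-1}}\bigl(I+(q-1)\Phi^\top\bigr)=-\bigl([\Phi]^q\bigr)^\top
\]
for the corresponding choice of branch. Then $x^{[-q\Phi^\top]^{q^{-1}}}=(x^{[\Phi]^q})^{-\top}$.
\item With this, \eqref{Eq:qInverseH} gives $Y^{[0]}(x;U,\Phi;q)^{-\top}=Y^{[0]}(x;-U,-q\Phi^\top;q^{-1})$.
\item At infinity, with $q'=q^{-1}$ and $\boldsymbol\beta'=-\boldsymbol\beta^\top$, we have $q'^{-\boldsymbol\beta'}=q^{-\boldsymbol\beta^\top}$. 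Then \eqref{Eq:eq} gives
\[
e_{q^{-1}}\bigl(q^{-\boldsymbol\beta^\top}(-U)x\bigr)=e_q\bigl(q^{-\boldsymbol\beta^\top}Ux\bigr)^{-1}=e_q\bigl(q^{-\boldsymbol\beta}Ux\bigr)^{-\top}.
\]
The last equality holds because the factor is block diagonal and $U$ is scalar on each block.
\item Together with $x^{-\boldsymbol\beta^\top}=(x^{\boldsymbol\beta})^{-\top}$ and \eqref{Eq:qInverseH}, this yields $Y^{[\infty]}(x;U,\Phi;\boldsymbol\beta;q)^{-\top}=Y^{[\infty]}(x;-U,-q\Phi^\top;-\boldsymbol\beta^\top;q^{-1})$.
\item The right-hand side of \eqref{Eq:CDuality} is therefore
\[
Y^{[\infty]}(x)^{\top}\,Y^{[0]}(x)^{-\top}\,W^{-\top}=\bigl(W^{-1}Y^{[0]}(x)^{-1}Y^{[\infty]}(x)\bigr)^{\top}=C^{-\top},
\]
which is \eqref{Eq:CDuality}.
\end{enumerate}

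\emph{Expected obstacle.} The main difficulty is bookkeeping rather than analysis. Powers of $x$ and $r$ on $\widetilde{\mathbb C^\ast}$ must be taken with a fixed argument so that $(rx)^\Lambda=r^\Lambda x^\Lambda$. The branch of $[\,\cdot\,]^{q^{-1}}$ must be chosen to match the fixed argument of $q^{-1}$.

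There is also a point about the Jordan form in the duality identity. For the dual data, the matrix $W^{-\top}$ conjugates $[-q\Phi^\top]^{q^{-1}}$ to $-\Lambda^\top$ rather than to a Jordan normal form. So $W^{-\top}$ should be read as an admissible choice of the matrix $W$ in \eqref{Def:Cbeta}. This is harmless, since \eqref{Def:Cbeta} only uses $W$ as the right factor producing the Floquet solution.
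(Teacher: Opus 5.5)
Your proposal is correct and takes the same route as the paper. The paper's proof only says the result follows directly from the definition \eqref{Def:Cbeta} and Corollary~\ref{Cor:Basic}; you carry out that verification explicitly, including the compatible branch choices $[D^{-1}\Phi D]^q=D^{-1}[\Phi]^qD$ and $[-q\Phi^\top]^{q^{-1}}=-([\Phi]^q)^\top$, the use of $e_{q^{-1}}(y)=e_q(-y)^{-1}$, and the remark that $W^{-\top}$ conjugates to $-\Lambda^\top$ rather than to a Jordan form, which the paper passes over silently.
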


\begin{prf}
The result follows directly from
	the definition of the central connection matrix \eqref{Def:Cbeta} and
	Corollary~\ref{Cor:Basic}.
\end{prf}
	
Although for fixed $U$, the parameter spaces of
$(C(x),\Lambda)$ and $(\Phi,W)$ have the same dimension $m^2+m$
on the generic locus, the data $(C(x),\Lambda)$ determine
$(\Phi,W)$ only up to a discrete ambiguity.
An analogous discrete ambiguity occurs in the differential setting
and is described by Schlesinger transformations, which preserve the
monodromy data while shifting the exponents of formal monodromy by
integers \cite{jimbo_monodromy_1981-1}.
To describe this ambiguity, write
\begin{align*}
	\mathrm{Spec}(U^{-1} (I+(q-1)\Phi)) & =
	\{
	u_k^{-1} q^{\alpha^{(k)}_i(\Phi)}:
	1\leqslant k \leqslant n,
	1\leqslant i \leqslant m_k
	\}.
\end{align*}
For subsets $A,B\subseteq \mathbb C^\ast$, we write
\[
	A/B:=\{a/b:a\in A,\ b\in B\}.
\]
The following proposition makes the remaining ambiguity precise.

\begin{pro}\label{Pro:qUni}
Suppose that
\begin{subequations}
\begin{align}
\label{UniC}
C(x;U,\Phi_1;\boldsymbol{\beta},W_1) & =
	C(x;U,\Phi_2;\boldsymbol{\beta},W_2),\\
\label{UniW}
W_1^{-1} [\Phi_1]^q W_1 & =
	W_2^{-1} [\Phi_2]^q W_2.
\end{align}
\end{subequations}
Then we have
\begin{subequations}\label{qSpiraleqplus}
\begin{align}
\label{qSpiraleq}
\bigcup_{k,i}
	\{ u_k^{-1} q^{\alpha^{(k)}_i(\Phi_1)} \} q^{\mathbb{Z}} & =
\bigcup_{k,i}
	\{ u_k^{-1} q^{\alpha^{(k)}_i(\Phi_2)} \} q^{\mathbb{Z}},
	\text{ in the sense of multisets}, \\
\label{ShiftBalance}
\prod_{k,i}
	q^{\alpha^{(k)}_i(\Phi_1)} & =
\prod_{k,i}
	q^{\alpha^{(k)}_i(\Phi_2)}.
\end{align}
\end{subequations}
If we further require
\begin{align}\label{NotUniPhi}
	\frac{
	\mathrm{Spec}( U^{-1}(I+(q-1)\Phi_2) )
		}{
	\mathrm{Spec}( U^{-1}(I+(q-1)\Phi_1) )
		} \cap
	q^{\mathbb{N}+1} & =
	\varnothing,
\end{align}
then $\Phi_1 = \Phi_2$ and $W_1 = W_2$.
\end{pro}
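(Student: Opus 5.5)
Set $Y_j^{[0]}:=Y^{[0]}(x;U,\Phi_j)W_j$ and $Y_j^{[\infty]}:=Y^{[\infty]}(x;U,\Phi_j;\boldsymbol\beta)$ for $j=1,2$, and consider
\[
	G(x):=Y_2^{[\infty]}(x)\,Y_1^{[\infty]}(x)^{-1}
	=Y_2^{[0]}(x)\,Y_1^{[0]}(x)^{-1}.
\]
The two expressions agree by \eqref{UniC}, since $C$ is the same for both systems.

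\textbf{Step 1: $G$ is rational.} In the $\infty$-form the exponential factors $x^{\boldsymbol\beta}e_q(q^{-\boldsymbol\beta}Ux)$ are identical and cancel, so $G=H_2^{[\infty]}(H_1^{[\infty]})^{-1}$. This is a meromorphic function on $\mathbb C^\ast$ (Proposition~\ref{Pro:SolY}) which tends to $I$ as $x\to\infty$. In the $0$-form, \eqref{UniW} makes the factors $x^{\Lambda}$ cancel, so
\[
	G=H_2^{[0]}W_2W_1^{-1}(H_1^{[0]})^{-1},
\]
which is holomorphic near $0$. Hence $G$ is single-valued meromorphic on $\mathbb P^1$, i.e. rational, with $G(\infty)=I$ and $G(0)=W_2W_1^{-1}$. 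It also satisfies the gauge equation
\[
	G(qx)\,A_1(x)=A_2(x)\,G(x),\qquad A_j(x)=(q-1)Ux+I+(q-1)\Phi_j .
\]

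\textbf{Step 2: locate the poles of $G$.} Work with $|q|<1$; the case $|q|>1$ follows from the duality \eqref{Eq:CDuality}. By \eqref{Pole:H0}, \eqref{Pole:H0inv}, the $0$-form can have poles only on $\frac{1}{1-q}\mathrm{Spec}(U^{-1}A_2(0))\,q^{-\mathbb N}$. The $\infty$-form can have poles only on $\frac{1}{1-q}\mathrm{Spec}(u_k^{-1}q^{\beta_k})q^{\mathbb N+1}\cup\frac{1}{1-q}\mathrm{Spec}(U^{-1}A_1(0))\,q^{\mathbb N+1}$. The $\beta$-part cancels: write $H^{[\infty]}=H_{\mathrm{BG}}\,e_q(\cdots)$ via \eqref{BGtobeta}, so that $G=H_{\mathrm{BG},2}H_{\mathrm{BG},1}^{-1}$. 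Then \eqref{Pole:HBG}, \eqref{Pole:HBGinv} leave only $\frac{1}{1-q}\mathrm{Spec}(U^{-1}A_1(0))\,q^{\mathbb N+1}$. A pole of $G$ must lie in both sets, i.e. at a point $p=\frac{1}{1-q}s_2q^{-a}=\frac{1}{1-q}s_1q^{b+1}$ with $s_2\in\mathrm{Spec}(U^{-1}A_2(0))$, $s_1\in\mathrm{Spec}(U^{-1}A_1(0))$ and $a,b\in\mathbb N$. This gives $s_2/s_1\in q^{\mathbb N+1}$, which \eqref{NotUniPhi} excludes. Hence under \eqref{NotUniPhi} the function $G$ is entire on $\mathbb P^1$, so $G\equiv I$. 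Then $G(0)=I$ gives $W_1=W_2$, and comparing the gauge equation $A_1=A_2$ gives $\Phi_1=\Phi_2$.

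\textbf{Step 3: the necessary conditions \eqref{qSpiraleqplus} without \eqref{NotUniPhi}.} Take determinants in the gauge equation:
\[
	\det G(qx)\,\det A_1(x)=\det A_2(x)\,\det G(x),
\]
where $g:=\det G$ is rational with $g(\infty)=1$. The determinants factor as
\[
	\det A_j(x)=\det\bigl((q-1)U\bigr)\prod_{k,i}\bigl(x-\tfrac{1}{1-q}u_k^{-1}q^{\alpha_i^{(k)}(\Phi_j)}\bigr).
\]
Write the divisor of $g$ on $\mathbb C^\ast$ spiral by spiral. On each $q$-spiral, the relation forces the root multisets of $\det A_1$ and $\det A_2$ to differ by a $q$-coboundary of a finitely supported divisor. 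A finitely supported divisor has total degree preserved under the coboundary, so the multiplicity counts on each spiral agree; this is \eqref{qSpiraleq}. Comparing the values at $x=0$, using $g(0)=\det W_2/\det W_1\neq0,\infty$, gives $\det A_1(0)=\det A_2(0)$, which is \eqref{ShiftBalance}.

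\textbf{Main obstacle.} The delicate point is Step 2: checking carefully that the $\beta$-dependent poles in \eqref{Pole:Hinf} genuinely cancel in $G$, since they come from the same scalar $e_q$ factor. A second issue is possible pole order or cancellation at $x=0$ and $\infty$ when arguing that $G$ is rational, which requires treating $(H^{[\infty]})^{-1}$ as convergent near $\infty$. For $|q|<1$, Proposition~\ref{Can:inf} gives convergence only of the columns of $H^{[\infty]}$, so I would rely on the BG factorization there.
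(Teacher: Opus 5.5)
Your proof is correct. Its core is the paper's argument: the paper also sets $F=H^{[\infty]}_2(H^{[\infty]}_1)^{-1}=H^{[\infty]}_{\mathrm{BG},2}(H^{[\infty]}_{\mathrm{BG},1})^{-1}=H^{[0]}_2W_2W_1^{-1}(H^{[0]}_1)^{-1}$ (subscripts indicating $\Phi_j$), intersects the pole sets from Proposition~\ref{Pro:SolY}, and applies Liouville with $F(\infty)=I$ and $F(0)=W_2W_1^{-1}$.

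The difference lies in how you obtain \eqref{qSpiraleq} and \eqref{ShiftBalance}.
\begin{itemize}
\item The paper passes to $C_{\mathrm{BG}}$ and computes $\det\bigl(C_{\mathrm{BG}}W^{-1}\bigr)$ in closed form from \eqref{Det:H}, as $\chi_q(x)$ times a product of $\theta_q$-factors. It reads \eqref{qSpiraleq} off the multiset of poles (zeros when $|q|>1$), and gets \eqref{ShiftBalance} by taking determinants in \eqref{UniW}.
\item You take determinants in the gauge relation $G(qx)A_1(x)=A_2(x)G(x)$. You then note that $\operatorname{div}(\det A_2)-\operatorname{div}(\det A_1)$ is the $q$-coboundary of the finitely supported divisor of the rational function $\det G$, so it has degree zero on each $q$-spiral. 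Finally you evaluate at $x=0$.
\end{itemize}
Your version needs only the rationality of $G$ and is uniform in $|q|\neq1$. The paper's version produces an explicit theta-product formula for $\det C$; the same computation via \eqref{Det:H} reappears in Lemma~\ref{Lem:CThetaStructure}.

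One small correction concerns reducing $|q|>1$ to $|q|<1$ through \eqref{Eq:CDuality}. If you keep the labels, the dual form of \eqref{NotUniPhi} is the condition with $q^{-\mathbb N-1}$ in place of $q^{\mathbb N+1}$, which is not what you are given. You must also interchange $(\Phi_1,W_1)$ and $(\Phi_2,W_2)$; this is harmless, since the hypotheses are symmetric. Alternatively, redo the pole count directly for $|q|>1$, as the paper does. There $H^{[\infty]}_{\mathrm{BG}}(x;U,\Phi_2)$ contributes $\frac{1}{1-q}\mathrm{Spec}\bigl(U^{-1}(I+(q-1)\Phi_2)\bigr)q^{-\mathbb N}$, and $H^{[0]}(x;U,\Phi_1)^{-1}$ contributes $\frac{1}{1-q}\mathrm{Spec}\bigl(U^{-1}(I+(q-1)\Phi_1)\bigr)q^{\mathbb N+1}$. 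A common point is excluded by exactly \eqref{NotUniPhi} again.
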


\begin{prf}
For convenience, we introduce the auxiliary
Birkhoff--Guenther connection matrix
\begin{align*}
C_{\mathrm{BG}}(x;U,\Phi,W)
&:=
Y_{\mathrm{BG}}^{[\infty]}(x;U,\Phi)^{-1}
\cdot
Y^{[0]}(x;U,\Phi)W.
\end{align*}
By \eqref{YBGtoYbeta} in Lemma~\ref{Lem:BG}, the factor relating
$C(x;U,\Phi;\boldsymbol{\beta},W)$ to
$C_{\mathrm{BG}}(x;U,\Phi,W)$ depends only on
$U$ and $\boldsymbol{\beta}$. Hence, \eqref{UniC} implies
\begin{align*}
C_{\mathrm{BG}}(x;U,\Phi_1,W_1)
&=
C_{\mathrm{BG}}(x;U,\Phi_2,W_2).
\end{align*}
Applying the determinant formulas \eqref{Det:H} in
Proposition~\ref{Pro:SolY}, we obtain
\begin{align*}
\det\left(
C_{\mathrm{BG}}(x;U,\Phi,W)W^{-1}
\right)
&=
\begin{cases}
\chi_q(x) \cdot (q;q)_\infty^m
\displaystyle
\prod_{k,i}
\theta_q\bigl(
(q-1)u_kq^{-\alpha_i^{(k)}(\Phi)}x
\bigr)^{-1},
& |q|<1,\\[14pt]
\chi_q(x) \cdot (q^{-1};q^{-1})_\infty^{-m}
\displaystyle
\prod_{k,i}
\theta_{q^{-1}}\bigl(
(1-q^{-1})u_kq^{-\alpha_i^{(k)}(\Phi)}x
\bigr),
& |q|>1,
\end{cases}\\
\chi_q(x)
&:=
x^{
\operatorname{tr}\Lambda
-\operatorname{tr}\log_q((q-1)U)
-\frac{m}{2}(\log_qx-1)
}.
\end{align*}
Thus, the determinant expressions corresponding to
	$(\Phi_1,W_1)$ and $(\Phi_2,W_2)$ have
	the same multiset of poles when $|q|<1$, and
	the same multiset of zeros when $|q|>1$,
	which yields \eqref{qSpiraleq}.
Moreover, taking determinants in \eqref{UniW}
	yields \eqref{ShiftBalance}.

Equation~\eqref{BGtobeta} in Lemma~\ref{Lem:BG}, together with
\eqref{UniC} and \eqref{UniW}, gives
\begin{align}\nonumber
	F(x)& \assign
	H^{[\infty]}(x;U,\Phi_2;\boldsymbol{\beta}) \cdot 
	H^{[\infty]}(x;U,\Phi_1;\boldsymbol{\beta})^{-1} \\
	& =
	H^{[\infty]}_{\mathrm{BG}}(x;U,\Phi_2) \cdot 
	H^{[\infty]}_{\mathrm{BG}}(x;U,\Phi_1)^{-1} =
	H^{[0]}(x;U,\Phi_2)W_2 \cdot
	W_1^{-1}H^{[0]}(x;U,\Phi_1)^{-1}.
\label{Eq:UniqueMono}
\end{align}
By Proposition~\ref{Pro:SolY},
the locations of the possible poles on both sides of
\eqref{Eq:UniqueMono} are
\begin{align*}
	\begin{cases}
		\displaystyle
		\frac{1}{1-q}\bigcup_{k,i}
			\left\{
			u_k^{-1}q^{\alpha_i^{(k)}(\Phi_1)}
			\right\}q^{\mathbb N+1}
		\text{ and }
		\displaystyle
		\frac{1}{1-q}\bigcup_{k,i}
			\left\{
			u_k^{-1}q^{\alpha_i^{(k)}(\Phi_2)}
			\right\}q^{-\mathbb N},
		& |q|<1,\\
		\displaystyle
		\frac{1}{1-q}\bigcup_{k,i}
			\left\{
			u_k^{-1}q^{\alpha_i^{(k)}(\Phi_2)}
			\right\}q^{-\mathbb N}
		\text{ and }
		\displaystyle
		\frac{1}{1-q}\bigcup_{k,i}
			\left\{
			u_k^{-1}q^{\alpha_i^{(k)}(\Phi_1)}
			\right\}q^{\mathbb N+1},
		& |q|>1.
	\end{cases}
\end{align*}
By \eqref{NotUniPhi}, the only possible common singularities are
$x=0$ and $x=\infty$.
Since
\begin{align*}
	\lim_{x\to\infty}F(x)=I,\qquad
	\lim_{x\to0}F(x)=W_2W_1^{-1},
\end{align*}
it follows from Liouville's theorem that $F(x)$
is constant and equal to $I$.
Hence, $W_1=W_2$, and \eqref{UniW} implies
$\Phi_1=\Phi_2$.
This completes the proof.
\end{prf}


More precisely, on the generic locus,
	\eqref{qSpiraleqplus} characterize
	all possible discrete choices of $(\Phi,W)$
	compatible with fixed $C(x)$ and $\Lambda$.
Let
\begin{align*}
	\bigl(n_i^{(k)}\bigr)_{
		1\leqslant k\leqslant n,\ 
		1\leqslant i\leqslant m_k}
	\in\mathbb Z^m,\qquad
	\sum_{k,i}n_i^{(k)}=0.
\end{align*}
Then, for generic $(\Phi_1,W_1)$, there exists a unique pair
$(\Phi_2,W_2)$ yielding the same central connection matrix $C(x)$
and the same $\Lambda$ such that
\begin{align*}
	\alpha_i^{(k)}(\Phi_2)
	&=
	\alpha_i^{(k)}(\Phi_1)+n_i^{(k)},
	\qquad
	1\leqslant k\leqslant n,\quad
	1\leqslant i\leqslant m_k.
\end{align*}
The existence follows by composing the $q$-isomonodromic shifts
described in Section~\ref{Sect:qisoeq} with permutations of the
$\alpha_i^{(k)}$, while the uniqueness follows from
Proposition~\ref{Pro:qUni} under the genericity condition
\eqref{NotUniPhi}.

Thus, to reconstruct $\Phi$ and $W$ uniquely
	from $C(x)$ and $\Lambda$,
	one must also record these spectral parameters.
We introduce these discrete data as follows.

\begin{defi}\label{Def:alpha}
We call
\begin{align*}
	\boldsymbol{\alpha}
	&=
	\mathrm{diag}(\alpha_1,\ldots,\alpha_n),
	\qquad
	\alpha_k
	=
	\mathrm{diag}
	\bigl(\alpha^{(k)}_1,\ldots,\alpha^{(k)}_{m_k}\bigr)
\end{align*}
	the \textbf{formal exponents} of
	the linear $q$-difference system \eqref{qLinear}
	at $x=\infty$
	if $u_1,\ldots,u_n$ are all nonzero and
\begin{align}
\label{Eq:Defalpha}
	\mathrm{Spec}\bigl(U^{-1}(I+(q-1)\Phi)\bigr)
	&=
	\{
		u_k^{-1}q^{\alpha_i^{(k)}}:
		1\leqslant k\leqslant n,\
		1\leqslant i\leqslant m_k
	\}.
\end{align}
We call $(U,\Phi;W;\boldsymbol{\alpha})$ a
	\textbf{$q$-monodromy system} if
	$W\in\mathrm{GL}_m(\mathbb C)$ is chosen as
	in \eqref{Jordan} and $\boldsymbol{\alpha}$
	satisfies \eqref{Eq:Defalpha}.
Its \textbf{$q$-monodromy data} with respect to the formal monodromy
$\boldsymbol{\beta}$ are defined by
\begin{align*}
	\mathrm{MD}_{\boldsymbol{\beta}}
	(U,\Phi;W;\boldsymbol{\alpha})
	&:=
	\bigl(
	C(x;U,\Phi;\boldsymbol{\beta},W),
	\Lambda;\boldsymbol{\alpha}
	\bigr).
\end{align*}
\end{defi}
\noindent
Note that the definition of the $q$-monodromy data also depends on
a choice of branch for $[\Phi]^q$, which we fix throughout.
Once this branch is fixed, the corresponding $\Lambda$ and the central
connection matrix $C(x)$ are uniquely determined.


Moreover, after an appropriate ordering, the formal exponents
$\alpha_i^{(k)}$
converge to elements of $\mathrm{Spec}(\Phi_{kk})$ as $q\to1$.
This follows from the Gershgorin Circle Theorem
\cite{gershgorin1931uber}.
In particular, when $u_1,\ldots,u_n$ all have multiplicity one,
$\boldsymbol{\alpha}$ recovers the formal monodromy
$\mathrm{diag}(\Phi_{11},\ldots,\Phi_{nn})$ of the differential system
in the differential limit as $q\to1$, and hence provides its natural $q$-analog.
When multiplicities occur, the corresponding block-valued $q$-analog
requires additional technical details, which we leave for future work.
Furthermore, in the multiplicity-one case, if we take the formal
monodromy to be $\boldsymbol{\beta}=\boldsymbol{\alpha}$, then
$Y^{[\infty]}(x)$ also converges, as $q\to1$, to the corresponding
canonical fundamental solution of the differential system.

\subsection{$q$-Borel Summation}
\label{Sect:qBorel}

In Proposition~\ref{Can:inf}, we constructed a canonical formal fundamental
solution of the linear $q$-difference system \eqref{qLinear} at $x=\infty$,
\begin{align*}
Y^{[\infty]}(x;U,\Phi;\boldsymbol{\beta})
&=
H^{[\infty]}(x;U,\Phi;\boldsymbol{\beta})\cdot
x^{\boldsymbol{\beta}}
e_q(q^{-\boldsymbol{\beta}}Ux).
\end{align*}
When $u_1,\ldots,u_n$ are all nonzero, the formal power series
$H^{[\infty]}(x;U,\Phi;\boldsymbol{\beta})$ is convergent.
By contrast, if $u_k=0$ for some $k$ and
$[\beta_k]_q=\Phi_{kk}$, the formal power series
$H^{[\infty]}(x;U,\Phi;\boldsymbol{\beta})$ need not converge.
In this case, we obtain an actual solution by $q$-Borel summation.

To treat the cases $|q|<1$ and $|q|>1$ uniformly, set
\begin{align*}
Q
:=
\begin{cases}
q^{-1}, & |q|<1,\\
q, & |q|>1.
\end{cases}
\end{align*}

\begin{defi}\label{Def:qBorel}
Let
\[
\widehat f(x)=\sum_{m=0}^{\infty}a_mx^{-m}
\]
be a scalar- or matrix-valued formal power series at $x=\infty$.
Its \textbf{$q$-Borel transform} is defined by
\begin{align}\label{Def:qBorelTransform}
(\mathcal B_q\widehat f)(\xi)
&:=
\sum_{m=0}^{\infty}
Q^{-\binom{m}{2}}a_m\xi^m.
\end{align}
Let $d\in\mathbb C^\ast$ and denote by
	$dq^{\mathbb Z}=dQ^{\mathbb Z}$ its $q$-spiral.
Suppose that $\mathcal B_q\widehat f$
	converges near $\xi=0$,
admits analytic continuation to a neighborhood of
$d^{-1}q^{\mathbb Z}$,
and has $Q$-exponential growth of order one there in the sense of
\cite[Definition~1.1]{dreyfus2015building}.
The \textbf{$q$-Borel sum} of $\widehat f$ along $d$ is defined by
\begin{align}\label{Def:qBorelSum}
(\mathcal S_d\widehat f)(x)
&:=
\sum_{\xi\in d^{-1}q^{\mathbb Z}}
\frac{(\mathcal B_q\widehat f)(\xi)}
{\theta_{Q^{-1}}(x\xi)}.
\end{align}
This sum depends only on the $q$-spiral $dq^{\mathbb Z}$.
When these conditions are satisfied, we say that
$\widehat f$ is \textbf{$q$-Borel summable along $d$}.
The possible poles of $\mathcal S_d\widehat f$ lie on
$-dq^{\mathbb Z}$ and have order at most one.
\end{defi}

The standard $q$-Borel--Laplace estimate
\cite[Definition~1.8 and Proposition~1.14]{dreyfus2015building}
gives the following basic property of the $q$-Borel sum.
For every sufficiently small $\varepsilon>0$, there exist
$R,L,M>0$ such that
\begin{align}\label{Eq:qGevreyAsymptotic}
\left\|
(\mathcal S_d\widehat f)(x)
-
\sum_{m=0}^{N-1}a_mx^{-m}
\right\|
&\leqslant
LM^N|Q|^{\binom{N}{2}}|x|^{-N}
\end{align}
for every $N\geqslant1$ and every $x$ satisfying
$|x|>R$ and
\[
|x+dq^\ell|>
\varepsilon|dq^\ell|,
\qquad
\ell\in\mathbb Z.
\]
Thus, $\mathcal S_d\widehat f$ admits $\widehat f$ as its
$q$-Gevrey asymptotic expansion of order one at $x=\infty$
away from the $q$-spiral $-dq^{\mathbb Z}$.

The following result is a specialization of the classical
$q$-summation theory developed in
\cite{zhang_developpements_1999}.

\begin{pro}\label{Pro:qBorelSummability}
Under the assumptions of Proposition~\ref{Can:inf}, suppose that
$u_k=0$ for some $k$.
Set
\begin{align}\label{Def:SingularDirections}
\Sigma_k
:=
\frac{1}{q-1}
\bigcup_{i\neq k}
u_i^{-1}
\bigl(1+(q-1)\mathrm{Spec}(\Phi_{kk})\bigr)
q^{\mathbb Z}
\subseteq
\mathbb C^\ast/q^{\mathbb Z}.
\end{align}
For every
$dq^{\mathbb Z}
\in(\mathbb C^\ast/q^{\mathbb Z})\setminus\Sigma_k$,
the formal power series
$H^{[\infty]}(x;U,\Phi;\boldsymbol{\beta})$ is
$q$-Borel summable along $d$.
Define
\begin{align}
H_d^{[\infty]}(x;U,\Phi;\boldsymbol{\beta})
&:=
\bigl(\mathcal S_dH^{[\infty]}\bigr)
(x;U,\Phi;\boldsymbol{\beta}),\notag\\
Y_d^{[\infty]}(x;U,\Phi;\boldsymbol{\beta})
&:=
H_d^{[\infty]}(x;U,\Phi;\boldsymbol{\beta})\cdot
x^{\boldsymbol{\beta}}
e_q(q^{-\boldsymbol{\beta}}Ux).
\label{qBorelCanInf}
\end{align}
Then $Y_d^{[\infty]}(x)$ is
	a fundamental solution of
	the linear $q$-difference system \eqref{qLinear}
	at $x=\infty$.
Moreover,
	$H_d^{[\infty]}(x)$ admits $H^{[\infty]}(x)$ as its
	$q$-Gevrey asymptotic expansion of order one at $x=\infty$
	in the sense of \eqref{Eq:qGevreyAsymptotic}.
We call $Y_d^{[\infty]}$ the
	\textbf{canonical fundamental solution at $x=\infty$
	in the direction $d$}.
\end{pro}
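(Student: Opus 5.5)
The plan is to follow the classical Ramis--Zhang route. First I derive a linear $q$-difference equation in the Borel variable $\xi$ for $\mathcal B_q H^{[\infty]}$. Then I show that its solution continues analytically along every spiral $d^{-1}q^{\mathbb Z}$ with $dq^{\mathbb Z}\notin\Sigma_k$, with $Q$-exponential growth of order one. Finally I check that the $q$-Laplace sum \eqref{Def:qBorelSum} commutes with the operations appearing in the equation, so that $H_d^{[\infty]}$ satisfies the same gauge equation as the formal series.

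\textbf{Step 1: the functional equation for $H^{[\infty]}$.} Using \eqref{Eq:eq}, $Y^{[\infty]}(qx)=H^{[\infty]}(qx)\,x^{\boldsymbol\beta}\bigl(q^{\boldsymbol\beta}+(q-1)Ux\bigr)e_q(q^{-\boldsymbol\beta}Ux)$. Hence \eqref{qLinear} is equivalent to
\[
H^{[\infty]}(qx)\bigl(q^{\boldsymbol\beta}+(q-1)Ux\bigr)=\bigl((q-1)Ux+I+(q-1)\Phi\bigr)H^{[\infty]}(x),
\]
which is exactly the recurrence \eqref{Eq:HinfRec}.

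\textbf{Step 2: the equation in the Borel plane.} Under $\mathcal B_q$, the dilation $x\mapsto qx$ becomes $\xi\mapsto q^{-1}\xi$. Multiplication by $x$ becomes, up to the factor $Q^{\pm(\cdot)}$ coming from the binomial weights, the operator $\xi^{-1}$ composed with a $Q$-dilation. Multiplying by a power of $\xi$, I obtain a first-order system
\[
M(\xi)\,B(Q\xi)=N(\xi)\,B(\xi),\qquad B:=\mathcal B_qH^{[\infty]},
\]
with $M,N$ matrix polynomials of degree one in $\xi$. The leading parts of $M$ and $N$ are built from the commutator-type map $X\mapsto UX-q^{-1}XU$ and from $\Phi$ and $q^{\boldsymbol\beta}$. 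Block columns $j$ with $u_j\neq0$ already converge by Proposition~\ref{Can:inf}. Their Borel transforms are therefore entire of $Q$-exponential order zero, and their $q$-Laplace sums reproduce the convergent sums. So only the $k$-th block column matters. For it, the $(k,k)$ block of $M$ degenerates because $u_k=0$, and the condition $[\beta_k]_q=\Phi_{kk}$, together with $q$-nonresonance of $\Phi_{kk}$, makes $B_{\ast k}$ analytic at $\xi=0$; this is the same mechanism that made the recurrence uniquely solvable.

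\textbf{Step 3: singular locus and continuation.} Iterating $B(Q\xi)=M(\xi)^{-1}N(\xi)B(\xi)$ outward from a disc around $0$ gives a meromorphic continuation of $B_{\ast k}$ to $\mathbb C^\ast$. Its poles lie only on the $Q$-spirals through the zeros of $\det M(\xi)$. A direct computation of the $(i,k)$ blocks for $i\neq k$ shows that these zeros are $\xi$ with $u_i\xi^{-1}\in(q-1)^{-1}\bigl(1+(q-1)\operatorname{Spec}\Phi_{kk}\bigr)$ up to powers of $q$. This is exactly the statement that $\xi^{-1}q^{\mathbb Z}\in\Sigma_k$. Hence $B$ is analytic near $d^{-1}q^{\mathbb Z}$ whenever $dq^{\mathbb Z}\notin\Sigma_k$.

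\textbf{Step 4: growth.} The product of the degree-one factors $M^{-1}N$ along the spiral grows at most like $C^\ell|Q|^{\ell^2/2}$-type bounds. This gives $Q$-exponential growth of order one in the sense of \cite[Definition~1.1]{dreyfus2015building}.

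\textbf{Step 5: conclusions.} The $q$-Laplace transform intertwines $\xi$-dilations and multiplications with $x$-dilations and multiplications; this is the standard identity for $1/\theta_{Q^{-1}}(x\xi)$, which follows from $\theta_q(qx)=x^{-1}\theta_q(x)$. Therefore $H_d^{[\infty]}$ satisfies the equation of Step~1, and $Y_d^{[\infty]}$ solves \eqref{qLinear}. The Gevrey asymptotic estimate \eqref{Eq:qGevreyAsymptotic} is then quoted from \cite[Proposition~1.14]{dreyfus2015building}. Since $H_d^{[\infty]}(x)\to I$ as $x\to\infty$ away from $-dq^{\mathbb Z}$, its determinant is not identically zero, so $Y_d^{[\infty]}$ is fundamental.

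\textbf{Main obstacle.} I expect the hardest step to be Steps~3--4: writing the Borel-plane equation correctly with the binomial weights, and identifying its singular spirals precisely as $\Sigma_k$. The coupling between the $k$-th block and the other blocks through $\Phi$ must be handled carefully. I would first treat $|q|<1$ via $Q=q^{-1}$, and then obtain $|q|>1$ from the duality \eqref{Eq:qInverseH}, or by repeating the argument with $Q=q$.
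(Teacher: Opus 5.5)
Your proposal is correct and follows essentially the paper's route. You Borel-transform the $k$-th block-column recurrence into the first-order equation \eqref{Eq:qBorelContinuation} and continue it outward from $\xi=0$. The blocks $i\neq k$ give the singular half-spirals, which yield exactly $\Sigma_k$. Growth comes from iterating, and you conclude with the morphism property and Gevrey estimate of \cite{dreyfus2015building}, with $|q|>1$ handled by the duality \eqref{Eq:qInverseH}.

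Two small slips do not affect the argument. First, the Borel transforms of the convergent columns are entire but in general of $Q$-exponential order one (like a theta function), not order zero. Second, for $|q|>1$ you should rely on the duality rather than ``repeating the argument with $Q=q$'': there the columns $j\neq k$ of $H^{[\infty]}$ are themselves divergent, since the $(k,j)$ entry of \eqref{Eq:HinfRec} picks up a factor of size $|q|^{\nu+1}/|u_j|$.
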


\begin{prf}
We first consider the case $|q|<1$. Set
$G(\xi):=
\bigl(\mathcal B_qH^{[\infty]}\bigr)(\xi)$
and denote by $G_{\ast k}$ the $k$-th block column of $G$.
The coefficient recurrence in the proof of
Proposition~\ref{Can:inf} shows that $G$ converges near $\xi=0$.
Applying the $q$-Borel transform to that recurrence gives
\begin{align}\label{Eq:qBorelContinuation}
(I+(q-1)\Phi) \xi \cdot G_{\ast k}(q\xi) &=
\xi G_{\ast k}(\xi) \cdot (I+(q-1)\Phi_{kk})
-(q-1)U \cdot G_{\ast k}(\xi).
\end{align}
For $i\neq k$, the linear map
\begin{align*}
X\longmapsto
\xi X \cdot (I+(q-1)\Phi_{kk}) - (q-1)u_i \cdot X
\end{align*}
is invertible unless
\begin{align*}
\xi \in \frac{(q-1)u_i}{1+(q-1)\mathrm{Spec}(\Phi_{kk})}.
\end{align*}
Since $q\xi$ is closer to the origin than $\xi$,
\eqref{Eq:qBorelContinuation} analytically continues
$G_{\ast k}$ from a neighborhood of $\xi=0$, with possible
singularities only on
\begin{align*}
(q-1)\bigcup_{i\neq k}
\frac{u_i}{1+(q-1)\mathrm{Spec}(\Phi_{kk})}
q^{-\mathbb N}.
\end{align*}
The remaining block columns of $H^{[\infty]}$ are convergent. 
Therefore, for every $dq^{\mathbb Z}\notin\Sigma_k$,
	the $q$-spiral $d^{-1}q^{\mathbb Z}$
	avoids all possible
singularities of $G$.

Iterating \eqref{Eq:qBorelContinuation} also gives
the required $Q$-exponential growth in a neighborhood of
every such spiral.
Together with the standard estimate for the theta function
\cite[Lemma~1.3]{dreyfus2015building}, this shows that
$H^{[\infty]}$ is $q$-Borel summable along every
$dq^{\mathbb Z}\notin\Sigma_k$ and that
$H_d^{[\infty]} :=
\mathcal S_dH^{[\infty]}$
is meromorphic on $\mathbb C^\ast$.
By \cite[Lemma~1.4 and Remark~1.5]{dreyfus2015building},
$Y_d^{[\infty]}$ is a solution of \eqref{qLinear}.
The asymptotic estimate \eqref{Eq:qGevreyAsymptotic} gives
$H_d^{[\infty]}(x)=I+O(x^{-1})$ as $x\to\infty$
away from $-dq^{\mathbb Z}$.
Hence $Y_d^{[\infty]}$ is a fundamental solution.

The case $|q|>1$ reduces to the case $|q|<1$ by applying
the relation \eqref{Eq:qInverseH} in
Corollary~\ref{Cor:Basic}.
For this reduction, note that the preceding argument for $|q|<1$
applies equally to the inverse formal power series
$H^{[\infty]}(x)^{-1}$.
Moreover,
\begin{align*}
\frac{1}{q^{-1}-1}
\bigcup_{i\neq k}
(-u_i)^{-1}
\bigl(
1+(q^{-1}-1)\mathrm{Spec}(-q\Phi_{kk}^{\top})
\bigr)
q^{\mathbb Z}
&=
\Sigma_k.
\end{align*}
This completes the proof.
\end{prf}

Suppose in addition that $\Phi$ is $q$-nonresonant and
$W,\Lambda$ are chosen as in \eqref{Jordan}.
We define the
\textbf{central connection matrix in the direction $d$} by
\begin{align}\label{Def:Cd}
	C_d(x;U,\Phi;\boldsymbol{\beta},W)
	&:=
	Y_d^{[\infty]}(x;U,\Phi;\boldsymbol{\beta})^{-1}
	\cdot
	Y^{[0]}(x;U,\Phi)W.
\end{align}

\subsection{Explicit Rank-Two Formulas}
\label{Sect:ExampCan2}

Consider the following $2\times2$ linear $q$-difference system:
\begin{equation}\label{rank2}
	\frac{\mathrm{d}_q}{\mathrm{d}_qx}Y(x)
	=
	\left(U+\Phi x^{-1}\right)\cdot Y(x),\quad
	U=
	\begin{pmatrix}
		u_1 & 0\\
		0 & u_2
	\end{pmatrix},\quad
	\Phi=
	\begin{pmatrix}
		\varphi_{11} & \varphi_{12}\\
		\varphi_{21} & \varphi_{22}
	\end{pmatrix}.
\end{equation}
Assume that $u_1,u_2\in\mathbb C^\ast$,
$u_1\notin u_2q^{\mathbb Z}$, and $\Phi$ is $q$-nonresonant.
Suppose further that $\Phi$ has distinct eigenvalues
$[\lambda_1]_q$ and $[\lambda_2]_q$,
neither of which is equal to $\varphi_{11}$.
Set
\begin{align*}
	W :=
	\begin{pmatrix}
		\displaystyle
		\frac{\varphi_{12}}
		{[\lambda_1]_q-\varphi_{11}}
		&
		\displaystyle
		\frac{\varphi_{12}}
		{[\lambda_2]_q-\varphi_{11}}\\
		1&1
	\end{pmatrix},\qquad
	\Lambda :=
	\mathrm{diag}(\lambda_1,\lambda_2).
\end{align*}
Then $W$ is invertible and
\begin{align*}
	W^{-1} [\Phi]^q W
	&=
	\Lambda.
\end{align*}
Choose $\alpha_1$ and $\alpha_2$ such that
\begin{align*}
	\mathrm{Spec}\bigl(U^{-1}(I+(q-1)\Phi)\bigr)
	&=
	\left\{
		u_1^{-1}q^{\alpha_1},
		u_2^{-1}q^{\alpha_2}
	\right\}.
\end{align*}
These choices determine the $q$-monodromy system
$(U,\Phi;W;\boldsymbol{\alpha})$ considered below.

We now compute the canonical fundamental solutions of the
$q$-monodromy system
$(U,\Phi;W;\boldsymbol{\alpha})$
and the corresponding $q$-monodromy data.
For the formal monodromy $\boldsymbol{\alpha}$,
these fundamental solutions admit explicit expressions
in terms of the basic hypergeometric series
\begin{align*}
	{}_2\varphi_1\left(
	\begin{array}{c}
		a_1,a_2\\
		b
	\end{array};
	x
	\right)
	&:=
	\sum_{k=0}^{\infty}
	\frac{(a_1;q)_k(a_2;q)_k}
	{(b;q)_k(q;q)_k}x^k.
\end{align*}

\begin{pro}
For the formal monodromy $\boldsymbol{\alpha}$, the canonical
fundamental solution of the linear $q$-difference system
\eqref{rank2} at $x=\infty$ is
	\begin{align*}
		Y^{[\infty]}(x) & =
		H^{[\infty]}(x) \cdot
		x^{\boldsymbol{\alpha}}e_{q}(q^{-\boldsymbol{\alpha}}U x),\\
		H^{[\infty]}(x)_{11} & =
		{}_2\varphi_1 \left(
		\begin{array}{c} 
			q^{\lambda_1-\alpha_1}, q^{\lambda_2-\alpha_1} \\ 
			\frac{u_2}{u_1} 
		\end{array}; 
		-\frac{u_1^{-1}q^{\alpha_1+1}}{q-1} x^{-1}
		\right),\\
		H^{[\infty]}(x)_{22} & =
		{}_2\varphi_1 \left(
		\begin{array}{c}
			q^{\lambda_1-\alpha_2}, q^{\lambda_2-\alpha_2} \\ 
			\frac{u_1}{u_2}
		\end{array}; 
		-\frac{u_2^{-1}q^{\alpha_2+1}}{q-1} x^{-1}
		\right),\\
		H^{[\infty]}(x)_{21} & =
		\frac{q\varphi_{21}}{u_1-qu_2}x^{-1} \cdot
		{}_2\varphi_1 \left(
		\begin{array}{c}
			q^{\lambda_1-\alpha_1+1}, q^{\lambda_2-\alpha_1+1} \\ 
			q^2\frac{u_2}{u_1}
		\end{array}; 
		-\frac{u_1^{-1}q^{\alpha_1+1}}{q-1} x^{-1}\right),\\
		H^{[\infty]}(x)_{12} & =
		\frac{q\varphi_{12}}{u_2-qu_1}x^{-1} \cdot
		{}_2\varphi_1 \left(
		\begin{array}{c}
			q^{\lambda_1-\alpha_2+1}, q^{\lambda_2-\alpha_2+1} \\ 
			q^2\frac{u_1}{u_2}
		\end{array}; 
		-\frac{u_2^{-1}q^{\alpha_2+1}}{q-1} x^{-1}\right).
	\end{align*}
The Floquet solution at $x=0$ with respect to $W$ is
\begin{align*}
	Y^{[0]}(x)W & =
	H^{[0]}(x)W \cdot
	x^\Lambda,\\
	( H^{[0]}(x)W )_{11} & =
		\frac{\varphi_{12}}{[\lambda_1]_q-\varphi_{11}}\cdot 
		{}_2\varphi_1\left(
		\begin{matrix}
			q^{\lambda_1 - \alpha_1},
			q^{\lambda_1 - \alpha_1 + 1} \frac{u_1}{u_2} \\
			q^{\lambda_1 - \lambda_2 + 1}
		\end{matrix};
		-\frac{q-1}{u_2^{-1}q^{\alpha_2}}x \right)
		e_q(q^{-\alpha_1}u_1 x)\\
	& =
	\frac{\varphi_{12}}{[\lambda_1]_q-\varphi_{11}}\cdot 
		{}_2\varphi_1\left(
		\begin{matrix}
			q^{\lambda_1 - \alpha_2} \frac{u_2}{u_1},
			q^{\lambda_1 - \alpha_2 + 1} \\
			q^{\lambda_1 - \lambda_2 + 1}
		\end{matrix};
		-\frac{q-1}{u_1^{-1}q^{\alpha_1}}x \right)
		e_q(q^{-\alpha_2}u_2 x),\\
	( H^{[0]}(x)W )_{12} & =
		\frac{\varphi_{12}}{[\lambda_2]_q-\varphi_{11}}\cdot
		{}_2\varphi_1\left(
		\begin{matrix}
			q^{\lambda_2 - \alpha_1},
			q^{\lambda_2 - \alpha_1 + 1}\frac{u_1}{u_2} \\
			q^{\lambda_2 - \lambda_1 + 1}
		\end{matrix};
		-\frac{q-1}{u_2^{-1}q^{\alpha_2}}x\right)
		e_q(q^{-\alpha_1}u_1 x)\\
	& =
		\frac{\varphi_{12}}{[\lambda_2]_q-\varphi_{11}}\cdot
		{}_2\varphi_1\left(
		\begin{matrix}
			q^{\lambda_2 - \alpha_2} \frac{u_2}{u_1},
			q^{\lambda_2 - \alpha_2 + 1} \\
			q^{\lambda_2 - \lambda_1 + 1}
		\end{matrix};
		-\frac{q-1}{u_1^{-1}q^{\alpha_1}}x\right)
		e_q(q^{-\alpha_2}u_2 x),\\
	( H^{[0]}(x)W )_{21} & =
		{}_2\varphi_1\left(
		\begin{matrix}
			q^{\lambda_1 - \alpha_1 + 1},
			q^{\lambda_1 - \alpha_1} \frac{u_1}{u_2} \\
			q^{\lambda_1 - \lambda_2 + 1}
		\end{matrix};
		-\frac{q-1}{u_2^{-1}q^{\alpha_2}}x\right)
		e_q(q^{-\alpha_1}u_1 x)\\
	& =
		{}_2\varphi_1\left(
		\begin{matrix}
			q^{\lambda_1 - \alpha_2 + 1} \frac{u_2}{u_1},
			q^{\lambda_1 - \alpha_2} \\
			q^{\lambda_1 - \lambda_2 + 1}
		\end{matrix};
		-\frac{q-1}{u_1^{-1}q^{\alpha_1}}x\right)
		e_q(q^{-\alpha_2}u_2 x),\\
	( H^{[0]}(x)W )_{22} & =
		{}_2\varphi_1\left(
		\begin{matrix}
			q^{\lambda_2 - \alpha_1 + 1},
			q^{\lambda_2 - \alpha_1} \frac{u_1}{u_2} \\
			q^{\lambda_2 - \lambda_1 + 1}
		\end{matrix};
		-\frac{q-1}{u_2^{-1}q^{\alpha_2}}x\right)
		e_q(q^{-\alpha_1}u_1 x)\\
	& =
		{}_2\varphi_1\left(
		\begin{matrix}
			q^{\lambda_2 - \alpha_2 + 1} \frac{u_2}{u_1},
			q^{\lambda_2 - \alpha_2} \\
			q^{\lambda_2-\lambda_1+1}
		\end{matrix};
		-\frac{q-1}{u_1^{-1}q^{\alpha_1}}x\right)
		e_q(q^{-\alpha_2}u_2 x).
\end{align*}

\end{pro}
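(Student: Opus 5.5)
We verify the formulas through the uniqueness statements already proved, not by constructing the solutions from scratch. By Proposition~\ref{Can:inf}, with $\boldsymbol{\beta}=\boldsymbol{\alpha}$ and $u_1\notin u_2q^{\mathbb Z}$, $H^{[\infty]}(x)$ is the unique formal series $I+O(x^{-1})$ solving the recurrence \eqref{Eq:HinfRec}. Likewise, by $q$-nonresonance, the column $(H^{[0]}(x)W)_{\ast j}x^{\lambda_j}$ is the unique solution of \eqref{rank2} of the form $x^{\lambda_j}\bigl(W_{\ast j}+O(x)\bigr)$. Indeed, the recurrence in the proof of Proposition~\ref{Can:zero} applied to $H^{[0]}W$ has invertible coefficient operators $X\mapsto X([\nu]_qI+q^\nu\Lambda')-\Phi X$, because $\Phi$ is nonresonant. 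So it suffices to show that each proposed column solves \eqref{rank2} and has the correct leading term.

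The algebraic input is the set of spectral identities. Since $q^{[\Phi]^q}=I+(q-1)\Phi$, we have $\det(I+(q-1)\Phi)=q^{\lambda_1+\lambda_2}$ and $\operatorname{tr}(I+(q-1)\Phi)=q^{\lambda_1}+q^{\lambda_2}$. From the definition of $\boldsymbol{\alpha}$ we also get $q^{\alpha_1+\alpha_2}=q^{\lambda_1+\lambda_2}$ and
\[
u_1^{-1}\bigl(1+(q-1)\varphi_{11}\bigr)+u_2^{-1}\bigl(1+(q-1)\varphi_{22}\bigr)=u_1^{-1}q^{\alpha_1}+u_2^{-1}q^{\alpha_2}.
\]
These let us express $\varphi_{11},\varphi_{22},\varphi_{12}\varphi_{21}$ in terms of $u_k,\lambda_i,\alpha_k$.

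For the first column at $\infty$, write $y=(h_{11},h_{21})^\top x^{\alpha_1}e_q(q^{-\alpha_1}u_1x)$. Using \eqref{Eq:eq}, the system becomes a first-order $2\times2$ $q$-difference system for $(h_{11},h_{21})$ with coefficients linear in $x^{-1}$. We eliminate $h_{21}$ using the first row, so $h_{21}$ is given by a $q$-difference expression in $h_{11}$. This yields a second-order scalar equation for $h_{11}$. After the substitution $t=-\frac{u_1^{-1}q^{\alpha_1+1}}{q-1}x^{-1}$ and the spectral identities, we expect it to become Heine's equation with $a=q^{\lambda_1-\alpha_1}$, $b=q^{\lambda_2-\alpha_1}$, $c=u_2/u_1$. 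The regular solution with value $1$ at $t=0$ is the stated ${}_2\varphi_1$. Heine's contiguous relations then turn the expression for $h_{21}$ into the stated multiple of ${}_2\varphi_1(qa,qb;q^2c;t)$. The prefactor is fixed by the $\nu=0$ case of \eqref{Eq:HinfRec}, $(u_2-q^{-1}u_1)(H^{[\infty]}_1)_{21}=-\varphi_{21}$, which gives $q\varphi_{21}/(u_1-qu_2)$. The second column follows by the symmetry exchanging the indices $1\leftrightarrow 2$.

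For the Floquet solution we proceed in the same way. We factor out $e_q(q^{-\alpha_1}u_1x)$ as a scalar gauge and set $y=x^{\lambda_j}e_q(q^{-\alpha_1}u_1x)\,g(x)$. Eliminating one component gives Heine's equation in $t=-(q-1)u_2q^{-\alpha_2}x$ with $c=q^{\lambda_j-\lambda_{j'}+1}$ and the numerator parameters as stated. The normalization comes from $W_{\ast j}$. The second displayed expression, obtained by gauging with $e_q(q^{-\alpha_2}u_2x)$ instead, is another solution of the same form $x^{\lambda_j}(W_{\ast j}+O(x))$. By the uniqueness above it must coincide with the first, and this is equivalent to a Heine-type transformation formula.

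The main obstacle will be the elimination step: checking that, after the spectral identities are substituted, the coefficients of the scalar second-order equation factor exactly into Heine's form with the claimed parameters, including the $u_1/u_2$ factors in the Floquet case. I would first check this coefficient matching at the level of the three-term recurrence for the series coefficients, where it reduces to comparing ratios $c_{k+1}/c_k$ of rational functions in $q^k$.
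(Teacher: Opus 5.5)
Your proposal is correct and is essentially the paper's argument. The paper proves the statement by substituting the ${}_2\varphi_1$ series into the coefficient recurrences from the proofs of Propositions~\ref{Can:zero} and~\ref{Can:inf} and invoking their uniqueness; this is the coefficient-level check you single out, and your $e_q$-gauge and Heine-equation language only organize it (the computation does close, using $q^{\alpha_1+\alpha_2}=q^{\lambda_1+\lambda_2}$, the weighted-trace identity, and $\varphi_{12}\varphi_{21}\neq0$). The agreement of the two Floquet expressions, which you obtain from uniqueness, is Heine's $q$-Euler transformation ${}_2\varphi_1(a,b;c;z)=\frac{(abz/c;q)_\infty}{(z;q)_\infty}\,{}_2\varphi_1(c/a,c/b;c;abz/c)$ with $z=(1-q)q^{-\alpha_2}u_2x$.
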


\begin{prf}
The result follows by substituting the defining series of
${}_2\varphi_1$ into the coefficient recurrences in the proofs of
Propositions~\ref{Can:zero} and~\ref{Can:inf}.
\end{prf}

For $|q|<1$, the connection formula for the basic
hypergeometric series ${}_2\varphi_1$
\cite{gasper_basic_2004}
allows us to evaluate explicitly the central connection matrix $C(x)$
of the $q$-monodromy system
$(U,\Phi;W;\boldsymbol{\alpha})$.
Furthermore, when the formal monodromy is taken to be
$\boldsymbol{\beta}=\mathrm{diag}(\beta_1,\beta_2)$,
the central connection matrix satisfies
\begin{equation*}
C(x;\boldsymbol{\beta})
=
\frac{
	\theta_q\left(
	\frac{(q-1)U}{q^{\boldsymbol{\beta}}}x
	\right)
}{
	\theta_q\left(
	\frac{(q-1)U}{q^{\boldsymbol{\alpha}}}x
	\right)
}
x^{\boldsymbol{\alpha}-\boldsymbol{\beta}}
\cdot C(x;\boldsymbol{\alpha}).
\end{equation*}
Consequently, the central connection matrix with respect to
the formal monodromy $\boldsymbol{\beta}$ is given by the following
proposition.

\begin{pro}\label{Pro:EntryC2}
Suppose that $|q|<1$.
The entries of
	the central connection matrix $C(x;\boldsymbol{\beta})$
	with respect to the formal monodromy $\boldsymbol{\beta}$
	are given by
\begin{subequations}
\begin{align}
\label{qC11}
	C_{11} & =
	\frac{\varphi_{12}}{[\lambda_1]_q - \varphi_{11}} \cdot
	\frac{
		(q^{\alpha_1 - \lambda_2 + 1} ;q)_\infty
		(q^{\alpha_2 - \lambda_2 + 1} \frac{u_1}{u_2} ;q)_\infty
	}{
		(q^{\lambda_1 - \lambda_2 + 1} ;q)_\infty
		(q \frac{u_1}{u_2} ;q)_\infty
	} \cdot
	\frac{
		\theta_q \left(
		\frac{(q-1) u_1}{q^{\beta_1}}
		x\right)
	}{
		\theta_q \left(
		\frac{(q-1) u_1}{q^{\alpha_1}}
		x \right)
	}
	\frac{
		\theta_q \left(
		\frac{(q-1) u_2}{q^{\lambda_2}}
		x\right)
	}{
		\theta_q \left(
		\frac{(q-1) u_2}{q^{\alpha_2}}
		x \right)
	}
	x^{\lambda_1 - \beta_1}
	,\\
	C_{12} & =
	\frac{\varphi_{12}}{[\lambda_2]_q - \varphi_{11}} \cdot
	\frac{
		(q^{\alpha_1 - \lambda_1 + 1} ;q)_\infty
		(q^{\alpha_2 - \lambda_1 + 1} \frac{u_1}{u_2} ;q)_\infty
	}{
		(q^{\lambda_2 - \lambda_1 + 1} ;q)_\infty
		(q \frac{u_1}{u_2} ;q)_\infty
	} \cdot
	\frac{
		\theta_q \left(
		\frac{(q-1) u_1}{q^{\beta_1}}
		x \right)
	}{
		\theta_q \left(
		\frac{(q-1) u_1}{q^{\alpha_1}}
		x \right)
	}
	\frac{
		\theta_q \left(
		\frac{(q-1) u_2}{q^{\lambda_1}}
		x \right)
	}{
		\theta_q \left(
		\frac{(q-1) u_2}{q^{\alpha_2}}
		x \right)
	}
	x^{\lambda_2 - \beta_1}
	,\\
	C_{21} & =
	\frac{
		(q^{\alpha_2 - \lambda_2 + 1} ;q)_\infty
		(q^{\alpha_1 - \lambda_2 + 1} \frac{u_2}{u_1} ;q)_\infty
	}{
		(q^{\lambda_1 - \lambda_2 + 1} ;q)_\infty
		(q \frac{u_2}{u_1} ;q)_\infty
	} \cdot
	\frac{
		\theta_q \left(
		\frac{(q-1) u_2}{q^{\beta_2}}
		x \right)
	}{
		\theta_q \left(
		\frac{(q-1) u_2}{q^{\alpha_2}}
		x \right)
	}
	\frac{
		\theta_q \left(
		\frac{(q-1) u_1}{q^{\lambda_2}}
		x \right)
	}{
		\theta_q \left(
		\frac{(q-1) u_1}{q^{\alpha_1}}
		x \right)
	}
	x^{\lambda_1 - \beta_2}
	,\\
\label{qC22}
	C_{22} & =
	\frac{
		(q^{\alpha_2 - \lambda_1 + 1} ;q)_\infty
		(q^{\alpha_1 - \lambda_1 + 1} \frac{u_2}{u_1} ;q)_\infty
	}{
		(q^{\lambda_2 - \lambda_1 + 1} ;q)_\infty
		(q \frac{u_2}{u_1} ;q)_\infty
	} \cdot
	\frac{
		\theta_q \left(
		\frac{(q-1) u_2}{q^{\beta_2}}
		x \right)
	}{
		\theta_q \left(
		\frac{(q-1) u_2}{q^{\alpha_2}}
		x \right)
	}
	\frac{
		\theta_q \left(
		\frac{(q-1) u_1}{q^{\lambda_1}}
		x \right)
	}{
		\theta_q \left(
		\frac{(q-1) u_1}{q^{\alpha_1}}
		x \right)
	}
	x^{\lambda_2 - \beta_2}.
\end{align}
\end{subequations}
The inverse is explicitly given by
\begin{subequations}
\begin{align}
\label{qCinv11}
(C^{-1})_{11} & =
	\frac{\varphi_{21}}{[\lambda_1]_q - \varphi_{22}} \cdot
	\frac{
		(q^{\alpha_2 - \lambda_1} ;q)_\infty
		(q^{\alpha_1 - \lambda_1} \frac{u_2}{u_1} ;q)_\infty
	}{
		(q^{\lambda_2 - \lambda_1} ;q)_\infty
		(\frac{u_2}{u_1} ;q)_\infty
	} \cdot
	\frac{
		\theta_q \left(
		\frac{(q-1) u_1}{q^{\lambda_1}}
		x\right)
	}{
		\theta_q \left(
		\frac{(q-1) u_1}{q^{\beta_1}}
		x \right)
	}
	x^{\beta_1 - \lambda_1}
	,\\
(C^{-1})_{21} & =
	\frac{\varphi_{21}}{[\lambda_2]_q - \varphi_{22}} \cdot
	\frac{
		(q^{\alpha_2 - \lambda_2} ;q)_\infty
		(q^{\alpha_1 - \lambda_2} \frac{u_2}{u_1} ;q)_\infty
	}{
		(q^{\lambda_1 - \lambda_2} ;q)_\infty
		(\frac{u_2}{u_1} ;q)_\infty
	} \cdot
	\frac{
		\theta_q \left(
		\frac{(q-1) u_1}{q^{\lambda_2}}
		x \right)
	}{
		\theta_q \left(
		\frac{(q-1) u_1}{q^{\beta_1}}
		x \right)
	}x^{\beta_1 - \lambda_2}
	,\\
(C^{-1})_{12} & =
	\frac{
		(q^{\alpha_1 - \lambda_1} ;q)_\infty
		(q^{\alpha_2 - \lambda_1} \frac{u_1}{u_2} ;q)_\infty
	}{
		(q^{\lambda_2 - \lambda_1} ;q)_\infty
		(\frac{u_1}{u_2} ;q)_\infty
	} \cdot
	\frac{
		\theta_q \left(
		\frac{(q-1) u_2}{q^{\lambda_1}}
		x \right)
	}{
		\theta_q \left(
		\frac{(q-1) u_2}{q^{\beta_2}}
		x \right)
	}x^{\beta_2 - \lambda_1}
	,\\
\label{qCinv22}
(C^{-1})_{22} & =
	\frac{
		(q^{\alpha_1 - \lambda_2} ;q)_\infty
		(q^{\alpha_2 - \lambda_2} \frac{u_1}{u_2} ;q)_\infty
	}{
		(q^{\lambda_1 - \lambda_2} ;q)_\infty
		(\frac{u_1}{u_2} ;q)_\infty
	} \cdot
	\frac{
		\theta_q \left(
		\frac{(q-1) u_2}{q^{\lambda_2}}
		x \right)
	}{
		\theta_q \left(
		\frac{(q-1) u_2}{q^{\beta_2}}
		x \right)
	}x^{\beta_2 - \lambda_2}.
\end{align}
\end{subequations}
\end{pro}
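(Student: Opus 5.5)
The plan is to compute $C(x;\boldsymbol{\alpha})=Y^{[\infty]}(x)^{-1}Y^{[0]}(x)W$ column by column. I take the explicit ${}_2\varphi_1$ representations from the preceding proposition and analytically continue the Floquet solution at $x=0$ to a neighbourhood of $x=\infty$ using the classical three-term connection formula for ${}_2\varphi_1$ (Gasper--Rahman, valid for $|q|<1$). At the end I pass from $\boldsymbol{\alpha}$ to general $\boldsymbol{\beta}$ by the displayed relation $C(x;\boldsymbol{\beta})=\theta_q(\cdots q^{-\boldsymbol{\beta}})/\theta_q(\cdots q^{-\boldsymbol{\alpha}})\,x^{\boldsymbol{\alpha}-\boldsymbol{\beta}}C(x;\boldsymbol{\alpha})$. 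This relation accounts exactly for the $\beta$-dependent theta ratios and the powers $x^{\lambda_j-\beta_i}$ in \eqref{qC11}--\eqref{qC22}.

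First, for the entries $C_{1j}$ I use the first expression of $(H^{[0]}W)_{\ast j}$, which carries the factor $e_q(q^{-\alpha_1}u_1x)$ and a ${}_2\varphi_1$ in $z=-(q-1)u_2q^{-\alpha_2}x$. I apply the connection formula ${}_2\varphi_1(a,b;c;z)=\frac{(b,c/a;q)_\infty(az,q/(az);q)_\infty}{(c,b/a;q)_\infty(z,q/z;q)_\infty}{}_2\varphi_1(a,aq/c;aq/b;cq/(abz))+(a\leftrightarrow b)$. Each of the two resulting terms is a series in $x^{-1}$, so, after combining with $e_q$ and rewriting the $(\cdot;q)_\infty$ pairs as theta functions, each is proportional to a column of $Y^{[\infty]}(x)$. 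I expect the term with argument $cq/(abz)\propto u_1^{-1}x^{-1}$ and prefactor $e_q(q^{-\alpha_1}u_1x)$ to reproduce the first column $H^{[\infty]}_{\ast1}x^{\alpha_1}e_q(q^{-\alpha_1}u_1x)$, and its coefficient gives $C_{1j}$. To identify the second column I use the alternative (second) expression of $(H^{[0]}W)_{\ast j}$, which carries $e_q(q^{-\alpha_2}u_2x)$, and read off $C_{2j}$ symmetrically with $1\leftrightarrow2$. The identification rests on the uniqueness in Proposition~\ref{Can:inf}: once a $q$-constant multiple of $x^{\alpha_k}e_q(q^{-\alpha_k}u_kx)$ times a series $\delta_{\ast k}+O(x^{-1})$ appears, it must be the $k$-th column. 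So I only need to match leading coefficients, namely the $(1,j)$ entry of $C_{1j}$ times $H^{[\infty]}_{11}\to1$, together with the $\varphi_{12}/([\lambda_j]_q-\varphi_{11})$ prefactor of $W$.

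The second step is to convert the prefactors into the stated form. I use $e_q(y)=1/((1-q)y;q)_\infty$ together with $\theta_q(y)=(q;q)_\infty(-y;q)_\infty(-q/y;q)_\infty$ and the quasi-periodicity $\theta_q(qy)=y^{-1}\theta_q(y)$ to express $(az,q/(az))_\infty/(z,q/z)_\infty$ times the $e_q$ ratio as $\theta_q((q-1)u_2q^{-\lambda_2}x)/\theta_q((q-1)u_2q^{-\alpha_2}x)$ times a power of $x$. The remaining constants are $(b,c/a)_\infty/(c,b/a)_\infty$ with $a,b=q^{\lambda_j-\alpha_1},q^{\lambda_j-\alpha_1+1}u_1/u_2$ and $c=q^{\lambda_j-\lambda_{j'}+1}$, and these should simplify to the Pochhammer quotients in \eqref{qC11}. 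In this simplification I use the relation $q^{\alpha_1+\alpha_2}=q^{\lambda_1+\lambda_2}$, which follows from $\det(U^{-1}(I+(q-1)\Phi))=u_1^{-1}u_2^{-1}q^{\lambda_1+\lambda_2}$.

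For the inverse \eqref{qCinv11}--\eqref{qCinv22} I will not invert the $2\times2$ matrix by hand. Instead I apply the duality \eqref{Eq:CDuality} of Proposition~\ref{Pro:CTransformation}: $C^{-\top}$ is the connection matrix of the system $(-U,-q\Phi^\top;q^{-1})$. Alternatively, and more simply within $|q|<1$, I would compute $C^{-1}=W^{-1}H^{[0]}(x)^{-1}\cdots$ by the same ${}_2\varphi_1$ connection argument applied to the adjoint system $Y^{-\top}$, whose explicit solutions are again ${}_2\varphi_1$'s with $\varphi_{21},\varphi_{22}$ replacing $\varphi_{12},\varphi_{11}$. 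As a consistency check I would verify $\det C$ against the determinant formula in the proof of Proposition~\ref{Pro:qUni}.

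I expect the main obstacle to be the bookkeeping in the second step. Branches of $x^{\lambda}$ on $\widetilde{\mathbb C^\ast}$ must be tracked consistently with the fixed $\arg(1-q)$, and the theta quasi-periodicity shifts introduce powers of $(q-1)u_k$ that must cancel exactly against those hidden in $e_q$ and in $x^{\alpha_k}$. I would handle this by first verifying the claimed formula up to a $q$-constant-free scalar via pole/zero matching (the poles in $x$ must lie on $\frac{1}{1-q}u_k^{-1}q^{\alpha_k}q^{\mathbb Z}$ by Proposition~\ref{Pro:SolY}), and then fixing the scalar from the leading asymptotics.
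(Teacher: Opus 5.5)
Your proposal follows the same route as the paper: expand the explicit ${}_2\varphi_1$ Floquet solution at $x=0$ with the Gasper--Rahman three-term connection formula, read off the columns of $Y^{[\infty]}(x;\boldsymbol\alpha)$, and then pass to general $\boldsymbol\beta$ by the theta-quotient relation; the constants you predict for the $a$-term do reproduce \eqref{qC11}. Two minor remarks: first, both terms of the connection formula carry the same argument $cq/(abz)\propto(u_1x)^{-1}$, so column $1$ is selected by the $a$-term (with $a=q^{\lambda_j-\alpha_1}$), not by the argument, and the $b$-term is the column-$2$ contribution only after Heine's transformation, which is why reading $C_{2j}$ off row $2$ of the second representation is the clean choice; second, for the inverse, the duality \eqref{Eq:CDuality} only trades the problem for the not-yet-computed $|q|>1$ case, whereas $C^{-1}=\operatorname{adj}C/\det C$ with $\det C(x;\boldsymbol\alpha)=x^{\lambda_1+\lambda_2-\alpha_1-\alpha_2}\det W$ (from \eqref{Det:H}) gives \eqref{qCinv11}--\eqref{qCinv22} directly.
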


The case $|q|>1$ follows from the inverse-transpose relation
\eqref{Eq:CDuality} in Proposition~\ref{Pro:CTransformation},
which reduces it to the case $|q|<1$.

\section{$q$-Isomonodromic Deformations}
\label{Sect:qisoeq}

In this section, we first study $q$-isomonodromic deformations of
the $q$-monodromy system
$(U,\Phi;W;\boldsymbol{\alpha})$.
More precisely, as $U$ varies, our aim is to construct
a $q$-monodromy system
$(U,\Phi(u);W(u);\boldsymbol{\alpha})$
with fixed $q$-monodromy data
$(C(x),\Lambda;\boldsymbol{\alpha})$.
The resulting $q$-isomonodromy equations also define a broader
notion of deformation for which the associated $q$-monodromy data
need not be defined.
For the part of the discussion involving $q$-monodromy data,
we impose the following conditions:
\begin{itemize}
	\item $\Phi$ is $q$-nonresonant,
		and we fix a branch of $[\Phi]^q$ throughout this section;
	\item $u_i\notin u_jq^{\mathbb{Z}\setminus\{0\}}$
	for all $i\neq j$, and
	$u_1,\ldots,u_n$ are all nonzero.
\end{itemize}
The discussion following Proposition~\ref{Pro:qUni} shows that,
	once the ordering data of the formal exponents
	$\boldsymbol{\alpha}$ are fixed,
	such a deformation, if it exists, is unique.
	
In the proof of Proposition~\ref{Pro:qUni},
	equation~\eqref{qSpiraleq} is obtained by comparing
	the zeros and poles of the determinants of
	the central connection matrices.
This shows that the locations of the singularities of $C(x)$
	as a function of~$x$ depend on~$U$.
Consequently, a deformation preserving $C(x)$ cannot vary
	continuously with respect to~$u$.
The deformation parameter~$u$ is therefore naturally restricted to
	the product of the $q$-spirals through an initial point~$u^0$.
For this reason, throughout this section we assume that $\Phi(u)$
	is defined on the lattice
\[
	u_1^0q^{\mathbb{Z}}\times\cdots\times u_n^0q^{\mathbb{Z}},
\]
where
\[
	u^0=(u_1^0,\ldots,u_n^0)
\]
has nonzero components.
We denote by $T_k$ the $q$-shift in the $k$-th variable.
Its action on $\Phi$ is given by
\begin{align*}
	(T_k\Phi)(u_1,\ldots,u_n)
	&:=\Phi(u_1,\ldots,qu_k,\ldots,u_n).
\end{align*}
In Section~\ref{Sect:Asym}, we shall use prescribed asymptotic behavior
	in the deformation variables to extend these solutions
	to continuously varying~$u$.

We first define $q$-isomonodromic deformations by requiring the
$q$-monodromy data to remain invariant.

\begin{defi}\label{Def:qID}
We call the triple 
	$(\Phi(u;\boldsymbol{\beta}),W(u;\boldsymbol{\beta});
	\boldsymbol{\alpha})$
	a \textbf{$q$-isomonodromic deformation} with respect to
	the formal monodromy $\boldsymbol{\beta}$ if there exists
	a choice of branches
	$[\Phi(u;\boldsymbol{\beta})]^q$
	for which the $q$-monodromy data remain invariant as $u$ varies:
	\begin{itemize}		
		\item The central connection matrix $C(x)$ remains invariant.
		For every $u$ and $k=1,\ldots,n$, we have
		\begin{align}
			C(x;U,\Phi(u;\boldsymbol{\beta});
				\boldsymbol{\beta},W(u;\boldsymbol{\beta})) = 
			C(x;T_kU,T_k\Phi(u;\boldsymbol{\beta});
				\boldsymbol{\beta},T_kW(u;\boldsymbol{\beta}));
		\end{align}
		
		\item There exists a matrix $\Lambda$ independent of $u$
		such that
		$W(u;\boldsymbol{\beta})$ conjugates
		$[\Phi(u;\boldsymbol{\beta})]^q$ to $\Lambda$:
		\begin{align}\label{qEigen}
			W(u;\boldsymbol{\beta})^{-1}
			[\Phi(u;\boldsymbol{\beta})]^q
			W(u;\boldsymbol{\beta})  = \Lambda;
		\end{align}
		
		\item The spectrum of
		$U^{-1}(I+(q-1)\Phi(u;\boldsymbol{\beta}))$
		is determined by the fixed formal exponents
		$\boldsymbol{\alpha}$ for every $u$:
		\begin{align}
			\mathrm{Spec}(U^{-1}(I+(q-1)\Phi(u;\boldsymbol{\beta}))) & =
			\{
				u_k^{-1} q^{\alpha^{(k)}_i}:
				1\leqslant k \leqslant n,
				1\leqslant i \leqslant m_k
			\}.
		\end{align}		
	\end{itemize}
\end{defi}

The remainder of this section is organized as follows.
In Section~\ref{Sect:qIsoEquation},
	we derive the explicit form of the $q$-isomonodromy equations.
In Section~\ref{Sect:LocalQIso},
	we establish the local uniqueness of their solutions
	in a single deformation direction.
In Section~\ref{Sect:GlobalQIso},
	we establish the local existence of solutions
	in the multiplicity-one case by constructing them explicitly,
	prove the compatibility
	of the deformation in different directions,
	and obtain global solutions.
Finally, in Section~\ref{Sect:Eqiso22},
	we present an explicit example in the $2\times2$ case.


\subsection{$q$-Isomonodromy Equations}
\label{Sect:qIsoEquation}

In this subsection, we derive the equations governing the
	$q$-isomonodromic deformation associated with the shift
	$u_k\mapsto qu_k$.
The shifted $q$-monodromy system
	$(T_kU,T_k\Phi;T_kW;\boldsymbol{\alpha})$
	is required to have the same $q$-monodromy data with respect to
	the formal monodromy $\boldsymbol{\beta}$ as
	$(U,\Phi;W;\boldsymbol{\alpha})$.
The resulting equations are called the
	$q$-isomonodromy equations.

For simplicity, we use the following notation for
	the canonical fundamental solutions:
	\begin{alignat*}{2}
    	T_kY^{[0]}(x) & :=
    		Y^{[0]}(x;T_kU,T_k\Phi),&\quad
    	T_kY^{[\infty]}(x) & :=
    		Y^{[\infty]}(x;T_kU,T_k\Phi;\boldsymbol{\beta}),\\
    	T_kH^{[0]}(x) & :=
    		H^{[0]}(x;T_kU,T_k\Phi),&\quad
    	T_kH^{[\infty]}(x) & :=
    		H^{[\infty]}(x;T_kU,T_k\Phi;\boldsymbol{\beta}).
	\end{alignat*}
The following lemma gives the equations that must be satisfied
whenever such a deformation exists.
	
\begin{lem}\label{Lem:betaEq}
Let $(U,\Phi;W;\boldsymbol{\alpha})$ be a
	$q$-monodromy system.
Suppose that there exists another $q$-monodromy system
	$(T_kU,T_k\Phi;T_kW;\boldsymbol{\alpha})$
	such that
\begin{align}\label{Eq:Mdbeta}
	\mathrm{MD}_{\boldsymbol{\beta}}
	(U,\Phi;W;\boldsymbol{\alpha})
	&=
	\mathrm{MD}_{\boldsymbol{\beta}}
	(T_kU,T_k\Phi;T_kW;\boldsymbol{\alpha}),
\end{align}
and that the following regularized condition holds:
\begin{align}\label{Eq:TkPandP}
	\frac{
		\mathrm{Spec}\bigl(
			(T_kU)^{-1}(I+(q-1)T_k\Phi)
		\bigr)
	}{
		\mathrm{Spec}\bigl(
			U^{-1}(I+(q-1)\Phi)
		\bigr)
	}
	\cap q^{\mathbb{N}+1}
	&=
	\varnothing.
\end{align}
Then there exists a matrix $V_k$ such that
\begin{subequations}\label{qCompaWu}
\begin{align}\label{qCompa}
	\left(T_kU x+I+(q-1)
		q^{\mathrm{ad}(\boldsymbol{\beta}E_k)}T_k\Phi\right)
	\left(UE_kx +V_k\right)
	&=
	\left(UE_kqx + V_k\right)
	\left(Ux+I+(q-1)\Phi\right),\\
	\label{Eq:Wu}
	q^{\boldsymbol{\beta}E_k} T_kW
	&=
	V_k\cdot W.
\end{align}
\end{subequations}
Moreover, $V_k$ satisfies
\begin{subequations}\label{VkidDet}
\begin{align}
\label{Vk:Id}
	(V_k)_{\hat{k}\hat{k}}
	&=
	I,\\
\label{Det:beta}
	\det\left(E_kx+V_k\right)
	&=
	\prod_{i=1}^{m_k} (x+q^{\alpha_i^{(k)}}).
\end{align}
\end{subequations}
\end{lem}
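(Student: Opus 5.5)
The plan is to build a gauge matrix relating the two systems and show it is a polynomial of degree one in $x$. Define
\[
M(x):=T_kY^{[\infty]}(x)\,Y^{[\infty]}(x)^{-1}.
\]
By \eqref{Eq:Mdbeta} we have $C(x;U,\Phi;\boldsymbol\beta,W)=C(x;T_kU,T_k\Phi;\boldsymbol\beta,T_kW)$ and the same $\Lambda$. Hence also
\[
M(x)=T_kY^{[0]}(x)\,T_kW\,W^{-1}\,Y^{[0]}(x)^{-1}=T_kH^{[0]}(x)\,T_kW\,W^{-1}\,H^{[0]}(x)^{-1},
\]
because the factors $x^{\Lambda}$ cancel. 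In particular $M$ is holomorphic at $x=0$ with $M(0)=T_kW\,W^{-1}$.

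On the side of $\infty$, I would use \eqref{Eq:eq} in the $k$-th block:
\[
e_q(q^{-\beta_k}qu_kx)=\bigl(1+(q-1)q^{-\beta_k}u_kx\bigr)\,e_q(q^{-\beta_k}u_kx).
\]
This gives
\[
M(x)=T_kH^{[\infty]}(x)\bigl(I+(q-1)q^{-\boldsymbol\beta}E_kUx\bigr)H^{[\infty]}(x)^{-1},
\]
which is $x$ times a constant plus a convergent series in $x^{-1}$ near $\infty$. Since $M$ intertwines the two systems, it satisfies $M(qx)\,A(x)=T_kA(x)\,M(x)$, with $A(x)=(q-1)Ux+I+(q-1)\Phi$ and $T_kA(x)$ the corresponding matrix for $(T_kU,T_k\Phi)$.

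The key step is to show that $M$ has no poles on $\mathbb C^\ast$. I would work with $|q|<1$ and reduce $|q|>1$ to it through \eqref{Eq:qInverseH}. Proposition~\ref{Pro:SolY} gives the pole sets of the two expressions for $M$. The zero-side expression can have poles only on $\frac{1}{1-q}\mathrm{Spec}\bigl((T_kU)^{-1}(I+(q-1)T_k\Phi)\bigr)q^{-\mathbb N}$. The $\infty$-side expression can have poles only on $\frac{1}{1-q}\mathrm{Spec}\bigl(U^{-1}(I+(q-1)\Phi)\bigr)q^{\mathbb N+1}$, together with the block-column poles of $T_kH^{[\infty]}$ on the $u_j^{-1}q^{\beta_j}$-spirals. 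To avoid that last source I would write $H^{[\infty]}$ in terms of $H^{[\infty]}_{\mathrm{BG}}$ via \eqref{BGtobeta}. The $e_q$-factors then combine with $I+(q-1)q^{-\boldsymbol\beta}E_kUx$ into a ratio of $q$-Pochhammer factors that telescopes, so only the $H^{[\infty]}_{\mathrm{BG}}$ pole sets \eqref{Pole:HBG}--\eqref{Pole:HBGinv} remain. The regularized condition \eqref{Eq:TkPandP} says exactly that the two remaining pole sets are disjoint. Hence $M$ is entire on $\mathbb C^\ast$, holomorphic at $0$, and of linear growth at $\infty$, so by Liouville $M(x)=M_1x+M_0$. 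I expect this pole bookkeeping, in particular the exact cancellation of the $\boldsymbol\beta$-spirals and the off-by-one shifts $q^{\mathbb N}$ versus $q^{\mathbb N+1}$, to be the main obstacle.

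Finally I would read off the claimed structure. From the $\infty$-expansion,
\[
M_1=(q-1)q^{-\boldsymbol\beta}E_kU,\qquad M_0=I+(q-1)q^{-\boldsymbol\beta}\bigl(T_kH_1^{[\infty]}E_kU-E_kUH_1^{[\infty]}\bigr).
\]
After rescaling $x$ by $(q-1)$ and conjugating by $q^{\boldsymbol\beta E_k}$, I would set $V_k:=q^{\boldsymbol\beta E_k}M_0$, suitably normalized, so that $q^{\boldsymbol\beta E_k}M(x)$ becomes $UE_kx+V_k$. With this normalization:
\begin{itemize}
\item the intertwining relation $M(qx)A(x)=T_kA(x)M(x)$ becomes \eqref{qCompa}, and the conjugation produces the factor $q^{\mathrm{ad}(\boldsymbol\beta E_k)}$ on $T_k\Phi$;
\item evaluating $M$ at $x=0$ gives $q^{\boldsymbol\beta E_k}T_kW=V_kW$, which is \eqref{Eq:Wu};
\item every correction term in $M_0$ carries $E_k$ on the left or on the right, so $(V_k)_{\hat k\hat k}=I$, which is \eqref{Vk:Id}.
\end{itemize}
For \eqref{Det:beta} I would take determinants in the $\infty$-side expression and insert \eqref{Det:inf} for $T_kH^{[\infty]}$ and $H^{[\infty]}$. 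The third bullet of Definition~\ref{Def:qID} says the formal exponents $\alpha^{(k)}_i$ are the same before and after the shift. Using this, the quotient of $q$-Pochhammer symbols collapses to finitely many linear factors, giving
\[
\det(E_kx+V_k)=\prod_{i=1}^{m_k}\bigl(x+q^{\alpha_i^{(k)}}\bigr).
\]
The $\boldsymbol\beta$-dependent factors cancel against $\det\bigl(I+(q-1)q^{-\boldsymbol\beta}E_kUx\bigr)$.
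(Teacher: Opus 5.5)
Your proposal is correct and is essentially the paper's proof. It uses the same gauge matrix $T_kY^{[\infty]}(Y^{[\infty]})^{-1}=T_kY^{[0]}\,T_kW\,(Y^{[0]}W)^{-1}$, rewritten through the Birkhoff--Guenther solutions so that \eqref{Eq:TkPandP} makes the two pole sets from Proposition~\ref{Pro:SolY} disjoint, followed by Liouville; then \eqref{qCompa}, \eqref{Eq:Wu}, \eqref{Vk:Id} and \eqref{Det:beta} are read off from the $q$-difference relation, the value at $x=0$, the support of the correction terms, and \eqref{Det:inf}.

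Minor points:
\begin{itemize}
\item The constant term should be $M_0=I+(q-1)\bigl(T_kH_1^{[\infty]}q^{-\boldsymbol\beta}E_kU-q^{-\boldsymbol\beta}E_kUH_1^{[\infty]}\bigr)$, with $q^{-\boldsymbol\beta}$ placed between $T_kH_1^{[\infty]}$ and $E_kU$. This placement is what produces $q^{\operatorname{ad}(\boldsymbol\beta E_k)}T_kH_1^{[\infty]}$ in $V_k=q^{\boldsymbol\beta E_k}M_0$. It does not affect your conclusions, which only use the support of the correction terms.
\item The case $|q|>1$ needs no duality, since Proposition~\ref{Pro:SolY} already lists the $|q|>1$ pole sets and the same disjointness follows directly. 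If you do reduce via duality, note that the forward $q$-shift becomes a backward $q^{-1}$-shift, so the $|q|<1$ case must be applied to the reversed pair.
\item The equality of formal exponents used for \eqref{Det:beta} comes from the hypothesis that $(T_kU,T_k\Phi;T_kW;\boldsymbol\alpha)$ is a $q$-monodromy system, not from Definition~\ref{Def:qID}.
\end{itemize}
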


\begin{prf}
It follows from the definition~\eqref{Def:Cbeta}
of the central connection matrix and~\eqref{Eq:Mdbeta} that
\begin{align}\label{Eq:Rxu}
	R(x) &:=
	T_kY^{[\infty]}(x) \cdot
	Y^{[\infty]}(x)^{-1} =
	\left(T_kY^{[0]}(x)\,T_kW\right) \cdot
	\left(Y^{[0]}(x)W\right)^{-1}.
\end{align}
Using \eqref{YBGtoYbeta} in Lemma~\ref{Lem:BG}
and \eqref{Eq:eq},
we can rewrite \eqref{Eq:Rxu} as
\begin{align}\label{Eq:BGRxu}
	R(x)
	&=
	Y^{[\infty]}_{\mathrm{BG}}(x;T_kU,T_k\Phi)
	\left(
		I-E_k+(q-1)q^{-\boldsymbol{\beta}}UE_k
	\right)
	Y^{[\infty]}_{\mathrm{BG}}(x;U,\Phi)^{-1}.
\end{align}
Applying Proposition~\ref{Pro:SolY}
to \eqref{Eq:Rxu} and~\eqref{Eq:BGRxu}
shows that the possible poles of $R(x)$ lie in
\begin{align*}
	\begin{cases}
		\displaystyle
		\frac{1}{1-q}
		\mathrm{Spec}\bigl(
			(T_kU)^{-1}(I+(q-1)T_k\Phi)
		\bigr)q^{-\mathbb{N}}
			\cap
		\displaystyle
		\frac{1}{1-q}
		\mathrm{Spec}\bigl(
			U^{-1}(I+(q-1)\Phi)
		\bigr)q^{\mathbb{N}+1},
		& |q|<1,\\[8pt]
		\displaystyle
		\frac{1}{1-q}
		\mathrm{Spec}\bigl(
			U^{-1}(I+(q-1)\Phi)
		\bigr)q^{\mathbb{N}+1}
			\cap
		\displaystyle
		\frac{1}{1-q}
		\mathrm{Spec}\bigl(
			(T_kU)^{-1}(I+(q-1)T_k\Phi)
		\bigr)q^{-\mathbb{N}},
		& |q|>1.
	\end{cases}
\end{align*}
The intersection in either case is empty
by~\eqref{Eq:TkPandP}.
Therefore, $R(x)$ has no poles in $\mathbb{C}^{\ast}$.
Propositions~\ref{Can:inf} and~\ref{Can:zero},
together with \eqref{Eq:Rxu} and \eqref{Eq:eq}, give
\begin{align*}
	R(x)
	&=
	q^{-\boldsymbol{\beta}E_k}
	\left((q-1)UE_kx+V_k\right)
	+O(x^{-1}),
	\qquad x\to\infty,\\
	R(x)
	&=
	T_kW\cdot W^{-1}+O(x),
	\qquad x\to0,
\end{align*}
where
\begin{align}\label{Eq:VkHinf}
	V_k
	&=
	q^{\boldsymbol{\beta}E_k}
	+(q-1)
	\left(
		(q^{\operatorname{ad}(\boldsymbol{\beta}E_k)}
		T_kH_1^{[\infty]})\cdot UE_k
		-
		UE_k\cdot H_1^{[\infty]}
	\right).
\end{align}
The pole comparison and these expansions show that
$R(x)$ is the rational function
\begin{align}\label{Eq:RPolynomial}
	R(x)
	&=
	q^{-\boldsymbol{\beta}E_k}
	\left((q-1)UE_kx+V_k\right).
\end{align}
Comparing \eqref{Eq:RPolynomial} with the expansion at $x=0$
proves~\eqref{Eq:Wu}.
Since
$\left(q^{\boldsymbol{\beta}E_k}\right)_{\hat{k}\hat{k}} = I$
and $UE_k$ is supported on the $k$-th block,
\eqref{Eq:VkHinf} also proves~\eqref{Vk:Id}.

Applying~\eqref{qLinear} to the two factors
in~\eqref{Eq:Rxu} gives
\begin{align*}
	R\left(\frac{qx}{q-1}\right)
	&=
	\left(T_kUx+I+(q-1)T_k\Phi\right)
	R\left(\frac{x}{q-1}\right)
	\left(Ux+I+(q-1)\Phi\right)^{-1}.
\end{align*}
Substituting~\eqref{Eq:RPolynomial} into this identity
gives~\eqref{qCompa}.
Finally, Propositions~\ref{Can:inf} and~\ref{Pro:SolY},
together with \eqref{Eq:eqisqP} and~\eqref{Eq:qP}, give
\begin{align*}
	\det R(x)
	&=
	q^{- \mathrm{tr} \beta_k}
	\prod_{i=1}^{m_k}
	\left(
		(q-1)u_kx+q^{\alpha_i^{(k)}}
	\right).
\end{align*}
Comparison with the determinant of~\eqref{Eq:RPolynomial},
followed by replacing $(q-1)u_kx$ with $x$,
gives~\eqref{Det:beta}.
This completes the proof.
\end{prf}

Conversely, the following lemma shows that any solution of
	the above equations preserves the $q$-monodromy data.

\begin{lem}\label{Lem:qBasic}
Let $(U,\Phi;W;\boldsymbol{\alpha})$ be a
	$q$-monodromy system.
If there exist matrices $V_k$, $T_k\Phi$, and $T_kW$
satisfying \eqref{qCompaWu} and~\eqref{VkidDet}, then
$(T_kU,T_k\Phi;T_kW;\boldsymbol{\alpha})$
is a $q$-monodromy system and
\begin{align}
	\mathrm{MD}_{\boldsymbol{\beta}}
	(U,\Phi;W;\boldsymbol{\alpha})
	&=
	\mathrm{MD}_{\boldsymbol{\beta}}
	(T_kU,T_k\Phi;T_kW;\boldsymbol{\alpha}).
\end{align}
Here the branch $[T_k\Phi]^q$ is chosen as
\begin{align}\label{Eq:TkBranch}
	[T_k\Phi]^q
	&:=
	T_kW \cdot W^{-1} [\Phi]^q W \cdot T_kW^{-1}.
\end{align}
\end{lem}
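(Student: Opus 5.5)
The plan is to run the argument of Lemma~\ref{Lem:betaEq} backwards. We define the rational matrix function
\[
R(x):=q^{-\boldsymbol{\beta}E_k}\bigl((q-1)UE_kx+V_k\bigr),
\]
and show that it intertwines the two linear $q$-difference systems. We then show that it carries the canonical fundamental solutions of the original system to those of the shifted system, both at $x=0$ and at $x=\infty$. Equality of central connection matrices then follows immediately.

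\emph{Step 1: $q$-monodromy system and intertwining.} First we check that $(T_kU,T_k\Phi;T_kW;\boldsymbol{\alpha})$ is a $q$-monodromy system.
\begin{itemize}
\item By \eqref{Eq:TkBranch}, $T_kW^{-1}[T_k\Phi]^q\,T_kW=\Lambda$. We must still check that $q^{[T_k\Phi]^q}=I+(q-1)T_k\Phi$. Evaluate \eqref{qCompa} at $x=0$ to get $(I+(q-1)q^{\mathrm{ad}(\boldsymbol\beta E_k)}T_k\Phi)V_k=V_k(I+(q-1)\Phi)$. Conjugating by $q^{\boldsymbol\beta E_k}$ and using \eqref{Eq:Wu} gives $I+(q-1)T_k\Phi=T_kW\,W^{-1}(I+(q-1)\Phi)W\,T_kW^{-1}$, which is what we need. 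In particular $V_k$ is invertible, by \eqref{Det:beta} at $x=0$, provided $W$ is.
\item For the formal exponents, take determinants in \eqref{qCompa} and compare the zeros in $x$ of both sides, using \eqref{Det:beta}. This gives $\mathrm{Spec}((T_kU)^{-1}(I+(q-1)T_k\Phi))$ as the old spectrum with the $k$-th block entries $u_k^{-1}q^{\alpha^{(k)}_i}$ replaced by $(qu_k)^{-1}q^{\alpha^{(k)}_i}$. So $\boldsymbol\alpha$ is preserved.
\item Rescaling $x\mapsto x/(q-1)$ in \eqref{qCompa} shows $R(qx)\,(I+(q-1)(U+\Phi/x)x\cdots)$; concretely, $R(x)Y^{[\infty]}(x)$ and $R(x)Y^{[0]}(x)W$ both solve the shifted system $T_x\widetilde Y=((q-1)T_kUx+I+(q-1)T_k\Phi)\widetilde Y$, exactly as in the last step of the proof of Lemma~\ref{Lem:betaEq}.
\end{itemize}

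\emph{Step 2: matching at $x=0$.} The solution $R(x)Y^{[0]}(x)W$ equals $R(x)H^{[0]}(x)W\,T_kW^{-1}\cdot T_kW\,x^{\Lambda}$. Its series part $R(x)H^{[0]}(x)W\,T_kW^{-1}$ is a convergent power series in $x$ with constant term $q^{-\boldsymbol\beta E_k}V_kW\,T_kW^{-1}=I$, by \eqref{Eq:Wu}. By the uniqueness in Proposition~\ref{Can:zero} (which uses $q$-nonresonance of $T_k\Phi$, inherited from conjugacy to $\Phi$), it is $T_kH^{[0]}(x)$. Hence
\[
R(x)\,Y^{[0]}(x)W=T_kY^{[0]}(x)\,T_kW.
\]

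\emph{Step 3: matching at $x=\infty$ (main obstacle).} We must show $R(x)Y^{[\infty]}(x)=T_kY^{[\infty]}(x)$. Use \eqref{Eq:eq} in the form $e_q(q^{-\beta_k}qu_kx)=(1+(q-1)q^{-\beta_k}u_kx)\,e_q(q^{-\beta_k}u_kx)$. This lets us write
\[
R(x)Y^{[\infty]}(x)=\widetilde H(x)\,x^{\boldsymbol\beta}e_q(q^{-\boldsymbol\beta}T_kUx),
\qquad
\widetilde H(x)=R(x)H^{[\infty]}(x)\,\bigl(I-E_k+E_k(1+(q-1)q^{-\boldsymbol\beta}u_kx)\bigr)^{-1}.
\]
The matrix $\widetilde H$ is a priori a formal Laurent series in $x^{-1}$, possibly with an $x^{1}$ term. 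Its leading term is $q^{-\boldsymbol\beta E_k}(q-1)UE_kx\cdot\bigl((q-1)q^{-\boldsymbol\beta}u_kE_kx\bigr)^{-1}$ on block $k$, and $V_k$ elsewhere. The hard part is to show that $\widetilde H=I+O(x^{-1})$.
\begin{itemize}
\item The $x^{1}$ coefficient cancels in the $k$-th block column because of the $E_k$ factor. In the other columns it vanishes because $UE_k$ kills them.
\item For the $x^0$ coefficient we need $(V_k)_{\hat k\hat k}=I$, from \eqref{Vk:Id}. We also need the $(\hat k,k)$, $(k,\hat k)$ and $(k,k)$ blocks of the constant term to vanish or equal $I$. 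I would obtain these by comparing the $x^{-1}$-order terms of \eqref{qCompa}, giving a relation of $V_k$ with $H_1^{[\infty]}$ analogous to \eqref{Eq:VkHinf}, together with the determinant condition \eqref{Det:beta}.
\end{itemize}
If this direct check is messy, the fallback is as follows. Note that $\widetilde H\,x^{\boldsymbol\beta}e_q(q^{-\boldsymbol\beta}T_kUx)$ is a formal solution of the shifted system whose series part begins with a constant $D$ commuting with $U$. Then $D^{-1}\widetilde H$ is the canonical series for formal monodromy $D^{-1}\boldsymbol\beta D$, and the spectral/determinant data force $D=I$. Once $\widetilde H=I+O(x^{-1})$ is established, uniqueness in Proposition~\ref{Can:inf} gives $\widetilde H=T_kH^{[\infty]}$.

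\emph{Conclusion.} With Steps~2 and~3 in hand,
\[
C(x;T_kU,T_k\Phi;\boldsymbol\beta,T_kW)
=(RY^{[\infty]})^{-1}RY^{[0]}W
=C(x;U,\Phi;\boldsymbol\beta,W).
\]
Together with the same $\Lambda$ and the same $\boldsymbol\alpha$ from Step~1, the $q$-monodromy data of the two systems agree.
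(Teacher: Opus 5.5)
Your outline is the paper's proof. It uses the same $R(x)=q^{-\boldsymbol\beta E_k}\bigl((q-1)UE_kx+V_k\bigr)$, the same gauged series $RH^{[0]}R(0)^{-1}$ and $RH^{[\infty]}\bigl(I+(q-1)q^{-\boldsymbol\beta}UE_kx\bigr)^{-1}$, uniqueness in Propositions~\ref{Can:zero} and~\ref{Can:inf}, and the determinant of \eqref{qCompa} for $\boldsymbol\alpha$. Steps~1 and~2 are fine.

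The gap is Step~3. You correctly identify $\widetilde H=I+O(x^{-1})$ as the crux but never establish it, and neither justification you offer works:
\begin{itemize}
\item Identity \eqref{qCompa} is polynomial in $x$, so it has no $x^{-1}$-order terms to compare.
\item The relation \eqref{Eq:VkHinf} cannot be transplanted. In Lemma~\ref{Lem:betaEq} it was read off from $T_kY^{[\infty]}(Y^{[\infty]})^{-1}$, i.e.\ from the conclusion you are now proving.
\item Condition \eqref{Det:beta} plays no role in this step.
\item In the fallback, ``spectral/determinant data force $D=I$'' is unjustified. Determinant data only see $\det D$. Moreover, the formal monodromy in Proposition~\ref{Can:inf} is a free parameter. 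Passing to $\widetilde H D^{-1}$ with formal monodromy $D\boldsymbol\beta D^{-1}$ (the correct normalization, not $D^{-1}\widetilde H$) therefore yields no contradiction that would pin down $D$.
\end{itemize}

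What is missing is a short block computation. Let $D$ be the $x^0$ coefficient of $\widetilde H$.
\begin{itemize}
\item Its $k$-th block column equals that of $I$ automatically, so the $(\hat k,k)$ and $(k,k)$ blocks need nothing.
\item $D_{\hat k\hat k}=(V_k)_{\hat k\hat k}=I$ by \eqref{Vk:Id}.
\item $D_{k\hat k}=q^{-\beta_k}\bigl((V_k)_{k\hat k}+(q-1)u_k(H_1^{[\infty]})_{k\hat k}\bigr)$. Your ``$V_k$ elsewhere'' drops exactly this $H_1^{[\infty]}$ contribution.
\end{itemize}
So everything reduces to $(V_k)_{k\hat k}=(1-q)u_k(H_1^{[\infty]})_{k\hat k}$. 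The paper obtains this by comparing two $(k,\hat k)$ blocks:
\begin{itemize}
\item the coefficient of $x$ in \eqref{qCompa}, namely $qu_k(V_k)_{k\hat k}=q(q-1)u_k\Phi_{k\hat k}+(V_k)_{k\hat k}U_{\hat k\hat k}$;
\item \eqref{Eq:HinfRec} at $\nu=0$, namely $u_k(H_1^{[\infty]})_{k\hat k}-q^{-1}(H_1^{[\infty]})_{k\hat k}U_{\hat k\hat k}=-\Phi_{k\hat k}$.
\end{itemize}

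Alternatively, the first half of your fallback already suffices. The $x^1$ coefficient of $\widetilde H(qx)\bigl(q^{\boldsymbol\beta}+(q-1)T_kUx\bigr)=\bigl((q-1)T_kUx+I+(q-1)T_k\Phi\bigr)\widetilde H(x)$ gives $[D,T_kU]=0$. Since $qu_k\neq u_j$ for $j\neq k$, this forces $D_{k\hat k}=0$. Together with the two blocks above, this yields $D=I$ with no appeal to spectral data.
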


\begin{prf}
Write the $q$-monodromy data of the original system as
\begin{align*}
	\mathrm{MD}_{\boldsymbol{\beta}}
	(U,\Phi;W;\boldsymbol{\alpha})
	&=
	(C(x),\Lambda;\boldsymbol{\alpha}).
\end{align*}
Setting $x=0$ in~\eqref{Det:beta} shows that $V_k$
is invertible.
Equation~\eqref{Eq:Wu} then shows that $T_kW$ is invertible.
Comparing the constant terms in~\eqref{qCompa}
and using~\eqref{Eq:Wu}, we obtain
\begin{align*}
	(T_kW)^{-1}(T_k\Phi)(T_kW)
	&=
	W^{-1}\Phi W
	=
	[\Lambda]_q.
\end{align*}
In particular, $T_k\Phi$ is $q$-nonresonant.
We choose the branch $[T_k\Phi]^q$ as in~\eqref{Eq:TkBranch}.
It then satisfies
\begin{align*}
	(T_kW)^{-1}[T_k\Phi]^q(T_kW)
	&=
	W^{-1}[\Phi]^qW
	=
	\Lambda.
\end{align*}
Denote
\begin{align*}
	T_kC(x)
	&:=
	\left(T_kY^{[\infty]}(x)\right)^{-1}
	T_kY^{[0]}(x)T_kW.
\end{align*}
It remains to prove that
$T_kC(x)=C(x)$ and that $\boldsymbol{\alpha}$
satisfies the spectral condition for
$(T_kU,T_k\Phi;T_kW;\boldsymbol{\alpha})$
to be a $q$-monodromy system.

Denote
\begin{align*}
	R(x)
	&:=
	q^{-\boldsymbol{\beta}E_k}
	\left((q-1)UE_kx+V_k\right).
\end{align*}
By~\eqref{Eq:Wu}, equation~\eqref{Eq:TkBranch} gives
\begin{align*}
	[T_k\Phi]^q
	&=
	R(0)[\Phi]^qR(0)^{-1}.
\end{align*}
It follows from~\eqref{qCompa} that
$R(x)Y^{[0]}(x)R(0)^{-1}$ and 
$R(x)Y^{[\infty]}(x)$
are fundamental solutions of the linear $q$-difference system
determined by $(T_kU,T_k\Phi)$ at $x=0$ and $x=\infty$,
respectively.
Using the preceding branch relation and~\eqref{Eq:eq},
we write these solutions as
\begin{alignat}{2}
\nonumber
	\widetilde{Y}^{[0]}(x)
	&:=
	\widetilde{H}^{[0]}(x)x^{[T_k\Phi]^q},
	&\qquad
	\widetilde{Y}^{[\infty]}(x)
	&:=
	\widetilde{H}^{[\infty]}(x)
	x^{\boldsymbol{\beta}}
	e_q(q^{-\boldsymbol{\beta}}T_kUx),\\
\label{Eq:Htilde}
	\widetilde{H}^{[0]}(x)
	&:=
	R(x)H^{[0]}(x)R(0)^{-1},
	&
	\widetilde{H}^{[\infty]}(x)
	&:=
	R(x)H^{[\infty]}(x)
	\left(
		I+(q-1)q^{-\boldsymbol{\beta}}UE_kx
	\right)^{-1}.
\end{alignat}
By Proposition~\ref{Can:zero} and~\eqref{Eq:Htilde},
\[
	\widetilde{H}^{[0]}(0)=I.
\]
Comparing the $(k,\hat{k})$-block of
the recurrence~\eqref{Eq:HinfRec} at $\nu=0$
in the proof of Proposition~\ref{Can:inf}
with the corresponding block of the coefficient of $x$
in~\eqref{qCompa} gives
\[
	(V_k)_{k\hat{k}}
	=
	(1-q)u_k(H_1^{[\infty]})_{k\hat{k}}.
\]
Together with~\eqref{Vk:Id}, this identity
and~\eqref{Eq:Htilde} give
\[
	\widetilde{H}^{[\infty]}(\infty)=I.
\]
The uniqueness in Propositions~\ref{Can:zero}
and~\ref{Can:inf} therefore gives
\begin{alignat*}{2}
	T_kY^{[0]}(x)
	&=
	\widetilde{Y}^{[0]}(x)
	=
	R(x)Y^{[0]}(x)R(0)^{-1},
	&\qquad
	T_kY^{[\infty]}(x)
	&=
	\widetilde{Y}^{[\infty]}(x)
	=
	R(x)Y^{[\infty]}(x).
\end{alignat*}
Consequently,
\begin{align*}
	T_kC(x)
	&=
	\left(R(x)Y^{[\infty]}(x)\right)^{-1}
	R(x)Y^{[0]}(x)R(0)^{-1}T_kW\\
	&=
	Y^{[\infty]}(x)^{-1}Y^{[0]}(x)W
	=
	C(x).
\end{align*}		
Taking determinants in~\eqref{qCompa}
and using~\eqref{Det:beta} gives the following identity
between multisets:
	\begin{align*}
	\mathrm{Spec}( (T_kU)^{-1} (I+(q-1)T_k\Phi) ) \cup
	\left\{ 
		u_k^{-1} q^{\alpha^{(k)}_i}
	\right\} =
	\left\{
		(qu_k)^{-1} q^{\alpha^{(k)}_i}
	\right\}\cup
	\mathrm{Spec}( U^{-1} (I+(q-1)\Phi) ).
	\end{align*}
Hence, $\boldsymbol{\alpha}$ are formal exponents of
$(T_kU,T_k\Phi)$.
Together with $T_kC(x)=C(x)$,
this proves the result.
\end{prf}

Thus, a single-step $q$-isomonodromic deformation in the
$u_k$-direction is equivalently characterized by the following
equations.

\begin{defi}\label{Def:qIsoEq}
The following equations for $(T_k\Phi,T_kW;V_k)$
are called the \textbf{$q$-isomonodromy equations}
in the $u_k$-direction with respect to the formal monodromy
$\boldsymbol{\beta}$:
\begin{subequations}\label{qisouk}
\begin{align}
\label{qisoeq}
	\left(
		T_kUx+I+(q-1)
		q^{\operatorname{ad}(\boldsymbol{\beta}E_k)}
		T_k\Phi
	\right)
	\left(UE_kx+V_k\right)
	&=
	\left(UE_kqx+V_k\right)
	\left(Ux+I+(q-1)\Phi\right),\\
\label{qisoeqW}
	q^{\boldsymbol{\beta}E_k}T_kW
	&=
	V_k\cdot W.
\end{align}
Here $V_k$ is required to satisfy the
\textbf{normalization condition}
\begin{align}\label{Cond:NorVk}
	(V_k)_{\hat{k}\hat{k}}
	&=
	I.
\end{align}
\end{subequations}
\end{defi}

\begin{thm}\label{Thm:qIsoMD}
Let $(U,\Phi;W;\boldsymbol{\alpha})$ be a
$q$-monodromy system.
Suppose that $T_k\Phi$ satisfies the following
\textbf{regularized condition}:
\begin{align}\label{Cond:RegTkPhi}
	\frac{
		\mathrm{Spec}\bigl(
			(T_kU)^{-1}(I+(q-1)T_k\Phi)
		\bigr)
	}{
		\mathrm{Spec}\bigl(
			U^{-1}(I+(q-1)\Phi)
		\bigr)
	}
	\cap q^{\mathbb{N}+1}
	&=
	\varnothing.
\end{align}
Then
$(T_kU,T_k\Phi;T_kW;\boldsymbol{\alpha})$
is a $q$-monodromy system with the same $q$-monodromy data
if and only if there exists a matrix $V_k$ satisfying the
\textbf{spectral condition} determined by
$\boldsymbol{\alpha}$,
\begin{align}\label{Cond:SpecVk}
	\det(E_kx+V_k)
	&=
	\prod_{i=1}^{m_k}
	\left(x+q^{\alpha_i^{(k)}}\right),
\end{align}
such that
$(T_k\Phi,T_kW;V_k)$ satisfies the
$q$-isomonodromy equations~\eqref{qisouk}
in the $u_k$-direction.
Here the branch $[T_k\Phi]^q$ is chosen as
\begin{align}\label{Eq:TkBranchThm}
	[T_k\Phi]^q
	&:=
	T_kW\cdot W^{-1}[\Phi]^qW\cdot T_kW^{-1}.
\end{align}
\end{thm}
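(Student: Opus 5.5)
The statement packages Lemma~\ref{Lem:betaEq} and Lemma~\ref{Lem:qBasic} into a single equivalence, so the plan is to prove the two implications separately by invoking these lemmas. The only bookkeeping needed is to match the hypotheses of each lemma with those of the theorem. The key point to track is the choice of branch \eqref{Eq:TkBranchThm}, since the $q$-monodromy data $\mathrm{MD}_{\boldsymbol{\beta}}$ depend on the branch of $[T_k\Phi]^q$.

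\emph{Necessity.} Assume that $(T_kU,T_k\Phi;T_kW;\boldsymbol{\alpha})$ is a $q$-monodromy system with the same $q$-monodromy data, and that the regularized condition \eqref{Cond:RegTkPhi} holds. The latter is exactly \eqref{Eq:TkPandP}, so Lemma~\ref{Lem:betaEq} applies and produces $V_k$ satisfying \eqref{qCompa}, \eqref{Eq:Wu}, \eqref{Vk:Id} and \eqref{Det:beta}. These are respectively \eqref{qisoeq}, \eqref{qisoeqW}, the normalization \eqref{Cond:NorVk} and the spectral condition \eqref{Cond:SpecVk}.

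It remains to check that the branch used for the shifted system agrees with \eqref{Eq:TkBranchThm}. Equality of the data gives the same $\Lambda$ for both systems. Then
\[
(T_kW)^{-1}[T_k\Phi]^q\,T_kW=\Lambda=W^{-1}[\Phi]^qW,
\]
which is precisely \eqref{Eq:TkBranchThm} after conjugating by $T_kW$. So the branch is forced, and no freedom is lost.

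\emph{Sufficiency.} Assume that $V_k$ satisfies \eqref{Cond:SpecVk} and that $(T_k\Phi,T_kW;V_k)$ solves \eqref{qisouk}, including the normalization \eqref{Cond:NorVk}. Then all hypotheses \eqref{qCompaWu} and \eqref{VkidDet} of Lemma~\ref{Lem:qBasic} hold. That lemma, with the branch chosen as in \eqref{Eq:TkBranch} (which is identical to \eqref{Eq:TkBranchThm}), shows that $(T_kU,T_k\Phi;T_kW;\boldsymbol{\alpha})$ is a $q$-monodromy system with $\mathrm{MD}_{\boldsymbol{\beta}}$ unchanged. The regularized condition is not needed in this direction.

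The only point requiring care is one implicit requirement. Definition~\ref{Def:alpha} needs the shifted system to meet the standing assumptions: $T_k\Phi$ must be $q$-nonresonant, and $T_kU$ must satisfy $qu_k\notin u_jq^{\mathbb Z\setminus\{0\}}$ with nonzero entries. The first follows, as in the proof of Lemma~\ref{Lem:qBasic}, from $(T_kW)^{-1}T_k\Phi\,T_kW=[\Lambda]_q$. The second is automatic from the assumptions on $U$, since a shift by $q$ keeps $u_k$ on the same $q$-spiral and does not create new coincidences beyond those already excluded. I expect no genuine obstacle beyond stating these consistency checks.
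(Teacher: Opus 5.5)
Your proposal is correct and follows the paper's own proof: the paper likewise just combines Lemma~\ref{Lem:betaEq} for necessity (the regularized condition is exactly \eqref{Eq:TkPandP}) and Lemma~\ref{Lem:qBasic} for sufficiency. Your extra checks are sound and consistent with the paper: equality of $\Lambda$ forces the branch \eqref{Eq:TkBranchThm}, and the standing assumptions on $T_kU$ survive the shift because $u_k\notin u_jq^{\mathbb Z}$ for $j\neq k$.
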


\begin{prf}
This follows directly from
Lemmas~\ref{Lem:betaEq} and~\ref{Lem:qBasic}.
\end{prf}

The regularized condition~\eqref{Cond:RegTkPhi} imposed on $T_k\Phi$ is essential here.
For the local uniqueness of solutions to the
$q$-isomonodromy equations in the $u_k$-direction,
we impose the following separation conditions
on the formal exponents:
\begin{subequations}
\begin{align}
	\mathrm{Spec}(u_k^{-1}q^{\alpha_k})
	\cap
	\left(\bigcup_{s\neq k}
	\mathrm{Spec}(u_s^{-1}q^{\alpha_s})
	\right)
	&=
	\varnothing,\\
	\mathrm{Spec}(q^{\alpha_k})
	\cap
	\mathrm{Spec}(q^{-1}q^{\alpha_k})
	&=
	\varnothing.
\end{align}
\end{subequations}
Under the regularized condition~\eqref{Cond:RegTkPhi},
these conditions are equivalent to
\begin{align*}
	\frac{
		\mathrm{Spec}\bigl(
			(T_kU)^{-1}(I+(q-1)T_k\Phi)
		\bigr)
	}{
		\mathrm{Spec}\bigl(
			(T_kU)^{-1}(I+(q-1)T_k\Phi)
		\bigr)
	}
	\cap q^{\mathbb{N}+1}
	&=
	\varnothing.
\end{align*}
By Proposition~\ref{Pro:qUni}, this is precisely the genericity
condition that guarantees the uniqueness of $T_k\Phi$ and $T_kW$.

\subsection{Local Uniqueness}
\label{Sect:LocalQIso}

In this subsection, we establish a local uniqueness result
for solutions of
the $q$-isomonodromy equations introduced in
Definition~\ref{Def:qIsoEq}.
The local existence problem will be treated in the next subsection.
Related $q$-isomonodromy equations were derived by
Kakei and Kikuchi from the multi-component $q$-KP hierarchy
in~\cite{kakei_q-analogue_2006}.
A related algebraic factorization
is discussed in~\cite{MYMthesis}.

Equation~\eqref{qisoeq} leads to the following
linear-algebraic problem:
\begin{align*}
	(xI+A_2)(xE_{22}+G_1)
	&=
	(xE_{22}+G_2)(xI+A_1),
\end{align*}
where
$E_{22}=\left(\begin{smallmatrix}0&0\\0&I\end{smallmatrix}\right)$.
For a given~$A_1$, the problem is to determine
$G_1$, $G_2$, and~$A_2$ under suitable conditions.
We begin with two linear-algebraic lemmas.

\begin{lem}\label{Lem:UniLAG}
Fix a $2\times2$ block matrix $A$ and
a submultiset $S$ of $\mathrm{Spec}(A)$ satisfying
\begin{align}\label{Cond:Sdisj}
	S\cap\bigl(\mathrm{Spec}(A)\setminus S\bigr)
	&=
	\varnothing.
\end{align}
Suppose that there exists an invertible $2\times2$ block matrix~$G$
such that $G_{11}$ is invertible and
\begin{align}
	GAG^{-1} =
	\begin{pmatrix}
		\Lambda_1 & 0\\
		J_{21} & \Lambda_2
	\end{pmatrix},\qquad
	\mathrm{Spec}(\Lambda_2)
	&=
	S.
\end{align}
Then $(G^{-1})_{22}$ is invertible, and the matrices
\begin{align}
	G_{11}^{-1}\Lambda_1G_{11},\qquad
	(G^{-1})_{22}\Lambda_2(G^{-1})_{22}^{-1},\qquad
	G_{11}^{-1}G_{12}
	=
	-(G^{-1})_{12}(G^{-1})_{22}^{-1}
\end{align}
are independent of the choice of~$G$.
In particular, they are uniquely determined by $A$ and~$S$.
\end{lem}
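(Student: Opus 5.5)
We reformulate the hypothesis invariantly. Write $G A = \begin{pmatrix}\Lambda_1&0\\J_{21}&\Lambda_2\end{pmatrix} G$. The key observation is that the first block row of this identity says $G_{1\ast}A = \Lambda_1 G_{1\ast}$, where $G_{1\ast}=(G_{11}\ G_{12})$ is the first block row of $G$. So the row space of $G_{1\ast}$ is a left $A$-invariant subspace. Dually, write $A G^{-1} = G^{-1}\begin{pmatrix}\Lambda_1&0\\J_{21}&\Lambda_2\end{pmatrix}$. Its second block column gives $A (G^{-1})_{\ast2} = (G^{-1})_{\ast2}\Lambda_2$, so the column space of $(G^{-1})_{\ast2}$ is a right $A$-invariant subspace.

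Next we identify these subspaces spectrally. Since $G$ is invertible, the block lower-triangular matrix is similar to $A$, so $\mathrm{Spec}(\Lambda_1)=\mathrm{Spec}(A)\setminus S$ as multisets. Let $P$ be the spectral projection of $A$ onto generalized eigenspaces with eigenvalues in $S$; it is well defined by \eqref{Cond:Sdisj}.

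\begin{itemize}
\item The columns of $(G^{-1})_{\ast2}$ are linearly independent and span a right-invariant subspace on which $A$ acts with spectrum $S$. Since $S$ is disjoint from the rest of the spectrum, this subspace is exactly $\operatorname{im}P$, because its dimension equals the multiplicity count of $S$.
\item The rows of $G_{1\ast}$ span a left-invariant subspace with spectrum $\mathrm{Spec}(A)\setminus S$. This subspace is exactly the row space of $I-P$, which is the left generalized eigenspace for the complementary eigenvalues.
\end{itemize}

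It follows that for any two admissible $G,G'$ there are invertible $M,N$ with $G'_{1\ast}=M G_{1\ast}$ and $(G'^{-1})_{\ast2}=(G^{-1})_{\ast2}N$.

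Now the uniqueness claims follow.

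\begin{itemize}
\item \emph{Invertibility.} From $G G^{-1}=I$, the $(1,2)$ block gives $G_{11}(G^{-1})_{12}+G_{12}(G^{-1})_{22}=0$, and the $(2,2)$ block of $G^{-1}G=I$ is also available. Using the Schur complement identity $(G^{-1})_{22} = (G_{22}-G_{21}G_{11}^{-1}G_{12})^{-1}$, valid since $G_{11}$ is invertible, we get that $(G^{-1})_{22}$ is invertible. The same $(1,2)$-block identity yields $G_{11}^{-1}G_{12}=-(G^{-1})_{12}(G^{-1})_{22}^{-1}$.
\item \emph{The off-diagonal quantity.} Under $G_{1\ast}\mapsto MG_{1\ast}$ we have $G_{11}\mapsto MG_{11}$ and $G_{12}\mapsto MG_{12}$. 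Hence $G_{11}^{-1}G_{12}$ is invariant, since it is the unique $X$ with row space of $(I\ X)$ equal to that of $G_{1\ast}$.
\item \emph{The conjugated blocks.} From $G_{1\ast}A=\Lambda_1G_{1\ast}$ restricted to the first block, $G_{11}^{-1}\Lambda_1G_{11}$ equals the matrix $B$ defined by $(I\ X)A=B(I\ X)$ with $X=G_{11}^{-1}G_{12}$. So it equals the first block of $(I\ X)A$, which is invariant. Similarly, normalizing $(G^{-1})_{\ast2}(G^{-1})_{22}^{-1}=\begin{pmatrix}Y\\I\end{pmatrix}$, the relation $A\begin{pmatrix}Y\\I\end{pmatrix}=\begin{pmatrix}Y\\I\end{pmatrix}(G^{-1})_{22}\Lambda_2(G^{-1})_{22}^{-1}$ shows that the second conjugated matrix is the second block of $A\begin{pmatrix}Y\\I\end{pmatrix}$. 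This is invariant because $Y$ is determined by $\operatorname{im}P$.
\end{itemize}

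The main care point is the spectral identification of the two invariant subspaces, which is where \eqref{Cond:Sdisj} is used. Everything after that is routine block algebra.
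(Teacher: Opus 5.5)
Your proof is correct, but it establishes the key step with a different lemma from the paper.

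The paper sets $H=\widehat G G^{-1}$ and compares the $(1,2)$ blocks of the intertwining relation. This gives the Sylvester equation $H_{12}\Lambda_2=\widehat\Lambda_1H_{12}$, and \eqref{Cond:Sdisj} then forces $H_{12}=0$. All the transformation rules follow at once from this.

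Your $M$ and $N$ are exactly $H_{11}$ and $H_{22}^{-1}$, and your statement $G'_{1\ast}=MG_{1\ast}$ is equivalent to $H_{12}=0$. You reach it differently, by identifying the row space of $G_{1\ast}$ and the column space of $(G^{-1})_{\ast 2}$ with spectral subspaces of $A$. Your dimension count relies on \eqref{Cond:Sdisj} forcing $S$ to contain the full algebraic multiplicity of each of its eigenvalues; you should state this explicitly.

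The paper's route is shorter and needs no spectral projections. Yours is more intrinsic, because it writes the invariants directly in terms of $A$ and $P$:
\begin{itemize}
\item $X=G_{11}^{-1}G_{12}$ is fixed by the left generalized eigenspace for $\mathrm{Spec}(A)\setminus S$, and $Y=-X$;
\item $G_{11}^{-1}\Lambda_1G_{11}=A_{11}+XA_{21}$;
\item $(G^{-1})_{22}\Lambda_2(G^{-1})_{22}^{-1}=A_{21}Y+A_{22}$.
\end{itemize}
This has three benefits. It makes ``determined by $A$ and $S$'' literal, rather than a consequence of independence from $G$. It explains the hypothesis on $G_{11}$ geometrically: the complementary left-invariant subspace is a graph over the first block of coordinates. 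And your formula for $G_{11}^{-1}\Lambda_1G_{11}$ is exactly the one the paper later extracts in \eqref{Exp:J11}.
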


\begin{prf}
Since $G$ and $G_{11}$ are invertible, the Schur complement formula
shows that $(G^{-1})_{22}$ is invertible.
Let $\widehat G$ be another matrix satisfying the assumptions, and write
\begin{align*}
	\widehat G A\widehat G^{-1} =
	\begin{pmatrix}
		\widehat\Lambda_1 & 0\\
		\widehat J_{21} & \widehat\Lambda_2
	\end{pmatrix},\qquad
	\mathrm{Spec}(\widehat\Lambda_2) =
	S.
\end{align*}
Set $H=\widehat GG^{-1}$.
Comparing the $(1,2)$-blocks of
\begin{align*}
	H
	\begin{pmatrix}
		\Lambda_1 & 0\\
		J_{21} & \Lambda_2
	\end{pmatrix}
	&=
	\begin{pmatrix}
		\widehat\Lambda_1 & 0\\
		\widehat J_{21} & \widehat\Lambda_2
	\end{pmatrix}
	H
\end{align*}
gives $H_{12}\Lambda_2 =
\widehat\Lambda_1H_{12}$.
Since
\begin{align*}
	\mathrm{Spec}(\widehat\Lambda_1) =
	\mathrm{Spec}(A)\setminus S,\qquad
	\mathrm{Spec}(\Lambda_2)
	&=
	S,
\end{align*}
condition~\eqref{Cond:Sdisj}
implies $H_{12}=0$.
Thus $H$ is block lower triangular.
Since $H$ is invertible, both $H_{11}$ and $H_{22}$ are invertible.
It follows that
\begin{alignat*}{2}
	\widehat G_{11}
	&=
	H_{11}G_{11},
	&\qquad \widehat\Lambda_1
	&=
	H_{11}\Lambda_1H_{11}^{-1},\\
	(\widehat G^{-1})_{22}
	&=
	(G^{-1})_{22}H_{22}^{-1},
	&\qquad \widehat\Lambda_2
	&=
	H_{22}\Lambda_2H_{22}^{-1},\\
	\widehat G_{12}
	&=
	H_{11}G_{12}.
\end{alignat*}
These identities show that
\begin{align*}
	\widehat G_{11}^{-1}\widehat\Lambda_1\widehat G_{11}
	&=
	G_{11}^{-1}\Lambda_1G_{11},\\
	(\widehat G^{-1})_{22}\widehat\Lambda_2
	(\widehat G^{-1})_{22}^{-1}
	&=
	(G^{-1})_{22}\Lambda_2(G^{-1})_{22}^{-1},\\
	\widehat G_{11}^{-1}\widehat G_{12}
	&=
	G_{11}^{-1}G_{12}.
\end{align*}
Finally, the $(1,2)$-block of $GG^{-1}=I$ gives
\begin{align*}
	G_{11}^{-1}G_{12}
	&=
	-(G^{-1})_{12}(G^{-1})_{22}^{-1}.
\end{align*}
This proves the result.
\end{prf}

\begin{lem}\label{Lem:EquiLAG}
Let $A_1$, $A_2$, $G_1$, and $G_2$ be $2\times2$ block matrices,
with $G_1$, $G_2$, $(G_1)_{11}$, and $(G_2)_{11}$ invertible.
Set
$E_{22}=\left(\begin{smallmatrix}0&0\\0&I\end{smallmatrix}\right)$.
Assume that
\begin{alignat}{2}\label{DiagA1A2}
	G_1A_1G_1^{-1}
	&=
	\begin{pmatrix}
		(J_1)_{11} & 0\\
		(J_1)_{21} & (G_1^{-1})_{22}^{-1}
	\end{pmatrix},
	&\qquad
	G_2^{-1}A_2G_2
	&=
	\begin{pmatrix}
		(J_2)_{11} & (J_2)_{12}\\
		0 & (G_2^{-1})_{22}^{-1}
	\end{pmatrix},
\end{alignat}
and
\begin{subequations}\label{Cond:G}
\begin{align}
\label{Cond:G11}
	G_{11}
	:=
	(G_1)_{11}
	&=
	(G_2)_{11},\\
\label{Cond:G11Prop}
	(J_1)_{11}
	&=
	G_{11}(J_2)_{11}G_{11}^{-1},\\
\label{Cond:G21Exp}
	(G_1)_{21}
	&=
	(A_1)_{21}+(G_2)_{21},\\
\label{Cond:G12Exp}
	(G_2)_{12}
	&=
	(A_2)_{12}+(G_1)_{12}.
\end{align}
\end{subequations}
Then
\begin{align}\label{Eq:BasicLem}
	(xI+A_2)(xE_{22}+G_1)
	&=
	(xE_{22}+G_2)(xI+A_1).
\end{align}
Conversely, if
\begin{align}\label{Cond:G1G2disj}
	\mathrm{Spec}((G_1^{-1})_{22})
	\cap
	\mathrm{Spec}((G_2^{-1})_{22})
	&=
	\varnothing,
\end{align}
then equation~\eqref{Eq:BasicLem} implies
\eqref{DiagA1A2} and~\eqref{Cond:G}.
Moreover,
\begin{subequations}\label{Exp:J11}
\begin{align}
	(J_1)_{11}
	&=
	G_{11}
	\left(
		(A_1)_{11}
		+
		G_{11}^{-1}(G_1)_{12}(A_1)_{21}
	\right)
	G_{11}^{-1},\\
	(J_2)_{11}
	&=
	G_{11}^{-1}
	\left(
		(A_2)_{11}
		+
		(A_2)_{12}(G_2)_{21}G_{11}^{-1}
	\right)
	G_{11}.
\end{align}
\end{subequations}
\end{lem}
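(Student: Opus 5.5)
The plan is to expand \eqref{Eq:BasicLem} in powers of $x$ and compare coefficients. The $x^2$-terms agree trivially, since both equal $E_{22}$. Hence \eqref{Eq:BasicLem} is equivalent to the two matrix identities
\begin{align*}
A_2E_{22}+G_1=G_2+E_{22}A_1,\qquad A_2G_1=G_2A_1 .
\end{align*}
Written blockwise, the $x$-coefficient identity has $(1,1)$-, $(1,2)$- and $(2,1)$-blocks that are exactly \eqref{Cond:G11}, \eqref{Cond:G12Exp} and \eqref{Cond:G21Exp}. The remaining $(2,2)$-block reads $(A_2)_{22}+(G_1)_{22}=(G_2)_{22}+(A_1)_{22}$. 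So the lemma reduces to relating this $(2,2)$-block identity and the constant-term identity to \eqref{DiagA1A2} and \eqref{Cond:G11Prop}.

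For the forward direction, I would set $L_1:=G_1A_1G_1^{-1}$ and $R_2:=G_2^{-1}A_2G_2$, which are block triangular by \eqref{DiagA1A2}.

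First I would extract block identities from $G_1A_1=L_1G_1$. Its $(2,\ast)$-row gives $(J_1)_{21}(G_1)_{1\ast}+(G_1^{-1})_{22}^{-1}(G_1)_{2\ast}=(G_1A_1)_{2\ast}$. Multiplying on the right by the column $(G_1^{-1})_{\ast2}$ and using $G_1G_1^{-1}=I$ should give the identity $(A_1)_{22}+(G_1)_{21}\ast(\cdots)=\ldots$ that feeds into the $(2,2)$-block. The same computation applied to $A_2G_2=G_2R_2$ gives the companion identity.

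Combining these with \eqref{Cond:G21Exp} and \eqref{Cond:G12Exp} should give the $(2,2)$-block of the $x$-coefficient. Concretely, I expect both sides to reduce to $(G_1)_{22}-(G_1)_{21}G_{11}^{-1}(G_1)_{12}$ plus matching terms; this is the Schur complement $(G_1^{-1})_{22}^{-1}$.

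For the constant term, I would write $A_2G_1=G_2R_2G_2^{-1}G_1$ and $G_2A_1=G_2G_1^{-1}L_1G_1$. It then suffices to check $R_2N=NL_1'$, where $N=G_2^{-1}G_1$ and $L_1'$ is $L_1$ conjugated suitably. Here \eqref{Cond:G11Prop} handles the $(1,1)$-block and the triangular shapes handle the others. This direction is bookkeeping with Schur complements, and I would organize it by first proving that $N$ is block unipotent-like, with $N_{12}=0$ or a similar triangularity, using \eqref{Cond:G}.

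For the converse, I would assume \eqref{Eq:BasicLem} and \eqref{Cond:G1G2disj}. The coefficient comparison already yields \eqref{Cond:G11}, \eqref{Cond:G21Exp} and \eqref{Cond:G12Exp}. The substantive point is to show that $L_1=G_1A_1G_1^{-1}$ has vanishing $(1,2)$-block with $(2,2)$-block $(G_1^{-1})_{22}^{-1}$, and the analogous statement for $R_2$.

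My approach here is to rewrite \eqref{Eq:BasicLem} as
\begin{align*}
(xI+A_2)\,(xE_{22}+G_1)G_1^{-1}\cdot G_1 = (xE_{22}+G_2)G_1^{-1}\cdot(xI+L_1)G_1,
\end{align*}
so that $(xE_{22}+G_1)G_1^{-1}=I+xE_{22}G_1^{-1}$ and the determinants $\det(xE_{22}+G_i)=\det G_{11}\det\bigl(xI+(G_i^{-1})_{22}^{-1}\bigr)$ appear. Taking the right-hand side times suitable projections, the $(1,2)$-block of $L_1$ should satisfy a Sylvester equation of the form
\begin{align*}
X\,(G_1^{-1})_{22}^{-1}-(G_2^{-1})_{22}^{-1}X=0,
\end{align*}
possibly after conjugation. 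Then \eqref{Cond:G1G2disj} forces $X=0$. I expect that equation to come from the $x^1$- and $x^0$-coefficients after multiplying on the left by $E_{22}G_2^{-1}$ and on the right by $G_1^{-1}E_{22}$.

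Once triangularity is established, the $(2,2)$-blocks follow from the $(2,2)$-block of the $x$-coefficient identity. Then \eqref{Cond:G11Prop} and the formulas \eqref{Exp:J11} follow by reading off the $(1,1)$-block of $G_1A_1=L_1G_1$. That block gives $(J_1)_{11}G_{11}=(A_1)_{11}\ldots$, and solving it yields $(J_1)_{11}=G_{11}\bigl((A_1)_{11}+G_{11}^{-1}(G_1)_{12}(A_1)_{21}\bigr)G_{11}^{-1}$ after using the constant-term relation. The argument for $(J_2)_{11}$ is the same.

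The main obstacle is isolating that Sylvester equation cleanly in the converse direction. If it does not appear directly, my fallback is a pole/kernel argument. Viewing $(xI+A_2)=(xE_{22}+G_2)(xI+A_1)(xE_{22}+G_1)^{-1}$ as a polynomial, the poles of $(xE_{22}+G_1)^{-1}$ at $-\mathrm{Spec}\bigl((G_1^{-1})_{22}^{-1}\bigr)$ must cancel. By \eqref{Cond:G1G2disj} they cannot cancel against the left factor, so the relevant left kernel vectors must be invariant under $A_1$. This invariance is exactly the block triangular form in \eqref{DiagA1A2}.
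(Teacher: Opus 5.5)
Your reduction to the $x^1$- and $x^0$-coefficients is correct. Reading \eqref{Cond:G11}, \eqref{Cond:G12Exp}, \eqref{Cond:G21Exp} directly off the $(1,1)$-, $(1,2)$-, $(2,1)$-blocks of $A_2E_{22}+G_1=E_{22}A_1+G_2$ is more direct than the paper's route.

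The gap is the step you flag yourself in the converse. Your proposed unknown, the $(1,2)$-block of $G_1A_1G_1^{-1}$, has the shape of a $(1,2)$-block. The equation $X(G_1^{-1})_{22}^{-1}-(G_2^{-1})_{22}^{-1}X=0$ needs $X$ square, of the size of the $(2,2)$-block. For unequal block sizes, as in Corollary~\ref{Cor:UniVk}, no conjugation makes that block the unknown of such an equation.

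Your sequencing also does not work: triangularity first, then the $(2,2)$-block from the $(2,2)$-block of the $x$-coefficient identity. That single block identity mixes $A_1$ and $A_2$, and by itself does not give $(G_1A_1G_1^{-1})_{22}=(G_1^{-1})_{22}^{-1}$.

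Your kernel fallback is a genuinely different route, but as sketched it only covers the semisimple case. Take $x_0$ with $x_0E_{22}+G_1$ singular. A \emph{right} (not left) kernel vector $v$ satisfies $(x_0I+A_1)v=0$, because $x_0E_{22}+G_2$ is invertible by \eqref{Cond:G1G2disj}. Moreover $G_1v=\left(\begin{smallmatrix}0\\ w\end{smallmatrix}\right)$ with $(G_1^{-1})_{22}^{-1}w=-x_0w$. This fixes the whole second block column of $G_1A_1G_1^{-1}$ only when such $w$ span, i.e.\ when $(G_1^{-1})_{22}$ is diagonalizable. Otherwise the argument must be run on Jordan chains (root functions), which your sketch omits.

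The multiplication you guessed is exactly right; only the unknown is wrong. Multiply the $x$-coefficient identity by $G_2^{-1}$ on the left and $G_1^{-1}$ on the right, and substitute $G_2^{-1}A_2=A_1G_1^{-1}$:
\[
(A_1G_1^{-1}E_{22}-I)G_1^{-1}=G_2^{-1}(E_{22}A_1G_1^{-1}-I).
\]
Its $(2,2)$-block is $X(G_1^{-1})_{22}=(G_2^{-1})_{22}X$ with $X:=(A_1G_1^{-1})_{22}-I$, so $X=0$ by \eqref{Cond:G1G2disj}. Only then do the $(1,2)$- and $(2,1)$-blocks give
\[
(A_1G_1^{-1})_{12}=(G_1^{-1})_{12}(G_1^{-1})_{22}^{-1},\qquad (G_2^{-1}A_2)_{21}=(G_2^{-1})_{22}^{-1}(G_2^{-1})_{21}.
\]
So the second block column of $A_1G_1^{-1}$ is $G_1^{-1}\left(\begin{smallmatrix}0\\ (G_1^{-1})_{22}^{-1}\end{smallmatrix}\right)$, and the second block row of $G_2^{-1}A_2$ is $\left(\begin{smallmatrix}0 & (G_2^{-1})_{22}^{-1}\end{smallmatrix}\right)G_2^{-1}$. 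This is \eqref{DiagA1A2}: the zero block and the $(2,2)$-block come out together.

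Formula \eqref{Exp:J11} then follows from the $(1,1)$-blocks of $G_1A_1=(G_1A_1G_1^{-1})G_1$ and $A_2G_2=G_2(G_2^{-1}A_2G_2)$, with no constant-term input. Condition \eqref{Cond:G11Prop} is the $(1,1)$-block of $A_2G_1=G_2A_1$ after substituting \eqref{Cond:G21Exp} and \eqref{Cond:G12Exp}; the term $(A_2)_{12}(A_1)_{21}$ cancels. This is the paper's proof.

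In the forward direction, drop the idea that $G_2^{-1}G_1$ is block triangular; it fails in general. Instead check $A_1G_1^{-1}=G_2^{-1}A_2$ blockwise. By \eqref{DiagA1A2}, \eqref{Cond:G11}, \eqref{Cond:G21Exp} and \eqref{Cond:G12Exp}, both matrices have $(2,2)$-block $I$, $(1,2)$-block $-G_{11}^{-1}(G_1)_{12}$ and $(2,1)$-block $-(G_2)_{21}G_{11}^{-1}$. Condition \eqref{Cond:G11Prop} matches their $(1,1)$-blocks. The remaining $(2,2)$-block of the $x$-coefficient identity holds because $(A_1)_{22}-(G_1)_{22}$ and $(A_2)_{22}-(G_2)_{22}$ both equal $-(G_2)_{21}G_{11}^{-1}(G_1)_{12}$.
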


\begin{prf}
The forward implication follows by direct block multiplication using
\eqref{DiagA1A2} and~\eqref{Cond:G}.
For the converse, the block matrix inverse formula shows that
$(G_i^{-1})_{22}$ is invertible for $i=1,2$.
By comparing the linear and constant terms in
\eqref{Eq:BasicLem}, we can rewrite it equivalently as
\begin{align*}
	A_2E_{22}+G_1
	=
	E_{22}A_1+G_2,\qquad
	G_2^{-1}A_2
	=
	A_1G_1^{-1}.
\end{align*}
Hence, we obtain another equivalent form
\begin{subequations}
\begin{align}\label{Lem:BStep1}
	(A_1G_1^{-1}E_{22}-I)G_1^{-1}
	&=
	G_2^{-1}(E_{22}A_1G_1^{-1}-I),\\
\label{Lem:BStep2}
	G_1A_1G_1^{-1}G_2
	&=
	G_1G_2^{-1}A_2G_2.
\end{align}
\end{subequations}
Comparing the $(2,2)$-blocks of \eqref{Lem:BStep1} gives
\begin{align*}
	\bigl((A_1G_1^{-1})_{22}-I\bigr)(G_1^{-1})_{22}
	&=
	(G_2^{-1})_{22}
	\bigl((A_1G_1^{-1})_{22}-I\bigr).
\end{align*}
Condition~\eqref{Cond:G1G2disj} shows that this identity is
equivalent to $(A_1G_1^{-1})_{22} = I$.
Comparing the remaining blocks of \eqref{Lem:BStep1} and applying
the block matrix inverse formula shows that
\eqref{Lem:BStep1} and~\eqref{Lem:BStep2} are equivalent to
\eqref{Cond:G11} and
\begin{align}\label{Exp:GA2AG1}
	G_2^{-1}A_2
	&=
	A_1G_1^{-1}
	=
	\begin{pmatrix}
		\ast & -G_{11}^{-1}(G_1)_{12}\\
		-(G_2)_{21}G_{11}^{-1} & I
	\end{pmatrix}.
\end{align}
Using the block matrix inverse formula, the identities for the
$(1,2)$- and $(2,2)$-blocks of $A_1G_1^{-1}$ in
\eqref{Exp:GA2AG1} are equivalent to the first identity in
\eqref{DiagA1A2}.
Similarly, the identities for the $(2,1)$- and $(2,2)$-blocks of
$G_2^{-1}A_2$ are equivalent to the second identity in
\eqref{DiagA1A2}.

Under \eqref{DiagA1A2} and \eqref{Cond:G11}, the identity for the
$(2,1)$-block of $A_1G_1^{-1}$ in \eqref{Exp:GA2AG1} is equivalent
to \eqref{Cond:G21Exp}.
Indeed,
\begin{align*}
	(G_1^{-1})_{11}
	&=
	G_{11}^{-1}
	+
	(G_1^{-1})_{12}(G_1^{-1})_{22}^{-1}
	(G_1^{-1})_{21},\\
	-(G_2)_{21}G_{11}^{-1}
	&=
	(A_1)_{21}(G_1^{-1})_{11}
	+
	(A_1)_{22}(G_1^{-1})_{21}\\
	&=
	(A_1)_{21}G_{11}^{-1}
	+
	\left(
		(A_1)_{21}(G_1^{-1})_{12}
		(G_1^{-1})_{22}^{-1}
		+
		(A_1)_{22}
	\right)
	(G_1^{-1})_{21}\\
	&=
	(A_1)_{21}G_{11}^{-1}
	+
	(G_1^{-1})_{22}^{-1}(G_1^{-1})_{21}\\
	&=
	(A_1)_{21}G_{11}^{-1}
	-
	(G_1)_{21}G_{11}^{-1}.
\end{align*}
Similarly, the identity for the $(1,2)$-block of
$G_2^{-1}A_2$ in \eqref{Exp:GA2AG1} is equivalent to
\eqref{Cond:G12Exp}.

Next, we note that
\begin{align*}
	(J_1)_{11}
	&=
	G_{11}(A_1G_1^{-1})_{11}
	-
	(G_1)_{12}(G_2)_{21}G_{11}^{-1},\\
	(J_2)_{11}
	&=
	(G_2^{-1}A_2)_{11}G_{11}
	-
	G_{11}^{-1}(G_1)_{12}(G_2)_{21}.
\end{align*}
Thus, the identity for the $(1,1)$-block in
\eqref{Exp:GA2AG1} is equivalent to \eqref{Cond:G11Prop}.
Finally, using \eqref{DiagA1A2}, we compare the $(1,1)$-blocks in
\begin{align*}
	G_1A_1
	=
	(G_1A_1G_1^{-1})G_1,\qquad
	A_2G_2
	=
	G_2(G_2^{-1}A_2G_2).
\end{align*}
This gives \eqref{Exp:J11} and completes the proof.
\end{prf}

The $q$-isomonodromy equation~\eqref{qisoeq}
can be reduced to~\eqref{Eq:BasicLem} in
Lemma~\ref{Lem:EquiLAG}.
Together with Lemma~\ref{Lem:UniLAG}, this gives the following
local uniqueness result.

\begin{cor}\label{Cor:UniVk}
Let $(U,\Phi;W;\boldsymbol{\alpha})$ be a
$q$-monodromy system.
Suppose that its formal exponents
$\boldsymbol{\alpha}$ satisfy the following
\textbf{separation conditions}:
\begin{subequations}\label{Cond:Sep}
\begin{align}
\label{Cond:SpecDisj}
	\mathrm{Spec}(u_k^{-1}q^{\alpha_k})
	\cap
	\left(
	\bigcup_{s\neq k}
	\mathrm{Spec}(u_s^{-1}q^{\alpha_s})
	\right)
	&=
	\varnothing,\\
\label{Cond:SpecnoShift1}
	\mathrm{Spec}(q^{\alpha_k})
	\cap
	\mathrm{Spec}(q^{-1}q^{\alpha_k})
	&=
	\varnothing.
\end{align}
\end{subequations}
Then there is at most one triple
$(T_k\Phi,T_kW;V_k)$ satisfying the
$q$-isomonodromy equations~\eqref{qisouk}
in the $u_k$-direction, with $V_k$ satisfying the
spectral condition~\eqref{Cond:SpecVk}
determined by $\boldsymbol{\alpha}$.
Moreover, for any such triple,
\begin{align}\label{Eq:Vkkhatk}
	(V_k)_{k\hat{k}}
	&=
	\Phi_{k\hat{k}}
	\frac{(q-1)u_k}{u_kI-q^{-1}U_{\hat{k}\hat{k}}}.
\end{align}
\end{cor}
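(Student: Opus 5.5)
The plan is to reduce the $u_k$-direction equation~\eqref{qisoeq} to the abstract problem~\eqref{Eq:BasicLem} of Lemma~\ref{Lem:EquiLAG}, use the separation conditions~\eqref{Cond:Sep} to verify the disjointness hypotheses there and in Lemma~\ref{Lem:UniLAG}, and then read off that every unknown is forced.

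\textbf{Step 1: the explicit formula.} Formula~\eqref{Eq:Vkkhatk} requires no uniqueness argument, so I would prove it first. Compare the coefficients of $x$ in~\eqref{qisoeq}:
\[
T_kU\,V_k+\bigl(I+(q-1)q^{\operatorname{ad}(\boldsymbol{\beta}E_k)}T_k\Phi\bigr)UE_k
=
qUE_k\bigl(I+(q-1)\Phi\bigr)+V_kU .
\]
Take the $(k,\hat k)$-block. The second term on the left vanishes, since right multiplication by $UE_k$ kills every column outside block $k$. What remains is
\[
qu_k(V_k)_{k\hat k}=q(q-1)u_k\Phi_{k\hat k}+(V_k)_{k\hat k}U_{\hat k\hat k}.
\]
The matrix $qu_kI-U_{\hat k\hat k}$ is invertible because $u_i\notin u_kq^{\mathbb Z\setminus\{0\}}$. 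Solving for $(V_k)_{k\hat k}$ gives~\eqref{Eq:Vkkhatk}.

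\textbf{Step 2: the reduction.} Reorder the blocks into a $2\times2$ block form with the $\hat k$-blocks first and the $k$-th block second, so that $E_k$ becomes $E_{22}$. Factor $U$ on the right of $Ux+I+(q-1)\Phi$, and $T_kU$ on the left of the other linear factor. Then rescale $x$ and conjugate by a suitable diagonal matrix commuting with $U$ (essentially $\mathrm{diag}(I,u_k^{\pm1})$ together with the factor $q$ on the $k$-block). After this, \eqref{qisoeq} takes the form~\eqref{Eq:BasicLem} with
\[
A_1=U^{-1}\bigl(I+(q-1)\Phi\bigr),\qquad
A_2=(T_kU)^{-1}\bigl(I+(q-1)q^{\operatorname{ad}(\boldsymbol{\beta}E_k)}T_k\Phi\bigr),
\]
and with $G_1,G_2$ explicit rescalings of $V_k$. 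The normalization~\eqref{Cond:NorVk} becomes $(G_1)_{11}=(G_2)_{11}=I$ up to the fixed scaling.

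\textbf{Step 3: checking the hypotheses.} The spectral condition~\eqref{Cond:SpecVk}, through the Schur complement $\det(E_kx+V_k)=\det\bigl(x+(V_k^{-1})_{kk}^{-1}\bigr)\det V_k$ with $(V_k)_{\hat k\hat k}=I$, identifies the two relevant spectra:
\begin{itemize}
\item $\mathrm{Spec}\bigl((G_1^{-1})_{22}^{-1}\bigr)$ with $u_k^{-1}\mathrm{Spec}(q^{\alpha_k})$;
\item $\mathrm{Spec}\bigl((G_2^{-1})_{22}^{-1}\bigr)$ with the same set shifted by $q^{-1}$.
\end{itemize}
Hence~\eqref{Cond:SpecnoShift1} is exactly the disjointness~\eqref{Cond:G1G2disj}. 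The converse part of Lemma~\ref{Lem:EquiLAG} then yields~\eqref{DiagA1A2}. In particular $G_1$ block-lower-triangularizes $A_1$, with lower-right block of spectrum $S=u_k^{-1}\mathrm{Spec}(q^{\alpha_k})$. By~\eqref{Cond:SpecDisj}, $S$ is disjoint from $\mathrm{Spec}(A_1)\setminus S$, so Lemma~\ref{Lem:UniLAG} applies.

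\textbf{Step 4: uniqueness.} Since $G_{11}$ is fixed, Lemma~\ref{Lem:UniLAG} determines $G_{12}$, $(G_1^{-1})_{12}$ and $(G_1^{-1})_{22}^{-1}=(G_1^{-1})_{22}\Lambda_2(G_1^{-1})_{22}^{-1}$. The remaining block $(G_1)_{21}$ is then fixed by the $(2,1)$-entry of~\eqref{Exp:GA2AG1}, or equivalently by completing $G_1$ from its first block row and the second block column of its inverse. This makes $G_1$, hence $V_k$, unique. Then $G_2=A_2E_{22}+G_1-E_{22}A_1$ together with $A_2=G_2A_1G_1^{-1}$ determines $G_2$ and $A_2$, hence $T_k\Phi$, and~\eqref{qisoeqW} determines $T_kW$.

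The main obstacle I expect is the bookkeeping in Steps 2 and 3. The scalings must make the normalization, the spectral condition and the shift by $q$ land exactly on the hypotheses of Lemmas~\ref{Lem:EquiLAG} and~\ref{Lem:UniLAG}, in the right order of blocks. I would fix the scaling matrices by matching the $x^2$ and $x^0$ terms first.
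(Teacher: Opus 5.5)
Your proposal is correct and follows essentially the paper's own route: the same block reduction ($A_1=U^{-1}(I+(q-1)\Phi)$, $G_1=\mathrm{diag}(I,u_k^{-1})V_k$, $G_2=q^{-E_k}U^{-1}G_1U$, with $A_2$ additionally conjugated by $\mathrm{diag}(I,u_k^{-1})$), the same identification of \eqref{Cond:SpecnoShift1} and \eqref{Cond:SpecDisj} with \eqref{Cond:G1G2disj} and \eqref{Cond:Sdisj}, Lemma~\ref{Lem:UniLAG} for $(G_1)_{12}$ and $(G_1^{-1})_{22}$, and the $(k,\hat k)$-block relation for $(V_k)_{k\hat k}$ (your Step~1, which is equivalent to \eqref{Cond:G21Exp}). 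A few small slips do not affect the argument: the Schur-complement identity should read $\det(E_kx+V_k)=\det\bigl(xI+(V_k^{-1})_{kk}^{-1}\bigr)$ without the extra factor $\det V_k$; $(G_1)_{21}$ is not recovered by ``completing'' $G_1$ from its first block row and the second block column of $G_1^{-1}$, but comes from Step~1, after which $(G_1)_{22}=(G_1^{-1})_{22}^{-1}+(G_1)_{21}(G_1)_{12}$; and $G_2$ is simply the fixed rescaling $q^{-E_k}U^{-1}G_1U$ of $G_1$, not something solved for from $G_2=A_2E_{22}+G_1-E_{22}A_1$ (that system alone is degenerate).
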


\begin{prf}
Without loss of generality, let $k=n$.
With respect to the block decomposition $(\hat n,n)$, set
\begin{subequations}\label{Eq:BlockReduction}
\begin{align}
	A_1
	&=
	U^{-1}\bigl(I+(q-1)\Phi\bigr),\\
	G_1
	&=
	\begin{pmatrix}
		I & 0\\
		0 & u_n^{-1}I
	\end{pmatrix}
	V_n,\\
	G_2
	&=
	q^{-E_n}U^{-1}G_1U,\\
	A_2
	&=
	\begin{pmatrix}
		I & 0\\
		0 & u_n^{-1}I
	\end{pmatrix}
	(T_nU)^{-1}
	\left(
		I+(q-1)
		q^{\operatorname{ad}(\boldsymbol{\beta}E_n)}
		T_n\Phi
	\right)
	\begin{pmatrix}
		I & 0\\
		0 & u_nI
	\end{pmatrix}.
\end{align}
\end{subequations}
A direct calculation transforms
the $q$-isomonodromy equation~\eqref{qisoeq}
into~\eqref{Eq:BasicLem}.

Setting $x=0$ in the spectral condition~\eqref{Cond:SpecVk}
shows that $V_n$ is invertible.
Hence $G_1$ and~$G_2$ are also invertible.
The normalization condition~\eqref{Cond:NorVk} gives
\begin{align*}
	(G_1)_{11}
	&=
	(G_2)_{11}
	=
	I.
\end{align*}
The block matrix inverse formula and
the spectral condition~\eqref{Cond:SpecVk} give
\begin{align*}
	\mathrm{Spec}\bigl((G_1^{-1})_{22}^{-1}\bigr)
	&=
	\mathrm{Spec}\left(
		u_n^{-1}
		\left(
			(V_n)_{nn}
			-
			(V_n)_{n\hat n}(V_n)_{\hat n n}
		\right)
	\right)=
	\mathrm{Spec}(u_n^{-1}q^{\alpha_n}).
\end{align*}
By the definition of the formal exponents,
condition~\eqref{Cond:SpecDisj} is precisely
condition~\eqref{Cond:Sdisj} for
$S =
\mathrm{Spec}(u_n^{-1}q^{\alpha_n})$.
Moreover,
\begin{align*}
	(G_2^{-1})_{22} = q(G_1^{-1})_{22}.
\end{align*}
Hence condition~\eqref{Cond:SpecnoShift1} is precisely
condition~\eqref{Cond:G1G2disj}.

Lemma~\ref{Lem:EquiLAG} now gives
\eqref{DiagA1A2} and
\eqref{Cond:G11}--\eqref{Cond:G12Exp}.
Applying Lemma~\ref{Lem:UniLAG} to $A_1$, $G_1$, and~$S$,
and using the definition of~$G_1$ and~\eqref{DiagA1A2},
we find that
\begin{align*}
	(G_1)_{11}^{-1}(G_1)_{12} &=
	(V_n)_{\hat n n},\\
	(G_1^{-1})_{22} \cdot
	(G_1^{-1})_{22}^{-1} \cdot
	(G_1^{-1})_{22}^{-1} &=
	(G_1^{-1})_{22}^{-1}
	=
	u_n^{-1}
	\left(
		(V_n)_{nn}
		-
		(V_n)_{n\hat n}(V_n)_{\hat n n}
	\right)
\end{align*}
are uniquely determined.
Under the above identification,
equation~\eqref{Cond:G21Exp} becomes
\begin{align*}
	u_n^{-1}(V_n)_{n\hat n}
	&=
	u_n^{-1}(q-1)\Phi_{n\hat n}
	+
	q^{-1}u_n^{-2}
	(V_n)_{n\hat n}U_{\hat n\hat n}.
\end{align*}
Therefore,
\begin{align*}
	(V_n)_{n\hat n}
	&=
	\Phi_{n\hat n}
	\frac{(q-1)u_n}{u_nI-q^{-1}U_{\hat n\hat n}}.
\end{align*}
This proves~\eqref{Eq:Vkkhatk}.
Finally,
\begin{align*}
	(V_n)_{nn}
	&=
	u_n(G_1^{-1})_{22}^{-1}
	+
	(V_n)_{n\hat n}(V_n)_{\hat n n},
\end{align*}
so every block of~$V_n$ is uniquely determined.
Setting $x=0$ in~\eqref{qisoeq} gives
\begin{align*}
	q^{\operatorname{ad}(\boldsymbol{\beta}E_n)}
	T_n\Phi
	&=
	V_n\Phi V_n^{-1},
\end{align*}
so $T_n\Phi$ is uniquely determined by~$V_n$.
Equation~\eqref{qisoeqW} then uniquely determines~$T_nW$.
This completes the proof.
\end{prf}

Corollary~\ref{Cor:UniVk} gives local uniqueness under
	the separation conditions~\eqref{Cond:Sep}, without assuming
	the regularized condition~\eqref{Cond:RegTkPhi}.
When the regularized condition fails,
	Proposition~\ref{Pro:qUni} no longer guarantees that the full
	$q$-monodromy data determine $T_k\Phi$ and $T_kW$ uniquely.
Corollary~\ref{Cor:UniVk} nevertheless gives local uniqueness
	for the $q$-isomonodromy equations under the separation
	conditions~\eqref{Cond:Sep}.
Thus, the $q$-isomonodromy equations may select
	a deformation that is not determined by
	the $q$-monodromy data alone.
Moreover, these equations remain meaningful even when
	the corresponding $q$-monodromy data are not defined.

This motivates the following generalized notion of
$q$-isomonodromic deformation, defined by the
$q$-isomonodromy equations alone without assuming that the
corresponding $q$-monodromy data are defined.
Here, $\Phi$ need not be $q$-nonresonant, and no branch of
$[\Phi]^q$ needs to be chosen.

\begin{defi}\label{Def:GenqID}
We call the triple
$(\Phi(u;\boldsymbol{\beta}),W(u;\boldsymbol{\beta});
\boldsymbol{\alpha})$
a \textbf{generalized $q$-isomonodromic deformation} with respect to
the formal monodromy $\boldsymbol{\beta}$ if, for every~$u$, the
identity
\begin{align}
	\mathrm{Spec}
	\bigl(
		U^{-1}(I+(q-1)\Phi(u;\boldsymbol{\beta}))
	\bigr)
	&=
	\{
	u_k^{-1}q^{\alpha_i^{(k)}}:
	1\leqslant k\leqslant n,\,
	1\leqslant i\leqslant m_k
	\}
\end{align}
holds and, for each $k=1,\ldots,n$, there exists a matrix~$V_k(u)$
satisfying the spectral condition~\eqref{Cond:SpecVk}
determined by $\boldsymbol{\alpha}$ such that
$(T_k\Phi(u;\boldsymbol{\beta}),
T_kW(u;\boldsymbol{\beta});V_k(u))$
is a solution of the $q$-isomonodromy equations~\eqref{qisouk}
in the $u_k$-direction.
\end{defi}

\noindent
In what follows, the term \textbf{$q$-isomonodromic deformation}
refers to a generalized $q$-isomonodromic deformation in the sense
of Definition~\ref{Def:GenqID}.

\begin{rmk}
	If $(\Phi(u;0),W(u;0);\boldsymbol{\alpha})$
	is a generalized $q$-isomonodromic deformation
	with respect to the formal monodromy $0$,
	then
	$(U^{-\mathrm{ad}\boldsymbol{\beta}}\Phi(u;0),
	U^{-\boldsymbol{\beta}}W(u;0);\boldsymbol{\alpha})$
	is a generalized $q$-isomonodromic deformation
	with respect to the formal monodromy $\boldsymbol{\beta}$.
\end{rmk}

\subsection{Global Solutions}
\label{Sect:GlobalQIso}

In this subsection, we consider the $q$-isomonodromy equations
where all $u_k$ have multiplicity one.
We give explicit formulas for their local solutions and prove the
compatibility of the deformation in different directions.
We thus obtain global solutions.

For each $k$, define $V_k(u;\Phi,\boldsymbol{\alpha};q)$ by
\begin{subequations}\label{Exp:Vk}
\begin{align}
	V_k(u;\Phi,\boldsymbol{\alpha};q)_{\hat{k}\hat{k}}
	&:=
	I,
	\label{Exp:Vkhatkk}\\
	V_k(u;\Phi,\boldsymbol{\alpha};q)_{k\hat{k}}
	&:=
	\Phi_{k\hat{k}}
	\frac{(q-1)u_k}{
		u_kI-q^{-1}U_{\hat{k}\hat{k}}
	},
	\label{Exp:Vkkhatk}
	\\
	V_k(u;\Phi,\boldsymbol{\alpha};q)_{\hat k k} & :=
            \frac{(q-1) q^{-\alpha_{k}} u_k}{
            q^{-\alpha_k} u_k
            \left( I + (q-1) \Phi_{\hat{k}\hat{k}} \right)
            - U_{\hat{k}\hat{k}} }
            \Phi_{\hat{k}k},
	\label{Exp:Vk12}\\
	V_k(u;\Phi,\boldsymbol{\alpha};q)_{kk}
	&:=
	q^{\alpha_k}
	+
	V_k(u;\Phi,\boldsymbol{\alpha};q)_{k\hat{k}}
	V_k(u;\Phi,\boldsymbol{\alpha};q)_{\hat{k}k}.
	\label{Exp:Vkkk}
\end{align}
\end{subequations}

\begin{thm}\label{Thm:qiso}
For any $q$-monodromy system
$(U^0,\Phi^0;W^0;\boldsymbol{\alpha})$,
there exist functions
$(\Phi(u;\boldsymbol{\beta}),W(u;\boldsymbol{\beta}))$
on their maximal domain of definition in
$(u_1^0q^{\mathbb Z})
\times\cdots\times
(u_n^0q^{\mathbb Z})$
such that
\begin{align}\label{Eq:qIsoInitial}
	\bigl(
	\Phi(u^0;\boldsymbol{\beta}),
	W(u^0;\boldsymbol{\beta})
	\bigr)
	&=
	(\Phi^0,W^0),
\end{align}
and
\begin{subequations}\label{ExpclitPhiW}
\begin{align}
	T_k\Phi(u;\boldsymbol{\beta})
	&=
	\mathrm{Ad}\left(
		q^{-\boldsymbol{\beta}E_k}
		V_k(u;\Phi(u;\boldsymbol{\beta}),\boldsymbol{\alpha};q)
	\right)
	\Phi(u;\boldsymbol{\beta}),
	\label{ExpclitPhi}\\
	T_k^{-1}\Phi(u;\boldsymbol{\beta})
	&=
	\mathrm{Ad}\left(
		q^{\boldsymbol{\beta}E_k}
		V_k\left(
		-u;-q\Phi(u;\boldsymbol{\beta})^\top,
		-\boldsymbol{\alpha};q^{-1}
		\right)^{-\top}
	\right)
	\Phi(u;\boldsymbol{\beta}),
	\label{ExpclitPhiInv}\\
	T_kW(u;\boldsymbol{\beta})
	&=
	q^{-\boldsymbol{\beta}E_k}
	V_k(u;\Phi(u;\boldsymbol{\beta}),\boldsymbol{\alpha};q)\cdot
	W(u;\boldsymbol{\beta}),
	\label{ExpclitW}\\
	T_k^{-1}W(u;\boldsymbol{\beta})
	&=
	q^{\boldsymbol{\beta}E_k}
	V_k\left(
	-u;-q\Phi(u;\boldsymbol{\beta})^\top,
	-\boldsymbol{\alpha};q^{-1}
	\right)^{-\top}\cdot
	W(u;\boldsymbol{\beta}).
	\label{ExpclitWInv}
\end{align}
\end{subequations}
Then
$(\Phi(u;\boldsymbol{\beta}),
W(u;\boldsymbol{\beta});
\boldsymbol{\alpha})$
is a $q$-isomonodromic deformation with respect to the formal
monodromy $\boldsymbol{\beta}$.
If, in addition, the following
separation condition holds:
\begin{align}\label{Cond:GlobalSep}
	\frac{
		(u_i^0)^{-1}q^{\alpha_i}
	}{
		(u_j^0)^{-1}q^{\alpha_j}
	}
	&\notin
	q^{\mathbb Z},
	\qquad
	1\leqslant i\neq j\leqslant n,
\end{align}
then it is the unique $q$-isomonodromic deformation
satisfying~\eqref{Eq:qIsoInitial}.
\end{thm}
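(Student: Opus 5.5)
We will prove the theorem in four steps. First we check that one step of \eqref{ExpclitPhiW} solves the $q$-isomonodromy equations~\eqref{qisouk}. Next we show that the backward formulas invert the forward ones. Then we prove that the shifts in different directions commute, so that composing steps along lattice paths from $u^0$ gives well-defined functions. Finally we derive uniqueness from Corollary~\ref{Cor:UniVk}.

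\emph{Step 1 (one forward step).} Fix $k$; by relabelling we may take $k=n$. Since all multiplicities are one, $V_n:=V_n(u;\Phi,\boldsymbol{\alpha};q)$ is an honest $n\times n$ matrix, and the normalization~\eqref{Cond:NorVk} holds by construction. The spectral condition~\eqref{Cond:SpecVk} reduces to a Schur complement computation. Expanding along the last column, $\det(E_nx+V_n)=x+(V_n)_{nn}-(V_n)_{n\hat n}(V_n)_{\hat nn}$, and this equals $x+q^{\alpha_n}$ by~\eqref{Exp:Vkkk}. For~\eqref{qisoeq}, set $T_n\Phi:=\mathrm{Ad}(q^{-\boldsymbol\beta E_n}V_n)\Phi$, so that the constant terms agree. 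We then pass to the block reduction~\eqref{Eq:BlockReduction} and use the forward direction of Lemma~\ref{Lem:EquiLAG}, which requires checking~\eqref{DiagA1A2} and~\eqref{Cond:G}.
\begin{itemize}
\item Condition~\eqref{Cond:G11} is the normalization $(G_1)_{11}=(G_2)_{11}=I$.
\item Condition~\eqref{Cond:G21Exp} is exactly the defining formula~\eqref{Exp:Vkkhatk} for $(V_n)_{n\hat n}$, by the computation in the proof of Corollary~\ref{Cor:UniVk}.
\item The first identity in~\eqref{DiagA1A2} says that $(G_1^{-1})_{\ast n}$ spans a right eigenvector of $A_1=U^{-1}(I+(q-1)\Phi)$ with eigenvalue $u_n^{-1}q^{\alpha_n}$. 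This is precisely what the formula~\eqref{Exp:Vk12} for $(V_n)_{\hat nn}$ encodes, since it is the solution of the first $n-1$ rows of the eigenvector equation. Here we use that $u_n^{-1}q^{\alpha_n}\in\mathrm{Spec}(A_1)$.
\item The remaining conditions (the second identity of~\eqref{DiagA1A2}, \eqref{Cond:G11Prop} and~\eqref{Cond:G12Exp}) involve $A_2$. Since $A_2$ is defined from $T_n\Phi=\mathrm{Ad}(q^{-\boldsymbol\beta E_n}V_n)\Phi$, they become polynomial identities in $\Phi$. We verify these by direct block multiplication.
\end{itemize}
Finally, taking determinants of~\eqref{qisoeq}, as at the end of the proof of Lemma~\ref{Lem:qBasic}, shows that $\boldsymbol\alpha$ are again formal exponents of $(T_nU,T_n\Phi)$.

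\emph{Step 2 (backward step).} For~\eqref{ExpclitPhiInv} and~\eqref{ExpclitWInv} we use the duality $(U,\Phi,\boldsymbol\alpha,q)\mapsto(-U,-q\Phi^\top,-\boldsymbol\alpha,q^{-1})$ of Corollary~\ref{Cor:Basic} and Proposition~\ref{Pro:CTransformation}. Under this duality, equation~\eqref{qisoeq} transposes and inverts into the same type of equation with $q$ replaced by $q^{-1}$, and $T_k$ is exchanged with $T_k^{-1}$. Applying Step~1 to the dual system and transposing back therefore produces a solution of the equation for $T_k^{-1}$.

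That $T_kT_k^{-1}=\mathrm{id}$ follows from Corollary~\ref{Cor:UniVk}. Both $\Phi$ and the forward image of $T_k^{-1}\Phi$ solve the same single-step problem, so for generic $\Phi$ they coincide. Both sides are rational in $\Phi$, so the identity extends off the generic locus.

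\emph{Step 3 (compatibility).} We must show that, for $j\neq k$,
\begin{align*}
	V_j(T_ku;T_k\Phi)\,q^{-\boldsymbol\beta E_k}V_k(u;\Phi)
	=
	q^{\boldsymbol\beta E_j-\boldsymbol\beta E_k}\,
	V_k(T_ju;T_j\Phi)\,q^{-\boldsymbol\beta E_j}V_j(u;\Phi),
\end{align*}
with the obvious placement of the $\boldsymbol\beta$-factors. This identity gives $T_jT_k\Phi=T_kT_j\Phi$ and $T_jT_kW=T_kT_jW$. We expect this to be the main obstacle, since a direct algebraic verification is heavy.

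We would try a monodromy argument first. Suppose $\Phi$ is generic, so that $\Phi$ is $q$-nonresonant, the regularized conditions~\eqref{Cond:RegTkPhi} hold along both paths, and~\eqref{NotUniPhi} holds between the two endpoints. By Lemma~\ref{Lem:qBasic}, both composite shifts preserve $(C(x),\Lambda;\boldsymbol\alpha)$. Proposition~\ref{Pro:qUni} then forces the two endpoints $(\Phi,W)$ to coincide. Since the two sides of the displayed identity are rational in the entries of $\Phi$, the identity extends to all $\Phi$ by Zariski density.

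Having Steps~1--3, we define $(\Phi,W)$ at every lattice point by composing forward and backward steps along any path from $u^0$. Path independence follows from Steps~2 and~3. The maximal domain of definition is the set of lattice points reachable without meeting a vanishing denominator in~\eqref{Exp:Vk}. By Step~1 the result is a $q$-isomonodromic deformation in the sense of Definition~\ref{Def:GenqID}.

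\emph{Step 4 (uniqueness).} Under~\eqref{Cond:GlobalSep}, the separation conditions~\eqref{Cond:Sep} hold at every lattice point. Indeed, each $u_i$ ranges over $u_i^0q^{\mathbb Z}$, so every ratio $(u_i^{-1}q^{\alpha_i})/(u_j^{-1}q^{\alpha_j})$ still lies outside $q^{\mathbb Z}$. Condition~\eqref{Cond:SpecnoShift1} holds trivially in multiplicity one, because $q\neq1$. Corollary~\ref{Cor:UniVk} then shows that each single step is forced. The backward steps are forced as well, by applying the same corollary to the dual system. Hence induction along lattice paths from $u^0$ gives uniqueness.
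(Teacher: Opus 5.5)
Your proposal is correct and shares the paper's architecture. You verify one forward step, obtain the backward step from the duality $(U,\Phi,\boldsymbol\alpha,q)\mapsto(-U,-q\Phi^\top,-\boldsymbol\alpha,q^{-1})$, and get commutativity of the shifts from Lemma~\ref{Lem:qBasic} and Proposition~\ref{Pro:qUni} on a dense set followed by rational extension. Your Step~3 is exactly the paper's argument; the proposal differs in two places.

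\textbf{The single step.} The paper does not go through the forward direction of Lemma~\ref{Lem:EquiLAG}. It compares the coefficients of $x$ in \eqref{qisoeq} block by block. The only nontrivial blocks, $(\hat k,k)$ and $(k,k)$, both reduce to the Schur-complement identity \eqref{Eq:VkSpectralIdentity}. Your route has more conditions to check. The ones you defer to ``direct block multiplication'' are not polynomial identities in $\Phi$ alone. They hold only on the locus $u_n^{-1}q^{\alpha_n}\in\mathrm{Spec}(A_1)$, and verifying them uses \eqref{Exp:Vkkhatk} together with \eqref{Eq:VkSpectralIdentity}. They do hold, so this is imprecision rather than a gap.

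\textbf{Inverse and uniqueness.} For $T_kT_k^{-1}=\mathrm{id}$ and for the final uniqueness, the paper again uses Proposition~\ref{Pro:qUni}. You instead use the purely algebraic local uniqueness of Corollary~\ref{Cor:UniVk} at each lattice point, applied to the dual system for backward steps.
\begin{itemize}
\item Your version fits Definition~\ref{Def:GenqID} more naturally. Generalized deformations are defined by the equations, and your argument never needs the central connection matrix to be defined along the lattice.
\item The paper's version is shorter, because it reuses the monodromy mechanism already needed for compatibility.
\end{itemize}

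\textbf{A correction in Step~3.} The condition \eqref{NotUniPhi} between the two endpoints depends only on $(u,\boldsymbol\alpha)$, and it follows from \eqref{Cond:GlobalSep}. So the density argument should be made in $(u,\Phi,q^{\boldsymbol\alpha})$ on the spectral incidence locus, not in $\Phi$ alone. Also, the regularized condition \eqref{Cond:RegTkPhi} is not needed there, since Lemma~\ref{Lem:qBasic} does not use it.
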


\begin{prf}
We first derive the formulas in~\eqref{Exp:Vk}.
Equation~\eqref{Exp:Vkhatkk} follows from the normalization
condition~\eqref{Cond:NorVk}.
Comparing the $(k,\hat{k})$-blocks of the coefficients of~$x$
on the two sides of~\eqref{qisoeq} gives~\eqref{Exp:Vkkhatk}.
Equation~\eqref{Exp:Vkkk} follows from the block matrix inverse
formula and the spectral condition~\eqref{Cond:SpecVk}.

To derive~\eqref{Exp:Vk12}, use the block
reduction~\eqref{Eq:BlockReduction}, with $n$ replaced by~$k$.
Equations~\eqref{Exp:Vkhatkk} and~\eqref{Exp:Vkkk}, together with
the block matrix inverse formula, give
\begin{align*}
	(G_1^{-1})_{\hat{k}k}
	=
	-q^{-\alpha_k}u_k(V_k)_{\hat{k}k},\qquad
	(G_1^{-1})_{kk}
	&=
	q^{-\alpha_k}u_k.
\end{align*}
Multiplying the first identity in~\eqref{DiagA1A2} by $G_1^{-1}$
from the left and taking the $(\hat{k},k)$-block, we obtain
\begin{align*}
	(A_1G_1^{-1})_{\hat{k}k}
	&=
	(G_1^{-1})_{\hat{k}k}
	(G_1^{-1})_{kk}^{-1}
	=
	-(V_k)_{\hat{k}k}.
\end{align*}
Substituting the definition of~$A_1$ from~\eqref{Eq:BlockReduction}
and the above formulas for~$G_1^{-1}$ gives
\begin{align*}
	&-q^{-\alpha_k}u_k
	U_{\hat{k}\hat{k}}^{-1}
	\bigl(I+(q-1)\Phi_{\hat{k}\hat{k}}\bigr)
	(V_k)_{\hat{k}k}
	+(q-1)q^{-\alpha_k}u_k
	U_{\hat{k}\hat{k}}^{-1}\Phi_{\hat{k}k}
	=
	-(V_k)_{\hat{k}k}.
\end{align*}
Multiplying by~$U_{\hat{k}\hat{k}}$ from the left and rearranging,
we obtain
\begin{align*}
	\left(
		q^{-\alpha_k}u_k
		\bigl(I+(q-1)\Phi_{\hat{k}\hat{k}}\bigr)
		-
		U_{\hat{k}\hat{k}}
	\right)
	(V_k)_{\hat{k}k}
	&=
	(q-1)q^{-\alpha_k}u_k\Phi_{\hat{k}k},
\end{align*}
which is equivalent to~\eqref{Exp:Vk12}.

We next verify that the matrix~$V_k$ defined by~\eqref{Exp:Vk}
satisfies the $q$-isomonodromy equations~\eqref{qisouk}.
Equation~\eqref{qisoeqW} is precisely~\eqref{ExpclitW}, and the
constant term of~\eqref{qisoeq} follows from~\eqref{ExpclitPhi}.
Since $U$ is diagonal, the coefficients of~$x^2$ also agree.
For the coefficient of~$x$, the $(\hat{k},\hat{k})$-block holds
identically, while the $(k,\hat{k})$-block is
\eqref{Exp:Vkkhatk}.
After using~\eqref{Exp:Vk12} and~\eqref{Exp:Vkkk}, the
$(\hat{k},k)$- and $(k,k)$-blocks both reduce to
\begin{align}\label{Eq:VkSpectralIdentity}
	1+(q-1)\Phi_{kk}
	-
	(q-1)\Phi_{k\hat{k}}(V_k)_{\hat{k}k}
	&=
	q^{\alpha_k}.
\end{align}
Since $\boldsymbol{\alpha}$ are the formal exponents of
$(U,\Phi)$, Definition~\ref{Def:alpha} gives
\begin{align*}
	\det\left(
		I+(q-1)\Phi
		-
		u_k^{-1}q^{\alpha_k}U
	\right)
	&=
	0.
\end{align*}
On the domain of definition of~\eqref{Exp:Vk12}, the
$(\hat{k},\hat{k})$-block of the matrix in this determinant is
invertible.
The Schur complement formula therefore gives
\eqref{Eq:VkSpectralIdentity}.
Hence $V_k$ gives a local solution of the $q$-isomonodromy
equations.

The inverse-transpose duality~\eqref{Eq:CDuality}
in Proposition~\ref{Pro:CTransformation}, together with
Definition~\ref{Def:alpha}, identifies
\begin{align*}
	(-U,-q\Phi^\top;W^{-\top};-\boldsymbol{\alpha})
\end{align*}
as the dual system with base~$q^{-1}$ and formal monodromy
$-\boldsymbol{\beta}^{\top}$.
Applying the forward formulas~\eqref{ExpclitPhi}
and~\eqref{ExpclitW} to this system gives
\eqref{ExpclitPhiInv} and~\eqref{ExpclitWInv}, respectively.

To establish compatibility, we first
impose~\eqref{Cond:GlobalSep}.
On this dense open subset,
Proposition~\ref{Pro:qUni} shows that a $q$-monodromy
system with fixed $U$ and formal exponents is uniquely determined
by its $q$-monodromy data.
Lemma~\ref{Lem:qBasic} and the duality~\eqref{Eq:CDuality} show
that the forward and inverse transformations constructed above
preserve these data.
Since $T_iT_jU=T_jT_iU$, Proposition~\ref{Pro:qUni} gives
\begin{align*}
	T_iT_j\Phi
	&=
	T_jT_i\Phi,\qquad
	T_iT_jW
	=
	T_jT_iW.
\end{align*}
The same argument shows that each inverse transformation is
inverse to the corresponding forward transformation.
Since both sides of the commutation and inverse identities are
rational functions of the initial data by~\eqref{Exp:Vk}
and~\eqref{ExpclitPhiW}, the identities extend from this dense
open subset to their maximal common domain of definition.
Consequently, the transformations define a consistent
$\mathbb Z^n$-action and yield the required functions
$\Phi(u;\boldsymbol{\beta})$ and~$W(u;\boldsymbol{\beta})$
on the full $q^{\mathbb Z}$-lattice.
Under~\eqref{Cond:GlobalSep}, Proposition~\ref{Pro:qUni} also
proves their uniqueness subject to~\eqref{Eq:qIsoInitial}.
This completes the proof.
\end{prf}

\begin{cor}
Under the assumptions of Theorem~\ref{Thm:qiso},
\begin{subequations}\label{Exp:TkPhiBlocks}
\begin{align}
	(T_k\Phi)_{\hat{k}k}
	&=
	\frac{
		u_kI-U_{\hat{k}\hat{k}}
	}{
		(q-1)u_k
	}
	(V_k)_{\hat{k}k}q^{\beta_k},
	\\
	(T_k\Phi)_{kk}
	&=
	\Phi_{kk}
	-
	q^{-1}\Phi_{k\hat{k}}
	\frac{
		(q-1)U_{\hat{k}\hat{k}}
	}{
		u_kI-q^{-1}U_{\hat{k}\hat{k}}
	}
	(V_k)_{\hat{k}k},
	\\
	(T_k\Phi)_{\hat{k}\hat{k}}
	&=
	\Phi_{\hat{k}\hat{k}}
	+
	\frac{1}{(q-1)u_k}
	\left(
		U_{\hat{k}\hat{k}}
		(V_k)_{\hat{k}k}(V_k)_{k\hat{k}}
		-
		(V_k)_{\hat{k}k}(V_k)_{k\hat{k}}
		q^{-1}U_{\hat{k}\hat{k}}
	\right),
	\\
	(T_k\Phi)_{k\hat{k}}
	&=
	q^{-\beta_k}\Phi_{k\hat{k}}
	\left(
		q^{\alpha_k}I
		+
		\frac{q-1}{I-q^{-1}
		\frac{U_{\hat{k}\hat{k}}}{u_k}}
		\left(
			(T_k\Phi)_{\hat{k}\hat{k}}
			-
			[\alpha_k]_qI
		\right)
	\right).
\end{align}
\end{subequations}
\end{cor}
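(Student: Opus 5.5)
The four identities will be read off from the $q$-isomonodromy equation \eqref{qisoeq}. By Theorem~\ref{Thm:qiso}, $V_k:=V_k(u;\Phi,\boldsymbol{\alpha};q)$ from \eqref{Exp:Vk} solves it. Write $\widetilde\Phi:=q^{\operatorname{ad}(\boldsymbol{\beta}E_k)}T_k\Phi$. In block form $\widetilde\Phi_{\hat k\hat k}=(T_k\Phi)_{\hat k\hat k}$, $\widetilde\Phi_{kk}=(T_k\Phi)_{kk}$, $\widetilde\Phi_{k\hat k}=q^{\beta_k}(T_k\Phi)_{k\hat k}$ and $\widetilde\Phi_{\hat kk}=(T_k\Phi)_{\hat kk}q^{-\beta_k}$.

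The $x^2$-terms of \eqref{qisoeq} already agree. I will compare its $x^1$-coefficients,
\[
T_kU\,V_k+\bigl(I+(q-1)\widetilde\Phi\bigr)UE_k
=
UE_k\bigl(I+(q-1)\Phi\bigr)q+V_kU,
\]
block by block. The constant term $\widetilde\Phi=V_k\Phi V_k^{-1}$ is kept as a backup. Because $E_k$ kills the $\hat k$-columns on the left side and the $\hat k$-rows on the right side, each block yields a single linear relation.

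First I take the $(\hat k,k)$-block. With $(V_k)_{\hat k\hat k}=I$ it reads $U_{\hat k\hat k}(V_k)_{\hat kk}+(q-1)\widetilde\Phi_{\hat kk}u_k=(V_k)_{\hat kk}u_k$. Solving for $\widetilde\Phi_{\hat kk}$ and multiplying on the right by $q^{\beta_k}$ gives the first formula.

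Next comes the $(\hat k,\hat k)$-block. Its $x^1$-coefficient holds identically, so it carries no information. Here I use the constant term in the form $\widetilde\Phi V_k=V_k\Phi$ and take its $(\hat k,\hat k)$-block:
\[
\widetilde\Phi_{\hat k\hat k}+\widetilde\Phi_{\hat kk}(V_k)_{k\hat k}
=
\Phi_{\hat k\hat k}+(V_k)_{\hat kk}\Phi_{k\hat k}.
\]
I substitute $\widetilde\Phi_{\hat kk}$ from the first step. Then I rewrite $\Phi_{k\hat k}$ through \eqref{Exp:Vkkhatk} as $\Phi_{k\hat k}=(V_k)_{k\hat k}\,\frac{u_kI-q^{-1}U_{\hat k\hat k}}{(q-1)u_k}$. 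The two correction terms then combine into $\frac{1}{(q-1)u_k}\bigl(U_{\hat k\hat k}(V_k)_{\hat kk}(V_k)_{k\hat k}-(V_k)_{\hat kk}(V_k)_{k\hat k}q^{-1}U_{\hat k\hat k}\bigr)$, which is the third formula.

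For the $(k,k)$-block I use the $x^1$-coefficient. Writing $(V_k)_{kk}=q^{\alpha_k}+(V_k)_{k\hat k}(V_k)_{\hat kk}$, the equation becomes
\[
qu_k(V_k)_{kk}+\bigl(1+(q-1)\widetilde\Phi_{kk}\bigr)u_k
=
qu_k\bigl(1+(q-1)\Phi_{kk}\bigr)+(V_k)_{kk}u_k .
\]
This gives $(q-1)\widetilde\Phi_{kk}=q(q-1)\Phi_{kk}+(q-1)-(q-1)(V_k)_{kk}$. To reach the stated form I will eliminate $q^{\alpha_k}$ with the spectral identity \eqref{Eq:VkSpectralIdentity}, $q^{\alpha_k}=1+(q-1)\Phi_{kk}-(q-1)\Phi_{k\hat k}(V_k)_{\hat kk}$. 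I will also use $(V_k)_{k\hat k}=(q-1)\Phi_{k\hat k}\,u_k/(u_kI-q^{-1}U_{\hat k\hat k})$. The resulting combination $\Phi_{k\hat k}(V_k)_{\hat kk}-\Phi_{k\hat k}\frac{u_k}{u_k-q^{-1}U}(V_k)_{\hat kk}$ equals $-\Phi_{k\hat k}\frac{q^{-1}U_{\hat k\hat k}}{u_k-q^{-1}U_{\hat k\hat k}}(V_k)_{\hat kk}$. This gives the second formula. This bookkeeping is the most error-prone step, and I will check it against the rank-two case.

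Finally, the $(k,\hat k)$-block of the $x^1$-coefficient is just \eqref{Exp:Vkkhatk}, so I take the $(k,\hat k)$-block of the constant term instead:
\[
\widetilde\Phi_{k\hat k}+\widetilde\Phi_{kk}(V_k)_{k\hat k}
=
(V_k)_{kk}\Phi_{k\hat k}+(V_k)_{k\hat k}\Phi_{\hat k\hat k}.
\]
The target formula involves $(T_k\Phi)_{\hat k\hat k}$, so the cleaner route is $V_k\Phi=\widetilde\Phi V_k$ read in the other order, that is, $\widetilde\Phi=V_k\Phi V_k^{-1}$ compared on the $(k,\hat k)$-block after right multiplication. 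I expect this to be the main obstacle. I will first try to express $\widetilde\Phi_{k\hat k}$ from the equation above, eliminating $\widetilde\Phi_{kk}$, $(V_k)_{kk}$ and $\Phi_{\hat k\hat k}$ with the second formula, the third formula (which gives $\Phi_{\hat k\hat k}$ in terms of $(T_k\Phi)_{\hat k\hat k}$), \eqref{Exp:Vkkk} and \eqref{Eq:VkSpectralIdentity}. The aim is to collapse everything to
\[
\Phi_{k\hat k}\Bigl(q^{\alpha_k}+\tfrac{q-1}{I-q^{-1}U_{\hat k\hat k}/u_k}\bigl((T_k\Phi)_{\hat k\hat k}-[\alpha_k]_q\bigr)\Bigr),
\]
where $[\alpha_k]_q$ enters through $q^{\alpha_k}=1+(q-1)[\alpha_k]_q$. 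Multiplying by $q^{-\beta_k}$ then gives the fourth formula.
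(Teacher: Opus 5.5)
Your proposal is correct. It is the block-by-block reading of \eqref{qisoeq}: the $x^1$-coefficient gives the $(\hat k,k)$- and $(k,k)$-blocks, and the constant term $\widetilde\Phi V_k=V_k\Phi$ with $\widetilde\Phi:=q^{\operatorname{ad}(\boldsymbol{\beta}E_k)}T_k\Phi$ gives the $(\hat k,\hat k)$- and $(k,\hat k)$-blocks, using \eqref{Exp:Vk} and \eqref{Eq:VkSpectralIdentity}; this is the computation the paper leaves implicit after Theorem~\ref{Thm:qiso}. The step you left tentative does close. After eliminating $\Phi_{\hat k\hat k}$ and using $(V_k)_{kk}-(V_k)_{k\hat k}(V_k)_{\hat kk}=q^{\alpha_k}$, the leftover scalar is $\widetilde\Phi_{kk}-(V_k)_{k\hat k}\widetilde\Phi_{\hat kk}$, which by your first two formulas equals $\Phi_{kk}-\Phi_{k\hat k}(V_k)_{\hat kk}=[\alpha_k]_q$; hence $\widetilde\Phi_{k\hat k}=q^{\alpha_k}\Phi_{k\hat k}+(V_k)_{k\hat k}\bigl(\widetilde\Phi_{\hat k\hat k}-[\alpha_k]_q\bigr)$, and more directly this is the $(k,\hat k)$-block of $V_k^{-1}\widetilde\Phi=\Phi V_k^{-1}$, with $V_k^{-1}$ given by the Schur complement $q^{\alpha_k}$ of $(V_k)_{\hat k\hat k}=I$.
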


The compatibility of the transformations~\eqref{ExpclitPhiW}
established in Theorem~\ref{Thm:qiso} yields the following
algebraic identity.

\begin{cor}[Compatibility condition]\label{CompCond}
Denote $V_k(u;\Phi,\boldsymbol{\alpha};q)$ by $V_k(u;\Phi)$.
Let $\Phi$ be a generic $n\times n$ matrix such that
$\boldsymbol{\alpha}$ are the formal exponents of $(U,\Phi)$.
Then
\begin{align}\label{Eq:CompCond}
	V_i
	\left(
	T_ju;
		\operatorname{Ad}\left(V_j(u;\Phi)\right)\Phi
	\right)
	V_j(u;\Phi)
	&=
	V_j
	\left(
	T_iu;
		\operatorname{Ad}\left(V_i(u;\Phi)\right)\Phi
	\right)
	V_i(u;\Phi).
\end{align}
\end{cor}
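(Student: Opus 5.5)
The plan is to deduce \eqref{Eq:CompCond} from the commutation $T_iT_jW=T_jT_iW$ established in the proof of Theorem~\ref{Thm:qiso}, and then extend it by rationality. I would work with formal monodromy $\boldsymbol{\beta}=0$, since $V_k$ in \eqref{Exp:Vk} does not involve $\boldsymbol{\beta}$. The transformations \eqref{ExpclitPhi} and \eqref{ExpclitW} then read $T_k\Phi=\operatorname{Ad}(V_k(u;\Phi))\Phi$ and $T_kW=V_k(u;\Phi)W$.

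First take $\Phi$ generic: $q$-nonresonant, with distinct eigenvalues so that an invertible $W$ as in \eqref{Jordan} exists, and with formal exponents $\boldsymbol{\alpha}$ satisfying \eqref{Cond:GlobalSep}. Take $u_i\notin u_jq^{\mathbb Z}$, all $u_k\neq 0$, and $u$ away from the poles of the $V_k$. Theorem~\ref{Thm:qiso} gives a $q$-isomonodromic deformation through $(U,\Phi;W;\boldsymbol{\alpha})$, and its proof shows $T_iT_jW=T_jT_iW$ on this dense set.

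Next, compute both sides of this identity. Applying $T_j$ first gives $T_jW=V_j(u;\Phi)W$ at the point $T_ju$, with matrix $T_j\Phi=\operatorname{Ad}(V_j(u;\Phi))\Phi$. Applying $T_i$ at that point gives
\[
T_iT_jW=V_i\bigl(T_ju;\operatorname{Ad}(V_j(u;\Phi))\Phi\bigr)V_j(u;\Phi)\,W .
\]
The other order gives the symmetric expression. Here we use that $\boldsymbol{\alpha}$ remains the formal exponents of $(T_jU,T_j\Phi)$, by Lemma~\ref{Lem:qBasic}, so the same $\boldsymbol{\alpha}$ enters $V_i$. Since $W$ is invertible, we may cancel it on the right, which yields \eqref{Eq:CompCond} on the dense open set.

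Finally, both sides of \eqref{Eq:CompCond} are rational functions of $(u,\Phi,q^{\alpha_k})$. The constraint that $\boldsymbol{\alpha}$ are the formal exponents of $(U,\Phi)$ cuts out a variety, parametrized for instance by $\Phi$, with $q^{\alpha_k}$ determined up to labelling as $u_k$ times the eigenvalues of $U^{-1}(I+(q-1)\Phi)$. On that variety the generic conditions above define a nonempty Zariski-open subset. The identity therefore extends to all generic $\Phi$.

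The main obstacle I anticipate is this density argument. One must check that the conditions used (nonresonance, diagonalizability, \eqref{Cond:GlobalSep}, and invertibility of the denominators in \eqref{Exp:Vk12} at both $u$ and $T_ju$) are simultaneously satisfiable on each irreducible component of the constraint variety. If that check is awkward, I would instead verify \eqref{Eq:CompCond} directly. Both sides have the form $R(x)$ from the proof of Lemma~\ref{Lem:betaEq}, and the composite gauge factors $R_iR_j$ are pinned down by their poles and normalization; this mirrors the uniqueness in Corollary~\ref{Cor:UniVk}.
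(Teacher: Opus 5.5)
Your proposal is correct and is essentially the paper's argument. You set $\boldsymbol{\beta}=0$, substitute $T_k\Phi=\operatorname{Ad}(V_k)\Phi$ and $T_kW=V_kW$ into the commutation $T_iT_jW=T_jT_iW$ from Theorem~\ref{Thm:qiso}, and cancel the invertible $W$. The rational extension from the dense generic set that you single out as the main obstacle is already carried out in the proof of Theorem~\ref{Thm:qiso}, so the corollary needs no further density check or fallback argument.
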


\begin{prf}
For $\boldsymbol{\beta}=0$, the
transformations~\eqref{ExpclitPhiW} in
Theorem~\ref{Thm:qiso} reduce to
\begin{subequations}\label{Eq:CompShifts}
\begin{align}
	T_k\Phi(u)
	&=
		\operatorname{Ad}\left(V_k(u;\Phi(u))\right)\Phi(u),
	\label{Eq:CompShiftPhi}\\
	T_kW(u)
	&=
	V_k(u;\Phi(u))W(u).
	\label{Eq:CompShiftW}
\end{align}
\end{subequations}
Substituting~\eqref{Eq:CompShiftW} into the commutativity
$T_jT_iW=T_iT_jW$
established in Theorem~\ref{Thm:qiso} gives
\begin{align*}
	V_i\left(T_ju;T_j\Phi(u)\right)
	V_j(u;\Phi(u))W(u)
	&=
	V_j\left(T_iu;T_i\Phi(u)\right)
	V_i(u;\Phi(u))W(u).
\end{align*}
Cancelling $W(u)$ and using~\eqref{Eq:CompShiftPhi}
proves~\eqref{Eq:CompCond}.
\end{prf}

The compatibility established above allows us to define composite
shifts independently of their ordering.
We conclude this subsection with two identities describing these
shifts.
For an index set
$\boldsymbol{i}=\{i_1,\ldots,i_s\}$ and a diagonal matrix
$D=\operatorname{diag}(d_1,\ldots,d_n)$,
we use the notation
\begin{align}\label{Nota:Eiset}
	T_{\boldsymbol{i}}
	&\assign
	T_{i_1}\cdots T_{i_s},
	\qquad
	E_{\boldsymbol{i}}
	\assign
	E_{i_1}+\cdots+E_{i_s},
	\qquad
	D_{\boldsymbol{i}}
	\assign
	d_{i_1}E_{i_1}+\cdots+d_{i_s}E_{i_s}.
\end{align}
For a $q$-isomonodromic deformation with respect to the formal
monodromy~$\boldsymbol{\beta}$, define
\begin{align}\label{Def:Vseti}
	V_{\boldsymbol{i}}(u)
	&\assign
	q^{\boldsymbol{\beta}E_{\boldsymbol{i}}}
	+
	(q-1)
	\left(
		\left(
		q^{\operatorname{ad}
		(\boldsymbol{\beta}E_{\boldsymbol{i}})}
		T_{\boldsymbol{i}}H_1^{[\infty]}(u)
		\right)
		U_{\boldsymbol{i}}
		-
		U_{\boldsymbol{i}}H_1^{[\infty]}(u)
	\right).
\end{align}
For a singleton, equation~\eqref{Eq:VkHinf} gives
$V_{\{k\}}(u)=V_k(u)$.

\begin{lem}\label{Lem:CompositeShift}
For any disjoint index sets $\boldsymbol{i}$ and~$\boldsymbol{j}$,
one has
\begin{align}\label{Eq:CompositeShift}
	U_{\boldsymbol{i}\sqcup\boldsymbol{j}}x
	+
	V_{\boldsymbol{i}\sqcup\boldsymbol{j}}(u)
	&=
	\left(
	U_{\boldsymbol{i}}x
	+
	q^{\operatorname{ad}
	(\boldsymbol{\beta}E_{\boldsymbol{j}})}
	T_{\boldsymbol{j}}V_{\boldsymbol{i}}(u)
	\right)
	\left(
	U_{\boldsymbol{j}}x
	+
	V_{\boldsymbol{j}}(u)
	\right).
\end{align}
\end{lem}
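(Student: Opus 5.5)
The plan is to identify both sides of \eqref{Eq:CompositeShift} with the gauge matrix $R(x)$ relating canonical solutions at infinity, exactly as in the proof of Lemma~\ref{Lem:betaEq}. For any index set $\boldsymbol{i}$ (with $(q-1)$ absorbed by rescaling $x$, as in that proof) I will show
\begin{align*}
R_{\boldsymbol{i}}(x):=T_{\boldsymbol{i}}Y^{[\infty]}(x)\cdot Y^{[\infty]}(x)^{-1}
=q^{-\boldsymbol{\beta}E_{\boldsymbol{i}}}\bigl((q-1)U_{\boldsymbol{i}}x+V_{\boldsymbol{i}}(u)\bigr).
\end{align*}
The first step is the expansion at $x=\infty$. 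Write $Y^{[\infty]}=H^{[\infty]}x^{\boldsymbol{\beta}}e_q(q^{-\boldsymbol{\beta}}Ux)$ and use $T_{\boldsymbol{i}}U=U+(q-1)U_{\boldsymbol{i}}$ on the $\boldsymbol{i}$ blocks. Together with $e_q(qy)=(1+(q-1)y)e_q(y)$ from \eqref{Eq:eq}, this gives
\[
x^{\boldsymbol{\beta}}e_q(q^{-\boldsymbol{\beta}}T_{\boldsymbol{i}}Ux)\,e_q(q^{-\boldsymbol{\beta}}Ux)^{-1}x^{-\boldsymbol{\beta}}
=I-E_{\boldsymbol{i}}+E_{\boldsymbol{i}}+(q-1)q^{-\boldsymbol{\beta}}U_{\boldsymbol{i}}x .
\]
Expanding $T_{\boldsymbol{i}}H^{[\infty]}(\cdot)(\cdot)H^{[\infty]\,-1}$ to order $x^0$ reproduces precisely the definition \eqref{Def:Vseti}, in the same way \eqref{Eq:VkHinf} arose.

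The second step is to show that $R_{\boldsymbol{i}}$ is a polynomial in $x$. Isomonodromy of the deformation gives $T_{\boldsymbol{i}}C=C$, so $R_{\boldsymbol{i}}$ also equals $(T_{\boldsymbol{i}}Y^{[0]}T_{\boldsymbol{i}}W)(Y^{[0]}W)^{-1}$. I would rather avoid repeating the pole argument under a regularized condition for composite shifts. Instead, I will iterate single shifts: by the proof of Lemma~\ref{Lem:qBasic}, $T_kY^{[\infty]}=R_k(x)Y^{[\infty]}$ with $R_k$ linear. Composing single shifts along the elements of $\boldsymbol{i}$ (well defined by the compatibility in Theorem~\ref{Thm:qiso}) shows that $R_{\boldsymbol{i}}$ is a product of linear factors, hence a polynomial. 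A degree count, combining the leading behaviour at infinity with the determinant computation, then forces degree one. Hence $R_{\boldsymbol{i}}$ equals its expansion truncated after the constant term.

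The third step is the cocycle identity
\[
R_{\boldsymbol{i}\sqcup\boldsymbol{j}}(x)=(T_{\boldsymbol{j}}R_{\boldsymbol{i}})(x)\,R_{\boldsymbol{j}}(x),
\]
which follows from writing $T_{\boldsymbol{i}\sqcup\boldsymbol{j}}Y^{[\infty]}=T_{\boldsymbol{j}}(T_{\boldsymbol{i}}Y^{[\infty]}(T_{\boldsymbol{j}}Y^{[\infty]})^{-1})\cdot T_{\boldsymbol{j}}Y^{[\infty]}Y^{[\infty]\,-1}$. Here $T_{\boldsymbol{j}}R_{\boldsymbol{i}}=q^{-\boldsymbol{\beta}E_{\boldsymbol{i}}}((q-1)U_{\boldsymbol{i}}x+T_{\boldsymbol{j}}V_{\boldsymbol{i}})$, since $U_{\boldsymbol{i}}$ is unaffected by $T_{\boldsymbol{j}}$ for disjoint sets. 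Moving the factor $q^{-\boldsymbol{\beta}E_{\boldsymbol{j}}}$ of $R_{\boldsymbol{j}}$ leftward past it produces $q^{\operatorname{ad}(\boldsymbol{\beta}E_{\boldsymbol{j}})}$ acting on $T_{\boldsymbol{j}}V_{\boldsymbol{i}}$. It leaves $U_{\boldsymbol{i}}$ fixed because that matrix is block diagonal, and it combines with $q^{-\boldsymbol{\beta}E_{\boldsymbol{i}}}$ into $q^{-\boldsymbol{\beta}E_{\boldsymbol{i}\sqcup\boldsymbol{j}}}$. Cancelling this prefactor and rescaling $(q-1)x\mapsto x$ yields \eqref{Eq:CompositeShift}.

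The main obstacle is justifying that $R_{\boldsymbol{i}}$ is linear without invoking monodromy data, which are undefined for generalized deformations. My fallback is a genericity argument. Prove the identity on the dense set where the regularized condition and \eqref{Cond:GlobalSep} hold, so that the single-step proofs apply verbatim. Then extend it by rationality in the initial data, as in the proof of Theorem~\ref{Thm:qiso}. This extension requires $H_1^{[\infty]}$ to depend rationally on $\Phi$, which follows from the recurrence \eqref{Eq:HinfRec}.
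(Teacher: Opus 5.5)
Your proposal is correct, and its skeleton is the paper's: set $R_{\boldsymbol{i}}=T_{\boldsymbol{i}}Y^{[\infty]}\,(Y^{[\infty]})^{-1}$, identify it with $q^{-\boldsymbol{\beta}E_{\boldsymbol{i}}}\bigl((q-1)U_{\boldsymbol{i}}x+V_{\boldsymbol{i}}(u)\bigr)$ by reading off the expansion at $x=\infty$ (which does reproduce \eqref{Def:Vseti}), use the cocycle $R_{\boldsymbol{i}\sqcup\boldsymbol{j}}=(T_{\boldsymbol{j}}R_{\boldsymbol{i}})R_{\boldsymbol{j}}$, and commute $q^{-\boldsymbol{\beta}E_{\boldsymbol{j}}}$ to the left.

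The difference is in the only non-formal step, showing that $R_{\boldsymbol{i}}$ is linear in $x$.
\begin{itemize}
\item The paper repeats the argument of Lemma~\ref{Lem:betaEq}. It also writes $R_{\boldsymbol{i}}=T_{\boldsymbol{i}}(Y^{[0]}W)\,(Y^{[0]}W)^{-1}$ and compares pole sets. This tacitly needs monodromy data and a regularized condition like \eqref{Eq:TkPandP} for the composite shift, which holds for instance under \eqref{Cond:GlobalSep}.
\item You instead compose the single-step identities $T_kY^{[\infty]}=R_kY^{[\infty]}$. These come from the uniqueness argument in the proof of Lemma~\ref{Lem:qBasic}, which uses only the $q$-isomonodromy equations and the normalization \eqref{Cond:NorVk}.
\end{itemize}
Your route never touches $x=0$ and needs no regularized condition. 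It therefore applies verbatim to generalized deformations, and your genericity fallback is unnecessary. The paper's route is shorter and mirrors Lemma~\ref{Lem:betaEq}.

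Two small fixes are needed.
\begin{itemize}
\item The degree bound follows from the growth at infinity alone: $R_{\boldsymbol{i}}(x)=T_{\boldsymbol{i}}H^{[\infty]}(x)\bigl(I+(q-1)q^{-\boldsymbol{\beta}}U_{\boldsymbol{i}}x\bigr)H^{[\infty]}(x)^{-1}=O(x)$. The determinant computation cannot force a matrix polynomial to have degree one.
\item Your telescoping formula in the third step is garbled. It should read $R_{\boldsymbol{i}\sqcup\boldsymbol{j}}=T_{\boldsymbol{j}}\bigl(T_{\boldsymbol{i}}Y^{[\infty]}\,(Y^{[\infty]})^{-1}\bigr)\cdot T_{\boldsymbol{j}}Y^{[\infty]}\,(Y^{[\infty]})^{-1}$.
\end{itemize}
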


\begin{prf}
Following the argument from~\eqref{Eq:Rxu} to
\eqref{Eq:RPolynomial} in the proof of
Lemma~\ref{Lem:betaEq}, set
\begin{align*}
	R_{\boldsymbol{i}}(x,u)
	:=
	T_{\boldsymbol{i}}Y^{[\infty]}(x,u)
	Y^{[\infty]}(x,u)^{-1}=
	T_{\boldsymbol{i}}
	\left(
	Y^{[0]}(x,u)W(u)
	\right)
	\left(
	Y^{[0]}(x,u)W(u)
	\right)^{-1}.
\end{align*}
The same comparison of the expansions at $x=\infty$ and~$x=0$
gives
\begin{align}\label{Eq:Ri}
	R_{\boldsymbol{i}}(x,u)
	&=
	q^{-\boldsymbol{\beta}E_{\boldsymbol{i}}}
	\left(
	(q-1)U_{\boldsymbol{i}}x
	+
	V_{\boldsymbol{i}}(u)
	\right).
\end{align}
For disjoint $\boldsymbol{i}$ and~$\boldsymbol{j}$, we have
\begin{align}\label{Eq:Rij}
	R_{\boldsymbol{i}\sqcup\boldsymbol{j}}(x,u)
	&=
	T_{\boldsymbol{j}}R_{\boldsymbol{i}}(x,u)\cdot
	R_{\boldsymbol{j}}(x,u).
\end{align}
Substituting \eqref{Eq:Ri} into \eqref{Eq:Rij} gives~\eqref{Eq:CompositeShift}.
\end{prf}

For the full index set, the composite shift takes a particularly
simple form.

\begin{lem}\label{Lem:FullShift}
Let $(\Phi(u),W(u);\boldsymbol{\alpha})$ be a
$q$-isomonodromic deformation with respect to
$\boldsymbol{\beta}$.
Then
\begin{align}
\label{eq:FullT}
	T_{\{1,\ldots,n\}}
	(\Phi(u),W(u);\boldsymbol{\alpha})
	&=
	\left(
	q^{-\boldsymbol{\beta}}\Phi(u)q^{\boldsymbol{\beta}},
	q^{-\boldsymbol{\beta}}W(u)q^\Lambda;
	\boldsymbol{\alpha}
	\right),\\
\label{eq:FullV}
	V_{\{1,\ldots,n\}}(u)
	&=
	I+(q-1)\Phi(u).
\end{align}
\end{lem}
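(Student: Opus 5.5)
The plan is to compute the full shift $T_{\{1,\ldots,n\}}$ in two independent ways and compare. The first uses the pole/expansion argument behind \eqref{Eq:Ri}, specialised to $\boldsymbol{i}=\{1,\ldots,n\}$. The second is a direct computation from the canonical solutions, exploiting that shifting every $u_k$ by $q$ is the same as rescaling $x$ by $q$.

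First I use Corollary~\ref{Cor:Basic} with $r=q$ and $D=I$. Since $T_{\{1,\ldots,n\}}U=qU$, it gives
\[
H^{[0]}(x;qU,\Phi)=H^{[0]}(qx;U,\Phi),\qquad H^{[\infty]}(x;qU,\Phi;\boldsymbol{\beta})=H^{[\infty]}(qx;U,\Phi;\boldsymbol{\beta}).
\]
Put $\widetilde\Phi:=q^{-\boldsymbol{\beta}}\Phi q^{\boldsymbol{\beta}}$ and $\widetilde W:=q^{-\boldsymbol{\beta}}Wq^{\Lambda}$; both are admissible because $q^{\boldsymbol{\beta}}$ commutes with $U$. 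I then check directly that $(qU,\widetilde\Phi;\widetilde W;\boldsymbol{\alpha})$ is a $q$-monodromy system with the same monodromy data. The spectral condition is immediate, since $(qU)^{-1}(I+(q-1)\widetilde\Phi)$ is conjugate to $q^{-1}U^{-1}(I+(q-1)\Phi)$ and the diagonal shift by $q^{-1}$ matches $T_k u_k=qu_k$. The conjugation $\widetilde W^{-1}[\widetilde\Phi]^q\widetilde W=\Lambda$ holds with the transported branch, because $q^{\Lambda}$ commutes with $\Lambda$.

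Next I verify the central connection matrix using \eqref{Eq:CScaling} with $D=q^{\boldsymbol{\beta}}$ and $r=q$:
\[
C(x;qU,\widetilde\Phi;\boldsymbol{\beta},q^{-\boldsymbol{\beta}}Wq^{\Lambda})=q^{-\boldsymbol{\beta}}q^{\boldsymbol{\beta}}C(qx;U,\Phi;\boldsymbol{\beta},W)=C(x),
\]
where the last equality is $q$-constancy. Because this involves the transported branch, I have to check that the $D$ and $r$ factors in \eqref{Eq:CScaling} combine to exactly $q^{-\boldsymbol{\beta}}Wq^\Lambda$. The next step is to identify this triple with the one produced by the $q$-isomonodromic deformation. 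On the generic locus \eqref{Cond:GlobalSep} I would invoke uniqueness, i.e.\ Proposition~\ref{Pro:qUni} via Theorem~\ref{Thm:qiso}, and then extend to the full domain by rationality, exactly as in the compatibility argument of Theorem~\ref{Thm:qiso}. That proves \eqref{eq:FullT}.

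For \eqref{eq:FullV} I read off $R_{\{1,\ldots,n\}}$ in two ways. By \eqref{Eq:Ri} it equals $q^{-\boldsymbol{\beta}}\bigl((q-1)Ux+V_{\{1,\ldots,n\}}\bigr)$. On the other hand, from $T_{\{1,\ldots,n\}}Y^{[\infty]}(x)=q^{-\boldsymbol{\beta}}Y^{[\infty]}(qx)q^{\boldsymbol{\beta}}$ (the trailing factor cancels inside $x^{\boldsymbol{\beta}}e_q$ up to a constant that I need to track) and \eqref{qLinear}, I expect
\[
R=q^{-\boldsymbol{\beta}}\bigl((q-1)Ux+I+(q-1)\Phi\bigr).
\]
Comparing the two expressions gives $V_{\{1,\ldots,n\}}=I+(q-1)\Phi$. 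A cheaper cross-check is available directly from \eqref{Def:Vseti}: using $T_{\{1,\ldots,n\}}H_1^{[\infty]}=q^{-1}q^{-\mathrm{ad}\boldsymbol{\beta}}H_1^{[\infty]}$ together with the $\nu=0$ recurrence \eqref{Eq:HinfRec}, which reads $UH_1-q^{-1}H_1U=[\boldsymbol{\beta}]_q-\Phi$, one gets
\[
V=q^{\boldsymbol{\beta}}+(q-1)(q^{-1}H_1U-UH_1)=q^{\boldsymbol{\beta}}-(q-1)[\boldsymbol{\beta}]_q+(q-1)\Phi=I+(q-1)\Phi.
\]

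I expect the main obstacle to be the bookkeeping of $q^{\pm\boldsymbol{\beta}}$ conjugations and the $q$-constant factors from $x^{\boldsymbol{\beta}}e_q(q^{-\boldsymbol{\beta}}Ux)$ under $x\mapsto qx$. I would fix these using \eqref{Eq:eq} and the $H_1$ computation above.
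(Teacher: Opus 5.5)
Your proposal is correct and follows the paper's proof. Corollary~\ref{Cor:Basic} with $r=q$, $D=q^{\boldsymbol{\beta}}$ shows that $(qU,q^{-\boldsymbol{\beta}}\Phi q^{\boldsymbol{\beta}};q^{-\boldsymbol{\beta}}Wq^{\Lambda};\boldsymbol{\alpha})$ has the same $q$-monodromy data, and Proposition~\ref{Pro:qUni} on the generic locus plus rationality then gives \eqref{eq:FullT}. Your ``cross-check'' via $T_{\{1,\ldots,n\}}H_1^{[\infty]}=q^{-1}q^{-\operatorname{ad}\boldsymbol{\beta}}H_1^{[\infty]}$ and the $\nu=0$ case of \eqref{Eq:HinfRec} is exactly how the paper derives \eqref{eq:FullV}.

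The only slip is bookkeeping. The correct identity is $T_{\{1,\ldots,n\}}Y^{[\infty]}(x)=q^{-\boldsymbol{\beta}}Y^{[\infty]}(qx)$, with no trailing $q^{\boldsymbol{\beta}}$: the factor $q^{-\boldsymbol{\beta}}$ from $x^{\boldsymbol{\beta}}=q^{-\boldsymbol{\beta}}(qx)^{\boldsymbol{\beta}}$ cancels the right factor $D$. With that identity, \eqref{qLinear} gives your $R$ immediately, and the detour through $D=I$ is unnecessary.
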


\begin{prf}
Taking $r=q$ and $D=q^{\boldsymbol{\beta}}$ in
Corollary~\ref{Cor:Basic}, and using the canonical
forms~\eqref{Solatinf} and~\eqref{Solatzero} together
with~\eqref{Jordan}, gives
\begin{subequations}
\begin{align}\label{eq:TfullYinf}
	Y^{[\infty]}
	\left(
	x;qU,
	q^{-\boldsymbol{\beta}}\Phi(u)q^{\boldsymbol{\beta}};
	\boldsymbol{\beta}
	\right)
	&=
	q^{-\boldsymbol{\beta}}
	Y^{[\infty]}
	\left(
	qx;U,\Phi(u);
	\boldsymbol{\beta}
	\right),\\
	Y^{[0]}
	\left(
	x;qU,
	q^{-\boldsymbol{\beta}}\Phi(u)q^{\boldsymbol{\beta}}
	\right)
	q^{-\boldsymbol{\beta}}W(u)q^\Lambda
	&=
	q^{-\boldsymbol{\beta}}
	Y^{[0]}(qx;U,\Phi(u))W(u).
\end{align}
\end{subequations}
Consequently,
$\left(
	qU,
	q^{-\boldsymbol{\beta}}\Phi(u)q^{\boldsymbol{\beta}};
	q^{-\boldsymbol{\beta}}W(u)q^\Lambda;
	\boldsymbol{\alpha}
	\right)$
has the same $q$-monodromy data as
$(U,\Phi(u);W(u);\boldsymbol{\alpha})$.
On the generic locus where~\eqref{NotUniPhi} holds,
Proposition~\ref{Pro:qUni} gives~\eqref{eq:FullT}.
Since both sides of~\eqref{eq:FullT} are rational functions
of the initial data by~\eqref{ExpclitPhiW}, the identity extends
to their maximal common domain of definition.

Comparing the coefficients of $x^{-1}$ in~\eqref{eq:TfullYinf}
gives
\begin{align*}
	T_{\{1,\ldots,n\}}H_1^{[\infty]}
	&=
	q^{-1}q^{-\boldsymbol{\beta}}
	H_1^{[\infty]}q^{\boldsymbol{\beta}}.
\end{align*}
Moreover, the recurrence~\eqref{Eq:HinfRec} with $\nu=0$ gives
\begin{align*}
	q^{-1}H_1^{[\infty]}U
	-
	UH_1^{[\infty]}
	&=
	\Phi(u)-[\boldsymbol{\beta}]_q.
\end{align*}
Substituting these identities into~\eqref{Def:Vseti} with
$\boldsymbol{i}=\{1,\ldots,n\}$ yields
\begin{align*}
	V_{\{1,\ldots,n\}}(u)
	&=
	q^{\boldsymbol{\beta}}
	+
	(q-1)
	\left(
	q^{-1}H_1^{[\infty]}U
	-
	UH_1^{[\infty]}
	\right)\\
	&=
	q^{\boldsymbol{\beta}}
	+
	(q-1)
	\left(
	\Phi(u)-[\boldsymbol{\beta}]_q
	\right) =
	I+(q-1)\Phi(u),
\end{align*}
which proves~\eqref{eq:FullV}.
\end{prf}

\subsection{Explicit Rank-Two Solutions}
\label{Sect:Eqiso22}

In this subsection, following Section~\ref{Sect:ExampCan2}, we give
explicit solutions of the $2\times2$ $q$-isomonodromy equations
\eqref{qisouk}.

Let
\begin{align*}
	U^0 =
	\begin{pmatrix}
		u_1^0&0\\
		0&u_2^0
	\end{pmatrix},\qquad
	\Phi^0 =
	\begin{pmatrix}
		\varphi_{11}^0&\varphi_{12}^0\\
		\varphi_{21}^0&\varphi_{22}^0
	\end{pmatrix}.
\end{align*}
Assume that
$u_1^0,u_2^0\in\mathbb C^\ast$,
$u_1^0\notin u_2^0q^{\mathbb Z}$, and that $\Phi^0$ is
$q$-nonresonant with distinct eigenvalues
$[\lambda_1]_q$ and $[\lambda_2]_q$, neither of which is equal to
$\varphi_{11}^0$.
Set
	$\Lambda=\operatorname{diag}(\lambda_1,\lambda_2)$,
	and choose $W^0=(w_{ij}^0)\in\operatorname{GL}_2(\mathbb C)$
	and a branch of $[\Phi^0]^q$ such that
\begin{align}
	(W^0)^{-1}[\Phi^0]^qW^0
	&=
	\Lambda,\qquad
	\frac{w_{1i}^0}{w_{2i}^0}
	=
	\frac{\varphi_{12}^0}
	{[\lambda_i]_q-\varphi_{11}^0},
	\quad i=1,2.
	\label{Eq:RankTwoInitialW}
\end{align}
Choose $\boldsymbol{\alpha}
=\operatorname{diag}(\alpha_1,\alpha_2)$ so that
\[
	\operatorname{Spec}
	\left(
		(U^0)^{-1}(I+(q-1)\Phi^0)
	\right)
	=
	\left\{
		(u_1^0)^{-1}q^{\alpha_1},
		(u_2^0)^{-1}q^{\alpha_2}
	\right\},
\]
and fix an arbitrary formal monodromy
$\boldsymbol{\beta}=\operatorname{diag}(\beta_1,\beta_2)$.

For $N_1,N_2\in\mathbb Z$, set
$u_1=q^{N_1}u_1^0$ and
$u_2=q^{N_2}u_2^0$.
For brevity, we write
\[
	\Phi(u_1,u_2)
	:=
	\Phi(u_1,u_2;\boldsymbol{\beta})
	=
	\begin{pmatrix}
		\varphi_{11}(u_1,u_2)&\varphi_{12}(u_1,u_2)\\
		\varphi_{21}(u_1,u_2)&\varphi_{22}(u_1,u_2)
	\end{pmatrix}.
\]
The entries of $\Phi(u_1,u_2)$ are given by
\begin{subequations}\label{Exp:RankTwoPhi}
\begin{align}
	\varphi_{11}(u_1,u_2)
	&=
	\frac{u_2}{u_2-u_1}[\alpha_1]_q
	-
	\frac{u_1}{u_2-u_1}
	\left(
		[\lambda_1]_q+[\lambda_2]_q-[\alpha_2]_q
	\right),
	\label{qPhi11}\\
	\varphi_{22}(u_1,u_2)
	&=
	\frac{u_2}{u_2-u_1}
	\left(
		[\lambda_1]_q+[\lambda_2]_q-[\alpha_1]_q
	\right)
	-
	\frac{u_1}{u_2-u_1}[\alpha_2]_q,
	\\
	\varphi_{12}(u_1,u_2)
	&=
	\left(\frac{u_1}{u_1^0}\right)^{\alpha_1-\beta_1}
	\left(\frac{u_2}{u_2^0}\right)^{\beta_2-\alpha_1}
	\frac{
		\left(
			q^{\lambda_2-\alpha_1+1}
			\frac{u_1^0}{u_2^0};q
		\right)_{N_1-N_2}
		\left(
			q^{\lambda_1-\alpha_1+1}
			\frac{u_1^0}{u_2^0};q
		\right)_{N_1-N_2}
	}{
		\left(
			q\frac{u_1^0}{u_2^0};q
		\right)_{N_1-N_2}^2
	}
	\varphi_{12}^0
	\nonumber\\
	&=
	\left(\frac{u_1}{u_1^0}\right)^{\alpha_2-\beta_1}
	\left(\frac{u_2}{u_2^0}\right)^{\beta_2-\alpha_2}
	\frac{
		\left(
			\frac{u_2^0}{u_1^0};q
		\right)_{N_2-N_1}^2
	}{
		\left(
			q^{\lambda_1-\alpha_2}
			\frac{u_2^0}{u_1^0};q
		\right)_{N_2-N_1}
		\left(
			q^{\lambda_2-\alpha_2}
			\frac{u_2^0}{u_1^0};q
		\right)_{N_2-N_1}
	}
	\varphi_{12}^0,
	\label{qPhi12}\\
	\varphi_{21}(u_1,u_2)
	&=
	\left(\frac{u_1}{u_1^0}\right)^{\beta_1-\alpha_1}
	\left(\frac{u_2}{u_2^0}\right)^{\alpha_1-\beta_2}
	\frac{
		\left(
			\frac{u_1^0}{u_2^0};q
		\right)_{N_1-N_2}^2
	}{
		\left(
			q^{\lambda_2-\alpha_1}
			\frac{u_1^0}{u_2^0};q
		\right)_{N_1-N_2}
		\left(
			q^{\lambda_1-\alpha_1}
			\frac{u_1^0}{u_2^0};q
		\right)_{N_1-N_2}
	}
	\varphi_{21}^0
	\nonumber\\
	&=
	\left(\frac{u_1}{u_1^0}\right)^{\beta_1-\alpha_2}
	\left(\frac{u_2}{u_2^0}\right)^{\alpha_2-\beta_2}
	\frac{
		\left(
			q^{\lambda_1-\alpha_2+1}
			\frac{u_2^0}{u_1^0};q
		\right)_{N_2-N_1}
		\left(
			q^{\lambda_2-\alpha_2+1}
			\frac{u_2^0}{u_1^0};q
		\right)_{N_2-N_1}
	}{
		\left(
			q\frac{u_2^0}{u_1^0};q
		\right)_{N_2-N_1}^2
	}
	\varphi_{21}^0.
	\label{qPhi21}
\end{align}
\end{subequations}
Similarly, write
\[
	W(u_1,u_2)
	:=
	W(u_1,u_2;\boldsymbol{\beta})
	=
	\begin{pmatrix}
		w_{11}(u_1,u_2)&w_{12}(u_1,u_2)\\
		w_{21}(u_1,u_2)&w_{22}(u_1,u_2)
	\end{pmatrix}.
\]
Its entries are
\begin{subequations}\label{Exp:RankTwoW}
\begin{align}
	w_{11}(u_1,u_2)
	&=
	\left(\frac{u_1}{u_1^0}\right)^{\alpha_1-\beta_1}
	\left(\frac{u_2}{u_2^0}\right)^{\lambda_1-\alpha_1}
	\frac{
		\left(
			q^{\lambda_1-\alpha_1+1}
			\frac{u_1^0}{u_2^0};q
		\right)_{N_1-N_2}
	}{
		\left(
			q\frac{u_1^0}{u_2^0};q
		\right)_{N_1-N_2}
	}
	w_{11}^0
	\nonumber\\
	&=
	\left(\frac{u_1}{u_1^0}\right)^{\lambda_1-\beta_1}
	\frac{
		\left(
			\frac{u_2^0}{u_1^0};q
		\right)_{N_2-N_1}
	}{
		\left(
			q^{\lambda_2-\alpha_2}
			\frac{u_2^0}{u_1^0};q
		\right)_{N_2-N_1}
	}
	w_{11}^0,
	\\
	w_{12}(u_1,u_2)
	&=
	\left(\frac{u_1}{u_1^0}\right)^{\alpha_1-\beta_1}
	\left(\frac{u_2}{u_2^0}\right)^{\lambda_2-\alpha_1}
	\frac{
		\left(
			q^{\lambda_2-\alpha_1+1}
			\frac{u_1^0}{u_2^0};q
		\right)_{N_1-N_2}
	}{
		\left(
			q\frac{u_1^0}{u_2^0};q
		\right)_{N_1-N_2}
	}
	w_{12}^0
	\nonumber\\
	&=
	\left(\frac{u_1}{u_1^0}\right)^{\lambda_2-\beta_1}
	\frac{
		\left(
			\frac{u_2^0}{u_1^0};q
		\right)_{N_2-N_1}
	}{
		\left(
			q^{\lambda_1-\alpha_2}
			\frac{u_2^0}{u_1^0};q
		\right)_{N_2-N_1}
	}
	w_{12}^0,
	\\
	w_{21}(u_1,u_2)
	&=
	\left(\frac{u_2}{u_2^0}\right)^{\lambda_1-\beta_2}
	\frac{
		\left(
			\frac{u_1^0}{u_2^0};q
		\right)_{N_1-N_2}
	}{
		\left(
			q^{\lambda_2-\alpha_1}
			\frac{u_1^0}{u_2^0};q
		\right)_{N_1-N_2}
	}
	w_{21}^0
	\nonumber\\
	&=
	\left(\frac{u_1}{u_1^0}\right)^{\lambda_1-\alpha_2}
	\left(\frac{u_2}{u_2^0}\right)^{\alpha_2-\beta_2}
	\frac{
		\left(
			q^{\lambda_1-\alpha_2+1}
			\frac{u_2^0}{u_1^0};q
		\right)_{N_2-N_1}
	}{
		\left(
			q\frac{u_2^0}{u_1^0};q
		\right)_{N_2-N_1}
	}
	w_{21}^0,
	\\
	w_{22}(u_1,u_2)
	&=
	\left(\frac{u_2}{u_2^0}\right)^{\lambda_2-\beta_2}
	\frac{
		\left(
			\frac{u_1^0}{u_2^0};q
		\right)_{N_1-N_2}
	}{
		\left(
			q^{\lambda_1-\alpha_1}
			\frac{u_1^0}{u_2^0};q
		\right)_{N_1-N_2}
	}
	w_{22}^0
	\nonumber\\
	&=
	\left(\frac{u_1}{u_1^0}\right)^{\lambda_2-\alpha_2}
	\left(\frac{u_2}{u_2^0}\right)^{\alpha_2-\beta_2}
	\frac{
		\left(
			q^{\lambda_2-\alpha_2+1}
			\frac{u_2^0}{u_1^0};q
		\right)_{N_2-N_1}
	}{
		\left(
			q\frac{u_2^0}{u_1^0};q
		\right)_{N_2-N_1}
	}
	w_{22}^0.
	\label{2t2W22}
\end{align}
\end{subequations}
These functions satisfy
\[
	\Phi(u_1^0,u_2^0)=\Phi^0,
	\qquad
	W(u_1^0,u_2^0)=W^0.
\]

\section{Deformations with Prescribed Asymptotics}
\label{Sect:Asym}

In the remainder of this paper, unless otherwise stated,
we make the following assumptions and choice:
\begin{itemize}
	\item each $u_k$ has multiplicity one;
	\item $u_i\notin u_jq^{\mathbb{Z}\setminus\{0\}}$
	for all $i\neq j$, and
	$u_1,\ldots,u_n$ are all nonzero;
	\item the formal monodromy is taken to be
	$\boldsymbol{\beta}=\boldsymbol{\alpha}$, where
	$\boldsymbol{\alpha}$ denotes the formal exponents.
\end{itemize}
Applying Theorem~\ref{Thm:qiso},
	we construct a $q$-isomonodromic deformation
	$(\Phi(u;\boldsymbol{\alpha}),W(u;\boldsymbol{\alpha});
	\boldsymbol{\alpha})$
	depending continuously on $u$ and
	having prescribed
	asymptotic leading term $(\Phi_0,W_0)$.
The construction requires
	the eigenvalues of
	the upper-left submatrices of $\Phi_0$ to satisfy
	an additional shrinking condition.
This deformation is uniquely
	determined by the leading term $(\Phi_0,W_0)$.
We call a $q$-isomonodromic deformation
	obtained in this way a
	\textbf{shrinking solution} of
	the $q$-isomonodromy equations.
The choice
	$\boldsymbol{\beta}=\boldsymbol{\alpha}$
	makes the recursive asymptotic formulas
	take their simplest form.

The construction is recursive.
Starting from a constant pair $(\Phi_0,W_0)$ satisfying the
conditions specified below, we use Picard iteration to construct
the sequence
$(\Phi_k(u),W_k(u);\boldsymbol{\alpha})$,
for $k=1,\ldots,n$.
The $k$-th triple depends only on $u_1,\ldots,u_k$, and the final
triple is
\[
(\Phi_n(u),W_n(u);\boldsymbol{\alpha})
=
(\Phi(u;\boldsymbol{\alpha}),W(u;\boldsymbol{\alpha});
\boldsymbol{\alpha}).
\]
Here, the $n\times n$ matrices $\Phi_k(u)$ and $W_k(u)$ satisfy
the following $q$-difference equations
for $j=1,\ldots,k$:
\begin{subequations}\label{qkjiso}
\begin{align}
\label{qkjisoP}
T_j \Phi_k(u) & =
	V^{(n)}_{k;j}(u;\Phi_k(u),\alpha_j) \cdot \Phi_k(u) \cdot
	V^{(n)}_{k;j}(u;\Phi_k(u),\alpha_j)^{-1},\\
\label{qkjisoW}
T_j W_k(u) & =
	V^{(n)}_{k;j}(u;\Phi_k(u),\alpha_j) \cdot W_k(u),
\end{align}
\end{subequations}
with the following relations:
\begin{subequations}\label{Rel:kneq}
\begin{align}
    W_k(u)^{-1} \Phi_k(u) W_k(u) & =
	[\Lambda]_q, \\
    \left \{ u_1^{-1} q^{\alpha_1}, \ldots, u_k^{-1} q^{\alpha_k} \right \}
	& = 
    \mathrm{Spec}
    \left(
        (U^{[k]})^{-1}(I + (q-1)\Phi_k^{[k]})
    \right), \\
    q^{\alpha_s}
    & =
    \frac{
        \mathrm{det}( I + (q-1)\Phi_k^{[s]} )
    }{
        \mathrm{det}( I + (q-1)\Phi_k^{[s-1]} )
    }, \quad \text{for }s = k+1, \ldots, n.
\end{align}
\end{subequations}
The upper-left $s\times s$ submatrix of a matrix $A$ is denoted
by $A^{[s]}$. For $1\leqslant j\leqslant k$, set
\begin{align}\label{Def:Vqiso}
V_{k;j}^{(n)}(u;\Phi_k,\alpha_j)
:=
\begin{pmatrix}
q^{-\boldsymbol{\alpha}^{[k]}E_j}
V_j\bigl(
u_1,\ldots,u_k;
\Phi_k^{[k]},\boldsymbol{\alpha}^{[k]};q
\bigr)
& 0\\
0 & I_{n-k}
\end{pmatrix},
\end{align}
where $V_j$, defined by~\eqref{Exp:Vk}, is the normalized shift
matrix associated with $T_j$ in the rank-$k$
$q$-isomonodromy equations \eqref{qisouk}.
We call \eqref{qkjiso},
together with the preceding relations \eqref{Rel:kneq},
the \textbf{$(k,n)$-equations}.
A triple $(\Phi_k(u),W_k(u);\boldsymbol{\alpha})$ satisfying these
equations is called a
\textbf{$q$-isomonodromic deformation at level $(k,n)$}.

Throughout this section, we assume
that $(\Phi_{k-1}(u),W_{k-1}(u);\boldsymbol{\alpha})$ is a
$q$-isomonodromic deformation at level $(k-1,n)$.
In addition, we always impose the following conditions on
$\Phi_{k-1}$.
\begin{itemize}
	\item We assume that
	\begin{align}\label{PhikInvert}
		I+(q-1)\Phi_{k-1}^{[k-1]}
		\text{ is invertible},
	\end{align}
	and write
	\[
	\mathrm{Spec}\left(
		\Phi_{k-1}^{[k-1]}
	\right)
	=
	\{
	[\lambda^{(k-1)}_1]_q,\ldots,[\lambda^{(k-1)}_{k-1}]_q
	\}.
	\]

	\item The following
	\textbf{shrinking condition at level $k-1$} holds:
	\begin{align}\label{Cond:less1}
		\underset{1\leqslant i,j\leqslant k-1}{\mathrm{max}}
		\left|
		\mathrm{Re}(\lambda^{(k-1)}_i-\lambda^{(k-1)}_j)
		-
		\frac{\mathrm{arg}q}{\mathrm{ln}|q|}
		\mathrm{Im}(\lambda^{(k-1)}_i-\lambda^{(k-1)}_j)
		\right|
		&<
		1-\varepsilon_{k-1}<1.
	\end{align}
\end{itemize}
Let $\delta_k$ denote
	the following nonlinear projection on
	$\mathrm{Mat}_{n\times n}(\mathbb{C})$,
\begin{align}
\alpha_s(A)
&:=
\log_q
\frac{
\det\bigl(I+(q-1)A^{[s]}\bigr)
}{
\det\bigl(I+(q-1)A^{[s-1]}\bigr)
},
\qquad 1\leqslant s \leqslant n,\\
\label{Def:deltak}
\delta_k A
&:=
\begin{pmatrix}
A^{[k]} & & & 0\\
& [\alpha_{k+1}(A)]_q & &\\
& & \ddots &\\
0 & & & [\alpha_n(A)]_q
\end{pmatrix},\qquad
0\leqslant k \leqslant n,
\end{align}
where $A^{[k]}$ denotes the upper-left $k\times k$ submatrix of
$A$, and we use the convention
$\det\bigl(I+(q-1)A^{[0]}\bigr)=1$.

\begin{rmk}
The shrinking condition \eqref{Cond:less1} is independent of the
choices of
$\lambda^{(k-1)}_1,\ldots,\lambda^{(k-1)}_{k-1}$.
\end{rmk}

The remainder of this section is organized as follows.
In Section~\ref{Sect:RecConstruction}, we use Picard iteration to
construct a solution with prescribed asymptotic behavior along one
deformation direction. This is the first step in the recursive
construction.
In Section~\ref{Sect:RecCompatibility}, we combine these recursive
steps to construct the desired $q$-isomonodromic deformation.
Finally, in Section~\ref{Sect:AsymRankTwo}, we describe the
asymptotic behavior of the solutions in the $2\times2$ case.

\subsection{Recursive Construction}
\label{Sect:RecConstruction}

In this subsection, we give the recursive construction needed to
obtain the desired $q$-isomonodromic deformation.

For a scalar- or matrix-valued function $f$ defined on a fixed
	$q$-spiral $u_0q^{\mathbb Z}$,
	define the \textbf{$q$-Jackson integral}
\begin{align}\label{qJackson}
\int_\infty^u f(t)\,\mathrm d_qt
:=
\begin{cases}
(1-q)u\displaystyle\sum_{k=0}^{\infty}
q^k f(uq^k),& |q|>1,\\[6pt]
(q-1)u\displaystyle\sum_{k=1}^{\infty}
q^{-k}f(uq^{-k}),& |q|<1.
\end{cases}
\end{align}
For a scalar-valued function $g$, define
\begin{align*}
\int_\infty^u g(t)\,|\mathrm d_qt|
:=
\begin{cases}
|(1-q)u|\displaystyle\sum_{k=0}^{\infty}
|q|^k g(uq^k),& |q|>1,\\[6pt]
|(q-1)u|\displaystyle\sum_{k=1}^{\infty}
|q|^{-k}g(uq^{-k}),& |q|<1.
\end{cases}
\end{align*}
Moreover,
\begin{align}
\left\lVert\int_\infty^u f(t)\,\mathrm d_qt\right\rVert
&\leqslant
\int_\infty^u\lVert f(t)\rVert\,|\mathrm d_qt|,\\
\label{BasicqInt}
\int_\infty^u|t|^r\,|\mathrm d_qt|
&=
\frac{|1-q|}{|1-|q|^{r+1}|}|u|^{r+1}
=
\frac{|1-q|}{|1-|q||}
\frac{|u|^{r+1}}{|[r+1]_{|q|}|},
\qquad r<-1.
\end{align}

We first establish a Picard iteration lemma for the integral equations associated with the $(k,n)$-equations \eqref{qkjiso}.

\begin{lem}\label{Lem:Picard}
Let $A\in\mathrm{Mat}_{(n-1)\times(n-1)}$ be a constant matrix
	such that $I+(q-1)A$ is invertible.
Fix a branch of $[A]^q$ and assume that
\begin{align}\label{Cond:PicardGap}
	\mathrm{max}\left\{
	\left|
	\mathrm{Re}(\lambda_i-\lambda_j)
	-
	\frac{\mathrm{arg}q}{\mathrm{ln}|q|}
	\mathrm{Im}(\lambda_i-\lambda_j)
	\right|:
	\lambda_i,\lambda_j\in\mathrm{Spec}([A]^q)
	\right\}
	&=
	1-\varepsilon_A<1.
\end{align}
Set
	$X_{\hat{n}\hat{n}}(u)
	=u^{\mathrm{ad}[A]^q}Z_{\hat{n}\hat{n}}(u)$.
Consider the following system for the matrix-valued functions
	$Z_{\hat{n}\hat{n}}(u)$,
	$X_{\hat{n}n}(u)$, and $X_{n\hat{n}}(u)$:
\begin{subequations}\label{Eq:PicardSystem}
	\begin{align}\label{Lem:Eq11}
	\frac{\partial_q}{\partial_q u}
	Z_{\hat{n}\hat{n}} & =
		u^{-2}
		f(Z_{\hat{n}\hat{n}}, X_{ \hat{n}n }, X_{ n\hat{n} }, u), \\
	\label{Lem:Eq12}
	\frac{\partial_q}{\partial_q u}
	X_{\hat{n}n} & =
		u^{-1}
		f_1(X_{\hat{n}\hat{n}}, X_{\hat{n}n}, X_{n\hat{n}}, u), \\
	\label{Lem:Eq21}
	\frac{\partial_q}{\partial_q u}
	X_{n\hat{n}} & =
		u^{-1}
		f_2(X_{\hat{n}\hat{n}}, X_{\hat{n}n}, X_{n\hat{n}}, u),
	\end{align}
\end{subequations}
	where
	\begin{align}\nonumber
	f(Z_{\hat{n}\hat{n}},X_{\hat{n}n},X_{n\hat{n}},u)
	&=
	U_1\cdot h_1(Z_{\hat{n}\hat{n}},u)
	u^{-[A]^q}
	g_1(X_{\hat{n}n},X_{n\hat{n}},u)
	u^{[A]^q}\\
	&\quad+
	h_2(Z_{\hat{n}\hat{n}},u)
	u^{-[A]^q}
	g_2(X_{\hat{n}n},X_{n\hat{n}},u)
	u^{[A]^q}\cdot U_2,
	\label{Def:Picardf}
	\end{align}	
	and $U_1,U_2\in\mathrm{Mat}_{(n-1)\times(n-1)}$
	are fixed matrices.
For $x=u^{\mathrm{ad}[A]^q}z$, set
	$\eta_i(x,u)=u^{\mathrm{ad}[A]^q}h_i(z,u)$.
Assume that, for some $0<\varepsilon<\varepsilon_A$, there exist
	$C_0,C_1>0$, with $C_0$ sufficiently small, and constants
	$K_\ast>0$, independent of $u$,
	such that for all sufficiently large $|u|$,
	when
	\[
	\lVert z^{(\ast)}-A\rVert,\
	\lVert x^{(\ast)}-A\rVert < C_0,
	\qquad
	\lVert y_1^{(\ast)}\rVert,\
	\lVert y_2^{(\ast)}\rVert < C_1,
	\]
	the coefficient functions
	$h_i$, $\eta_i$, $g_i$, and $f_i$
	satisfy the following estimates:
\begin{subequations}\label{Est:PicardConditions}
	\begin{align}
	\label{Est:Bound}
		\lVert U_i \rVert,
		\lVert h_i(z^{(\ast)},u) \rVert,
		\lVert \eta_i(x^{(\ast)},u) \rVert,
		\lVert g_i(y_1^{(\ast)}, y_2^{(\ast)}, u) \rVert
		& \leqslant
			K_0,\\
	\label{Est:fiBase}
		\lVert f_i(A,y_1^{(0)},y_2^{(0)},u)\rVert
		&\leqslant
		K_0 \cdot |u|^{-\varepsilon},\\
		\label{Est:h}
		\lVert
		h_i( z^{(1)},u) -
		h_i( z^{(2)},u) \rVert
		& \leqslant
			K_1\cdot
			\lVert
			z^{(1)} -
			z^{(2)} \rVert, \\
		\label{Est:eta}
		\lVert
		\eta_i( x^{(1)},u) -
		\eta_i( x^{(2)},u) \rVert
		& \leqslant
			K_1\cdot
			\lVert
			x^{(1)} -
			x^{(2)} \rVert, \\
		\label{Est:g1}
		\lVert
		g_i(y_1^{(1)}, y_2^{(0)}, u) -
		g_i(y_1^{(2)}, y_2^{(0)}, u) \rVert
		& \leqslant
			K_1\cdot
			\lVert
			y_1^{(1)} -
			y_1^{(2)} \rVert, \\
		\label{Est:g2}
		\lVert
		g_i(y_1^{(0)}, y_2^{(1)}, u) -
		g_i(y_1^{(0)}, y_2^{(2)}, u) \rVert
		& \leqslant
			K_1\cdot
			\lVert
			y_2^{(1)} -
			y_2^{(2)} \rVert,\\
		\label{Est:fix}
		\lVert
		f_i(x^{(1)}, y_1^{(0)}, y_2^{(0)}, u) -
		f_i(x^{(2)}, y_1^{(0)}, y_2^{(0)}, u)
		\rVert
		& \leqslant
			K_1\cdot
			\lVert
			x^{(1)} -
			x^{(2)} \rVert,\\
		\label{Est:fiy1}
		\lVert
		f_i(x^{(0)}, y_1^{(1)}, y_2^{(0)}, u) -
		f_i(x^{(0)}, y_1^{(2)}, y_2^{(0)}, u)
		\rVert
		& \leqslant
			\left(
			K_{11} \lVert x^{(0)}-A \rVert +
			K_{12}|u|^{-\varepsilon} \right)
			\cdot
			\lVert
			y_1^{(1)} -
			y_1^{(2)} \rVert,\\
		\label{Est:fiy2}
		\lVert
		f_i(x^{(0)}, y_1^{(0)}, y_2^{(1)}, u) -
		f_i(x^{(0)}, y_1^{(0)}, y_2^{(2)}, u)
		\rVert
		& \leqslant
			\left(
			K_{11} \lVert x^{(0)}-A \rVert +
			K_{12}|u|^{-\varepsilon} \right)
			\cdot
			\lVert
			y_2^{(1)} -
			y_2^{(2)} \rVert,
	\end{align}
\end{subequations}
Then, for every pair of constant matrices
	$B_{\hat{n}n}\in\mathrm{Mat}_{(n-1)\times1}$ and
	$B_{n\hat{n}}\in\mathrm{Mat}_{1\times(n-1)}$
	satisfying
	$\lVert B_{\hat{n}n}\rVert,
	 \lVert B_{n\hat{n}}\rVert<C_1$,
	there exists a unique solution of
	\eqref{Eq:PicardSystem} for sufficiently large $|u|$ such that
\begin{equation}\label{Lim:Picard}
\begin{alignedat}{2}
Z_{\hat{n}\hat{n}}(u)
	&=
	A + O(u^{-\varepsilon}),\qquad &
X_{\hat{n}\hat{n}}(u)
	&=
	A + O(u^{-\varepsilon}),\\
X_{\hat{n}n}(u)
	&=
	B_{\hat{n}n} + O(u^{-\varepsilon}),\qquad &
X_{n\hat{n}}(u)
	&=
	B_{n\hat{n}} + O(u^{-\varepsilon}).
\end{alignedat}
\end{equation}	
Moreover, if the coefficient functions are analytic in their
	matrix variables and in $[A]^q$, and the estimates
	\eqref{Est:PicardConditions} hold locally uniformly with respect
	to the asymptotic data
	$([A]^q,B_{\hat{n}n},B_{n\hat{n}})$,
	then the solution depends analytically on these data.
\end{lem}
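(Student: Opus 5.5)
We recast \eqref{Eq:PicardSystem} as a fixed-point problem for $q$-Jackson integrals from $\infty$ and apply the contraction mapping principle on a weighted space of functions defined on the $q$-spiral through $u$, for $|u|$ large.

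Using \eqref{qJackson}, the solutions with the asymptotics \eqref{Lim:Picard} are exactly the solutions of
\begin{align*}
Z_{\hat n\hat n}(u)&=A+\int_\infty^u t^{-2}f(Z_{\hat n\hat n},X_{\hat n n},X_{n\hat n},t)\,\mathrm d_qt,\\
X_{\hat n n}(u)&=B_{\hat n n}+\int_\infty^u t^{-1}f_1(X_{\hat n\hat n},X_{\hat n n},X_{n\hat n},t)\,\mathrm d_qt,
\end{align*}
together with the analogous equation for $X_{n\hat n}$ with $f_2$ and $B_{n\hat n}$. Here $X_{\hat n\hat n}=u^{\mathrm{ad}[A]^q}Z_{\hat n\hat n}$. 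The equivalence holds because the $q$-derivative of $\int_\infty^u$ returns the integrand, and the integrals tend to $0$ whenever the integrands are $O(|t|^{-1-\varepsilon})$.

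We work in the space $\mathcal X$ of triples $(Z,Y_1,Y_2)$ on $\{|u|>R\}\cap u_0q^{\mathbb Z}$ satisfying
\[
\lVert Z-A\rVert,\ \lVert u^{\mathrm{ad}[A]^q}Z-A\rVert,\ \lVert Y_1-B_{\hat n n}\rVert,\ \lVert Y_2-B_{n\hat n}\rVert\leqslant M|u|^{-\varepsilon}.
\]
The norm is the weighted sup norm $\sup|u|^{\varepsilon}\lVert\cdot\rVert$, which makes $\mathcal X$ complete. For $R$ large this forces the arguments into the balls of radius $C_0$ and $C_1$ where \eqref{Est:PicardConditions} apply.

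1. \textbf{The $Z$-equation.} By \eqref{Def:Picardf}, $f$ is built from $h_i$, $U_i$, and the conjugates $u^{-[A]^q}g_i\,u^{[A]^q}$. The shrinking condition \eqref{Cond:PicardGap} gives $\lVert t^{\pm\mathrm{ad}[A]^q}\rVert\leqslant K|t|^{1-\varepsilon_A}$ on the spiral, since $|t^{\lambda_i-\lambda_j}|$ along $t_0q^{k}$ grows like $|q|^{k(\mathrm{Re}-\frac{\arg q}{\ln|q|}\mathrm{Im})}$. So the integrand is $O(|t|^{-1-\varepsilon_A})$. By \eqref{BasicqInt}, the integral is then $O(|u|^{-\varepsilon_A})\subset O(|u|^{-\varepsilon})$.

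2. \textbf{The $X$-part of the $Z$-equation.} For $X_{\hat n\hat n}=u^{\mathrm{ad}[A]^q}Z$ we instead use the $\eta_i$ form. Conjugating the integral by $u^{\mathrm{ad}[A]^q}$, the factor $(u/t)^{\mathrm{ad}[A]^q}$ over $|t|\geqslant|u|$ is again bounded by $(|t|/|u|)^{1-\varepsilon_A}$. The estimates \eqref{Est:Bound} and \eqref{Est:eta} are what control this conjugated piece.

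3. \textbf{The off-diagonal equations.} \eqref{Est:fiBase} and \eqref{Est:fix} bound $t^{-1}f_i$ by $|t|^{-1}\bigl(K_0|t|^{-\varepsilon}+K_1M|t|^{-\varepsilon}\bigr)$. Integrating gives $|u|^{-\varepsilon}\bigl(K_0+K_1M\bigr)/|[\varepsilon]_{|q|}|$ up to a constant, which does not shrink with $R$. For self-mapping we therefore need the Lipschitz constant in the $y$-variables to be small. That is exactly \eqref{Est:fiy1} and \eqref{Est:fiy2}: their constant is $K_{11}\lVert x-A\rVert+K_{12}|u|^{-\varepsilon}$, which is small for large $R$ and small $C_0$. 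In the $x$-variable, \eqref{Est:fix} is Lipschitz only with constant $K_1$. However, the $x$-component enters with weight $|t|^{-\varepsilon}$ and is itself $O(M|t|^{-\varepsilon})$, produced by steps 1–2 with a factor $R^{-(\varepsilon_A-\varepsilon)}$. So we order the contraction with a weighted product norm: first choose $M$ from the base estimate, then $R$ large.

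\textbf{Main obstacle.} The critical point is the $f_i$-equations, which have only the borderline decay $|t|^{-1-\varepsilon}$. Their contraction constant is small only through the structural hypotheses \eqref{Est:fiy1}, \eqref{Est:fiy2}, combined with the gain $|u|^{-(\varepsilon_A-\varepsilon)}$ from the $u^{-2}$ equation. I would handle this with a norm $\max(\lVert\delta Z\rVert_w,\lVert\delta X\rVert_w,\mu\lVert\delta Y\rVert_w)$ with a tuned $\mu$, checking that the resulting block Lipschitz matrix has norm below $1$.

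Uniqueness follows from the contraction, since any solution with the asymptotics \eqref{Lim:Picard} lies in $\mathcal X$ for $R$ large. Analytic dependence on $([A]^q,B_{\hat n n},B_{n\hat n})$ follows because the Picard iterates are analytic and converge locally uniformly.
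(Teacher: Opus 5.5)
Your proposal is correct and is essentially the paper's argument: a Picard/contraction scheme for the $q$-Jackson integral equations from $\infty$, in which the $Z$- and $X_{\hat n\hat n}$-equations gain an extra decay factor from $\varepsilon<\varepsilon_A$, and the off-diagonal equations contract through \eqref{Est:fiy1}--\eqref{Est:fiy2}. The only difference is how the non-small $x$-Lipschitz constant $K_1$ of $f_i$ is absorbed. The paper uses a Gauss--Seidel ordering: it feeds the freshly computed $P_{i+1}=u^{\mathrm{ad}[A]^q}Q_{i+1}$, whose increments are already $O(|u|^{-2\varepsilon})$, into the update of $P^{(\ast)}_{i+1}$, whereas your $\mu$-weighted norm achieves the same effect.
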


\begin{prf}
We first introduce the constants needed for the estimates
	and then set up the standard Picard iteration.
	We verify the base case and then the inductive step,
	and the resulting convergence gives existence.
	Uniqueness follows from the standard Picard argument
	and is omitted.

Let the Jordan--Chevalley decomposition of $[A]^q$ and the
	spectral decomposition of its semisimple part $\mathscr{S}$ be
\begin{align*}
[A]^q = \mathscr{S} + \mathscr{N} =
		\lambda_1\mathscr{P}_1+\cdots+
		\lambda_{n-1}\mathscr{P}_{n-1}+\mathscr{N},
\end{align*}
where $\mathscr{P}_1,\ldots,\mathscr{P}_{n-1}$ are the
corresponding spectral projections.
This decomposition gives
	\begin{align}\label{SpecDecomp}
		u^{-[A]^q}Xu^{[A]^q} =
		\sum_{j_1,j_2,k_1,k_2}
		(-1)^{k_1}
		\left(
		\frac{\mathscr{P}_{j_1} \mathscr{N}^{k_1}}{k_1!} X
		\frac{\mathscr{P}_{j_2} \mathscr{N}^{k_2}}{k_2!}
		\right) 
		u^{\lambda_{j_2}-\lambda_{j_1}}
		(\mathrm{ln} u)^{k_1+k_2}.
	\end{align}
	Take a constant $K_A>0$ such that
	\begin{align}\label{Const:KA}
		\mathrm{e}^{2\lVert \mathscr{N} \rVert}
		\sum_{j_1, j_2}
			\lVert \mathscr{P}_{j_1} \rVert
			\lVert \mathscr{P}_{j_2} \rVert
		\leqslant K_A,\qquad
		\mathrm{e}^{4\lVert \mathscr{N} \rVert}
		\sum_{j_1, j_2, j_3}
			\lVert \mathscr{P}_{j_1} \rVert
			\lVert \mathscr{P}_{j_2} \rVert
			\lVert \mathscr{P}_{j_3} \rVert
		\leqslant K_A.
	\end{align}
	For $u\in u_0q^\mathbb{Z}$ we have
	\begin{align}\label{Abs:uPower}
		|u^a| = |u_0|^{ \frac{\mathrm{arg} q}{\mathrm{ln}|q|}\mathrm{Im}a }
		\mathrm{e}^{-\mathrm{arg}u_0 \cdot \mathrm{Im}a } \cdot
		|u|^{\mathrm{Re}a - \frac{\mathrm{arg} q}{\mathrm{ln}|q|}\mathrm{Im}a}.
	\end{align}
Therefore, for any $0<\varepsilon<\varepsilon_A$, there exists
a constant $K_{u_0}>0$, depending on the spectrum of $[A]^q$
and the $q$-spiral $u_0q^{\mathbb{Z}}$, such that the following
estimates hold for all integers
$0\leqslant k_i\leqslant 2(n-1)$ and
$\lambda_i\in\mathrm{Spec}([A]^q)$:
	\begin{subequations}\label{Est:PicardSpectral}
	\begin{align}\label{Const:Ku0}
		\int_\infty^u \left| 
		t^{-2-\varepsilon}
		t^{\lambda_1-\lambda_2}
		(\mathrm{ln} t)^{k_1} \right|
		|\mathrm{d}_qt|
		& \leqslant
		K_{u_0} |u|^{-2\varepsilon}, \\
		\label{Const:Ku01}
		\left| u^{\lambda_3 - \lambda_2} (\mathrm{ln}u)^{k_2} \right|
		\int_\infty^u \left|
		t^{-2-\varepsilon} t^{\lambda_2-\lambda_1}
		(\mathrm{ln} t)^{k_1} \right|
		|\mathrm{d}_qt|
		& \leqslant
		K_{u_0} |u|^{-2\varepsilon},\\
		\label{Const:Ku02}
		\left| u^{\lambda_1 - \lambda_3} (\mathrm{ln}u)^{k_2} \right|
		\int_\infty^u \left|
		t^{-2-\varepsilon} t^{\lambda_2-\lambda_1}
		(\mathrm{ln} t)^{k_1} \right|
		|\mathrm{d}_qt|
		& \leqslant
		K_{u_0} |u|^{-2\varepsilon}.
	\end{align}
	\end{subequations}
We fix such an $\varepsilon$ and a corresponding constant
$K_{u_0}$ for the remainder of the proof.
	
Define the sequences
	$(Q_i)_{i\in\mathbb{N}}$,
	$({P}_i)_{i\in\mathbb{N}}$,
	$(P^{(1)}_i)_{i\in\mathbb{N}}$,
	$(P^{(2)}_i)_{i\in\mathbb{N}}$ by
	\begin{align}\label{Init:Picard}
		Q_0(u) = A,\quad
		P^{(1)}_0(u) = B_{ \hat{n}n },\quad
		P^{(2)}_0(u) = B_{ n\hat{n} },
	\end{align}
	and the recursive relations
	\begin{subequations}\label{Rec:}
	\begin{align}
	\label{Rec:P0}
	{P}_{i}(u) & =
		u^{\mathrm{ad}[A]^q} Q_{i}(u), \\
	\label{Rec:Q}
	Q_{i+1}(u) & = A + \int_{\infty}^u
		t^{-2}
		f(Q_i(t), P^{(1)}_i(t), P^{(2)}_i(t), t)
		\mathrm{d}_qt, \\
	\label{Rec:P1}
	P^{(1)}_{i+1}(u) & = B_{ \hat{n}n } + \int_{\infty}^u
		t^{-1}
		f_1({P}_{i+1}(t), P^{(1)}_i(t), P^{(2)}_i(t), t)
		\mathrm{d}_qt, \\
	\label{Rec:P2}
	P^{(2)}_{i+1}(u) & = B_{ n\hat{n} } + \int_{\infty}^u
		t^{-1}
		f_2({P}_{i+1}(t), P^{(1)}_i(t), P^{(2)}_i(t), t)
		\mathrm{d}_qt.
	\end{align}
	\end{subequations}
For the fixed $0<\varepsilon<\varepsilon_A$,
	choose $K>0$ sufficiently large and $0<r<1$,
	both independent of $i$ and $u$.
	We prove by induction that the estimates
\begin{subequations}\label{Induc:i}
	\begin{align}\label{Induc:i1}
		\lVert Q_{i+1}(u) - Q_i(u) \rVert,\
		\lVert P_{i+1}(u) - P_i(u) \rVert
		&\leqslant
		K r^i \cdot |u|^{-\varepsilon},\\
	\label{Induc:i2}
		\lVert P^{(\ast)}_{i+1}(u) - P^{(\ast)}_i(u) \rVert
		&\leqslant
		K r^i \cdot |u|^{-\varepsilon}
	\end{align}
\end{subequations}
hold for all $i\geqslant 0$.

We begin with the base case $i=0$:
\begin{subequations}\label{Base:Picard}
	\begin{align}\label{Base:Picard1}
		\lVert Q_1(u)-A \rVert,\
		\lVert P_1(u)-A \rVert
		&\leqslant
		K |u|^{-\varepsilon},\\
	\label{Base:Picard2}
		\lVert P^{(1)}_1(u)-B_{\hat{n}n} \rVert,\
		\lVert P^{(2)}_1(u)-B_{n\hat{n}} \rVert
		&\leqslant
		K |u|^{-\varepsilon}.
	\end{align}
\end{subequations}
From \eqref{Init:Picard}, \eqref{Rec:P0}, and \eqref{Rec:Q},
	using \eqref{BasicqInt}, \eqref{Cond:PicardGap},
	\eqref{Def:Picardf}, \eqref{Est:Bound},
	\eqref{SpecDecomp}, \eqref{Const:KA},
	and \eqref{Abs:uPower},
	we obtain \eqref{Base:Picard1}.
Together with \eqref{Base:Picard1}, applying
	\eqref{BasicqInt}, \eqref{Est:fiBase}, and \eqref{Est:fix}
	to \eqref{Rec:P1} and \eqref{Rec:P2}
	gives \eqref{Base:Picard2}.
Thus, \eqref{Base:Picard} holds for all sufficiently large
	$|u|$ by our choice of $K$.
	By \eqref{Init:Picard} and \eqref{Rec:P0},
	this is precisely the case $i=0$ of
	\eqref{Induc:i1} and \eqref{Induc:i2}.
	
Assume now that \eqref{Induc:i1} and \eqref{Induc:i2}
	hold up to some $i$. We verify them for $i+1$.	
First, for any sufficiently large $N$ and
	$|t|\geqslant|u|>N$, summing \eqref{Induc:i1}
	from \eqref{Init:Picard} and using \eqref{Rec:P0}
	gives \eqref{DifftoAi} and \eqref{DifftoAplus}.
	Similarly, \eqref{Init:Picard} and \eqref{Induc:i2}
	give \eqref{SizeofP}.
\begin{subequations}\label{DifftoA}
	\begin{align}\label{DifftoAi}
		\lVert Q_i(t)-A \rVert,\
		\lVert {P}_i(t)-A \rVert
		& \leqslant 
		\frac{K}{1-r}|t|^{-\varepsilon} \leqslant
		\frac{K}{1-r}N^{-\varepsilon} < C_0, \\
	\label{DifftoAplus}
		\lVert Q_{i+1}(t)-A \rVert,\
		\lVert {P}_{i+1}(t)-A \rVert
		& \leqslant 
		\frac{K}{1-r}|t|^{-\varepsilon} \leqslant
		\frac{K}{1-r}N^{-\varepsilon} < C_0, \\
		\lVert P^{(\ast)}_i(t) \rVert,\
		\lVert P^{(\ast)}_{i+1}(t) \rVert
		& \leqslant
		\max\left\{
			\lVert B_{\hat{n}n}\rVert,
			\lVert B_{n\hat{n}}\rVert
		\right\}
		+
		\frac{K}{1-r}N^{-\varepsilon} <C_1.
	\label{SizeofP}
	\end{align}
\end{subequations}
From \eqref{Rec:Q}, using
	\eqref{Def:Picardf}, \eqref{Est:Bound}, \eqref{Est:h},
	\eqref{Est:g1}, \eqref{Est:g2},
	\eqref{SpecDecomp}, \eqref{Const:KA}, \eqref{Const:Ku0},
	\eqref{Induc:i}, and
	\eqref{DifftoA}, we obtain
	\begin{align}\label{Q0i2diff}
		\lVert Q_{i+2}(u)-Q_{i+1}(u) \rVert & \leqslant
		Kr^i\cdot
		6K_0^2K_1K_AK_{u_0}\cdot
		|u|^{-2\varepsilon}.
	\end{align}
Substituting \eqref{Rec:Q} into \eqref{Rec:P0} and using
	\eqref{SpecDecomp}, we obtain
	\begin{align}\nonumber
		{P}_{i+1}(u) & =
			A + u^{\mathrm{ad}[A]^q} \int_{\infty}^u
				t^{-2} U_1 t^{-[A]^q}
				\eta_1({P}_i(t),t)
				g_1(P^{(1)}_i(t), P^{(2)}_i(t), t)
				t^{[A]^q}
			\mathrm{d}_qt \\
		\nonumber
		& \quad
			+ u^{\mathrm{ad}[A]^q} \int_{\infty}^u
				t^{-2} t^{-[A]^q}
				\eta_2({P}_i(t),t)
				g_2(P^{(1)}_i(t), P^{(2)}_i(t), t)
				t^{[A]^q} U_2
			\mathrm{d}_qt \\
		\nonumber
		& =
			A + \sum_{j_\ast, k_\ast} (-1)^{k_1+k_4}
			u^{\lambda_{j_3}-\lambda_{j_2}} (\mathrm{ln} u)^{k_3+k_4}\\
		\nonumber
		& \qquad
			\int_{\infty}^u
				t^{ -2+\lambda_{j_2}-\lambda_{j_1}} (\mathrm{ln} t)^{k_1+k_2}
				\frac{\mathscr{P}_{j_3} \mathscr{N}^{k_3}}{k_3!}
				U_1
				\frac{\mathscr{P}_{j_1} \mathscr{N}^{k_1}}{k_1!}
				\eta_1(\ast)
				g_1(\ast)
				\frac{\mathscr{P}_{j_2} \mathscr{N}^{k_2+k_4}}{k_2!k_4!}
			\mathrm{d}_qt \\
		\nonumber
		& \quad
			+ \sum_{j_\ast, k_\ast} (-1)^{k_1+k_3}
			u^{\lambda_{j_1}-\lambda_{j_3}} (\mathrm{ln} u)^{k_3+k_4}\\
		& \qquad
			\int_{\infty}^u
				t^{-2+\lambda_{j_2}-\lambda_{j_1}} (\mathrm{ln} t)^{k_1+k_2}
				\frac{\mathscr{P}_{j_1} \mathscr{N}^{k_1+k_4}}{k_1!k_4!}
				\eta_2(\ast) g_2(\ast) 
				\frac{\mathscr{P}_{j_2} \mathscr{N}^{k_2}}{k_2!}
				U_2 
				\frac{\mathscr{P}_{j_3} \mathscr{N}^{k_3}}{k_3!}
			\mathrm{d}_qt.
		\label{Rec:adP0}
	\end{align}
From \eqref{Rec:adP0}, using
	\eqref{Est:Bound}, \eqref{Est:eta},
	\eqref{Est:g1}, \eqref{Est:g2},
	\eqref{Const:KA}, \eqref{Const:Ku01}, \eqref{Const:Ku02},
	\eqref{Induc:i}, and \eqref{DifftoA}, we obtain
\begin{align}\label{P0i2diff}
	\lVert {P}_{i+2}(u)-{P}_{i+1}(u) \rVert \leqslant
	Kr^i\cdot
	6K_0^2K_1K_AK_{u_0}\cdot
	|u|^{-2\varepsilon}.
\end{align}
From \eqref{Rec:P1} and \eqref{Rec:P2}, using
	\eqref{BasicqInt}, \eqref{Est:fix},
	\eqref{Est:fiy1}, \eqref{Est:fiy2},
	\eqref{Induc:i2}, \eqref{DifftoAplus},
	\eqref{SizeofP}, and \eqref{P0i2diff}, we obtain
	\begin{align}\nonumber
		\lVert P^{(\ast)}_{i+2}(u)-P^{(\ast)}_{i+1}(u) \rVert
		& \leqslant
		\int_\infty^u
			K_1 |t|^{-1} \lVert {P}_{i+2}(t)-{P}_{i+1}(t) \rVert
		|\mathrm{d}_qt| \\
		\nonumber
		& \quad +
		\int_\infty^u
			|t|^{-1}
			(K_{11}\lVert P_{i+1}(t)-A \rVert + K_{12}|t|^{-\varepsilon})
			\lVert P^{(1)}_{i+1}(t)-P^{(1)}_i(t) \rVert
		|\mathrm{d}_qt| \\
		\nonumber
		& \quad +
		\int_\infty^u
			|t|^{-1}
			(K_{11}\lVert P_{i+1}(t)-A \rVert + K_{12}|t|^{-\varepsilon})
			\lVert P^{(2)}_{i+1}(t)-P^{(2)}_i(t) \rVert
		|\mathrm{d}_qt| \\
		& \leqslant
		K r^i\cdot 
		\left(
			6K_0^2K_1^2K_AK_{u_0} +
			2\left(K_{11}\frac{K}{1-r} + K_{12}\right)
		\right) \cdot
		\frac{|1-q|}{|1-|q||}
		\frac{|u|^{-2\varepsilon}}{|[-2\varepsilon]_{|q|}|}.
	\label{P12i2diff}
	\end{align}
Finally, since these constants are independent of $N$,
	we can choose
	$N$ sufficiently large so that
	\begin{align}
		6K_0^2K_1K_AK_{u_0}\cdot N^{-\varepsilon}
		& \leqslant r, \label{Choice:PicardN1}\\
		\left(
			6K_0^2K_1^2K_AK_{u_0} +
			2\left(K_{11}\frac{K}{1-r} + K_{12}\right)
		\right) \cdot
		\frac{|1-q|}{|1-|q||}
		\frac{N^{-\varepsilon}}{ |[-2\varepsilon]_{|q|}| }
		& \leqslant r. \label{Choice:PicardN2}
	\end{align}
By \eqref{Q0i2diff}, \eqref{P0i2diff}, and
	\eqref{Choice:PicardN1}, \eqref{Induc:i1}
	holds with $i$ replaced by $i+1$.
	Similarly, \eqref{P12i2diff} and
	\eqref{Choice:PicardN2} give \eqref{Induc:i2}
	with $i$ replaced by $i+1$.
	This completes the induction.
	
By \eqref{Induc:i}, the sequences
	$Q_i$, $P_i$, $P^{(1)}_i$, and $P^{(2)}_i$
	converge uniformly for $|u|>N$ as $i\to\infty$. Set
	\begin{align}\label{Def:PicardLimits}
		Z_{\hat{n}\hat{n}}(u)
		&= \lim_{i\to\infty} Q_i(u),\qquad
		X_{\hat{n}n}(u)
		= \lim_{i\to\infty} P^{(1)}_i(u),\qquad
		X_{n\hat{n}}(u)
		= \lim_{i\to\infty} P^{(2)}_i(u).
	\end{align}
	Equation \eqref{Rec:P0} gives
	$\underset{i\to\infty}{\lim}P_i(u)
	=u^{\mathrm{ad}[A]^q}Z_{\hat{n}\hat{n}}(u)$.
The estimates above justify passing to the limit in
	\eqref{Rec:Q}, \eqref{Rec:P1}, and \eqref{Rec:P2},
	which give \eqref{Lem:Eq11}, \eqref{Lem:Eq12},
	and \eqref{Lem:Eq21}, respectively.
	Summing \eqref{Induc:i} gives \eqref{Lim:Picard}.
	
Under the additional assumptions, \eqref{Init:Picard} and
	\eqref{Rec:} show inductively that the iterates
\begin{align*}
	(Q_i(u),P_i(u),P^{(1)}_i(u),P^{(2)}_i(u))
\end{align*}
	depend analytically on the initial data
	$([A]^q,B_{\hat{n}n},B_{n\hat{n}})$.
The strict inequality \eqref{Cond:PicardGap} and the local
	uniformity of \eqref{Est:PicardConditions} allow the constants
	in \eqref{Base:Picard}, \eqref{Q0i2diff},
	\eqref{P0i2diff}, and \eqref{P12i2diff}, as well as $N$ in
	\eqref{Choice:PicardN1} and \eqref{Choice:PicardN2},
	to be chosen locally uniformly with respect to these data.
Hence, the convergence in \eqref{Def:PicardLimits} is locally
	uniform, and the limiting solution depends analytically on them.
\end{prf}

\noindent
Although the Picard iteration in
Lemma~\ref{Lem:Picard} is carried out on a single $q$-spiral,
the prescribed asymptotic behavior determines a natural analytic
extension of the resulting solution to a neighborhood of infinity
in $\widetilde{\mathbb{C}^{\ast}}$.

We now apply the Picard iteration of Lemma~\ref{Lem:Picard}
to construct the $u_n$-dependent part of the recursive step
from level $(n-1,n)$ to level $(n,n)$.

\begin{lem}\label{Lem:RecStep1}
Suppose that $\Phi_{n-1}$ satisfies
\eqref{PhikInvert} and
and the shrinking condition \eqref{Cond:less1}.
Choose $\alpha_1,\ldots,\alpha_n$ such that
\begin{subequations}\label{Lem:Step1cond}
\begin{align}
\label{Lem:Step1cond2}
\left\{
	u_1^{-1}q^{\alpha_1},\ldots,
	u_{n-1}^{-1}q^{\alpha_{n-1}}
\right\}
&=
\mathrm{Spec}\left(
	U_{\hat n\hat n}^{-1}
	\bigl(I+(q-1)(\Phi_{n-1})_{\hat n\hat n}\bigr)
\right),\\
\label{Lem:Step1cond3}
q^{\alpha_n}
&=
\frac{
	\det\bigl(I+(q-1)\Phi_{n-1}\bigr)
}{
	\det\bigl(I+(q-1)\Phi_{n-1}^{[n-1]}\bigr)
}.
\end{align}
\end{subequations}
Fix a branch of
$[\Phi_{n-1}^{[n-1]}]^q$, and set
\[
	[\delta_{n-1}\Phi_{n-1}]^q
	:=
	\begin{pmatrix}
		[\Phi_{n-1}^{[n-1]}]^q&0\\
		0&\alpha_n
	\end{pmatrix}.
\]
Then there exists a unique function
$\Phi_n(u_n)$, analytic in $u_n$ on a neighborhood of $\infty$
in $\widetilde{\mathbb C^\ast}$, such that
\begin{subequations}
\begin{align}
\label{Picardk}
T_n\Phi_n(u_n)
&=
V_{n;n}^{(n)}(u_n;\Phi_n(u_n),\alpha_n) \cdot
\Phi_n(u_n) \cdot
V_{n;n}^{(n)}(u_n;\Phi_n(u_n),\alpha_n)^{-1},\\
\label{Lim:adPhin}
u_n^{\operatorname{ad}[\delta_{n-1}\Phi_{n-1}]^q}
\Phi_n(u_n)
&=
\Phi_{n-1} + O(u_n^{-\varepsilon_{n-1}}),\qquad
u_n\to\infty,\\
\label{Lem:Recalpha}
\left\{
	u_1^{-1}q^{\alpha_1},\ldots,
	u_n^{-1}q^{\alpha_n}
\right\}
&=
\mathrm{Spec}\left(
	U^{-1}\bigl(I+(q-1)\Phi_n(u_n)\bigr)
\right).
\end{align}
\end{subequations}
Fix also a branch of $[\Phi_{n-1}]^q$.
Then for any $W_{n-1}\in\mathrm{GL}_n(\mathbb C)$,
there exist a unique function $W_n(u_n)$,
analytic in $u_n$ on a neighborhood of $\infty$ in
$\widetilde{\mathbb C^\ast}$, and a unique branch
$[\Phi_n(u_n)]^q$ such that
\begin{subequations}
\begin{align}
\label{PicardWn}
T_nW_n(u_n)
&=
V_{n;n}^{(n)}(u_n;\Phi_n(u_n),\alpha_n) \cdot W_n(u_n),\\
\label{Lim:adWn}
u_n^{[\delta_{n-1}\Phi_{n-1}]^q}
u_n^{-[\Phi_n(u_n)]^q}
W_n(u_n)
&=
W_{n-1} + O(u_n^{-\varepsilon_{n-1}}),\qquad
u_n\to\infty,\\
\label{Lem:DiagW}
W_n(u_n)^{-1} \cdot [\Phi_n(u_n)]^q \cdot W_n(u_n)
&=
W_{n-1}^{-1} \cdot [\Phi_{n-1}]^q \cdot W_{n-1}.
\end{align}
\end{subequations}
Moreover, the pair
$(\Phi_n(u_n),W_n(u_n))$
depends analytically on the initial data
$(\Phi_{n-1},W_{n-1})$.
\end{lem}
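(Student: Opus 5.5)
We will rewrite the single-direction $q$-isomonodromy equation \eqref{Picardk} as a system of the form \eqref{Eq:PicardSystem} and then invoke Lemma~\ref{Lem:Picard} with $A=\Phi_{n-1}^{[n-1]}$. Write $\Phi_n=\Phi$, $u=u_n$, and decompose $\Phi$ in the blocks $(\hat n,n)$. Since $V:=V_{n;n}^{(n)}$ is the identity outside the $n$-th row and column, \eqref{Picardk} is equivalent to $\frac{\partial_q}{\partial_q u}\Phi=\frac{1}{(q-1)u}\bigl((V-I)\Phi-\Phi(V-I)\bigr)V^{-1}$, which is the form \eqref{Intro:qiso}. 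By \eqref{Exp:Vkkhatk} and \eqref{Exp:Vk12} we have $(V-I)_{n\hat n}=(q-1)q^{-\alpha_n}\Phi_{n\hat n}+O(u^{-1})$ and $(V-I)_{\hat nn}=(q-1)\Phi_{\hat n n}+O(u^{-1})$, with $(V-I)_{\hat n\hat n}=0$. Expanding, the $(\hat n,\hat n)$-block of the right-hand side is $u^{-1}$ times a product of the off-diagonal blocks, up to terms of order $u^{-2}$. The off-diagonal blocks, in turn, have right-hand sides of order $u^{-1}$.

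The key normalisation is the substitution $X_{\hat n\hat n}:=\Phi_{\hat n\hat n}$, $Z_{\hat n\hat n}:=u^{-\operatorname{ad}[A]^q}\Phi_{\hat n\hat n}$, together with the conjugation of the off-diagonal blocks by $u^{[\delta_{n-1}\Phi_{n-1}]^q}$ as dictated by \eqref{Lim:adPhin}. That is, we set $X_{\hat nn}=u^{[A]^q}\Phi_{\hat nn}u^{-\alpha_n}$ and $X_{n\hat n}=u^{\alpha_n}\Phi_{n\hat n}u^{-[A]^q}$, and we then rename so that the diagonal block is expressed through $Z$. The leading $u^{-1}$ terms in the off-diagonal equations are exactly cancelled by this conjugation, because of the spectral identity \eqref{Eq:VkSpectralIdentity} and the relation $1+(q-1)[\alpha_n]_q=q^{\alpha_n}$. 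What remains for $X_{\hat nn}$ and $X_{n\hat n}$ is a right-hand side $u^{-1}f_i$ with $f_i$ small, giving \eqref{Est:fiBase}, \eqref{Est:fiy1} and \eqref{Est:fiy2}: $f_i$ is small when $X_{\hat n\hat n}$ is near $A$ and $|u|$ is large. For the $Z$ equation, the $u^{-1}$ term in $\Phi_{\hat n\hat n}$ becomes, after conjugation by $u^{-\operatorname{ad}[A]^q}$, a term of the form $u^{-2}(U_1h_1u^{-[A]^q}g_1u^{[A]^q}+\cdots)$, matching \eqref{Def:Picardf}. The extra power of $u^{-1}$ comes from the factor $U_{\hat n\hat n}/u_n$ in $V$, and $U_1,U_2$ are built from $U_{\hat n\hat n}$. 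I expect the main obstacle to be this bookkeeping: identifying precisely $h_i,g_i,\eta_i,f_i$ and verifying the Lipschitz estimates \eqref{Est:PicardConditions}. The shrinking condition \eqref{Cond:less1} is what makes the integrals \eqref{Est:PicardSpectral} converge, and the denominators in \eqref{Exp:Vk12} must stay invertible; this holds for large $|u|$.

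Lemma~\ref{Lem:Picard} then yields a unique solution on each $q$-spiral, with \eqref{Lim:adPhin} and $B$ determined by the off-diagonal blocks of $\Phi_{n-1}$. Analytic dependence on $[A]^q$ and $B$, together with uniqueness across spirals, glues these solutions into a function analytic in $u_n$ near $\infty$ in $\widetilde{\mathbb C^\ast}$. The spectral property \eqref{Lem:Recalpha} follows from Lemma~\ref{Lem:qBasic}, since the spectrum relation propagates under $T_n$ through the determinant identity. At infinity it holds asymptotically, because the characteristic polynomial of $U^{-1}(I+(q-1)\Phi_n)$ tends, after the scaling, to that determined by \eqref{Lem:Step1cond}. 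Combining the $q$-shift with the limit forces exactness.

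For $W_n$, we solve the linear equation \eqref{PicardWn}. Define $M(u)=u^{[\delta_{n-1}\Phi_{n-1}]^q}u^{-[\Phi_n]^q}W_n$; the branch $[\Phi_n(u)]^q$ is chosen continuously so that $u^{\operatorname{ad}[\delta_{n-1}\Phi_{n-1}]^q}[\Phi_n]^q\to[\Phi_{n-1}]^q$. The equation for $M$ then has the form $T_nM=(I+O(u^{-\varepsilon}))M$. This is solved by the convergent product, or equivalently the Jackson integral equation $M=W_{n-1}+\int_\infty^u(\cdots)M\,\mathrm d_qt$, giving \eqref{Lim:adWn}. Finally, \eqref{Lem:DiagW} holds because $W_n^{-1}[\Phi_n]^qW_n$ is $T_n$-invariant, since $V$ conjugates both $\Phi_n$ and $W_n$ compatibly. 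This quantity is therefore constant on spirals, and its limit is $W_{n-1}^{-1}[\Phi_{n-1}]^qW_{n-1}$.
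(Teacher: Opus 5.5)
Your overall route is the paper's. You reduce \eqref{Picardk} to Lemma~\ref{Lem:Picard} with $A=\Phi_{n-1}^{[n-1]}$ and $B$ the off-diagonal blocks of $\Phi_{n-1}$, get the spectrum from a $q$-constant and its limit, solve for $M=u_n^{[\delta_{n-1}\Phi_{n-1}]^q}u_n^{-[\Phi_n]^q}W_n$ by a second Picard step, and obtain \eqref{Lem:DiagW} from $T_n$-invariance. The $W_n$ part is fine.

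\textbf{The gap: $\varphi_{nn}$ is never fixed.} You never say what the $(n,n)$ entry $\varphi_{nn}$ of $\Phi_n$ is. Write $V:=V_{n;n}^{(n)}$. Although $V$ itself does not involve $\varphi_{nn}$, the conjugate $V\Phi_nV^{-1}$ does. Hence the $(\hat n,\hat n)$, $(\hat n,n)$ and $(n,\hat n)$ blocks of \eqref{Picardk} are not a closed system in the three unknowns of Lemma~\ref{Lem:Picard}. The paper closes it by imposing $u_n^{-1}q^{\alpha_n}\in\operatorname{Spec}(U^{-1}(I+(q-1)\Phi_n))$, which by the Schur complement is equivalent to
\[
\varphi_{nn}=[\alpha_n]_q+(q-1)\Phi_{n\hat n}\Bigl(I+(q-1)\Phi_{\hat n\hat n}-q^{\alpha_n}\frac{U_{\hat n\hat n}}{u_n}\Bigr)^{-1}\Phi_{\hat n n}.
\]
It then uses this to eliminate $\varphi_{nn}$.

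This relation is exactly \eqref{Eq:VkSpectralIdentity}, which you invoke for your cancellations. So you use the constraint as an input, yet claim \eqref{Lem:Recalpha} as an output of Lemma~\ref{Lem:qBasic}. That is circular: \eqref{qisoeq}, and hence Lemma~\ref{Lem:qBasic}, holds for $V$ from \eqref{Exp:Vk} only when that identity already holds at $u_n$.

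The correct logic runs as follows. The constraint supplies the eigenvalue $u_n^{-1}q^{\alpha_n}$ by construction. The other $n-1$ eigenvalues come from
\[
\widehat\chi(x,u_n)=(x-u_n^{-1}q^{\alpha_n})^{-1}\det\bigl(xI-U^{-1}(I+(q-1)\Phi_n)\bigr).
\]
This is a $q$-constant (take the determinant of the polynomial identity), its limit is $\det(xI-U_{\hat n\hat n}^{-1}(I+(q-1)\Phi_{n-1}^{[n-1]}))$, and \eqref{Lem:Step1cond2} then identifies its roots.

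Likewise, the $(n,n)$ block of \eqref{Lim:adPhin} is not delivered by Lemma~\ref{Lem:Picard}. It follows from the displayed formula together with \eqref{Lem:Step1cond3}, which is precisely the Schur-complement expression for $(\Phi_{n-1})_{nn}$. You never use \eqref{Lem:Step1cond3} for this.

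\textbf{Consequences for your bookkeeping.} The same constraint is what makes the diagonal block integrable. With $\varphi_{nn}$ eliminated and $V_n$ as in \eqref{Exp:Vk}, one gets exactly (cf.~\eqref{Exp:TkPhiBlocks})
\[
(T_n\Phi_n-\Phi_n)_{\hat n\hat n}=\frac{1}{(q-1)u_n}\bigl(U_{\hat n\hat n}N-N\,q^{-1}U_{\hat n\hat n}\bigr),\qquad N:=(V_n)_{\hat n n}(V_n)_{n\hat n}.
\]
\begin{itemize}
\item The $O(1)\cdot\Phi_{\hat n n}\Phi_{n\hat n}$ terms cancel identically, and the extra factor $u_n^{-1}$ comes with $U_{\hat n\hat n}$. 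It does not come from conjugating by $u_n^{-\operatorname{ad}[A]^q}$, which cannot produce powers of $u_n^{-1}$. Without the cancellation, $\frac{\partial_q}{\partial_q u_n}\Phi_{\hat n\hat n}$ would contain $u_n^{-1}\Phi_{\hat n n}\Phi_{n\hat n}=u_n^{-1}\cdot O(u_n^{1-\varepsilon_{n-1}})$, which is not integrable at $\infty$.
\item Your $Z/X$ assignment is reversed. The variable obeying the $u_n^{-2}$-equation is the unconjugated $Z_{\hat n\hat n}=\Phi_{\hat n\hat n}$, while $X_{\hat n\hat n}=u_n^{\operatorname{ad}[A]^q}\Phi_{\hat n\hat n}$ is what enters $f_1,f_2$. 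With your choice you would get $u_n^{-\operatorname{ad}[A]^q}\Phi_{\hat n\hat n}\to A$ instead of \eqref{Lim:adPhin}.
\item The leading term of $(V-I)_{\hat n n}$ is $(q-1)(I+(q-1)\Phi_{\hat n\hat n})^{-1}\Phi_{\hat n n}$, not $(q-1)\Phi_{\hat n n}$. That inverse factor, combined with $q^{[A]^q}=I+(q-1)A$, is what cancels the leading term of the $(\hat n,n)$-equation.
\item The corrections are not additive $O(u_n^{-1})$, because $\Phi_{\hat n n}$ and $\Phi_{n\hat n}$ are unbounded. The relevant small quantity is $\widetilde U(u_n)=u_n^{-1}u_n^{\operatorname{ad}[A]^q}U_{\hat n\hat n}=O(u_n^{-\varepsilon_{n-1}})$. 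This is the second place the shrinking condition is needed, besides \eqref{Est:PicardSpectral}.
\end{itemize}
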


\begin{prf}
Denote $[A]^q=[\Phi_{n-1}^{[n-1]}]^q$.
To construct a solution of \eqref{Picardk}, set
\begin{subequations}
	\begin{alignat}{2}
		Z_{\hat{n}\hat{n}}(u_n) & =
			\Phi_n(u_n)_{\hat{n}\hat{n}},& \quad
		X_{\hat{n}\hat{n}}(u_n) & =
			u_n^{\mathrm{ad}[A]^q}\Phi_n(u_n)_{\hat{n}\hat{n}}, \\
		X_{\hat{n}n}(u_n) & =
			u_n^{[A]^q-\alpha_n I}\Phi_n(u_n)_{\hat{n}n},& \quad
		X_{n\hat{n}}(u_n) & =
			\Phi_n(u_n)_{n\hat{n}}u_n^{\alpha_n I-[A]^q}.
	\end{alignat}
\end{subequations}
The condition
	$u_n^{-1}q^{\alpha_n}\in
	\mathrm{Spec}(U^{-1}(I+(q-1)\Phi_n(u_n)))$
	is equivalent to
	\begin{align}\nonumber
		\varphi_{nn}(u_n) & = [\alpha_n]_q +
		\Phi(u_n)_{n\hat{n}}
		\frac{q-1}
		{I+(q-1) \Phi(u_n)_{\hat{n}\hat{n}} -
		q^{\alpha_n}\frac{U_{\hat{n}\hat{n}}}{u_n} }
		\Phi(u_n)_{\hat{n}n} \\
		\label{Def:pnn}
		& = [\alpha_n]_q +
		X_{n\hat{n}}(u_n)
		\frac{q-1}
		{I+(q-1) X_{\hat{n}\hat{n}}(u_n) -
		q^{\alpha_n} u_n^{\mathrm{ad}[A]^q}
		\frac{U_{\hat{n}\hat{n}}}{u_n}}
		X_{\hat{n}n}(u_n).
	\end{align}
Substituting \eqref{Def:pnn} into \eqref{Picardk}
	eliminates $\varphi_{nn}(u_n)$, and the remaining three block
	equations form the closed system
	\begin{subequations}\label{Lem:PhiEq}
	\begin{align}\label{Lem:PhiEq11}
	\frac{\partial_q}{\partial_q u_n}
	Z_{\hat{n}\hat{n}} & =
		u_n^{-2}
		f  (Z_{\hat{n}\hat{n}}, X_{\hat{n}n}, X_{n\hat{n}}, u_n), \\
	\frac{\partial_q}{\partial_q u_n}
	X_{\hat{n}n} & =
		u_n^{-1}
		f_1(X_{\hat{n}\hat{n}}, X_{\hat{n}n}, X_{n\hat{n}}, u_n), \\
	\label{Lem:PhiEq21}
	\frac{\partial_q}{\partial_q u_n}
	X_{n\hat{n}} & =
		u_n^{-1}
		f_2(X_{\hat{n}\hat{n}}, X_{\hat{n}n}, X_{n\hat{n}}, u_n).
	\end{align}
	\end{subequations}
	Here
	\begin{subequations}
	\begin{align}\notag
		f( Z_{\hat{n}\hat{n}}, X_{\hat{n}n}, X_{n\hat{n}}, u_n )
			& = U_{\hat{n}\hat{n}} \cdot
			h(Z_{\hat{n}\hat{n}}, u_n) u_n^{-[A]^q}
			g(X_{\hat{n}n}, X_{n\hat{n}}, u_n) u_n^{[A]^q} \\
		& \quad -
			h(Z_{\hat{n}\hat{n}}, u_n) u_n^{-[A]^q}
			g(X_{\hat{n}n}, X_{n\hat{n}}, u_n) u_n^{[A]^q}\cdot
			q^{-1}U_{\hat{n}\hat{n}},\\
		f_1( X_{\hat{n}\hat{n}}, X_{\hat{n}n}, X_{n\hat{n}}, u_n )
			& =
			( A - X_{\hat{n}\hat{n}} +
			([\alpha_n]_qI-A)\tilde{U}(u_n) )
			\frac{1}{
				I+(q-1)X_{\hat{n}\hat{n}} - q^{\alpha_n} \tilde{U}(u_n)
			}
			X_{\hat{n}n}, \\
		\nonumber
		f_2( X_{\hat{n}\hat{n}}, X_{\hat{n}n}, X_{n\hat{n}}, u_n )
			& = X_{n\hat{n}}
			\frac{1}{I - q^{-1} \tilde{U}(u_n)}
			\Bigg(
			( X_{\hat{n}\hat{n}}-A + 
			q^{-1}\tilde{U}(u_n) ( A-[\alpha_n]_qI ) )
        	\\
        	& \hspace{1em} +
        	(q-1)
        	u_n^{-1}u_n^{\mathrm{ad}[A]^q}
        	f( Z_{\hat{n}\hat{n}}, X_{\hat{n}n}, X_{n\hat{n}}, u_n )
    		\Bigg) q^{-[A]^q},
	\end{align}
	\end{subequations}
	where we denote
	$\tilde{U}(u_n) =
	u_n^{-1}u_n^{\mathrm{ad}[A]^q} U_{\hat{n}\hat{n}}$
	and
	\begin{align}
		h(Z_{\hat{n}\hat{n}}, u_n) & =
        	\frac{1}{
        	I+(q-1) Z_{\hat{n}\hat{n}}
        	- q^{\alpha_n} \frac{U_{\hat{n}\hat{n}}}{u_n} }, \\
        g(X_{\hat{n}n}, X_{n\hat{n}}, u_n) & =
        	X_{\hat{n}n} X_{n\hat{n}}
        	\frac{1}{I-q^{-1} \tilde{U}(u_n)}.
	\end{align}
Denote
	$\eta(X_{\hat{n}\hat{n}}, u_n) :=
	 u_n^{\mathrm{ad}[A]^q}
	 h(Z_{\hat{n}\hat{n}}, u_n)$.
Since
	\begin{align}\nonumber
		u_n^{\mathrm{ad}[A]^q}
		f( Z_{\hat{n}\hat{n}}, X_{\hat{n}n}, X_{n\hat{n}}, u_n )
			& = u_n^{[A]^q} U_{\hat{n}\hat{n}} u_n^{-[A]^q} \cdot
			\eta(X_{\hat{n}\hat{n}}, u_n)
			g(X_{\hat{n}n}, X_{n\hat{n}}, u_n) \\
		& \quad -
			\eta(X_{\hat{n}\hat{n}}, u_n)
			g(X_{\hat{n}n}, X_{n\hat{n}}, u_n) \cdot
			q^{-1} u_n^{[A]^q} U_{\hat{n}\hat{n}} u_n^{-[A]^q},
	\end{align}
the choices
	$U_1=U_{\hat n\hat n}$, $U_2=-q^{-1}U_{\hat n\hat n}$,
	$h_1=h_2=h$, $\eta_1=\eta_2=\eta$, and $g_1=g_2=g$
	identify this expression with \eqref{Def:Picardf}.
For the remaining hypotheses of Lemma \ref{Lem:Picard},
	equation \eqref{PhikInvert} gives the invertibility of $I+(q-1)A$,
	while the shrinking condition \eqref{Cond:less1} gives
	$\widetilde U(u_n)=O(u_n^{-\varepsilon_{n-1}})$.
The formulas above then satisfy \eqref{Est:PicardConditions}.
Hence there exists a unique solution
$(Z_{\hat n\hat n}(u_n),
X_{\hat n n}(u_n),
X_{n\hat n}(u_n))$
of equations \eqref{Lem:PhiEq} such that
\begin{subequations}\label{Eq:lem43}
\begin{alignat}{2}
Z_{\hat n\hat n}(u_n) &=
	\Phi_{n-1}^{[n-1]}
	+O(u_n^{-\varepsilon_{n-1}}),\qquad &
X_{\hat n\hat n}(u_n) &=
	\Phi_{n-1}^{[n-1]}
	+O(u_n^{-\varepsilon_{n-1}}),\\
X_{\hat n n}(u_n) &=
	(\Phi_{n-1})_{\hat n n}
	+O(u_n^{-\varepsilon_{n-1}}),\qquad &
X_{n\hat n}(u_n) &=
	(\Phi_{n-1})_{n\hat n}
	+O(u_n^{-\varepsilon_{n-1}}).
\end{alignat}
\end{subequations}
Together with \eqref{Def:pnn}, this solution gives
	$\Phi_n(u_n)$ satisfying \eqref{Picardk}.
The estimates above verify \eqref{Lim:adPhin} in every block except
	the $(n,n)$-block.
For the remaining block,
	\eqref{Lem:Step1cond3} gives
	\begin{align*}
		(\Phi_{n-1})_{nn} = [\alpha_n]_q +
		(\Phi_{n-1})_{n\hat{n}}
		\frac{q-1}{I+(q-1)\Phi_{n-1}^{[n-1]}}
		(\Phi_{n-1})_{\hat{n}n}.
	\end{align*}
Combining \eqref{Def:pnn}, \eqref{Lem:Step1cond3}, and
\eqref{Eq:lem43} with
$\widetilde U(u_n)=O(u_n^{-\varepsilon_{n-1}})$,
we obtain
\[
\varphi_{nn}(u_n)
=
(\Phi_{n-1})_{nn}
+
O(u_n^{-\varepsilon_{n-1}}).
\]
Thus \eqref{Lim:adPhin} holds.

Set
\[
	\widehat{\chi}(x,u_n)
	:=
	(x-u_n^{-1}q^{\alpha_n})^{-1}
	\mathrm{det}
	\left(
		xI-U^{-1}(I+(q-1)\Phi_n(u_n))
	\right).
\]
By \eqref{Def:pnn}, this is a monic polynomial in $x$ of degree
	$n-1$.
Equations \eqref{Picardk} and \eqref{Def:pnn}, together with the
	definition of $V_{n;n}^{(n)}$, give the matrix polynomial identity
\begin{align*}
\left(
		T_nU x+I+(q-1)T_n\Phi_n
	\right)
	\left(
		q^{-\alpha_n}UE_nx+V_{n;n}^{(n)}
	\right) =
	\left(
		q^{-\alpha_n}UE_nqx+V_{n;n}^{(n)}
	\right)
	\left(
		Ux+I+(q-1)\Phi_n
	\right).
\end{align*}
Taking determinants in this identity gives
$\widehat{\chi}(x,qu_n)=\widehat{\chi}(x,u_n)$.
Hence $\widehat{\chi}(x,u_n)$ is
	a $q$-constant with respect to $u_n$.
The block determinant formula,
the shrinking condition \eqref{Cond:less1}, and
	\eqref{Lim:adPhin} give
\[
	\lim_{u_n\to\infty}\widehat{\chi}(x,u_n)
	=
	\mathrm{det}\left(
		xI-U_{\hat n\hat n}^{-1}
		(I+(q-1)\Phi_{n-1}^{[n-1]})
	\right).
\]
Since $\widehat{\chi}(x,u_n)$ is a $q$-constant, the displayed
	limit shows that it is in fact independent of $u_n$ and equal to
	the determinant on the right-hand side.
By \eqref{Lem:Step1cond2} and the definition of
	$\widehat{\chi}$, this proves \eqref{Lem:Recalpha}.
	
By \eqref{Lim:adPhin}, the fixed branch
$[\Phi_{n-1}]^q$ determines a unique branch
$[\Phi_n(u_n)]^q$ such that
\begin{align}\label{Lim:adqPhin}
	\lim_{u_n\to\infty}
	u_n^{\operatorname{ad}[\delta_{n-1}\Phi_{n-1}]^q}
	[\Phi_n(u_n)]^q
	&=
	[\Phi_{n-1}]^q.
\end{align}
Moreover, \eqref{Picardk} and the uniqueness of this branch give
\begin{align}\label{TransPq}
T_n[\Phi_n(u_n)]^q
	=
	V_{n;n}^{(n)}(u_n;\Phi_n(u_n),\alpha_n) \cdot
	[\Phi_n(u_n)]^q \cdot
	V_{n;n}^{(n)}(u_n;\Phi_n(u_n),\alpha_n)^{-1}.
\end{align}
Denote $X(u_n) = u_n^{ [\delta_{n-1}\Phi_{n-1}]^q }
	u_n^{ -[\Phi_n(u_n)]^q }
	W_n(u_n)$.
Taking $\varepsilon=\varepsilon_{n-1}$ in
\eqref{Lim:Picard},
and using the shrinking condition
	\eqref{Cond:less1},
	\eqref{Def:pnn}, \eqref{Lem:Step1cond3},
	and the definition of $V_{n;n}^{(n)}$ together
	with \eqref{Exp:Vk} give
\begin{align}\label{Eq:useful}
	u_n^{\mathrm{ad}[\delta_{n-1}\Phi_{n-1}]^q}
	\left(
		q^{[\delta_{n-1}\Phi_{n-1}]^q}
		V_{n;n}^{(n)}(u_n)
	\right)
	&=
	I+(q-1)\Phi_{n-1}+O(u_n^{-\varepsilon_{n-1}}),\\
	u_n^{\mathrm{ad}[\delta_{n-1}\Phi_{n-1}]^q}
	q^{[\Phi_n(u_n)]^q}
	&=
	I+(q-1)\Phi_{n-1}+O(u_n^{-\varepsilon_{n-1}}).
\end{align}
Moreover, \eqref{PhikInvert} and \eqref{Lem:Step1cond3}
show that the common leading term is invertible.
Hence \eqref{PicardWn} can be rewritten as
\begin{align}\nonumber
	\frac{\partial_q}{\partial_q u_n} X(u_n)
	&=
	u_n^{\mathrm{ad}[\delta_{n-1}\Phi_{n-1}]^q}
	\left(
	\frac{
			q^{[\delta_{n-1}\Phi_{n-1}]^q}V_{n;n}^{(n)}(u_n)
			-q^{[\Phi_n(u_n)]^q}
		}{
			(q-1)u_n
		}
		q^{-[\Phi_n(u_n)]^q}
	\right)X(u_n)\\
	&=
	O(u_n^{-1-\varepsilon_{n-1}})\cdot X(u_n).
	\label{Eq:AdWnun}
\end{align}
By a similar Picard iteration argument, for any
$W_{n-1}\in\mathrm{GL}_n(\mathbb C)$,
there exists a unique solution $X(u_n)$ of
\eqref{Eq:AdWnun} such that
$X(u_n) = W_{n-1} + O(u_n^{-\varepsilon_{n-1}})$.
Equivalently, there exists a unique $W_n(u_n)$ satisfying
\eqref{PicardWn} and \eqref{Lim:adWn}.

Equations \eqref{TransPq} and \eqref{PicardWn} show that
$W_n(u_n)^{-1}[\Phi_n(u_n)]^qW_n(u_n)$
is a $q$-constant with respect to $u_n$.
By the definition of $X(u_n)$,
\[
	W_n(u_n)^{-1}[\Phi_n(u_n)]^qW_n(u_n)
	=
	X(u_n)^{-1}
	\left(
		u_n^{\operatorname{ad}[\delta_{n-1}\Phi_{n-1}]^q}
		[\Phi_n(u_n)]^q
	\right)
	X(u_n).
\]
Taking the limit in this identity and applying
\eqref{Lim:adqPhin} and \eqref{Lim:adWn}, we obtain
\[
	W_n(u_n)^{-1}[\Phi_n(u_n)]^qW_n(u_n)
	=
	W_{n-1}^{-1}[\Phi_{n-1}]^qW_{n-1},
\]
which proves \eqref{Lem:DiagW}.
The analytic dependence follows directly from
Lemma~\ref{Lem:Picard}.
\end{prf}

\subsection{Compatibility of the Recursive Construction}
\label{Sect:RecCompatibility}

In this subsection, we show that the recursive construction in the
$u_n$-direction given by Lemma~\ref{Lem:RecStep1} is compatible with
the $q$-isomonodromy equations in the other directions.
The technical estimate in Lemma~\ref{Lem:RecImprotant} allows us to
pass from a $q$-isomonodromic deformation at level $(n-1,n)$ to one
at level $(n,n)$. Extending this step to arbitrary levels and
iterating the construction, we obtain the desired
$q$-isomonodromic deformation.

The following technical lemma will be used to show that the solution
constructed in the $u_n$-direction in Lemma~\ref{Lem:RecStep1} also
satisfies the $q$-isomonodromy equations in the other directions.

\begin{lem}\label{Lem:RecImprotant}
Let $(\Phi_n(u_n),W_n(u_n);\boldsymbol{\alpha})$ be the triple
derived in Lemma \ref{Lem:RecStep1}.
If $\varepsilon_{n-1} > \frac{1}{2}$,
then
the following limit holds
for each $1\leqslant k \leqslant n-1$:
\begin{align}\label{Lim:RecImportant}
	\lim_{u_n\to\infty}
	u_n^{\mathrm{ad}[\delta_{n-1}\Phi_{n-1}]^q}
	\left(
		V_{n-1;k}^{(n)}(u;\Phi_{n-1})^{-1}
		V_{n;k}^{(n)}(u;\Phi_{n}(u_n))
	\right) = I.
\end{align}
\end{lem}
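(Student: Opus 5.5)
Both matrices $V_{n-1;k}^{(n)}(u;\Phi_{n-1})$ and $V_{n;k}^{(n)}(u;\Phi_n(u_n))$ are explicit by \eqref{Def:Vqiso} and \eqref{Exp:Vk}. The plan is to compare them block by block after conjugating by $u_n^{[\delta_{n-1}\Phi_{n-1}]^q}$. We split indices as $\{1,\dots,n-1\}\sqcup\{n\}$. By definition, $V_{n-1;k}^{(n)}$ is block diagonal with blocks $q^{-\boldsymbol{\alpha}^{[n-1]}E_k}V_k(u_1,\dots,u_{n-1};\Phi_{n-1}^{[n-1]})$ and $1$. Since $[\delta_{n-1}\Phi_{n-1}]^q$ is block diagonal with the same splitting, conjugation only multiplies the off-diagonal blocks by $u_n^{\pm}$-powers. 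The $(\hat n\hat n)$-block is conjugated by $u_n^{[A]^q}$.

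Step one computes the entries of $V_{n;k}^{(n)}(u;\Phi_n)$ through \eqref{Exp:Vk} in terms of $X_{\hat n\hat n}=u_n^{\operatorname{ad}[A]^q}(\Phi_n)_{\hat n\hat n}$, $X_{\hat nn}$ and $X_{n\hat n}$, which tend to $\Phi_{n-1}$ by \eqref{Lim:adPhin}. The new ingredients relative to the $(n-1)$-level matrix are the entries involving the index $n$ and the factors $u_k/(u_kI-q^{-1}U_{\hat k\hat k})$, whose $n$-th diagonal entry is $u_k/(u_k-q^{-1}u_n)\sim -qu_k/u_n$. Concretely:
\begin{itemize}
\item The entry $(V_k)_{kn}=\varphi_{kn}(q-1)u_k/(u_k-q^{-1}u_n)$ is $O(u_n^{-1})$ times $\varphi_{kn}$. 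After conjugation and multiplication by $V_{n-1;k}^{-1}$, it contributes $u_n^{-1}\cdot u_n^{\lambda-\alpha_n}\cdot u_n^{\alpha_n-\lambda'}$-type terms. The eigenvalue differences are of size $<1-\varepsilon_{n-1}$ in the twisted real part, so this gives $O(u_n^{-1+(1-\varepsilon_{n-1})})=O(u_n^{-\varepsilon_{n-1}})\to0$.
\item Similarly $(V_k)_{nk}$ involves $\bigl(q^{-\alpha_k}u_k(I+(q-1)\Phi_{\hat k\hat k})-U_{\hat k\hat k}\bigr)^{-1}$, whose $n$-row is dominated by $-u_n$. This yields a factor $u_n^{-1}$ again.
\item The $(\hat n\hat n)$-part is handled by continuity. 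The blocks of $V_k$ restricted to indices $\ne n$ differ from the $(n-1)$-level ones by corrections from the extra row and column, for example in the inverse $\bigl(q^{-\alpha_k}u_k(I+(q-1)\Phi_{\hat k\hat k})-U_{\hat k\hat k}\bigr)^{-1}$ and in the product $(V_k)_{k\hat k}(V_k)_{\hat kk}$.
\end{itemize}

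Step two is the Schur complement expansion of that inverse. The correction to the $(\hat k\hat k)\setminus\{n\}$ block of the inverse is of the form $a\,(\text{pivot})^{-1}\,b$, where $a,b$ are $\Phi_n$-entries in column or row $n$ and the pivot is $\sim -u_n$. After conjugation, $a$ and $b$ carry exponents $u_n^{\lambda_i-\alpha_n}$ and $u_n^{\alpha_n-\lambda_j}$. The correction is therefore $O(u_n^{-1}\cdot u_n^{\mathrm{Re}'(\lambda_i-\lambda_j)})$, where $\mathrm{Re}'$ denotes the twisted real part appearing in \eqref{Abs:uPower}. This tends to $0$ by \eqref{Cond:less1}.

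The main obstacle is terms in which two such conjugation factors stack without a compensating $u_n^{-1}$. This happens when a correction of order $u_n^{-1}$ gets multiplied by two off-diagonal factors, each bounded only by $u_n^{(1-\varepsilon_{n-1})}$, for instance in $(V_k)_{kk}=q^{\alpha_k}+(V_k)_{k\hat k}(V_k)_{\hat kk}$ and in the left multiplication by $V_{n-1;k}^{-1}$. These terms are only $O(u_n^{-1+2(1-\varepsilon_{n-1})})$, which tends to zero exactly when $\varepsilon_{n-1}>\tfrac12$; this is where the hypothesis enters. I will bound all entries via \eqref{Abs:uPower} and \eqref{Est:PicardSpectral}-type estimates, tracking the worst-case exponent. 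Combined with $X_\ast\to\Phi_{n-1}$ from \eqref{Eq:lem43}, this shows that the product tends to $V_{n-1;k}^{-1}V_{n-1;k}=I$ for each $k\le n-1$.
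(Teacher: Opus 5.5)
Your route is the paper's: compare $V_{n;k}^{(n)}(u;\Phi_n)$ with $V_{n-1;k}^{(n)}(u;\Phi_{n-1})$ block by block, invert the denominator of \eqref{Exp:Vk12} by a Schur complement on the index $n$ with pivot $-u_n+O(u_n^{1-\varepsilon})$, and control products of a column-$n$ entry with a row-$n$ entry of $\Phi_n(u_n)$ through pairings in which $u_n^{\pm\alpha_n}$ cancel. The step that fails as stated is ``the $(\hat n\hat n)$-part is handled by continuity.'' On this block the outer conjugation is $u_n^{\operatorname{ad}[A]^q}$, with $A:=\Phi_{n-1}^{[n-1]}$. By \eqref{Abs:uPower} and \eqref{Cond:less1}, this conjugation can amplify a deviation by a factor of order $|u_n|^{1-\varepsilon_{n-1}}$, up to logarithms. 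Convergence of $(V_{n;k}^{(n)})_{\hat n\hat n}$ to $V_{n-1;k}^{(n-1)}(u;\Phi_{n-1})$ therefore proves nothing without a rate. Take the row deviation $q^{-\alpha_k}\bigl(\Phi_n(u_n)_{\hat n\hat n}-A\bigr)_{k\widehat{kn}}(q-1)u_k\bigl(u_kI-q^{-1}U_{\widehat{kn}\widehat{kn}}\bigr)^{-1}$, where $\widehat{kn}=\{1,\ldots,n\}\setminus\{k,n\}$. It contains no entry of the $n$-th row or column and is in general only $O(u_n^{-\varepsilon})$, yet after conjugation it is $O(u_n^{1-\varepsilon_{n-1}-\varepsilon})$ up to logarithms. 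The same happens when $A$ is replaced by $\Phi_n(u_n)_{\hat n\hat n}$ in the $(\widehat{kn},\widehat{kn})$-part of the denominator of \eqref{Exp:Vk12}. So $\varepsilon_{n-1}>\tfrac12$ is needed for every $O(u_n^{-\varepsilon})$ error in the $(\hat n\hat n)$-block, not only for the stacked correction terms you single out.

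The repair is what the paper does. Fix $\tfrac12<\varepsilon<\varepsilon_{n-1}$ and prove the three unconjugated comparisons \eqref{Est:RecVCol}, \eqref{Est:RecVRow}, \eqref{Est:RecVkk} with error $O(u_n^{-\varepsilon})$; these survive the conjugation because $1-\varepsilon_{n-1}<\tfrac12<\varepsilon$. Prove the two off-diagonal estimates \eqref{Est:RecVUpper}, \eqref{Est:RecVLower} directly in conjugated form. For this you need the rate for the unconjugated block $Z_{\hat n\hat n}=\Phi_n(u_n)_{\hat n\hat n}=A+O(u_n^{-\varepsilon})$, which Lemma~\ref{Lem:Picard} supplies; the rate for $X_{\hat n\hat n}$ alone is not enough. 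The formulas \eqref{Exp:Vk} are not equivariant under $u_n^{\operatorname{ad}[A]^q}$, which mixes the index $k$ with $\widehat{kn}$, so you cannot simply rewrite $V_{n;k}^{(n)}$ in terms of the $X$'s. Recovering $Z_{\hat n\hat n}-A=u_n^{-\operatorname{ad}[A]^q}(X_{\hat n\hat n}-A)$ from $X_{\hat n\hat n}-A$ would lose another factor $|u_n|^{1-\varepsilon_{n-1}}$. Finally, nothing in this lemma controls $\alpha_n-\lambda_i^{(n-1)}$, so single entries $\Phi_n(u_n)_{jn}$ and $\Phi_n(u_n)_{nj}$ need not be $O(u_n)$. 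Your statements that ``the $n$-row is dominated by $-u_n$'' and that off-diagonal factors are bounded by $u_n^{1-\varepsilon_{n-1}}$ hold only for the paired quantities, as in \eqref{Est:RecPairing}.
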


\begin{prf}
Denote $[A]^q=[\Phi_{n-1}^{[n-1]}]^q$.
By \eqref{Lim:Picard} and the definitions of
	$X_{\hat n n}$ and $X_{n\hat n}$ in the proof of
	Lemma \ref{Lem:RecStep1}, we have
\begin{equation}\label{Est:RecPicardData}
\begin{alignedat}{2}
\Phi_n(u_n)_{\hat n\hat n}
	&=A+O(u_n^{-\varepsilon}), &
\Phi_n(u_n)_{nn}&=O(1), \\
X_{\hat n n}
	:=u_n^{[A]^q-\alpha_nI}\Phi_n(u_n)_{\hat n n}
	&=O(1), &
X_{n\hat n}
	:=\Phi_n(u_n)_{n\hat n}u_n^{\alpha_nI-[A]^q}
	&=O(1).
\end{alignedat}
\end{equation}
Write
\[
	\widehat{kn}:=\{1,\ldots,n\}\setminus\{k,n\}.
\]
Since we choose $\varepsilon$ such that
	$\frac12<\varepsilon<\varepsilon_{n-1}$,
it is enough to prove the following five estimates:
\begin{subequations}\label{Est:RecVEntries}
\begin{align}
V_{n;k}^{(n)}(u;\Phi_n)_{\widehat{kn}k}
&=
V_{n-1;k}^{(n-1)}(u;\Phi_{n-1})_{\hat k k}
+
O(u_n^{-\varepsilon}),
\label{Est:RecVCol}
\\
V_{n;k}^{(n)}(u;\Phi_n)_{k\widehat{kn}}
&=
V_{n-1;k}^{(n-1)}(u;\Phi_{n-1})_{k\hat k}
+
O(u_n^{-\varepsilon}),
\label{Est:RecVRow}
\\
V_{n;k}^{(n)}(u;\Phi_n)_{kk}
&=
V_{n-1;k}^{(n-1)}(u;\Phi_{n-1})_{kk}
+
O(u_n^{-\varepsilon}),
\label{Est:RecVkk}
\\
u_n^{[A]^q-\alpha_nI}
\left(
V_{n-1;k}^{(n-1)}(u;\Phi_{n-1})
\right)^{-1}
E_{kk}
V_{n;k}^{(n)}(u;\Phi_n)_{\hat n n}
&=
O(u_n^{-\varepsilon}),
\label{Est:RecVUpper}
\\
V_{n;k}^{(n)}(u;\Phi_n)_{n\hat n}
E_{kk}
u_n^{\alpha_nI-[A]^q}
&=
O(u_n^{-\varepsilon}).
\label{Est:RecVLower}
\end{align}
\end{subequations}
By the shrinking condition \eqref{Cond:less1}
	and \eqref{Est:RecPicardData},
	for every bounded matrix $C=O(1)$, we have
\begin{subequations}\label{Est:RecPairing}
\begin{align}
u_n^{[A]^q-\alpha_nI}
C\Phi_n(u_n)_{\hat n n}
&=
\left(
u_n^{\mathrm{ad}[A]^q}C
\right)
X_{\hat n n}
=
O(u_n^{1-\varepsilon}),
\label{Est:RecPairingUpper}
\\
\Phi_n(u_n)_{n\hat n}
C u_n^{\alpha_nI-[A]^q}
&=
X_{n\hat n}
\left(
u_n^{\mathrm{ad}[A]^q}C
\right)
=
O(u_n^{1-\varepsilon}),
\label{Est:RecPairingLower}
\\
\Phi_n(u_n)_{n\hat n}
C\Phi_n(u_n)_{\hat n n}
&=
X_{n\hat n}
\left(
u_n^{\mathrm{ad}[A]^q}C
\right)
X_{\hat n n}
=
O(u_n^{1-\varepsilon}),
\label{Est:RecPairingCross}\\
\Phi_n(u_n)_{\hat n n}
\Phi_n(u_n)_{n\hat n}
&=
u_n^{-\operatorname{ad}[A]^q}
\left(X_{\hat n n}X_{n\hat n}\right)
=
O(u_n^{1-\varepsilon}).
\label{Est:RecPairingCross2}
\end{align}
\end{subequations}
We shall repeatedly use \eqref{Est:RecPairing}
	to establish \eqref{Est:RecVEntries}.

Write the denominator in
	\eqref{Exp:Vk12} in the block form
\begin{equation*}
\begin{pmatrix}
B_{\hat n\hat n}&B_{\hat n n}\\
B_{n\hat n}&b_{nn}
\end{pmatrix}
:=
\left(
\begin{array}{c|c}
q^{-\alpha_k}u_k
\left(
I+(q-1)
\Phi_n(u_n)_{\widehat{kn}\widehat{kn}}
\right)
-U_{\widehat{kn}\widehat{kn}}
&
(q-1)q^{-\alpha_k}u_k
\Phi_n(u_n)_{\widehat{kn}n}
\\[4pt]
\hline
(q-1)q^{-\alpha_k}u_k
\Phi_n(u_n)_{n\widehat{kn}}
&
q^{-\alpha_k}u_k
\left(
1+(q-1)\varphi_{nn}(u_n)
\right)
-u_n
\end{array}
\right).
\end{equation*}
Here $B_{\hat n\hat n}$ is indexed by $\widehat{kn}$.
The block inversion formula gives
\begin{align}\label{Def:RecSchur}
S_{nn}
&:=
b_{nn}
-
B_{n\hat n}
B_{\hat n\hat n}^{-1}
B_{\hat n n}\\
\label{Inv:RecVkDen}
\begin{pmatrix}
B_{\hat n\hat n}&B_{\hat n n}\\
B_{n\hat n}&b_{nn}
\end{pmatrix}^{-1}
&=
\begin{pmatrix}
B_{\hat n\hat n}^{-1}
+
B_{\hat n\hat n}^{-1}
B_{\hat n n}
S_{nn}^{-1}
B_{n\hat n}
B_{\hat n\hat n}^{-1}
&
-
B_{\hat n\hat n}^{-1}
B_{\hat n n}
S_{nn}^{-1}
\\[4pt]
-
S_{nn}^{-1}
B_{n\hat n}
B_{\hat n\hat n}^{-1}
&
S_{nn}^{-1}
\end{pmatrix}.
\end{align}
By \eqref{Est:RecPicardData},
\begin{align}\label{Eq:Bhatnhatn}
B_{\hat n\hat n}^{-1}
&=
\left(
q^{-\alpha_k}u_k
\left(
I+(q-1)A_{\widehat{kn}\widehat{kn}}
\right)
-U_{\widehat{kn}\widehat{kn}}
\right)^{-1}
+
O(u_n^{-\varepsilon})
=
O(1).
\end{align}
Hence \eqref{Est:RecPairingCross}
and $b_{nn}=-u_n+O(1)$ give
\begin{equation}\label{Est:RecSchur}
S_{nn}
=
-u_n + O(u_n^{1-\varepsilon}),
\qquad
S_{nn}^{-1}
=
O(u_n^{-1}).
\end{equation}
We first prove \eqref{Est:RecVUpper} and
	\eqref{Est:RecVLower}.

By \eqref{Exp:Vkkhatk}, \eqref{Exp:Vk12},
and \eqref{Inv:RecVkDen},
\begin{align}
V_{n;k}^{(n)}(u;\Phi_n)_{\hat n n}
&=
E_{kk}\Phi_n(u_n)_{\hat n n}
\frac{(q-1)q^{-\alpha_k}u_k}
{u_k-q^{-1}u_n},
\label{Eq:Vhatknn}\\
\label{Vk:Recnk}
V_{n;k}^{(n)}(u;\Phi_n)_{nk}
&=
(q-1)q^{-\alpha_k}u_k
S_{nn}^{-1}
\left(
\Phi_n(u_n)_{nk}
-
B_{n\hat n}
B_{\hat n\hat n}^{-1}
\Phi_n(u_n)_{\widehat{kn}k}
\right).
\end{align}
Since $(u_k-q^{-1}u_n)^{-1}=O(u_n^{-1})$,
	\eqref{Est:RecPairingUpper} gives
	\eqref{Est:RecVUpper}.
Equations \eqref{Exp:Vkhatkk} and \eqref{Vk:Recnk}
allow us to choose a matrix $C$, indexed by $\hat n$, with
\[
C_{\widehat{kn}k}
=
-(q-1)q^{-\alpha_k}u_k
B_{\hat n\hat n}^{-1}
\Phi_n(u_n)_{\widehat{kn}k},
\qquad
C_{kk}=1,
\]
and all its other entries equal to zero, such that
\[
V_{n;k}^{(n)}(u;\Phi_n)_{n\hat n}E_{kk}
=
(q-1)q^{-\alpha_k}u_k
S_{nn}^{-1}
\Phi_n(u_n)_{n\hat n}C.
\]
Since \eqref{Est:RecPicardData} and \eqref{Eq:Bhatnhatn}
show that $C=O(1)$,
	\eqref{Est:RecPairingLower} and \eqref{Est:RecSchur}
yield \eqref{Est:RecVLower}.

By \eqref{Exp:Vkkhatk}, we have
\begin{align*}
V_{n;k}^{(n)}(u;\Phi_n)_{k\widehat{kn}}
&=
q^{-\alpha_k}
\Phi_n(u_n)_{k\widehat{kn}}
\frac{(q-1)u_k}
{u_kI-q^{-1}U_{\widehat{kn}\widehat{kn}}}.
\end{align*}
Thus \eqref{Est:RecPicardData} gives \eqref{Est:RecVRow}.
By \eqref{Exp:Vk12} and \eqref{Inv:RecVkDen}, we have
\begin{align}\nonumber
V_{n;k}^{(n)}(u;\Phi_n)_{\widehat{kn}k}
&=
(q-1)q^{-\alpha_k}u_k
B_{\hat n\hat n}^{-1}
\Phi_n(u_n)_{\widehat{kn}k}
\\
&\quad+
(q-1)q^{-\alpha_k}u_k
B_{\hat n\hat n}^{-1}B_{\hat n n}S_{nn}^{-1}
\left(
B_{n\hat n}B_{\hat n\hat n}^{-1}
\Phi_n(u_n)_{\widehat{kn}k}
-
\Phi_n(u_n)_{nk}
\right).
\label{Eq:Vhatknk}
\end{align}
By \eqref{Est:RecPairingCross2},
\[
B_{\hat n n}B_{n\hat n}
=
O(u_n^{1-\varepsilon}),
\qquad
B_{\hat n n}\Phi_n(u_n)_{nk}
=
O(u_n^{1-\varepsilon}).
\]
Equations \eqref{Eq:Bhatnhatn},
	\eqref{Est:RecPicardData}, and \eqref{Est:RecSchur}
then show that the second term in \eqref{Eq:Vhatknk}
is $O(u_n^{-\varepsilon})$.
Moreover, \eqref{Est:RecPicardData},
	\eqref{Eq:Bhatnhatn}, and \eqref{Exp:Vk12} give
\[
(q-1)q^{-\alpha_k}u_k
B_{\hat n\hat n}^{-1}
\Phi_n(u_n)_{\widehat{kn}k}
=
V_{n-1;k}^{(n-1)}(u;\Phi_{n-1})_{\hat k k}
+
O(u_n^{-\varepsilon}).
\]
This proves \eqref{Est:RecVCol}.

Finally, \eqref{Eq:Vhatknn} and \eqref{Vk:Recnk} give
\begin{align*}
V_{n;k}^{(n)}(u;\Phi_n)_{kn}
V_{n;k}^{(n)}(u;\Phi_n)_{nk}
=
\frac{((q-1)q^{-\alpha_k}u_k)^2}
{u_k-q^{-1}u_n}
\Phi_n(u_n)_{kn}S_{nn}^{-1}
\left(
\Phi_n(u_n)_{nk}
-
B_{n\hat n}B_{\hat n\hat n}^{-1}
\Phi_n(u_n)_{\widehat{kn}k}
\right).
\end{align*}
By \eqref{Est:RecPairingCross2},
\[
\Phi_n(u_n)_{kn}\Phi_n(u_n)_{nk}
=
O(u_n^{1-\varepsilon}),
\qquad
\Phi_n(u_n)_{kn}B_{n\hat n}
=
O(u_n^{1-\varepsilon}).
\]
Together with \eqref{Eq:Bhatnhatn},
	\eqref{Est:RecPicardData}, and \eqref{Est:RecSchur},
these estimates imply
\[
V_{n;k}^{(n)}(u;\Phi_n)_{kn}
V_{n;k}^{(n)}(u;\Phi_n)_{nk}
=
O(u_n^{-1-\varepsilon}).
\]
Equations \eqref{Exp:Vkkk},
\eqref{Est:RecVRow}, and \eqref{Est:RecVCol} now give
\begin{align*}
V_{n;k}^{(n)}(u;\Phi_n)_{kk}
&=
1+
V_{n;k}^{(n)}(u;\Phi_n)_{k\widehat{kn}}
V_{n;k}^{(n)}(u;\Phi_n)_{\widehat{kn}k}
+
V_{n;k}^{(n)}(u;\Phi_n)_{kn}
V_{n;k}^{(n)}(u;\Phi_n)_{nk}
\\
&=
1+
V_{n-1;k}^{(n-1)}(u;\Phi_{n-1})_{k\hat k}
V_{n-1;k}^{(n-1)}(u;\Phi_{n-1})_{\hat k k}
+
O(u_n^{-\varepsilon})
\\
&=
V_{n-1;k}^{(n-1)}(u;\Phi_{n-1})_{kk}
+
O(u_n^{-\varepsilon}).
\end{align*}
Thus \eqref{Est:RecVkk} follows.
\end{prf}

\begin{lem}\label{Lem:Phin}
Suppose that
	$(\Phi_{n-1}(u),W_{n-1}(u);\boldsymbol{\alpha})$
	is a $q$-isomonodromic deformation at level $(n-1,n)$
	and that $\Phi_{n-1}(u)$ satisfies \eqref{PhikInvert}
	and the shrinking condition \eqref{Cond:less1}.
Fix branches of
$[\Phi_{n-1}^{[n-1]}]^q$ and $[\Phi_{n-1}]^q$.
Then there exists a unique $q$-isomonodromic deformation
$(\Phi_n(u),W_n(u);\boldsymbol{\alpha})$,
together with a unique branch $[\Phi_n(u)]^q$, such that
\begin{subequations}\label{Lem:PWnm1}
	\begin{align}
		\label{Lem:Pnm1}
		\lim_{u_n\to\infty}
		u_n^{ \mathrm{ad}[\delta_{n-1}\Phi_{n-1}]^q }
		[\Phi_n(u)]^q & =
		[\Phi_{n-1}(u)]^q, \\
		\label{Lem:Wnm1}
		\lim_{u_n\to\infty}
		u_n^{ [\delta_{n-1}\Phi_{n-1}]^q }
		u_n^{ -[\Phi_n]^q }
		W_n(u) & =
		W_{n-1}(u).
	\end{align}
\end{subequations}
Moreover, the pair
$(\Phi_n(u),W_n(u))$
depends analytically on the initial data
$(\Phi_{n-1}(u),W_{n-1}(u))$.
\end{lem}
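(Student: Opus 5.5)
The plan is to build $(\Phi_n(u),W_n(u))$ in two stages. First we construct it in the $u_n$-direction for each fixed $(u_1,\dots,u_{n-1})$ using Lemma~\ref{Lem:RecStep1}. Then we show that the resulting family also satisfies the $(n,n)$-equations in the directions $u_1,\dots,u_{n-1}$.

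\textbf{Stage one: the $u_n$-direction.} Fix $(u_1,\ldots,u_{n-1})$ on the lattice. Apply Lemma~\ref{Lem:RecStep1} with initial data $(\Phi_{n-1}(u),W_{n-1}(u))$ and the chosen branches. This gives unique $\Phi_n(u),W_n(u)$ and a branch $[\Phi_n(u)]^q$ with three properties: the $T_n$-equations of level $(n,n)$ hold, the spectral relations \eqref{Lem:Recalpha} hold, and the asymptotics \eqref{Lim:adPhin}, \eqref{Lim:adWn} hold. The branch limit \eqref{Lem:Pnm1} is exactly \eqref{Lim:adqPhin}. The conjugation relation $W_n^{-1}\Phi_nW_n=[\Lambda]_q$ follows from \eqref{Lem:DiagW} together with the level-$(n-1,n)$ relation. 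Uniqueness of $(\Phi_n,W_n)$ under \eqref{Lem:PWnm1} also follows from Lemma~\ref{Lem:RecStep1}, applied spiral by spiral. Analytic dependence on the data is inherited from the same lemma.

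\textbf{Stage two: the directions $k\le n-1$.} Define the candidate shifts
\[
\widetilde\Phi:=\mathrm{Ad}\bigl(V_{n;k}^{(n)}(u;\Phi_n)\bigr)\Phi_n,\qquad
\widetilde W:=V_{n;k}^{(n)}(u;\Phi_n)W_n .
\]
We want to show $\widetilde\Phi=T_k\Phi_n$ and $\widetilde W=T_kW_n$. The argument has four steps.

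\begin{enumerate}
\item By the compatibility identity of Corollary~\ref{CompCond} (applied in rank $n$ with $i=k$, $j=n$), the pair $(\widetilde\Phi,\widetilde W)$ again satisfies the $T_n$-equation in $u_n$. The spectral relations are preserved by Theorem~\ref{Thm:qiso}.
\item Compute the asymptotics of $(\widetilde\Phi,\widetilde W)$ as $u_n\to\infty$. By Lemma~\ref{Lem:RecImprotant},
\[
u_n^{\mathrm{ad}[\delta_{n-1}\Phi_{n-1}]^q}V_{n;k}^{(n)}(u;\Phi_n)\;\to\;V_{n-1;k}^{(n)}(u;\Phi_{n-1}).
\]
Hence
\[
u_n^{\mathrm{ad}[\delta_{n-1}\Phi_{n-1}]^q}\widetilde\Phi\to \mathrm{Ad}\bigl(V_{n-1;k}^{(n)}\bigr)\Phi_{n-1}=T_k\Phi_{n-1},
\]
using the level-$(n-1,n)$ equations. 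For $W$ one obtains analogously the limit $V_{n-1;k}^{(n)}W_{n-1}=T_kW_{n-1}$. Here one must check that $[\delta_{n-1}\Phi_{n-1}]^q$ is invariant under $T_k$, which holds because $V_{n-1;k}^{(n)}$ acts only on the first $n-1$ block and preserves the spectrum of $\Phi_{n-1}^{[n-1]}$ and $\alpha_n$.
\item The shifted family $(T_k\Phi_n,T_kW_n)$ is the Lemma~\ref{Lem:RecStep1} solution with data $(T_k\Phi_{n-1},T_kW_{n-1})$. By the uniqueness in Lemma~\ref{Lem:RecStep1}, the two solutions coincide.
\item Lemma~\ref{Lem:RecImprotant} requires $\varepsilon_{n-1}>\tfrac12$. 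Remove this restriction by analytic continuation: both sides depend analytically on $[\Phi_{n-1}^{[n-1]}]^q$, so the identity extends from the subset where the condition holds to the whole shrinking domain. Alternatively, one can first restrict to a subspiral $u_n\in u_n^0q^{N\mathbb Z}$, which effectively improves the decay.
\end{enumerate}

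\textbf{Main obstacle.} The delicate step is the asymptotic matching in step 2. One must show that the product $u_n^{\mathrm{ad}}(\cdot)$ applied to $\mathrm{Ad}(V)\Phi_n$ converges, which requires the error terms in Lemma~\ref{Lem:RecImprotant} to beat the possible growth $u_n^{1-\varepsilon}$ of the off-diagonal blocks. The plan is to factor the conjugated expression as
\[
\Bigl(u_n^{\mathrm{ad}}\bigl(V_{n-1;k}^{-1}V_{n;k}\bigr)\Bigr)\cdot\Bigl(u_n^{\mathrm{ad}}V_{n-1;k}\Bigr)\cdot\Bigl(u_n^{\mathrm{ad}}\Phi_n\Bigr).
\]
Since $V_{n-1;k}^{(n)}$ commutes with $[\delta_{n-1}\Phi_{n-1}]^q$ up to the block structure, the middle factor is harmless. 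The remaining gap is the justification of the $\varepsilon_{n-1}>\tfrac12$ reduction via analyticity: it requires the analytic dependence statement of Lemma~\ref{Lem:Picard} to hold uniformly, and I expect that to need the most care.
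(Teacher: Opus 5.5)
Your skeleton is the same as the paper's. You use Corollary~\ref{CompCond} to show that $\widetilde\Phi$ and $T_k\Phi_n$ satisfy the same $T_n$-equation (with coefficients evaluated at $T_ku$), Lemma~\ref{Lem:RecImprotant} for the asymptotic matching, uniqueness from Lemmas~\ref{Lem:Picard} and~\ref{Lem:RecStep1}, and analytic continuation in $\Phi_{n-1}$ to remove $\varepsilon_{n-1}>\frac12$.

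\textbf{The gap is in step~2}, which you correctly identify as the crux.
\begin{itemize}
\item The matrix $[\delta_{n-1}\Phi_{n-1}]^q$ is \emph{not} $T_k$-invariant. The shift $T_k\Phi_{n-1}=\mathrm{Ad}\bigl(V_{n-1;k}^{(n)}\bigr)\Phi_{n-1}$ is a genuine rank-$(n-1)$ isomonodromic shift of the upper-left block. It preserves the spectrum but not the matrix. So $V_{n-1;k}^{(n)}$ does not commute with $[\delta_{n-1}\Phi_{n-1}]^q$; it conjugates it to $[\delta_{n-1}T_k\Phi_{n-1}]^q$.
\item As a result you have misread Lemma~\ref{Lem:RecImprotant}. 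With $B:=[\delta_{n-1}\Phi_{n-1}]^q$, it asserts $u_n^{\mathrm{ad}B}\bigl((V_{n-1;k}^{(n)})^{-1}V_{n;k}^{(n)}\bigr)\to I$. That is not the same as $u_n^{\mathrm{ad}B}V_{n;k}^{(n)}\to V_{n-1;k}^{(n)}$.
\item The latter fails in general. The matrix $u_n^{\mathrm{ad}B}V_{n-1;k}^{(n)}$ has spectral components of size $u_n^{\lambda_i^{(n-1)}-\lambda_j^{(n-1)}}$ and no limit. So your ``middle factor'' is not harmless, and your claimed limit $u_n^{\mathrm{ad}B}\widetilde\Phi\to T_k\Phi_{n-1}$ does not hold.
\item It is also the wrong normalization. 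The Lemma~\ref{Lem:RecStep1} solution with data $T_k\Phi_{n-1}$ is characterized by conjugation with $u_n^{[\delta_{n-1}T_k\Phi_{n-1}]^q}$, not with $u_n^{B}$.
\end{itemize}

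\textbf{The missing observation} is that
\[
u_n^{[\delta_{n-1}T_k\Phi_{n-1}]^q}=V_{n-1;k}^{(n)}\,u_n^{B}\,(V_{n-1;k}^{(n)})^{-1},
\]
with the branch at $T_ku$ transported by this conjugation. This lets you pull the $u_n$-independent matrix $V_{n-1;k}^{(n)}$ outside the $u_n$-dependent conjugation. With $M:=(V_{n-1;k}^{(n)})^{-1}V_{n;k}^{(n)}$, one gets
\[
\mathrm{Ad}\bigl(u_n^{[\delta_{n-1}T_k\Phi_{n-1}]^q}\bigr)\widetilde\Phi
=\mathrm{Ad}\bigl(V_{n-1;k}^{(n)}\bigr)\,\mathrm{Ad}\bigl(u_n^{\mathrm{ad}B}M\bigr)\bigl(u_n^{\mathrm{ad}B}\Phi_n\bigr)
\longrightarrow\mathrm{Ad}\bigl(V_{n-1;k}^{(n)}\bigr)\Phi_{n-1}=T_k\Phi_{n-1}.
\]
This is exactly the paper's pair of equivalent conditions \eqref{Lim:TkPhin} and \eqref{Lim:ConjTkPhin}.

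For $W$, choose the branch $[\widetilde\Phi]^q:=\mathrm{Ad}\bigl(V_{n;k}^{(n)}\bigr)[\Phi_n]^q$. Then
\[
u_n^{[\delta_{n-1}T_k\Phi_{n-1}]^q}u_n^{-[\widetilde\Phi]^q}\widetilde W=V_{n-1;k}^{(n)}\bigl(u_n^{\mathrm{ad}B}M\bigr)\bigl(u_n^{B}u_n^{-[\Phi_n]^q}W_n\bigr)\longrightarrow V_{n-1;k}^{(n)}W_{n-1}=T_kW_{n-1},
\]
and uniqueness in Lemma~\ref{Lem:RecStep1} applies. With this correction, your steps~3 and~4 go through as in the paper.

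Finally, drop the subspiral alternative in step~4. By \eqref{Abs:uPower}, $|u_n^{a}|$ on $u_n^0q^{N\mathbb Z}$ grows with the same exponent as on $u_n^0q^{\mathbb Z}$. The decay rates, and hence the need for $\varepsilon_{n-1}>\frac12$ in Lemma~\ref{Lem:RecImprotant}, are unchanged. Only the analytic-continuation argument removes that restriction.
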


\begin{prf}
Take $(\Phi_{n-1}(u), W_{n-1}(u); \boldsymbol{\alpha})$
	as a $q$-isomonodromic deformation at level $(n-1,n)$.
Let $(\Phi_n(u), W_n(u); \boldsymbol{\alpha})$
	denote the triple obtained from the initial data
	$\Phi_{n-1}(u)$ and $W_{n-1}(u)$ by the above construction.
	
We aim to prove that $T_k\Phi_n(u)$ and
$V_{n;k}^{(n)}(u;\Phi_n(u))\Phi_n(u)
 V_{n;k}^{(n)}(u;\Phi_n(u))^{-1}$
coincide. First, Corollary~\ref{CompCond} shows that both matrices
satisfy the same $q$-difference equation:
\begin{align}\label{Eq:eqTk}
	T_n X(u)
	&=
	V_{n;n}^{(n)}(T_ku;X(u)) \cdot X(u) \cdot
	V_{n;n}^{(n)}(T_ku;X(u))^{-1}.
\end{align}
Lemma~\ref{Lem:Picard} further guarantees that
\eqref{Eq:eqTk} admits a unique solution
$X(u_n)=T_k\Phi_n(u_n)$ satisfying
\begin{align}
	\lim_{u_n\to\infty}\mathrm{Ad}
	\left(
		u_n^{[\delta_{n-1}T_k\Phi_{n-1}]^q}
	\right)X(u_n)
	&=T_k\Phi_{n-1},
	\label{Lim:TkPhin}\\
	\lim_{u_n\to\infty}\mathrm{Ad}
	\left(
		u_n^{[\delta_{n-1}\Phi_{n-1}]^q}
		V_{n-1;k}^{(n)}(u;\Phi_{n-1})^{-1}
	\right)X(u_n)
	&=\Phi_{n-1}.
	\label{Lim:ConjTkPhin}
\end{align}
By Lemma~\ref{Lem:RecImprotant}, specifically
\eqref{Lim:RecImportant}, together with \eqref{Lim:adPhin}, the matrix
$V_{n;k}^{(n)}(u;\Phi_n(u))\Phi_n(u)
 V_{n;k}^{(n)}(u;\Phi_n(u))^{-1}$
satisfies \eqref{Lim:ConjTkPhin}, and hence also
\eqref{Lim:TkPhin}, when
$\varepsilon_{n-1}>\frac{1}{2}$.
Since it also satisfies \eqref{Eq:eqTk}, the uniqueness in
Lemma~\ref{Lem:Picard} gives
\begin{align}\label{Prf:TkP}
	T_k\Phi_n(u)
	&=
	V_{n;k}^{(n)}(u;\Phi_n(u))\Phi_n(u)
	V_{n;k}^{(n)}(u;\Phi_n(u))^{-1}.
\end{align}
By Lemma~\ref{Lem:RecStep1}, both sides of \eqref{Prf:TkP}
depend analytically on $\Phi_{n-1}$. Hence, by analytic continuation
in $\Phi_{n-1}$, the identity \eqref{Prf:TkP} holds under the original
condition $\varepsilon_{n-1}>0$.

Corollary~\ref{CompCond}, together with \eqref{Prf:TkP} and
\eqref{PicardWn}, shows that $T_kW_n(u)$ and
$V_{n;k}^{(n)}(u;\Phi_n(u))W_n(u)$ satisfy
the same $q$-difference equation:
\begin{align*}
	T_nY(u)
	&=
	V_{n;n}^{(n)}(T_ku;T_k\Phi_n(u)) \cdot Y(u).
\end{align*}
Moreover, \eqref{Lim:RecImportant} and \eqref{Lim:adWn} show that
both matrices satisfy
\begin{align*}
	\lim_{u_n\to\infty}
	u_n^{[\delta_{n-1}T_k\Phi_{n-1}]^q}
	u_n^{-[T_k\Phi_n]^q}Y(u)
	&=
	T_kW_{n-1}(u)
\end{align*}
when $\varepsilon_{n-1}>\frac{1}{2}$.
The uniqueness in Lemma~\ref{Lem:RecStep1} therefore gives
\begin{align}\label{Prf:TkW}
	T_kW_n(u)
	&=
	V_{n;k}^{(n)}(u;\Phi_n(u)) \cdot W_n(u).
\end{align}
By Lemma~\ref{Lem:RecStep1}, both sides of \eqref{Prf:TkW}
depend analytically on $\Phi_{n-1}$. Hence analytic continuation
in $\Phi_{n-1}$ extends \eqref{Prf:TkW} to
$\varepsilon_{n-1}>0$.

Together with \eqref{Lem:DiagW} and \eqref{Lem:Recalpha},
equations \eqref{Prf:TkP} and \eqref{Prf:TkW} prove that
$(\Phi_n(u),W_n(u);\boldsymbol{\alpha})$
is a $q$-isomonodromic deformation.
Equations \eqref{Lim:adqPhin} and \eqref{Lim:adWn}
give \eqref{Lem:PWnm1}.
The analytic dependence follows directly from
Lemma~\ref{Lem:RecStep1}.
\end{prf}

\begin{cor}\label{Cor:Reck}
Suppose that
$(\Phi_{k-1}(u),W_{k-1}(u);\boldsymbol{\alpha})$
is a $q$-isomonodromic deformation at level $(k-1,n)$
and that $\Phi_{k-1}(u)$ satisfies \eqref{PhikInvert}
and the shrinking condition \eqref{Cond:less1}.
Fix branches of
$[\Phi_{k-1}^{[k-1]}]^q$ and
$[\Phi_{k-1}^{[k]}]^q$.
Then there exists a unique $q$-isomonodromic deformation
$(\Phi_k(u),W_k(u);\boldsymbol{\alpha})$
at level $(k,n)$, together with a unique branch
$[\Phi_k(u)^{[k]}]^q$, such that
\begin{subequations}\label{Lem:PWkm1}
\begin{align}
\label{Lem:Pkm0}
\lim_{u_k\to\infty}
u_k^{\mathrm{ad}[\delta_{k-1}\Phi_{k-1}]^q}
[\delta_k\Phi_k(u)]^q
&=
[\delta_k\Phi_{k-1}(u)]^q,\\
\label{Lem:Pkm1}
\lim_{u_k\to\infty}
u_k^{\mathrm{ad}[\delta_{k-1}\Phi_{k-1}]^q}
u_k^{-\mathrm{ad}[\delta_k\Phi_k]^q}
\Phi_k(u)
&=
\Phi_{k-1}(u),\\
\label{Lem:Wkm1}
\lim_{u_k\to\infty}
u_k^{[\delta_{k-1}\Phi_{k-1}]^q}
u_k^{-[\delta_k\Phi_k]^q}
W_k(u)
&=
W_{k-1}(u).
\end{align}
\end{subequations}
Moreover, the pair
$(\Phi_k(u),W_k(u))$
depends analytically on the initial data
$(\Phi_{k-1}(u),W_{k-1}(u))$.
\end{cor}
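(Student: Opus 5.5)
The plan is to reduce Corollary~\ref{Cor:Reck} to the case $k=n$ already handled in Lemma~\ref{Lem:Phin}, by working inside the upper-left $k\times k$ block. The lower-right $(n-k)$ rows and columns are inert under the $(k,n)$-equations, since $V^{(n)}_{k;j}$ in \eqref{Def:Vqiso} is block diagonal with identity lower-right block.

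First, restrict to the upper-left block. The matrix $\Phi_{k-1}^{[k]}$ (in the variables $u_1,\ldots,u_k$) is a $q$-isomonodromic deformation at level $(k-1,k)$ for the rank-$k$ problem. Its formal exponents are $\alpha_1,\ldots,\alpha_{k-1}$ from \eqref{Rel:kneq}, and $\alpha_k$ is given by the determinant ratio. For $W$, use the rank-$k$ data as follows: apply Lemma~\ref{Lem:Phin} with $n$ replaced by $k$ to $\Phi_{k-1}^{[k]}$ and an auxiliary $k\times k$ diagonalizer (for instance the identity). This produces $\Phi_k^{[k]}(u)$, which is unique and analytic in the initial data, together with the branch $[\Phi_k^{[k]}]^q$. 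Here \eqref{PhikInvert} and the shrinking condition \eqref{Cond:less1} are used exactly as stated there. It also produces a rank-$k$ gauge matrix $G_k(u)$ satisfying $T_jG_k=q^{-\boldsymbol{\alpha}^{[k]}E_j}V_j(\cdots)G_k$ and
\[
\lim_{u_k\to\infty}u_k^{[\delta_{k-1}\Phi_{k-1}^{[k]}]^q}u_k^{-[\Phi_k^{[k]}]^q}G_k=I.
\]

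Second, lift to $n\times n$ matrices. Set $\mathcal G_k=\mathrm{diag}(G_k,I_{n-k})$ and define
\[
\Phi_k=\mathcal G_k\,\mathcal G_{k-1}^{-1}\Phi_{k-1}\mathcal G_{k-1}\mathcal G_k^{-1}\quad(\text{suitably}),
\]
or more simply $\Phi_k:=\mathcal G_k\Phi_{k-1}\mathcal G_k^{-1}$ and $W_k:=\mathcal G_kW_{k-1}$. This is the precise point to check. The requirement is that $T_j\mathcal G_k=V^{(n)}_{k;j}\mathcal G_k\,(V^{(n)}_{k-1;j})^{-1}$ for $j<k$, which expresses that $\mathcal G_k$ intertwines the two levels; it follows from the compatibility argument of Lemma~\ref{Lem:Phin}. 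For $j=k$ the requirement is $T_k\mathcal G_k=V^{(n)}_{k;k}\mathcal G_k$, which holds because $\Phi_{k-1}$ does not depend on $u_k$.

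With these identities, \eqref{qkjiso} for $j\le k$ and $W_k^{-1}\Phi_kW_k=[\Lambda]_q$ follow directly. The spectral relation in \eqref{Rel:kneq} for the upper block comes from \eqref{Lem:Recalpha}. For $s>k$, the relation $q^{\alpha_s}=\det(I+(q-1)\Phi_k^{[s]})/\det(I+(q-1)\Phi_k^{[s-1]})$ holds because conjugating by a block-diagonal matrix whose nontrivial block sits in $[k]\subseteq[s-1]$ preserves the determinants of the leading minors of order $\ge k$.

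Third, the limits. \eqref{Lem:Pkm0} is \eqref{Lem:Pnm1} applied to the upper block, with the lower diagonal of $\delta_k$ unchanged because the minors of order $\ge k$ are preserved. \eqref{Lem:Pkm1} and \eqref{Lem:Wkm1} follow from the limit for $G_k$, since $[\delta_k\Phi_k]^q=\mathrm{diag}([\Phi_k^{[k]}]^q,\alpha_{k+1},\ldots,\alpha_n)$ commutes past the inert block. Uniqueness and analytic dependence reduce to the corresponding statements in Lemma~\ref{Lem:Phin} and Lemma~\ref{Lem:RecStep1}, with the lower rows and columns determined by the gauge relation.

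The main obstacle is the intertwining identity for $j<k$. To establish it, I would first rerun the argument of Lemma~\ref{Lem:Phin}: both sides of $T_j\mathcal G_k$ satisfy the same $T_k$-equation by Corollary~\ref{CompCond}, and they have the same asymptotics by Lemma~\ref{Lem:RecImprotant} (with $n\to k$) once $\varepsilon_{k-1}>\tfrac12$. Analytic continuation in the initial data then removes that restriction, exactly as in the proof of Lemma~\ref{Lem:Phin}.
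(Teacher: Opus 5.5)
Your architecture is the paper's. You reduce to rank $k$ via Lemma~\ref{Lem:Phin}, lift by a block-diagonal gauge $\mathcal G_k=\operatorname{diag}(G_k,I_{n-k})$, set $W_k=\mathcal G_kW_{k-1}$ and $\Phi_k=\mathcal G_k\Phi_{k-1}\mathcal G_k^{-1}$, and use that this conjugation preserves the leading minors of order $\geqslant k$. Your $G_k$, the normalized solution of the $u_k$-equation with limit $I$, is exactly the paper's gauge $W_k^{(k)}(W_{k-1}^{(k)})^{-1}$; the paper notes this product is independent of the choice of $W_{k-1}^{(k)}$. Your verification of \eqref{Lem:PWkm1} and of the determinant ratios for $s>k$ is therefore the paper's.

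The real difference is how you get $T_j\mathcal G_k=V^{(n)}_{k;j}\mathcal G_k(V^{(n)}_{k-1;j})^{-1}$ for $j<k$. The paper builds a genuine level-$(k-1,k)$ deformation $(\Phi_{k-1}^{[k]},W_{k-1}^{(k)};\boldsymbol\alpha^{[k]})$ and feeds it into Lemma~\ref{Lem:Phin}. The output $W_k^{(k)}$ then satisfies the $T_j$-equations for every $j\leqslant k$, so the intertwining is immediate, and uniqueness is a short $q$-constant argument on $(W^{(n)}_{k;k})^{-1}W_k$. You instead rerun the proof of Lemma~\ref{Lem:Phin}. By Corollary~\ref{CompCond}, both $T_j\mathcal G_k$ and $V^{(n)}_{k;j}\mathcal G_k(V^{(n)}_{k-1;j})^{-1}$ solve $T_kY=V^{(n)}_{k;k}(T_ju;T_j\Phi_k)Y$. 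Using $[T_j\Phi_k]^q=\operatorname{Ad}(V^{(n)}_{k;j})[\Phi_k]^q$ and Lemma~\ref{Lem:RecImprotant}, both have normalized limit $I$. Uniqueness in Lemma~\ref{Lem:RecStep1} plus analytic continuation from $\varepsilon_{k-1}>\frac12$ then finish. This is valid. It duplicates work already inside Lemma~\ref{Lem:Phin}, but it spares you constructing $W^{(k)}_{k-1}$, which the paper leaves implicit.

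Three points need repair.

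First, the identity is not an admissible input to Lemma~\ref{Lem:Phin}. That lemma needs a level-$(k-1,k)$ deformation, and $I$ neither satisfies $T_jI=V^{(k)}_{k-1;j}I$ nor conjugates $\Phi_{k-1}^{[k]}$ to a constant Jordan form. Read literally for all $j\leqslant k$, your claimed property of $G_k$ is impossible in general: the computation above shows the normalized limit must transform by $V^{(k)}_{k-1;j}$, which is exactly why your intertwining identity carries the factor $(V^{(n)}_{k-1;j})^{-1}$. Instead, define $G_k$ by Lemma~\ref{Lem:RecStep1} with $W_{n-1}=I$, which is permitted there fiberwise in $u_1,\ldots,u_{k-1}$. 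Take $T_j\Phi^{[k]}_k=\operatorname{Ad}(V_{k;j})\Phi^{[k]}_k$ from the $\Phi$-half of the proof of Lemma~\ref{Lem:Phin}, which never uses $W$; you need this before Corollary~\ref{CompCond} yields the common $u_k$-equation.

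Second, state explicitly that $G_k\Phi_{k-1}^{[k]}G_k^{-1}=\Phi_k^{[k]}$, which is \eqref{Lem:DiagW} with $W_{n-1}=I$. Without it, the $V^{(n)}_{k;j}$ in your intertwining identity is not evaluated at the upper block of your lifted $\Phi_k$, and \eqref{qkjiso} does not follow.

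Third, $T_k\mathcal G_k=V^{(n)}_{k;k}\mathcal G_k$ holds by construction of $G_k$. The $u_k$-independence of $(\Phi_{k-1},W_{k-1})$ is what turns it into the $j=k$ equations for $\Phi_k$ and $W_k$.
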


\begin{prf}
Let
$(\Phi_{k-1}(u),W_{k-1}(u);\boldsymbol{\alpha})$
be a $q$-isomonodromic deformation at level $(k-1,n)$.
From this deformation, we construct
$W_{k-1}^{(k)}(u)$ such that
\[
	(\Phi_{k-1}(u)^{[k]},W_{k-1}^{(k)}(u);
	\boldsymbol{\alpha}^{[k]})
\]
is a $q$-isomonodromic deformation at level $(k-1,k)$.
Applying Lemma~\ref{Lem:Phin} to this deformation with the fixed
branches $[\Phi_{k-1}^{[k-1]}]^q$ and
$[\Phi_{k-1}^{[k]}]^q$, we obtain a
$q$-isomonodromic deformation at level $(k,k)$, which we denote by
\[
	(\Phi_k(u)^{[k]},W_k^{(k)}(u);
	\boldsymbol{\alpha}^{[k]}),
\]
together with a unique branch $[\Phi_k(u)^{[k]}]^q$.
Equation \eqref{Lem:Pnm1},
applied at rank $k$, gives \eqref{Lem:Pkm0}.
It remains to extend this deformation to a
$q$-isomonodromic deformation at level $(k,n)$
satisfying \eqref{Lem:Pkm1} and \eqref{Lem:Wkm1}.
By Lemma~\ref{Lem:Phin}, the matrix $\Phi_k(u)^{[k]}$ is uniquely
determined by $\Phi_{k-1}(u)^{[k]}$. Since
\[
	V_{k;j}^{(n)}(u;\Phi_k(u))
	=
	V_{k;j}^{(n)}(u;\Phi_k(u)^{[k]}),
\]
the coefficient matrices in \eqref{qkjiso}
are uniquely determined.
Hence these equations are fixed.

Denote $W_{k;\ast}^{(n)}(u) :=
	\begin{pmatrix}
		W_\ast^{(k)}(u) & 0 \\
		0 & I_{n-k}
	\end{pmatrix}$.
Suppose that there exists a matrix $W_k(u)$ satisfying
\eqref{qkjisoW} and \eqref{Lem:Wkm1}.
Since $W_{k;k}^{(n)}(u)$ also satisfies \eqref{qkjisoW},
	$W_{k;k}^{(n)}(u)^{-1}W_k(u)$
	is a $q$-constant with respect to $u_1,\ldots,u_k$.
By Lemma~\ref{Lem:Phin}, specifically \eqref{Lem:Wnm1}
applied at rank $k$, together with \eqref{Lem:Wkm1},
\begin{align*}
	\lim_{u_k\to\infty}
	W_{k;k}^{(n)}(u)^{-1}W_k(u)
	=
	W_{k;k-1}^{(n)}(u)^{-1}W_{k-1}(u).
\end{align*}
It follows that
\begin{equation}\label{Eq:WkExtension}
	W_k(u)=
	W_{k;k}^{(n)}(u)
	W_{k;k-1}^{(n)}(u)^{-1}
	W_{k-1}(u).
\end{equation}
A direct verification shows that the matrix $W_k(u)$ defined by
\eqref{Eq:WkExtension} satisfies
\eqref{qkjisoW} and \eqref{Lem:Wkm1}.
Moreover,
\begin{align*}
W_{k;k}^{(n)}(u)W_{k;k-1}^{(n)}(u)^{-1}
\end{align*}
is independent of the choice of $W_{k;k-1}^{(n)}(u)$.
Hence such a matrix $W_k(u)$ exists and is unique.

Set
\begin{align}\nonumber
\widetilde{\Phi}_k(u)
&=
	W_k(u)W_{k-1}(u)^{-1}\Phi_{k-1}(u)
	W_{k-1}(u)W_k(u)^{-1} \\
&=
	W_{k;k}^{(n)}(u)
	W_{k;k-1}^{(n)}(u)^{-1}\Phi_{k-1}(u)
	W_{k;k-1}^{(n)}(u)
	W_{k;k}^{(n)}(u)^{-1}.
\label{Eq:tPisP}
\end{align}
If a matrix $\Phi_k(u)$ satisfying the required conditions exists,
then it must coincide with $\widetilde{\Phi}_k(u)$.
The second equality directly shows that
$\widetilde{\Phi}_k(u)^{[k]}=\Phi_k(u)^{[k]}$.
Hence such a matrix $\Phi_k(u)$ indeed exists and is unique.

We now take $\Phi_k(u)=\widetilde{\Phi}_k(u)$.
Note that \eqref{Eq:tPisP} also guarantees
	\begin{align*}
		q^{\alpha_s} & =
    	\frac{
        	\mathrm{det}( I + (q-1)\Phi_{k-1}(u)^{[s]} )
    	}{
        	\mathrm{det}( I + (q-1)\Phi_{k-1}(u)^{[s-1]} )
    	} =
    	\frac{
        	\mathrm{det}( I + (q-1)\Phi_k(u)^{[s]} )
    	}{
        	\mathrm{det}( I + (q-1)\Phi_k(u)^{[s-1]} )
    	},\quad
    	s=k+1,\ldots,n.
	\end{align*}
We thus obtain a unique $q$-isomonodromic deformation
	$(\Phi_k(u),W_k(u);\boldsymbol{\alpha})$
	at level $(k,n)$
that satisfies \eqref{Lem:PWkm1}.
The analytic dependence follows directly from
Lemma~\ref{Lem:Phin}.
\end{prf}

At this point, starting from a $q$-isomonodromic deformation at level $(0, n)$, that is, a constant matrix triple $(\Phi_0, W_0; \boldsymbol{\alpha})$,
we can uniquely construct a $q$-isomonodromic deformation
$(\Phi(u), W(u); \boldsymbol{\alpha})$
with prescribed asymptotic behavior.
This is stated in the following theorem.

\begin{thm}\label{Thm:AsymBehavior}
Suppose that
	$\Phi_0\in\mathrm{Mat}_{n\times n}(\mathbb C)$ and
	$W_0\in\mathrm{GL}_n(\mathbb C)$.
For each $1\leqslant k\leqslant n$,
	assume that
\begin{align}\label{Thm:Cond1}
I+(q-1)\Phi_0^{[k]} \text{ is invertible.}
\end{align}
For each $1\leqslant k\leqslant n$,
fix a branch of $[\Phi_0^{[k]}]^q$ and write
\[
	\mathrm{Spec}\bigl([\Phi_0^{[k]}]^q\bigr)
	=
	\{
		\lambda_1^{(k)},\ldots,\lambda_k^{(k)}
	\},
\]
and assume further that there exists
$0<\varepsilon_{k-1}<1$
such that the following
shrinking condition holds:
\begin{align}\label{Cond:Shrinking}
	\underset{1\leqslant i,j\leqslant k-1}{\mathrm{max}}
	\left|
	\mathrm{Re}\bigl(\lambda_i^{(k-1)}-\lambda_j^{(k-1)}\bigr)
	-
	\frac{\mathrm{arg}q}{\mathrm{ln}|q|}
	\mathrm{Im}\bigl(\lambda_i^{(k-1)}-\lambda_j^{(k-1)}\bigr)
	\right|
	<
	1-\varepsilon_{k-1}.
\end{align}
Choose
$\boldsymbol{\alpha}
=\operatorname{diag}(\alpha_1,\ldots,\alpha_n)$
such that
\begin{align}\label{Thm:defalpha}
	q^{\alpha_k}
	&=
	\frac{
		\det\bigl(I+(q-1)\Phi_0^{[k]}\bigr)
	}{
		\det\bigl(I+(q-1)\Phi_0^{[k-1]}\bigr)
	},
	\qquad
	k=1,\ldots,n.
\end{align}
Then there exists a unique $q$-isomonodromic deformation
$(\Phi(u),W(u);\boldsymbol{\alpha})$,
together with functions
\[
	([\Phi_k(u)]^q,
	[\Phi_k(u)^{[n-1]}]^q,
	\ldots,
	[\Phi_k(u)^{[k]}]^q,
	W_k(u)),
	\qquad
	k=1,\ldots,n,
\]
such that for each $1\leqslant k \leqslant j\leqslant n$
\begin{subequations}\label{Thm:Asymptotics}
\begin{align}
\lim_{u_2\to\infty}\cdots\lim_{u_k\to\infty}
	u_1^{\operatorname{ad}\boldsymbol{\alpha}}
	\left(
	\mathop{\prod}\limits_{s=1}^{\overset{\longrightarrow}{k-1}}
	\left(
		\frac{u_{s+1}}{u_s}
	\right)^{
		\operatorname{ad}[\delta_s\Phi_s(u)]^q
	}
	\right)
	[\delta_k\Phi_k(u)]^q
	&=
	[\delta_k\Phi_0]^q,
	\\
\lim_{u_2\to\infty}\cdots\lim_{u_k\to\infty}
	u_1^{\operatorname{ad}\boldsymbol{\alpha}}
	\left(
	\mathop{\prod}\limits_{s=1}^{\overset{\longrightarrow}{k-1}}
	\left(
		\frac{u_{s+1}}{u_s}
	\right)^{
		\operatorname{ad}[\delta_s\Phi_s(u)]^q
	}
	\right)
	u_k^{-\operatorname{ad}[\delta_k\Phi_k(u)]^q}
	[\delta_j\Phi_k(u)]^q
	&=
	[\delta_j\Phi_0]^q,
	\\
\lim_{u_2\to\infty}\cdots\lim_{u_k\to\infty}
	u_1^{\boldsymbol{\alpha}}
	\left(
	\mathop{\prod}\limits_{s=1}^{\overset{\longrightarrow}{k-1}}
	\left(
		\frac{u_{s+1}}{u_s}
	\right)^{
		[\delta_s\Phi_s(u)]^q
	}
	\right)
	u_k^{-[\delta_k\Phi_k(u)]^q}
	W_k(u)
	&=
	W_0,
\end{align}
and
\begin{align}
	(\Phi(u),W(u))
	&=
	(\Phi_n(u),W_n(u)),\\
	W(u)^{-1} \cdot [\Phi(u)]^q \cdot W(u)
	&=
	W_0^{-1} \cdot [\Phi_0]^q \cdot W_0.
\end{align}
\end{subequations}
Here, for each $0\leqslant k \leqslant j \leqslant n$,
\[
	[\delta_j\Phi_k]^q
	:=
	\begin{pmatrix}
		[\Phi_k^{[j]}]^q & 0\\
		0 &
		\operatorname{diag}
		(\alpha_{j+1},\ldots,\alpha_n)
	\end{pmatrix}.
\]
Moreover, the pair
$(\Phi(u),W(u))$
depends analytically on the initial data
$(\Phi_0,W_0)$.
\end{thm}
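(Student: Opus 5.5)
The proof will be an induction on the level $k$, using Corollary~\ref{Cor:Reck} as the inductive step and starting from the constant level-$(0,n)$ triple $(\Phi_0,W_0;\boldsymbol{\alpha})$. First I would check that this constant triple is a $q$-isomonodromic deformation at level $(0,n)$. At this level there are no shift equations \eqref{qkjiso} to verify. The relation $W_0^{-1}\Phi_0W_0=[\Lambda]_q$ defines $\Lambda$ through the fixed branch $[\Phi_0]^q$. The determinant relations in \eqref{Rel:kneq} for $s=1,\ldots,n$ are exactly the definition \eqref{Thm:defalpha} of $\boldsymbol{\alpha}$. Hypothesis \eqref{Thm:Cond1} gives \eqref{PhikInvert} at every level, and \eqref{Cond:Shrinking} gives the shrinking condition \eqref{Cond:less1}.

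For the inductive step, suppose $(\Phi_{k-1}(u),W_{k-1}(u);\boldsymbol{\alpha})$ has been built at level $(k-1,n)$. Corollary~\ref{Cor:Reck} then produces a unique level-$(k,n)$ deformation $(\Phi_k,W_k)$ satisfying \eqref{Lem:Pkm0}--\eqref{Lem:Wkm1}. To apply it I need \eqref{PhikInvert} and \eqref{Cond:less1} for $\Phi_{k-1}(u)$ itself, not just for $\Phi_0$. I would obtain both from invariance along the flow. The spectrum of $\Phi_{k-1}(u)^{[k-1]}$ is preserved by the shifts $T_j$, $j\le k-1$, since they act by conjugation. It is also preserved under the $u_{k-1}\to\infty$ limit: by \eqref{Lem:Pkm0}, $[\delta_{k-1}\Phi_{k-1}]^q$ is conjugate to $[\delta_{k-1}\Phi_{k-2}]^q$, so the upper-left block keeps the same eigenvalues. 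Running this back down to level $0$ shows that $\mathrm{Spec}([\Phi_{k-1}(u)^{[k-1]}]^q)=\{\lambda^{(k-1)}_i\}$. Conditions \eqref{Thm:Cond1} and \eqref{Cond:Shrinking} then transfer to every level.

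Next I would check that the branches fixed for $[\Phi_0^{[j]}]^q$ propagate consistently. Each application of Corollary~\ref{Cor:Reck} needs branches of $[\Phi_{k-1}^{[k-1]}]^q$ and $[\Phi_{k-1}^{[k]}]^q$. These come from the previous step through the uniqueness statement in \eqref{Lem:Pkm0}, which yields the listed functions $[\Phi_k(u)^{[j]}]^q$.

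The asymptotic identities \eqref{Thm:Asymptotics} follow by composing the one-step limits \eqref{Lem:Pkm0}--\eqref{Lem:Wkm1} over the iterated limits $u_k\to\infty$, then $u_{k-1}\to\infty$, and so on. At each stage the prefactor $(u_{s+1}/u_s)^{\operatorname{ad}[\delta_s\Phi_s]^q}$ telescopes: $u_{s+1}^{\operatorname{ad}[\delta_s\Phi_s]^q}$ comes from step $s+1$, and $u_s^{-\operatorname{ad}[\delta_s\Phi_s]^q}$ from step $s$. At the bottom, $[\delta_0\Phi_0]^q=\boldsymbol{\alpha}$ produces the factor $u_1^{\operatorname{ad}\boldsymbol{\alpha}}$.

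The delicate point, and the step I expect to be hardest, is justifying that each limit can be taken with the remaining variables fixed. The inner prefactors depend on $u_{s}$ only through $[\delta_s\Phi_s(u)]^q$, whose spectrum is constant. Each limit from Corollary~\ref{Cor:Reck} is an identity of functions of the lower variables, so applying the previous level's limit to it is legitimate. I would write this as an induction on $k$ for the full chain of limits.

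The final identity $W^{-1}[\Phi]^qW=W_0^{-1}[\Phi_0]^qW_0$ follows by iterating \eqref{Lem:DiagW}, which passes through the construction of Corollary~\ref{Cor:Reck} via \eqref{Eq:tPisP}. Analytic dependence on $(\Phi_0,W_0)$ comes from composing the analytic dependence in Corollary~\ref{Cor:Reck} $n$ times. Uniqueness follows step by step from the uniqueness in Corollary~\ref{Cor:Reck}, because the prescribed limits at level $k$ determine the level-$(k-1)$ data.
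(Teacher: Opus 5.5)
Your route is the paper's: check that $(\Phi_0,W_0;\boldsymbol{\alpha})$ is a deformation at level $(0,n)$, apply Corollary~\ref{Cor:Reck} recursively, telescope the one-step limits, and iterate the conjugation relation for $W_k^{-1}[\Phi_k]^qW_k$. Your check that \eqref{PhikInvert} and \eqref{Cond:less1} hold for $\Phi_{k-1}(u)$, and not only for $\Phi_0$, is something the paper leaves implicit. The cleanest justification is that the level-$(k,n)$ shifts and the passage \eqref{Eq:tPisP} both act on $\Phi^{[m]}$, $m\geqslant k$, by conjugation with matrices of the form $\operatorname{diag}(\ast_{k\times k},I_{n-k})$.

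There is one missing step. The limits in \eqref{Thm:Asymptotics} run only over $u_2,\ldots,u_k$. There is no $\lim_{u_1\to\infty}$, and for $k=1$ the formulas contain no limit at all. Telescoping \eqref{Lem:Pkm0}--\eqref{Lem:Wkm1} down to ``the bottom'' uses these relations at $k=1$, and those are limits as $u_1\to\infty$. So your argument proves the statement only with an extra outermost $\lim_{u_1\to\infty}$, and proves nothing for $k=1$. If you stop at $u_2$ instead, you are left with $u_1^{\operatorname{ad}\boldsymbol{\alpha}}u_1^{-\operatorname{ad}[\delta_1\Phi_1(u)]^q}[\delta_j\Phi_1(u)]^q$ and $u_1^{\boldsymbol{\alpha}}u_1^{-[\delta_1\Phi_1(u)]^q}W_1(u)$. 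These must equal $[\delta_j\Phi_0]^q$ and $W_0$ identically in $u_1$.

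The paper supplies this from a feature special to level $1$. In rank one, \eqref{Exp:Vk} gives $V_1=q^{\alpha_1}$, so $V_{1;1}^{(n)}=I$ by \eqref{Def:Vqiso}. Hence $T_1\Phi_1=\Phi_1$ and $T_1W_1=W_1$. The spectral relation in \eqref{Rel:kneq} forces $\Phi_1^{[1]}=[\alpha_1]_q$, so the $u_1$-prefactors above are $q$-constant as well. Both expressions are therefore $q$-constant in $u_1$, and so equal their $u_1\to\infty$ limits from Corollary~\ref{Cor:Reck} at $k=1$. These exact identities are what let the chain of limits stop at $u_2$.

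A smaller point concerns the branches $[\Phi_k(u)^{[j]}]^q$ with $j>k$. They do not come from \eqref{Lem:Pkm0}, which only concerns $j=k$. They are transported from level $k-1$ through the block-diagonal conjugation \eqref{Eq:tPisP}. Combined with \eqref{Lem:Pkm1}, this gives the one-step limit of $u_k^{\operatorname{ad}[\delta_{k-1}\Phi_{k-1}]^q}u_k^{-\operatorname{ad}[\delta_k\Phi_k]^q}[\delta_j\Phi_k(u)]^q$ for $k<j\leqslant n$. You need that limit for the second formula in \eqref{Thm:Asymptotics}, and the paper records it separately.
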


\begin{proof}
By \eqref{Thm:defalpha},
$(\Phi_0,W_0;\boldsymbol{\alpha})$
is a $q$-isomonodromic deformation at level $(0,n)$.
We apply Corollary~\ref{Cor:Reck} recursively.
At the $(k-1)$-st step, we have obtained the unique
$q$-isomonodromic deformation
$(\Phi_{k-1},W_{k-1};\boldsymbol{\alpha})$
at level $(k-1,n)$, together with the branches
\[
	[\Phi_{k-1}^{[k-1]}]^q,\ldots,
	[\Phi_{k-1}^{[n]}]^q.
\]
Applying Corollary~\ref{Cor:Reck} to these data, we obtain the
unique $q$-isomonodromic deformation
$(\Phi_k,W_k;\boldsymbol{\alpha})$
at level $(k,n)$,
together with the branches
\[
	[\Phi_{k}^{[k]}]^q,\ldots,
	[\Phi_{k}^{[n]}]^q,
\]
satisfying \eqref{Lem:PWkm1} and
\begin{subequations}\label{Eq:Thm4prf}
\begin{align}
\lim_{u_k\to\infty}
	u_k^{\operatorname{ad}[\delta_{k-1}\Phi_{k-1}]^q}
	u_k^{-\operatorname{ad}[\delta_k\Phi_k]^q}
	[\delta_j\Phi_k(u)]^q
	&=
	[\delta_j\Phi_{k-1}(u)]^q,
	\qquad
	k<j\leqslant n,\\
W_k^{-1}[\Phi_k(u)]^qW_k
	&=
	W_{k-1}^{-1}[\Phi_{k-1}(u)]^qW_{k-1}.
\end{align}
\end{subequations}
At level $1$, \eqref{Exp:Vk} gives
$V_{1;1}^{(n)}=I$, and hence
$T_1\Phi_1=\Phi_1$ and $T_1W_1=W_1$.
Thus for $1\leqslant j\leqslant n$ we have identities
\begin{align}\label{Eq:lv1}
u_1^{\operatorname{ad}\boldsymbol{\alpha}}
	u_1^{-\operatorname{ad}[\delta_1\Phi_1]^q}
	[\delta_j\Phi_1(u)]^q
	=
	[\delta_j\Phi_0]^q,\qquad
u_1^{\boldsymbol{\alpha}}
	u_1^{-[\delta_1\Phi_1]^q}W_1(u)
	=
	W_0.
\end{align}
Taking the resulting unique pair
$(\Phi_n,W_n)$ as $(\Phi,W)$
and combining \eqref{Lem:PWkm1} with
\eqref{Eq:Thm4prf} and \eqref{Eq:lv1}, we obtain
\eqref{Thm:Asymptotics}.
The analytic dependence follows directly from
Corollary~\ref{Cor:Reck},
which completes the proof.
\end{proof}

\begin{defi}
We call the $q$-isomonodromic deformation
	$(\Phi(u),W(u);\boldsymbol{\alpha})$
	constructed in Theorem~\ref{Thm:AsymBehavior}
	a \textbf{shrinking solution}
	of the $q$-isomonodromy equations.
\end{defi}

\begin{rmk}
If the formal monodromy is instead taken to be
$\boldsymbol{\beta}
=\operatorname{diag}(\beta_1,\ldots,\beta_n)$,
and $\delta_k$ is replaced by
\[
[\delta_k^{\boldsymbol{\beta}}A]^q
:=
\operatorname{diag}
\left(
	[A^{[k]}]^q,
	\beta_{k+1},\ldots,\beta_n
\right),
\qquad
0\leqslant k\leqslant n,
\]
then the asymptotic formulas
\eqref{Lem:PWkm1} in Corollary~\ref{Cor:Reck}
remain unchanged, with every $\delta_k$ replaced by
$\delta_k^{\boldsymbol{\beta}}$.
In this notation, the corresponding first limit in
\eqref{Thm:Asymptotics} takes the form
\begin{subequations}
\begin{align}
\lim_{u_2\to\infty}\cdots\lim_{u_k\to\infty}
	u_1^{\operatorname{ad}\boldsymbol{\beta}}
	\left(
		\mathop{\prod}\limits_{s=1}^{\overset{\longrightarrow}{k-1}}
		\left(
			\frac{u_{s+1}}{u_s}
		\right)^{
			\operatorname{ad}
			[\delta_s^{\boldsymbol{\beta}}\Phi_s(u)]^q
		}
	\right)
	[\delta_k^{\boldsymbol{\beta}}\Phi_k(u)]^q
	&=
	[\delta_k^{\boldsymbol{\beta}}\Phi_0]^q,
	\\
\lim_{u_2\to\infty}\cdots\lim_{u_k\to\infty}
	u_1^{\operatorname{ad}\boldsymbol{\beta}}
	\left(
		\mathop{\prod}\limits_{s=1}^{\overset{\longrightarrow}{k-1}}
		\left(
			\frac{u_{s+1}}{u_s}
		\right)^{
			\operatorname{ad}
			[\delta_s^{\boldsymbol{\beta}}\Phi_s(u)]^q
		}
	\right)
	u_k^{-\operatorname{ad}
		[\delta_k^{\boldsymbol{\beta}}\Phi_k(u)]^q}
	[\delta_j^{\boldsymbol{\beta}}\Phi_k(u)]^q
	&=
	[\delta_j^{\boldsymbol{\beta}}\Phi_0]^q,
	\\
\lim_{u_2\to\infty}\cdots\lim_{u_k\to\infty}
	u_1^{\boldsymbol{\beta}}
	\left(
		\mathop{\prod}\limits_{s=1}^{\overset{\longrightarrow}{k-1}}
		\left(
			\frac{u_{s+1}}{u_s}
		\right)^{
			[\delta_s^{\boldsymbol{\beta}}\Phi_s(u)]^q
		}
	\right)
	u_k^{-[\delta_k^{\boldsymbol{\beta}}\Phi_k(u)]^q}
	W_k(u)
	&=
	W_0.
\end{align}
\end{subequations}
In fact, if the two recursive constructions start from the same
leading term $(\Phi_0,W_0)$,
then for each
$1\leqslant k\leqslant n$ we have
\begin{align*}
\Phi_k(u;\boldsymbol{\beta})
&=
U^{
	\operatorname{ad}\left(
		(\boldsymbol{\alpha}-\boldsymbol{\beta})
		(E_1+\cdots+E_k)
	\right)
}
\Phi_k(u;\boldsymbol{\alpha}),\\
W_k(u;\boldsymbol{\beta})
&=
U^{
	(\boldsymbol{\alpha}-\boldsymbol{\beta})
	(E_1+\cdots+E_k)
}
W_k(u;\boldsymbol{\alpha}).
\end{align*}
\end{rmk}

\subsection{Explicit Rank-Two Asymptotics}
\label{Sect:AsymRankTwo}

In this subsection,
	we give an explicit $2\times2$ example of
	the construction in Theorem~\ref{Thm:AsymBehavior}.
	
Fix constant matrices
\[
	\Phi_0
	=
	\begin{pmatrix}
		\varphi_{0,11} & \varphi_{0,12}\\
		\varphi_{0,21} & \varphi_{0,22}
	\end{pmatrix}
	\in\operatorname{Mat}_{2\times2}(\mathbb C),
	\qquad
	W_0
	=
	\begin{pmatrix}
		w_{0,11} & w_{0,12}\\
		w_{0,21} & w_{0,22}
	\end{pmatrix}
	\in\operatorname{GL}_2(\mathbb C).
\]
Assume that
$1+(q-1)\varphi_{0,11}\neq0$ and
$I+(q-1)\Phi_0$ is invertible.
Fix branches of
$[\varphi_{0,11}]^q$ and $[\Phi_0]^q$, and write
\[
	\lambda_1^{(1)}
	:=
	[\varphi_{0,11}]^q,
	\qquad
	\operatorname{Spec}\bigl([\Phi_0]^q\bigr)
	=
	\{
		\lambda_1^{(2)},\lambda_2^{(2)}
	\},\qquad
	\Lambda
	:=
	\operatorname{diag}
	\bigl(\lambda_1^{(2)},\lambda_2^{(2)}\bigr).
\]
Choose $\boldsymbol{\alpha} =
	\operatorname{diag}(\alpha_1,\alpha_2)$ such that
\[
	q^{\alpha_1}
	=
	1+(q-1)\varphi_{0,11},
	\qquad
	q^{\alpha_2}
	=
	\frac{
		\det\bigl(I+(q-1)\Phi_0\bigr)
	}{
		1+(q-1)\varphi_{0,11}
	}.
\]
Denote $[\delta_1\Phi_0]^q :=
	\operatorname{diag}(\lambda_1^{(1)},\alpha_2)$.
	
Assume that
	$\lambda_1^{(2)}$ and $\lambda_2^{(2)}$
	are distinct and chosen $W_0$ such that
	$W_0^{-1}[\Phi_0]^qW_0=\Lambda$.
Theorem~\ref{Thm:AsymBehavior} then gives the unique
$q$-isomonodromic deformation
$\bigl(
		\Phi_2(u_1,u_2),
		W_2(u_1,u_2);
		\boldsymbol{\alpha}
	\bigr)$
with prescribed leading term $(\Phi_0,W_0)$.
\begin{align}
\nonumber
\lim_{u_2\to\infty}
	u_1^{\mathrm{ad} \boldsymbol{\alpha}}
	\left(
		\frac{u_2}{u_1}
	\right)^{\mathrm{ad}[\delta_1\Phi_0]^q}
	\Phi_2(u_1,u_2)
	&=
	\Phi_0,\\
\nonumber
\lim_{u_2\to\infty}
	u_1^{\boldsymbol{\alpha}}
	\left(
		\frac{u_2}{u_1}
	\right)^{[\delta_1\Phi_0]^q}
	u_2^{-[\Phi_2]^q}W_2(u_1,u_2)
	&=W_0.
\end{align}
We have
\begin{subequations}\label{Exp:RankTwoPhiFromAsym}
\begin{align}
\Phi_2(u_1,u_2)_{11}
	&=
	\frac{u_2}{u_2-u_1}[\alpha_1]_q
	-
	\frac{u_1}{u_2-u_1}
	\left(
		[\lambda_1^{(2)}]_q
		+
		[\lambda_2^{(2)}]_q
		-
		[\alpha_2]_q
	\right),\\
\Phi_2(u_1,u_2)_{22}
	&=
	\frac{u_2}{u_2-u_1}
	\left(
		[\lambda_1^{(2)}]_q
		+
		[\lambda_2^{(2)}]_q
		-
		[\alpha_1]_q
	\right)
	-
	\frac{u_1}{u_2-u_1}[\alpha_2]_q,\\
\Phi_2(u_1,u_2)_{12}
	&=
	u_1^{\lambda_1^{(1)}-\alpha_1}
	u_2^{\alpha_2-\lambda_1^{(1)}}
	\frac{
		\left(
			q\frac{u_1}{u_2};q
		\right)_\infty^2
	}{
		\left(
			q^{\lambda_1^{(2)}-\alpha_1+1}
			\frac{u_1}{u_2};q
		\right)_\infty
		\left(
			q^{\lambda_2^{(2)}-\alpha_1+1}
			\frac{u_1}{u_2};q
		\right)_\infty
	}
	\varphi_{0,12},\\
\Phi_2(u_1,u_2)_{21}
	&=
	u_1^{\alpha_1-\lambda_1^{(1)}}
	u_2^{\lambda_1^{(1)}-\alpha_2}
	\frac{
		\left(
			q^{\lambda_1^{(2)}-\alpha_1}
			\frac{u_1}{u_2};q
		\right)_\infty
		\left(
			q^{\lambda_2^{(2)}-\alpha_1}
			\frac{u_1}{u_2};q
		\right)_\infty
	}{
		\left(
			\frac{u_1}{u_2};q
		\right)_\infty^2
	}
	\varphi_{0,21},
\end{align}
\end{subequations}
and
\begin{subequations}\label{Exp:RankTwoWFromAsym}
\begin{align}
W_2(u_1,u_2)_{11}
	&=
	u_1^{\lambda_1^{(1)}-\alpha_1}
	u_2^{\lambda_1^{(2)}-\lambda_1^{(1)}}
	\frac{
		\left(
			q\frac{u_1}{u_2};q
		\right)_\infty
	}{
		\left(
			q^{\lambda_1^{(2)}-\alpha_1+1}
			\frac{u_1}{u_2};q
		\right)_\infty
	}
	w_{0,11},\\
W_2(u_1,u_2)_{12}
	&=
	u_1^{\lambda_1^{(1)}-\alpha_1}
	u_2^{\lambda_2^{(2)}-\lambda_1^{(1)}}
	\frac{
		\left(
			q\frac{u_1}{u_2};q
		\right)_\infty
	}{
		\left(
			q^{\lambda_2^{(2)}-\alpha_1+1}
			\frac{u_1}{u_2};q
		\right)_\infty
	}
	w_{0,12},\\
W_2(u_1,u_2)_{21}
	&=
	u_2^{\lambda_1^{(2)}-\alpha_2}
	\frac{
		\left(
			q^{\lambda_2^{(2)}-\alpha_1}
			\frac{u_1}{u_2};q
		\right)_\infty
	}{
		\left(
			\frac{u_1}{u_2};q
		\right)_\infty
	}
	w_{0,21},\\
W_2(u_1,u_2)_{22}
	&=
	u_2^{\lambda_2^{(2)}-\alpha_2}
	\frac{
		\left(
			q^{\lambda_1^{(2)}-\alpha_1}
			\frac{u_1}{u_2};q
		\right)_\infty
	}{
		\left(
			\frac{u_1}{u_2};q
		\right)_\infty
	}
	w_{0,22}.
\end{align}
\end{subequations}
Recall that in Definition~\ref{Def:q} we extended the $q$-Pochhammer
symbol to the case $|q|>1$. Hence the above formulas hold for all
$q$ with $|q|\neq1$.

\section{Explicit Formulas for $q$-Monodromy Data}
\label{Sect:Dec}

Throughout this section, we consider the $q$-isomonodromic deformation
$(\Phi(u;\boldsymbol{\alpha}),W(u;\boldsymbol{\alpha});
\boldsymbol{\alpha})$
constructed from the initial data
$(\Phi_0,W_0;\boldsymbol{\alpha})$
in Theorem~\ref{Thm:AsymBehavior}.
Our goal is to derive an explicit formula, in terms of these initial
data, for the associated central connection matrix
$C(x;U,\Phi(u);\boldsymbol{\alpha},W(u))$.
The theoretical framework for this approach is established in
	Section 4 of \cite{tang_boundary_nodate}.

We begin with a single recursive step.
For the remainder of this section, let
	$(\Phi_{n-1},W_{n-1};\boldsymbol{\alpha})$
	denote the constant initial data
	for a single recursive step,
	with $\Phi_{n-1}$ satisfying
	\eqref{PhikInvert}
	and the shrinking condition \eqref{Cond:less1},
	and let
	$(\Phi_n(u_n),W_n(u_n);\boldsymbol{\alpha})$
	denote the one-variable family
	constructed from these data in
	Lemma~\ref{Lem:RecStep1}.
Its asymptotic formulas \eqref{Lim:adPhin} and \eqref{Lim:adWn},
	together with the block estimate \eqref{Eq:lem43},
	yield the following relations as $u_n\to\infty$:
\begin{subequations}\label{Lim:dk1ktok1}
\begin{align}
\label{Lim:dk1Pktodk1Pk1}
	[\delta_{n-1}\Phi_n(u_n)]^q
	&=
	[\delta_{n-1}\Phi_{n-1}]^q
	+O(u_n^{-\varepsilon_{n-1}}),\\
\label{Lim:dk1PktoPk1}
	u_n^{\operatorname{ad}[\delta_{n-1}\Phi_{n-1}]^q}
	[\Phi_n(u_n)]^q
	&=
	[\Phi_{n-1}]^q
	+O(u_n^{-\varepsilon_{n-1}}),\\
\label{Lim:dk1WktoWk1}
	u_n^{[\delta_{n-1}\Phi_{n-1}]^q}
	u_n^{-[\Phi_n(u_n)]^q}
	W_n(u_n)
	&=
	W_{n-1}
	+O(u_n^{-\varepsilon_{n-1}}).
\end{align}
\end{subequations}
Moreover, as in the proof of Lemma~\ref{Lem:qBasic},
	the $q$-isomonodromy equations satisfied by this family
	are the compatibility conditions for the linear
	$q$-difference system
\begin{equation}\label{Eq:TargetSystem}
\left\{
\begin{aligned}
    \frac{\partial_q}{\partial_q x}Y(x,u_n)
    &=
	\left(
		U+\frac{\Phi_n(u_n)}{x}
	\right)Y(x,u_n),\\
    \frac{\partial_q}{\partial_q u_n}Y(x,u_n)
    &=
	\left(
		q^{-\alpha_n}E_nx
		+
		\frac{
		V_{n;n}^{(n)}(u_n;\Phi_n(u_n),\alpha_n)-I
		}{
		(q-1)u_n
		}
	\right)Y(x,u_n).
\end{aligned}
\right.
\end{equation}
Theorem~\ref{Thm:qIsoMD} shows that the associated central connection
	matrix
	$C(x;U,\Phi_n(u_n);\boldsymbol{\alpha},W_n(u_n))$
	is a $q$-constant with respect to $u_n$.
To determine this matrix, we shall factorize the canonical fundamental
	solutions of \eqref{Eq:TargetSystem} and thereby express it explicitly
	in terms of the initial data
	$(\Phi_{n-1},W_{n-1};\boldsymbol{\alpha})$,
	completing the recursive step.
	
The remainder of this section is organized as follows.
In Section~\ref{Sect:FactCan}, we introduce the canonical fundamental
solutions required for the factorization formulas and establish their
basic properties.
In Section~\ref{Sect:CanFact}, we prove these factorization formulas.
Finally, in Section~\ref{Sect:CenFact}, we use these factorization
formulas to obtain an explicit formula for the central connection
matrix in terms of the initial data..

\subsection{Canonical Fundamental Solutions for the Factorization}
\label{Sect:FactCan}

In this subsection,
we introduce the canonical fundamental solutions
required for the factorization.
We establish the results below under the following additional
open condition on $\Phi_{n-1}$:
\begin{align}\label{Cond:less1new}
\max_{1\leqslant i\leqslant n-1}
\left|
\mathrm{Re}\bigl(\lambda_i^{(n-1)}-\alpha_n\bigr)
-
\frac{\arg q}{\ln|q|}
\mathrm{Im}\bigl(\lambda_i^{(n-1)}-\alpha_n\bigr)
\right|
&<
1-\varepsilon_\alpha,
\qquad
\text{for some }0<\varepsilon_\alpha<1,
\end{align}
and denote
$\epsilon_\alpha := \min\{\varepsilon_\alpha,\varepsilon_{n-1}\}$.
This condition implies the block estimate
\begin{align*}
\Phi_n(u_n)_{\hat n n} =
O(u_n^{1-\varepsilon_\alpha}),\qquad
\Phi_n(u_n)_{n \hat n} =
O(u_n^{1-\varepsilon_\alpha}),
\qquad
u_n\to\infty.
\end{align*}
This estimate, together with \eqref{Lim:dk1ktok1},
	provides the growth control needed below.
The resulting factorization formulas depend analytically on the
initial data $\Phi_{n-1}$. Therefore, Lemma~\ref{Lem:RecStep1}
ensures that the additional open condition
\eqref{Cond:less1new} is not essential.

We introduce the coordinates
\begin{equation}\label{Eq:Trans}
\left\{
\begin{aligned}
    x_n &= x, \\
    \zeta_n &= x u_n.
\end{aligned}
\right.
\end{equation}
In these coordinates, the $x_n$-equation in the limit
$\zeta_n\to\infty$ and the $\zeta_n$-equation in the limit
$x_n\to0$ give rise to two auxiliary $q$-difference systems.
We shall introduce the canonical fundamental solutions of
\eqref{Eq:TargetSystem} and the transformed system.
We shall also introduce those of the two limiting systems
used in the factorization.

\begin{lem}\label{Lem:TargetSysNew}
We denote the system obtained from
\eqref{Eq:TargetSystem} via the coordinate transformation
\eqref{Eq:Trans} as follows:
\begin{subequations}\label{Eq:TargetSysNew}
\begin{empheq}[left=\empheqlbrace]{align}
\frac{\partial_q}{\partial_q x_n}
Y(x_n,\zeta_n)
&=
A_1(x_n,\zeta_n)\cdot
Y(x_n,\zeta_n),
\label{Eq:TargetSysNew-x}
\\
\frac{\partial_q}{\partial_q \zeta_n}
\left(
\left(
\frac{\zeta_n}{x_n}
\right)^{[\delta_{n-1}\Phi_{n-1}]^q}
Y(x_n,\zeta_n)
\right)
&=
A_2(x_n,\zeta_n)\cdot
\left(
\left(
\frac{\zeta_n}{x_n}
\right)^{[\delta_{n-1}\Phi_{n-1}]^q}
Y(x_n,\zeta_n)
\right).
\label{Eq:TargetSysNew-zeta}
\end{empheq}
\end{subequations}
There exists a constant $C>0$,
	independent of sufficiently small $x_n$ and
	sufficiently large $\zeta_n$, such that
\begin{align}
	\left\|
	A_2(x_n,\zeta_n) -
		\left( E_n + \frac{\Phi_{n-1}}{\zeta_n}
		\right)
	\right\|
	&\leqslant
	C
	|x_n|^{\varepsilon_{n-1}}
	|\zeta_n|^{-1-\varepsilon_{n-1}},
\label{Eq:EstA2}
\end{align}
Denote $U_{\hat n}:=
\begin{pmatrix}
U_{\hat n\hat n}&0\\
0&0
\end{pmatrix}$.
Under the additional open condition \eqref{Cond:less1new},
	there exists a constant $C>0$,
	independent of sufficiently small
	$x_n$ and sufficiently large $\zeta_n$, such that
\begin{subequations}\label{Eq:EstA1andad}
\begin{align}
\left\|
A_1(x_n,\zeta_n)
- \left( U_{\hat n} +
\frac{\delta_{n-1}\Phi_{n-1}}{x_n}
\right)
\right\|
&\leqslant
C
|x_n|^{-1+\epsilon_\alpha}
|\zeta_n|^{-\epsilon_\alpha},
\label{Eq:EstA1}\\
\left\|
\left(
	\frac{\zeta_n}{x_n}
\right)^{\operatorname{ad}[\delta_{n-1}\Phi_{n-1}]^q}
\left(
A_1(x_n,\zeta_n)
- \left( U_{\hat n} +
\frac{\delta_{n-1}\Phi_{n-1}}{x_n}
\right) \right)
\right\|
&\leqslant
C
|x_n|^{-1+\epsilon_\alpha}
|\zeta_n|^{-\epsilon_\alpha}.
\label{Eq:EstadA1}
\end{align}
\end{subequations}
\end{lem}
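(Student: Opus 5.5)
The plan is to compute $A_1$ and $A_2$ explicitly from \eqref{Eq:TargetSystem} and the change of variables \eqref{Eq:Trans}, and then to read off both estimates from the Picard asymptotics of Lemma~\ref{Lem:RecStep1}. Write
\[
R(x,u_n):=q^{-\alpha_n}(q-1)u_nE_nx+V_{n;n}^{(n)}(u_n;\Phi_n(u_n),\alpha_n),
\]
so that $Y(x,qu_n)=R(x,u_n)Y(x,u_n)$ and $Y(qx,u_n)=\bigl(I+(q-1)(Ux+\Phi_n(u_n))\bigr)Y(x,u_n)$.

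\emph{The $\zeta_n$-direction.} With $x_n$ fixed, $\zeta_n\mapsto q\zeta_n$ is exactly $u_n\mapsto qu_n$, and $u_nx=\zeta_n$. Hence
\[
A_2=\frac{u_n^{\operatorname{ad}[\delta_{n-1}\Phi_{n-1}]^q}\bigl(q^{[\delta_{n-1}\Phi_{n-1}]^q}R\bigr)-I}{(q-1)\zeta_n},
\qquad u_n=\zeta_n/x_n .
\]
\begin{itemize}
\item The term $q^{-\alpha_n}(q-1)E_n\zeta_n$ commutes with the diagonal conjugation on its $(n,n)$ entry, and $q^{\alpha_n}$ cancels $q^{-\alpha_n}$. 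After dividing by $(q-1)\zeta_n$ it contributes exactly $E_n$.
\item For the $V$ part I use \eqref{Eq:useful}:
\[
u_n^{\operatorname{ad}[\delta_{n-1}\Phi_{n-1}]^q}\bigl(q^{[\delta_{n-1}\Phi_{n-1}]^q}V_{n;n}^{(n)}\bigr)=I+(q-1)\Phi_{n-1}+O(u_n^{-\varepsilon_{n-1}}).
\]
\item This gives $A_2=E_n+\Phi_{n-1}/\zeta_n+O(u_n^{-\varepsilon_{n-1}})\zeta_n^{-1}$. Substituting $u_n=\zeta_n/x_n$ yields \eqref{Eq:EstA2}.
\end{itemize}
Uniformity of the constant follows because small $x_n$ and large $\zeta_n$ force $|u_n|$ large. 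The $O$-terms from Lemma~\ref{Lem:Picard} are uniform there, locally uniformly along spirals, since the solution extends analytically near $\infty$.

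\emph{The $x_n$-direction.} With $\zeta_n$ fixed, $x_n\mapsto qx_n$ corresponds to $(x,u_n)\mapsto(qx,u_n/q)$. Therefore
\[
Y(qx,u_n/q)=R(qx,u_n/q)^{-1}\bigl(I+(q-1)(U_{\hat n}x_n+E_n\zeta_n+\Phi_n(u_n))\bigr)Y,
\]
where $R(qx,u_n/q)=q^{-\alpha_n}(q-1)E_n\zeta_n+V_{n;n}^{(n)}(u_n/q)$, and
\[
A_1=\frac{R(qx,u_n/q)^{-1}\bigl(I+(q-1)(U_{\hat n}x_n+E_n\zeta_n+\Phi_n)\bigr)-I}{(q-1)x_n}.
\]
I would invert $R$ by the block formula \eqref{Inv:RecVkDen}, since $(V_{n;n}^{(n)})_{\hat n\hat n}=I$ and the $(n,n)$ entry is of size $\zeta_n$. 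The expected outcome is:
\begin{itemize}
\item the $(\hat n,\hat n)$ block equals $U_{\hat n\hat n}+\Phi_n(u_n)_{\hat n\hat n}/x_n$ plus corrections;
\item the $(n,n)$ entry equals $[\alpha_n]_q/x_n$ plus corrections, using \eqref{Def:pnn} and \eqref{Lem:Step1cond3};
\item the off-diagonal blocks are combinations such as $\Phi_{\hat n n}-(V)_{\hat n n}(\cdots)$.
\end{itemize}
The diagonal corrections are controlled by $\Phi_n(u_n)_{\hat n\hat n}=A+O(u_n^{-\varepsilon_{n-1}})$ and by \eqref{Est:RecPairingCross}, giving errors of size $x_n^{-1}u_n^{-\epsilon_\alpha}$, which is the right-hand side of \eqref{Eq:EstA1}.

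\emph{Main obstacle: the off-diagonal blocks of $A_1$.} Individually $\Phi_n(u_n)_{\hat nn}$ and $(V_{n;n}^{(n)})_{\hat nn}$ are only $O(u_n^{1-\varepsilon_\alpha})$ under \eqref{Cond:less1new}, so a cancellation is required. My plan is to use the explicit formula \eqref{Exp:Vk12} (equivalently the spectral identity \eqref{Eq:VkSpectralIdentity}) to rewrite the off-diagonal blocks in terms of $X_{\hat nn}=u_n^{[A]^q-\alpha_n}\Phi_{n,\hat nn}$ and $X_{n\hat n}$. The cancellation should leave the $(\hat n,n)$ block of the form $u_n^{\alpha_n-[A]^q}(\text{bounded})\,u_n^{-1}\cdot\zeta_n^{0}x_n^{-1}$, and similarly for the $(n,\hat n)$ block.

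Once that holds, \eqref{Eq:EstadA1} follows by conjugating with $(\zeta_n/x_n)^{\operatorname{ad}[\delta_{n-1}\Phi_{n-1}]^q}=u_n^{\operatorname{ad}[\delta_{n-1}\Phi_{n-1}]^q}$. This conjugation turns the off-diagonal blocks into expressions in $X_{\hat nn}$, $X_{n\hat n}$ multiplied by $u_n^{-\varepsilon_{n-1}}$-small factors, using the shrinking condition \eqref{Cond:less1} as in \eqref{Est:RecPairing}. The unconjugated estimate \eqref{Eq:EstA1} then follows from the conjugated one together with \eqref{Cond:less1new}, because $|u_n^{\pm(\alpha_n-\lambda_i^{(n-1)})}|\le C|u_n|^{1-\varepsilon_\alpha}$.

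If the direct block cancellation proves messy, my fallback is different: compare \eqref{Eq:TargetSysNew-x} with the $x$-equation satisfied by the already-estimated $\zeta_n$-gauge. The compatibility of the two equations in \eqref{Eq:TargetSysNew} expresses $A_1$ through $A_2$ and $\Phi_n$, which would inherit the required smallness from \eqref{Eq:EstA2}.
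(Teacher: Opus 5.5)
Your $A_2$ computation is the paper's argument: the $E_n$ term passes through the conjugation unchanged, and \eqref{Eq:useful} gives \eqref{Eq:EstA2}. You are also right that the off-diagonal blocks of $A_1$ need an exact cancellation from \eqref{Exp:Vk12}. The gap is in what that cancellation leaves behind, and in deducing \eqref{Eq:EstA1} from \eqref{Eq:EstadA1}.

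In your form $R(qx,u_n/q)^{-1}(\cdots)$, the shift matrix is evaluated at $u_n/q$ while $\Phi_n$ is evaluated at $u_n$, so \eqref{Exp:Vk12} does not apply directly. The paper first uses \eqref{qisoeq} to move the inverse to the right:
\[
I+(q-1)x_nA_1(x_n,q\zeta_n)=\bigl(I+(q-1)(Ux_n+\Phi_n(u_n))\bigr)\bigl(V+(q-1)q^{-\alpha_n}u_nx_nE_n\bigr)^{-1},\qquad V:=V_{n;n}^{(n)}(u_n).
\]
Then \eqref{Exp:Vk12} and \eqref{Eq:VkSpectralIdentity} give the exact blocks
\[
A_1(x_n,q\zeta_n)_{\hat nn}=-\frac{q^{\alpha_n}}{(q-1)\zeta_n}U_{\hat n\hat n}V_{\hat nn},\qquad A_1(x_n,q\zeta_n)_{n\hat n}=-\frac{q^{\alpha_n-1}}{(q-1)\zeta_n}V_{n\hat n}U_{\hat n\hat n},\qquad A_1(x_n,q\zeta_n)_{nn}=\frac{[\alpha_n]_q}{x_n},
\]
together with $A_1(x_n,q\zeta_n)_{\hat n\hat n}=U_{\hat n\hat n}+x_n^{-1}\Phi_n(u_n)_{\hat n\hat n}+\frac{q^{\alpha_n}}{(q-1)\zeta_n}U_{\hat n\hat n}V_{\hat nn}V_{n\hat n}$.

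The constant matrix $U_{\hat n\hat n}$ sits to the left of $V_{\hat nn}=u_n^{\alpha_n-[A]^q}O(1)$, where $[A]^q=[\Phi_{n-1}^{[n-1]}]^q$. So your predicted form $u_n^{\alpha_n-[A]^q}(\text{bounded})\,u_n^{-1}x_n^{-1}$ is wrong: moving $U_{\hat n\hat n}$ past $u_n^{-[A]^q}$ produces $u_n^{\operatorname{ad}[A]^q}U_{\hat n\hat n}=O(u_n^{1-\varepsilon_{n-1}})$. The conjugated off-diagonal error is therefore only $O(|x_n|^{-1}|u_n|^{-\varepsilon_{n-1}})$. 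Undoing the conjugation with $|u_n^{\pm(\alpha_n-\lambda_i^{(n-1)})}|\leqslant C|u_n|^{1-\varepsilon_\alpha}$ then gives only $O(|x_n|^{-1}|u_n|^{1-\varepsilon_\alpha-\varepsilon_{n-1}})$. This is not $O(|x_n|^{-1}|u_n|^{-\epsilon_\alpha})$, because $\varepsilon_\alpha,\varepsilon_{n-1}<1$. So the off-diagonal part of \eqref{Eq:EstA1} does not follow from \eqref{Eq:EstadA1}.

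The repair is to read each estimate off the exact blocks separately, using $|\zeta_n|^{-1}|u_n|^{1-\epsilon}=|x_n|^{-1+\epsilon}|\zeta_n|^{-\epsilon}$.

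For \eqref{Eq:EstA1}, use:
\begin{itemize}
\item $U_{\hat n\hat n}$ is bounded, and $V_{\hat nn},V_{n\hat n}=O(u_n^{1-\varepsilon_\alpha})$ under \eqref{Cond:less1new};
\item $\Phi_n(u_n)_{\hat n\hat n}=\Phi_{n-1}^{[n-1]}+O(u_n^{-\varepsilon_{n-1}})$ from \eqref{Eq:lem43};
\item the outer-product bound $V_{\hat nn}V_{n\hat n}=O(u_n^{1-\varepsilon_{n-1}})$ for the $(\hat n,\hat n)$ correction. This is the analogue of \eqref{Est:RecPairingCross2}, not of the scalar pairing \eqref{Est:RecPairingCross} you cite.
\end{itemize}

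For \eqref{Eq:EstadA1}, use instead $u_n^{[A]^q-\alpha_n}V_{\hat nn}=O(1)$, $V_{n\hat n}u_n^{\alpha_n-[A]^q}=O(1)$, $u_n^{\operatorname{ad}[A]^q}U_{\hat n\hat n}=O(u_n^{1-\varepsilon_{n-1}})$, and \eqref{Lim:adPhin}.

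Your fallback does not avoid the problem if it feeds in only the estimate \eqref{Eq:EstA2}. After conjugation, its $O(u_n^{-\varepsilon_{n-1}})$ remainder is multiplied by $x_nu_n^{\operatorname{ad}[A]^q}U_{\hat n\hat n}=O(\zeta_nu_n^{-\varepsilon_{n-1}})$, which is not bounded. The resulting error in $(q-1)x_nA_1$ is of order $x_nu_n^{1-2\varepsilon_{n-1}}$, which in general is not $O(u_n^{-\epsilon_\alpha})$. The exact identities coming from \eqref{Exp:Vk12} and \eqref{Eq:VkSpectralIdentity} are needed.
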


\begin{prf}
Applying \eqref{Eq:TargetSystem}
	under the coordinate transformation \eqref{Eq:Trans} gives
\begin{subequations}
\begin{align}
\label{Eq:A1xz}
I+(q-1)x_nA_1(x_n,q\zeta_n)
&=
\left(
	I+(q-1)\bigl(Ux_n+\Phi_n(u_n)\bigr)
\right)
\left(
	V_{n;n}^{(n)}(u_n)
	+(q-1)q^{-\alpha_n}u_nx_nE_n
\right)^{-1},
\\
\label{Eq:A2xz}
A_2(x_n,\zeta_n)
&=
E_n+
\frac{
u_n^{\operatorname{ad}[\delta_{n-1}\Phi_{n-1}]^q}
\left(
q^{[\delta_{n-1}\Phi_{n-1}]^q}
V_{n;n}^{(n)}(u_n)
\right)-I
}{
(q-1)\zeta_n
}.
\end{align}
\end{subequations}
Equation \eqref{Eq:useful}, substituted into
\eqref{Eq:A2xz}, together with
$u_n=\zeta_n/x_n$, gives \eqref{Eq:EstA2}.

In fact, the right-hand side of \eqref{Eq:A1xz}
	can be rewritten as a linear polynomial in $x_n$.
By the definition of $V_{n;n}^{(n)}$ and
	\eqref{Det:beta} in Lemma~\ref{Lem:betaEq}, we have
\begin{equation*}
\det\left(
	V_{n;n}^{(n)}(u_n)
	+(q-1)q^{-\alpha_n}u_nx_nE_n
\right)
=
1+(q-1)q^{-\alpha_n}u_nx_n.
\end{equation*}
Using \eqref{Exp:Vkhatkk} and \eqref{Exp:Vkkk},
the block inverse formula gives
\begin{align}\label{Inv:A1V}
&\left(
	V_{n;n}^{(n)}(u_n)
	+(q-1)q^{-\alpha_n}u_nx_nE_n
\right)^{-1}
\notag\\
&\quad=
\begin{pmatrix}
I&0\\
0&0
\end{pmatrix}
+
\frac{1}{
1+(q-1)q^{-\alpha_n}u_nx_n
}
\begin{pmatrix}
\bigl(V_{n;n}^{(n)}(u_n)\bigr)_{\hat n n}
\bigl(V_{n;n}^{(n)}(u_n)\bigr)_{n\hat n}
&
-\bigl(V_{n;n}^{(n)}(u_n)\bigr)_{\hat n n}
\\
-\bigl(V_{n;n}^{(n)}(u_n)\bigr)_{n\hat n}
&
1
\end{pmatrix}.
\end{align}
Substituting \eqref{Exp:Vkkhatk}, \eqref{Exp:Vk12},
\eqref{Def:pnn}, and \eqref{Inv:A1V} into
\eqref{Eq:A1xz} gives the following block identities
for $A_1$:
\begin{subequations}\label{Eq:A1Block}
\begin{align}
A_1(x_n,q\zeta_n)_{\hat n\hat n}
&=
U_{\hat n\hat n}
+
\frac{\Phi_n(u_n)_{\hat n\hat n}}{x_n} +
\frac{q^{\alpha_n}}{(q-1)\zeta_n}
U_{\hat n\hat n}
V_{n;n}^{(n)}(u_n)_{\hat n n}
V_{n;n}^{(n)}(u_n)_{n \hat n},
\\
A_1(x_n,q\zeta_n)_{\hat n n}
&=
-\frac{q^{\alpha_n}}{(q-1)\zeta_n}
U_{\hat n\hat n}
V_{n;n}^{(n)}(u_n)_{\hat n n},
\\
A_1(x_n,q\zeta_n)_{n\hat n}
&=
-\frac{q^{\alpha_n-1}}{(q-1)\zeta_n}
V_{n;n}^{(n)}(u_n)_{n\hat n}
U_{\hat n\hat n},
\\
A_1(x_n,q\zeta_n)_{nn}
&=
\frac{[\alpha_n]_q}{x_n}.
\end{align}
\end{subequations}
Applying \eqref{Lim:dk1Pktodk1Pk1} and
	\eqref{Lim:dk1PktoPk1} to \eqref{Exp:Vk}
	under \eqref{PhikInvert}
	and the shrinking condition \eqref{Cond:less1},
	together with the off-diagonal blocks of
	\eqref{Eq:useful}, gives
\begin{subequations}\label{Est:A1BasicBlocks}
\begin{alignat}{2}
(\Phi_n)_{\hat n\hat n}
&=
	\Phi_{n-1}^{[n-1]}
	+O(u_n^{-\varepsilon_{n-1}}),
	\qquad &
(V_{n;n}^{(n)})_{\hat n n}
(V_{n;n}^{(n)})_{n\hat n}
&=
	O(u_n^{1-\varepsilon_{n-1}}),
\\
u_n^{\operatorname{ad}[\Phi_{n-1}^{[n-1]}]^q}
(\Phi_n)_{\hat n\hat n}
&=
	\Phi_{n-1}^{[n-1]}
	+O(u_n^{-\varepsilon_{n-1}}),
	\qquad &
u_n^{\operatorname{ad}[\Phi_{n-1}^{[n-1]}]^q}
U_{\hat n\hat n}
&=
	O(u_n^{1-\varepsilon_{n-1}}),
\\
u_n^{[\Phi_{n-1}^{[n-1]}]^q-\alpha_nI}
(V_{n;n}^{(n)})_{\hat n n}
&=
	O(1),
	\qquad &
(V_{n;n}^{(n)})_{n\hat n}
u_n^{\alpha_nI-[\Phi_{n-1}^{[n-1]}]^q}
&=
	O(1).
\end{alignat}
\end{subequations}
The additional open condition \eqref{Cond:less1new}
	further gives
\begin{align}\label{Est:VBlocksA1}
V_{n;n}^{(n)}(u_n)_{\hat n n}
&=
O(u_n^{1-\varepsilon_\alpha}),\qquad
V_{n;n}^{(n)}(u_n)_{n\hat n}
=
O(u_n^{1-\varepsilon_\alpha}).
\end{align}
Substituting \eqref{Est:A1BasicBlocks} and
\eqref{Est:VBlocksA1} into \eqref{Eq:A1Block}
and using $u_n=\zeta_n/x_n$ gives
\eqref{Eq:EstA1andad}.
\end{prf}

\begin{rmk}
In fact, Lemma~\ref{Lem:CompositeShift} guarantees that
$I+(q-1)x_nA_1(x_n,q\zeta_n)$ is a linear polynomial in $x_n$.
\end{rmk}

We now introduce the canonical fundamental solutions used in the
factorization and analyze their singularities in detail.
The recursive equations \eqref{ExpclitPhiW} extend $\Phi(u)$ to a
meromorphic function on its natural domain
$(\widetilde{\mathbb C^\ast})^n$.
Hence all its finite singularities are poles.
The forward and backward evolution formulas in
	Theorem~\ref{Thm:qiso} show how
	the fixed $q$-resonance poles of
	$\Phi_n(u_n)$ propagate along
	the corresponding half $q$-spirals.
We therefore separate these poles from
	the movable ones as follows.

\begin{defi}\label{Def:PhinSingularLoci}
Set
\begin{align*}
\mathfrak P(\Phi_{n-1};u_1,\ldots,u_{n-1})
:=
\begin{cases}
\displaystyle
\operatorname{Pole}\bigl(\Phi_n(u_n)\bigr)
\setminus
\left(
\{u_1,\ldots,u_{n-1}\}q^{\mathbb N}
\right),
& |q|<1,
\\
\displaystyle
\operatorname{Pole}\bigl(\Phi_n(u_n)\bigr)
\setminus
\left(
\{u_1,\ldots,u_{n-1}\}q^{-\mathbb N}
\right),
& |q|>1.
\end{cases}
\end{align*}
The elements of
$\mathfrak P(\Phi_{n-1};u_1,\ldots,u_{n-1})$
are called the \textbf{movable poles} of $\Phi_n(u_n)$.
\end{defi}

\begin{lem}\label{Lem:MovablePolesFromZeros}
Denote
\begin{align*}
\mathfrak Z(\Phi_{n-1};u_1,\ldots,u_{n-1})
:=
\left\{
u_n\in
\widetilde{\mathbb C^\ast}
:
\det\left(
I+(q-1)\Phi_n(u_n)_{\hat n\hat n}
-q^{\alpha_n}\frac{U_{\hat n\hat n}}{u_n}
\right)
=0
\right\}.
\end{align*}
Then
\begin{subequations}
\begin{align}
\operatorname{Pole}\bigl(\Phi_n(u_n)\bigr)
&\subseteq
\begin{cases}
\mathfrak P(\Phi_{n-1};u_1,\ldots,u_{n-1})
\cup
\{u_1,\ldots,u_{n-1}\}q^{\mathbb N},
& |q|<1,
\\
\mathfrak P(\Phi_{n-1};u_1,\ldots,u_{n-1})
\cup
\{u_1,\ldots,u_{n-1}\}q^{-\mathbb N},
& |q|>1,
\end{cases}
\label{Eq:PhinPoleClassification}
\\
\mathfrak P(\Phi_{n-1};u_1,\ldots,u_{n-1})
&\subseteq
\begin{cases}
\mathfrak Z(\Phi_{n-1};u_1,\ldots,u_{n-1})
q^{\mathbb N + 1},
& |q|<1,
\\
\mathfrak Z(\Phi_{n-1};u_1,\ldots,u_{n-1})
q^{-\mathbb N - 1},
& |q|>1.
\end{cases}
\label{Eq:MovablePolesFromZeros}
\end{align}
\end{subequations}
Moreover, for generic initial data, the inclusion
\eqref{Eq:PhinPoleClassification} becomes an equality.
\end{lem}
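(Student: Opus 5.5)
The plan is to analyse the one-step evolution $\Phi_n(u_n)\mapsto T_n\Phi_n(u_n)$ given by Theorem~\ref{Thm:qiso}, specialised to $k=n$ with $\boldsymbol\beta=\boldsymbol\alpha$. We read off where it can create poles and then propagate them along $q$-spirals. We treat $|q|<1$ in detail; the case $|q|>1$ follows from the inverse-transpose duality \eqref{Eq:CDuality} together with the backward formulas \eqref{ExpclitPhiInv}, which exchange $q^{\mathbb N}$ and $q^{-\mathbb N}$.

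\textbf{Step 1: singular locus of one forward step.} By \eqref{ExpclitPhi}, $T_n\Phi_n=\mathrm{Ad}(q^{-\alpha_nE_n}V_n)\Phi_n$, where $V_n$ is given by \eqref{Exp:Vk}. The blocks \eqref{Exp:Vkkhatk} and \eqref{Exp:Vkkk} are singular only where $u_nI-q^{-1}U_{\hat n\hat n}$ fails to be invertible, that is, at $u_n\in\{u_1,\ldots,u_{n-1}\}q^{-1}$. The block \eqref{Exp:Vk12} is singular exactly where $\det\bigl(I+(q-1)\Phi_{n,\hat n\hat n}-q^{\alpha_n}U_{\hat n\hat n}/u_n\bigr)=0$, which is the defining equation of $\mathfrak Z$. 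For the inverse we use $\det V_n=q^{\alpha_n}$, which is \eqref{Cond:SpecVk} at $x=0$. Hence $V_n^{-1}$ is polynomial in the entries of $V_n$ and introduces no new singularities. So if $\Phi_n$ is holomorphic at $u_n$, then $T_n\Phi_n$ can fail to be holomorphic there only when $u_n\in\{u_j\}q^{-1}\cup\mathfrak Z$. Writing $T_n\Phi_n(u_n)=\Phi_n(qu_n)$, the only possible new poles lie on $\{u_j\}\cup\mathfrak Z q$.

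\textbf{Step 2: propagation.} The Picard construction of Lemma~\ref{Lem:RecStep1} shows that $\Phi_n$ is analytic for $|u_n|$ large. Take any pole $p$ of the meromorphic continuation. Since $|q|<1$, the points $pq^{-m}$ escape to $\infty$, so there is a largest $m\geqslant1$ with $\Phi_n$ holomorphic at $pq^{-m}$ but singular at $pq^{-m+1}$. By Step 1, $pq^{-m+1}$ lies in $\{u_j\}\cup\mathfrak Zq$, and therefore $p\in\{u_j\}q^{\mathbb N}\cup\mathfrak Z q^{\mathbb N+1}$. This gives both \eqref{Eq:PhinPoleClassification} and \eqref{Eq:MovablePolesFromZeros}: the movable poles, that is, the poles off $\{u_j\}q^{\mathbb N}$, must come from the $\mathfrak Z$ part.

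The main obstacle is the case where the exit point $pq^{-m}$ is itself a zero point of $\mathfrak Z$ that is cancelled, or where a pole of $\Phi_n$ is cancelled in $\mathrm{Ad}(V_n)\Phi_n$. For the inclusion only the ``can fail only at'' direction is needed, so such cancellations are harmless. One must also check that the continuation is genuinely single-valued along each spiral; this follows from the rational recursion \eqref{ExpclitPhiW}.

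\textbf{Step 3: genericity.} To show equality for generic data, I would exhibit one explicit instance in which the pole at $u_j q^{N}$ actually occurs. The natural candidate is the rank-two formulas \eqref{Exp:RankTwoPhiFromAsym}, where $(q u_1/u_2;q)_\infty^{-1}$-type factors produce poles exactly on $u_1q^{\mathbb N}$. Block-embedding these gives an $n\times n$ instance. Since residues depend analytically on $(\Phi_{n-1},W_{n-1})$ by Lemma~\ref{Lem:RecStep1}, nonvanishing on a nonempty open set then yields equality off a proper analytic subset.
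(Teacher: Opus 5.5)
Your Steps 1--2 are correct and are exactly what the paper's one-line proof (``follows directly from \eqref{Exp:Vk} and \eqref{ExpclitPhiW}'') compresses: since $\det V_n=q^{\alpha_n}$, one forward step can only create singularities on $\{u_j\}q^{-1}\cup\mathfrak Z$, and these are propagated along $q$-spirals from the region near $\infty$ (dually, via the backward formula, for $|q|>1$); your Step 3 then supplies a genericity argument that the paper does not spell out. Two small corrections to that step: for $|q|<1$ the poles on $u_1q^{\mathbb N}$ in \eqref{Exp:RankTwoPhiFromAsym} come from the factor $(u_1/u_2;q)_\infty^{-2}$ in the $(2,1)$ entry, while $(qu_1/u_2;q)_\infty^{2}$ in the $(1,2)$ entry produces the poles on $u_1q^{-\mathbb N}$ only when $|q|>1$; and since there are infinitely many points $u_jq^k$, you obtain equality off a countable union of proper analytic subsets rather than off a single one.
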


\begin{prf}
This follows directly from \eqref{Exp:Vk} and \eqref{ExpclitPhiW}.
\end{prf}

\begin{lem}\label{Lem:CanXn0}
For each fixed
$\zeta_n\in\widetilde{\mathbb C^\ast}$,
the $x_n$-equation \eqref{Eq:TargetSysNew-x}
admits a unique canonical fundamental solution
in a punctured neighborhood of $x_n=0$ of the form
\begin{subequations}\label{Eq:Yxn0}
\begin{align}
\label{Eq:Yxn0Y}
\mathcal Y_{x_n}^{[0]}(x_n,\zeta_n)
&=
\mathcal H_{x_n}^{[0]}(x_n,\zeta_n)\cdot
x_n^{[\delta_{n-1}\Phi_{n-1}]^q},
\\
\label{Eq:Yxn0H}
\left(
\frac{\zeta_n}{x_n}
\right)^{
\operatorname{ad}[\delta_{n-1}\Phi_{n-1}]^q}
\mathcal H_{x_n}^{[0]}(x_n,\zeta_n)
&=
I+O(x_n^{\varepsilon_{n-1}}),
\qquad x_n\to0.
\end{align}
\end{subequations}
The estimate in \eqref{Eq:Yxn0H}
	is locally uniform in $\zeta_n$.
Moreover,
$\mathcal H_{x_n}^{[0]}(x_n,\zeta_n)^{\pm 1}$
admit meromorphic continuations in $x_n$ to
$\widetilde{\mathbb C^\ast}$.
Their pole sets as functions of $x_n$ are as follows.
\begin{subequations}\label{Pole:Yxn0H}
\begin{align}\label{Pole:Yxn0H1}
\operatorname{Pole}_{x_n}
\left(
\mathcal H_{x_n}^{[0]}
\right)
&\subseteq
\begin{cases}
\left(\zeta_n
\operatorname{Pole}\left(\Phi_n\right)^{-1}
\cup
\displaystyle
\bigcup_{1\leqslant k<n}
\left\{
\frac{q^{\alpha_k}}{(1-q)u_k}
\right\}
\right) q^{-\mathbb N},
& |q|<1,
\\
\zeta_n
\operatorname{Pole}\left(\Phi_n\right)^{-1} q^{\mathbb N},
& |q|>1,
\end{cases}
\\
\label{Pole:Yxn0Hm1}
\operatorname{Pole}_{x_n}
\left(
(\mathcal H_{x_n}^{[0]})^{-1}
\right)
&\subseteq
\begin{cases}
\zeta_n
\operatorname{Pole}\left(\Phi_n\right)^{-1} q^{-\mathbb N},
& |q|<1,
\\
\left(
\zeta_n
\operatorname{Pole}\left(\Phi_n\right)^{-1}
\cup
\displaystyle
\bigcup_{1\leqslant k<n}
\left\{
\frac{q^{\alpha_k}}{(1-q)q^{-1}u_k}
\right\}
\right) q^{\mathbb N},
& |q|>1.
\end{cases}
\end{align}
\end{subequations}
\end{lem}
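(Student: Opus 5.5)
We work with $\zeta_n$ fixed, so that $u_n=\zeta_n/x_n\to\infty$ as $x_n\to0$. We construct $\mathcal Y^{[0]}_{x_n}$ by conjugating to a nearly constant problem and running a Picard iteration in the spirit of Lemma~\ref{Lem:Picard}, and then obtain the meromorphic continuation by iterating the $x_n$-equation.

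\emph{Step 1 (reduction).} Write $D:=[\delta_{n-1}\Phi_{n-1}]^q$ and set
\[
G(x_n):=\Bigl(\frac{\zeta_n}{x_n}\Bigr)^{\operatorname{ad}D}\mathcal H^{[0]}_{x_n}(x_n,\zeta_n).
\]
Write \eqref{Eq:TargetSysNew-x} in the multiplicative form $Y(qx_n)=\bigl(I+(q-1)x_nA_1\bigr)Y(x_n)$ and use the identity $q^{D}=I+(q-1)\delta_{n-1}\Phi_{n-1}$. This turns the equation for $\mathcal H^{[0]}_{x_n}$ into an equation for $G$ of the form
\[
G(qx_n)=\bigl(q^{D}+x_nR(x_n)\bigr)G(x_n)q^{-D},
\]
where $x_nR$ collects $(q-1)x_n$ times the conjugated difference $A_1-\bigl(U_{\hat n}+\delta_{n-1}\Phi_{n-1}/x_n\bigr)$ together with the conjugated term $U_{\hat n}x_n$. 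For the first contribution, estimate \eqref{Eq:EstadA1} gives the bound $O(|x_n|^{\epsilon_\alpha})$. For the second, the shrinking condition gives $(\zeta_n/x_n)^{\operatorname{ad}D}U_{\hat n}\,x_n=O(|x_n|^{\varepsilon_{n-1}})$, locally uniformly in $\zeta_n$.

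\emph{Step 2 (existence and uniqueness).} Rewrite the equation for $G$ as the fixed-point problem
\[
G=I+\sum_{k\geqslant0}(\cdots)
\]
obtained by solving backwards toward $x_n=0$ along the spiral $x_nq^{\mathbb N}$ (or $x_nq^{-\mathbb N}$ when $|q|>1$). Expand $q^{k\operatorname{ad}D}$ spectrally as in \eqref{SpecDecomp}. The shrinking condition \eqref{Cond:less1} gives $|q^{k(\lambda_i-\lambda_j)}|$ of size at most $|q|^{-k(1-\varepsilon_{n-1})}$, and this factor is absorbed by the gain $|x_nq^k|^{\epsilon}$ in the remainder. This is the same mechanism as in \eqref{Est:PicardSpectral}: the geometric sums converge exactly because $\varepsilon>0$ and the spectral gap is below $1$. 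Picard iteration then yields a unique $G=I+O(x_n^{\varepsilon_{n-1}})$, locally uniformly in $\zeta_n$, which gives \eqref{Eq:Yxn0H}.

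To get the sharp exponent $\varepsilon_{n-1}$ rather than $\epsilon_\alpha$, we either argue by analytic continuation in $\Phi_{n-1}$, as after \eqref{Cond:less1new}, or use directly the block structure \eqref{Eq:A1Block} together with \eqref{Est:A1BasicBlocks}. The solution extends from the spiral to a punctured neighbourhood of $0$ by analyticity in $x_n$, since the iteration is uniform. Uniqueness is the standard contraction argument.

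\emph{Step 3 (continuation and poles).} Use \eqref{Eq:A1xz}, which gives
\[
\mathcal H(qx_n)=M(x_n)\,\mathcal H(x_n)\,q^{-D},\qquad M(x_n):=I+(q-1)x_nA_1(x_n,\zeta_n).
\]
Iterating this outward (for $|q|<1$, from $x_n$ near $0$ to $x_nq^{-k}$) gives the meromorphic continuation. Poles of $\mathcal H$ can only arise from poles of $M(x_n q^{-j})$, and poles of $\mathcal H^{-1}$ only from zeros of $\det M$; these then propagate along half-spirals. By \eqref{Eq:A1xz}, $M$ involves $\Phi_n(u_n)$ with $u_n=q\zeta_n/x_n$ (because of the shift $A_1(x_n,q\zeta_n)$), and it involves $V_{n;n}^{(n)}(u_n)^{-1}$ through the factor $1+(q-1)q^{-\alpha_n}u_nx_n$. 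With $u_nx_n=\zeta_n$ this factor is harmless in $x_n$ (the determinant of $V$ cancels), so the poles of $M$ sit at $x_n\in\zeta_n\operatorname{Pole}(\Phi_n)^{-1}$ up to a $q$-shift.

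Zeros of $\det M$ are governed by
\[
\det M=\frac{\det\bigl(I+(q-1)(Ux_n+\Phi_n)\bigr)}{1+(q-1)q^{-\alpha_n}\zeta_n},
\]
and by \eqref{Lem:Recalpha} this equals a constant times $\prod_{k<n}\bigl(1+(q-1)q^{-\alpha_k}u_kx_n\bigr)$. This produces the points $q^{\alpha_k}/((1-q)u_k)$, with the stated extra factor $q^{-1}$ when $|q|>1$ coming from the shift convention. Matching the directions of the half-spirals ($q^{-\mathbb N}$ versus $q^{\mathbb N}$) with the two iteration formulas, one for $\mathcal H$ and one for $\mathcal H^{-1}$, gives \eqref{Pole:Yxn0H}. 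The case $|q|>1$ follows either by running the same iteration in the other direction or from the duality \eqref{Eq:qInverseH}.

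\emph{Main obstacle.} The main obstacle is Step 2: keeping control of the $\operatorname{ad}D$-conjugated remainder uniformly while $u_n=\zeta_n/x_n\to\infty$, in particular the $(\hat n,n)$ and $(n,\hat n)$ blocks, which grow like $u_n^{1-\varepsilon}$. We handle these with the pairing estimates \eqref{Est:RecPairing} and with \eqref{Eq:EstadA1}.
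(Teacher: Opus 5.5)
Your overall route is the paper's: conjugate by $(\zeta_n/x_n)^{\operatorname{ad}D}$ with $D=[\delta_{n-1}\Phi_{n-1}]^q$, bound the conjugated coefficient, run a Picard iteration, then continue meromorphically through $\mathcal H(qx_n)=M(x_n)\mathcal H(x_n)q^{-D}$. However, the equation for $G$ in Step~1 is miscomputed, and as a result the convergence mechanism in Step~2 fails. The conjugating factor itself shifts, since $(\zeta_n/(qx_n))^{D}=q^{-D}(\zeta_n/x_n)^{D}$. This cancels the right factor $q^{-D}$, and the correct equation is
\[
G(qx_n)=N(x_n)\,G(x_n),\qquad
N:=q^{-D}\Bigl(\frac{\zeta_n}{x_n}\Bigr)^{\operatorname{ad}D}M(x_n)
=I+(q-1)x_n\,q^{-D}\Bigl(\frac{\zeta_n}{x_n}\Bigr)^{\operatorname{ad}D}\Bigl(A_1-\frac{\delta_{n-1}\Phi_{n-1}}{x_n}\Bigr).
\]
With your version $G(qx_n)=(q^D+x_nR)G(x_n)q^{-D}$, the backward iteration produces factors $q^{-(k+1)\operatorname{ad}D}$. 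The resulting bound $|q|^{-k(1-\varepsilon_{n-1})}|x_nq^k|^{\epsilon}$ is summable only if $\epsilon>1-\varepsilon_{n-1}$, which in general requires $\varepsilon_{n-1}>\frac12$; that is not assumed. The analogy with \eqref{Est:PicardSpectral} does not hold. There the kernel is $t^{-2-\varepsilon}$, and it is the full extra power $t^{-1}$ that absorbs a spectral spread below $1$; here you only gain $x_n^{\epsilon}$.

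With the correct equation, no spectral spread enters the iteration at all. The weighted bound $(\zeta_n/x_n)^{\operatorname{ad}D}(A_1-\delta_{n-1}\Phi_{n-1}/x_n)=O(x_n^{-1+\varepsilon_{n-1}})$ follows blockwise from \eqref{Eq:A1Block}, \eqref{Est:A1BasicBlocks} and the shrinking condition. This needs neither a subtraction of $U_{\hat n}$ nor \eqref{Cond:less1new}. It gives $N=I+O(x_n^{\varepsilon_{n-1}})$. For $|q|<1$, the product $G(x_n)=N(x_n)^{-1}N(qx_n)^{-1}N(q^2x_n)^{-1}\cdots$ then converges because $\sum_k|x_nq^k|^{\varepsilon_{n-1}}<\infty$. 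Uniqueness follows because $G_1^{-1}G_2$ is a $q$-constant tending to $I$. This is exactly the paper's argument.

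In Step~3 the pole bookkeeping is reversed for $|q|<1$. Continuing outward from $x_n=0$ when $|q|<1$ uses
\[
\mathcal H(x_n)=M(x_n)^{-1}\mathcal H(qx_n)q^{D},\qquad \mathcal H(x_n)^{-1}=q^{-D}\mathcal H(qx_n)^{-1}M(x_n).
\]
So it is $\mathcal H$ that inherits the poles of $M^{-1}$, and $\mathcal H^{-1}$ that inherits only the poles of $M$. The poles of $M^{-1}$ are the zeros $q^{\alpha_k}/((1-q)u_k)$ of $\det M$, which you computed correctly, together with the $\Phi_n$-poles entering through $\operatorname{adj}M$. Your assignment puts $q^{\alpha_k}/((1-q)u_k)$ into the pole set of $\mathcal H^{-1}$, which contradicts \eqref{Pole:Yxn0H}. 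For $|q|>1$ the relevant identity is $\mathcal H(qx_n)^{-1}=q^{D}\mathcal H(x_n)^{-1}M(x_n)^{-1}$. This gives $\mathcal H^{-1}$ the zeros of $\det M$ multiplied by $q$, which accounts for $q^{-1}u_k$. It also gives $\mathcal H^{-1}$ the $\Phi_n$-poles, which your ``only from zeros of $\det M$'' omits. A minor point: by \eqref{Eq:A1xz}, $M(x_n)$ involves $\Phi_n(\zeta_n/(qx_n))$ rather than $\Phi_n(q\zeta_n/x_n)$; this is harmless for the pole sets up to $q$-shifts.
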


\begin{prf}
Using the shrinking condition \eqref{Cond:less1}
	and \eqref{Lim:dk1PktoPk1},
	we have
\begin{align}\label{Est:PhiWeightedTop}
u_n^{
\operatorname{ad}[\Phi_{n-1}^{[n-1]}]^q}
U_{\hat n\hat n}
=
O(u_n^{1-\varepsilon_{n-1}}),\qquad
u_n^{
\operatorname{ad}[\Phi_{n-1}^{[n-1]}]^q}
\Phi_n(u_n)_{\hat n\hat n} =
\Phi_{n-1}^{[n-1]} +
O(u_n^{-\varepsilon_{n-1}}).
\end{align}
Taking the off-diagonal blocks in \eqref{Eq:useful} gives
\begin{align}\label{Est:VWeightedOffDiag}
u_n^{[\Phi_{n-1}^{[n-1]}]^q-\alpha_n I}
V_{n;n}^{(n)}(u_n)_{\hat n n}
=
O(1),\qquad
V_{n;n}^{(n)}(u_n)_{n\hat n}
u_n^{\alpha_n I-[\Phi_{n-1}^{[n-1]}]^q}
=
O(1).
\end{align}
Since $\zeta_n=x_nu_n$, equations
\eqref{Est:PhiWeightedTop},
\eqref{Est:VWeightedOffDiag}, and
\eqref{Eq:A1Block} give
\begin{align}
\label{Est:A1Weighted}
\left(
\frac{\zeta_n}{x_n}
\right)^{
\operatorname{ad}[\delta_{n-1}\Phi_{n-1}]^q}
\left(
A_1(x_n,\zeta_n)
-
\frac{\delta_{n-1}\Phi_{n-1}}{x_n}
\right)
&=
O(x_n^{-1+\varepsilon_{n-1}}).
\end{align}
Substituting \eqref{Eq:Yxn0Y} into
\eqref{Eq:TargetSysNew-x} and applying
\eqref{Est:A1Weighted}, we obtain
\begin{align*}
\frac{\partial_q}{\partial_q x_n}
\left(
\left(
\frac{\zeta_n}{x_n}
\right)^{
\operatorname{ad}[\delta_{n-1}\Phi_{n-1}]^q}
\mathcal H_{x_n}^{[0]}(x_n,\zeta_n)
\right)
&=
O(x_n^{-1+\varepsilon_{n-1}})\cdot
\left(
\frac{\zeta_n}{x_n}
\right)^{
\operatorname{ad}[\delta_{n-1}\Phi_{n-1}]^q}
\mathcal H_{x_n}^{[0]}(x_n,\zeta_n).
\end{align*}
The Picard iteration therefore gives a unique solution
satisfying \eqref{Eq:Yxn0H}.
The estimates above are uniform for $\zeta_n$ in compact subsets
of $\widetilde{\mathbb C^\ast}$, so the estimate in
\eqref{Eq:Yxn0H} is locally uniform in $\zeta_n$.

For the meromorphic continuation, set
\[
B(x_n,\zeta_n):=
I+(q-1)x_nA_1(x_n,\zeta_n).
\]
Then \eqref{Eq:TargetSysNew-x} and \eqref{Eq:Yxn0Y} give
\begin{align}\label{Eq:Yxn0Recurrence}
\mathcal H_{x_n}^{[0]}(qx_n,\zeta_n)
&=
B(x_n,\zeta_n)
\mathcal H_{x_n}^{[0]}(x_n,\zeta_n)
q^{-[\delta_{n-1}\Phi_{n-1}]^q}.
\end{align}
Equations \eqref{Eq:A1xz} and \eqref{Eq:A1Block},
together with \eqref{Exp:Vk} and \eqref{ExpclitPhiW}, give
\begin{subequations}\label{Pole:Yxn0B}
\begin{align}
\operatorname{Pole}_{x_n}
\left(
B(x_n,\zeta_n)
\right)
&\subseteq
\operatorname{Pole}_{x_n}
\left(
\Phi_n\left(\frac{\zeta_n}{x_n}\right)
\right) \{1,q^{-1}\},
\\
\operatorname{Pole}_{x_n}
\left(
B(x_n,\zeta_n)^{-1}
\right)
&\subseteq
\operatorname{Pole}_{x_n}
\left(
\Phi_n\left(\frac{\zeta_n}{x_n}\right)
\right) \{1,q^{-1}\}
\cup
\bigcup_{1\leqslant k<n}
\left\{
\frac{q^{\alpha_k}}{(1-q)u_k}
\right\}.
\end{align}
\end{subequations}
Since $\mathcal H_{x_n}^{[0]}(x_n,\zeta_n)^{\pm1}$
are holomorphic near $x_n=0$, iterating
\eqref{Eq:Yxn0Recurrence} in the appropriate direction and using
\eqref{Pole:Yxn0B} gives their meromorphic continuations together
with \eqref{Pole:Yxn0H1} and \eqref{Pole:Yxn0Hm1}.
\end{prf}

We next introduce notation for the canonical fundamental solutions
of \eqref{Eq:TargetSystem} and of the two limiting systems used in
the factorization.

\begin{itemize}

\item In the $(x,u_n)$-coordinates,
Proposition~\ref{Can:zero} gives the canonical fundamental solution
of the same $x$-equation at $x=0$:
\begin{subequations}
\begin{align}
Y^{[0]}(x,u_n)
&=
H^{[0]}(x,u_n)\cdot
x^{[\Phi_n(u_n)]^q},\\
H^{[0]}(x,u_n)
&=
I+O(x),
\qquad x\to0.
\end{align}
\end{subequations}
Proposition~\ref{Can:inf}
gives the canonical fundamental solution with formal monodromy
$\boldsymbol{\alpha}$ of the $x$-equation in
\eqref{Eq:TargetSystem} at $x=\infty$:
\begin{subequations}
\begin{align}
\label{Eq:YInfNormal}
Y^{[\infty]}(x,u_n)
&=
H^{[\infty]}(x,u_n)\cdot
x^{\boldsymbol{\alpha}}
e_q(q^{-\boldsymbol{\alpha}}Ux),\\
H^{[\infty]}(x,u_n)
&=
I+O(x^{-1}),
\qquad x\to\infty.
\end{align}
\end{subequations}

\item The limit $\zeta_n\to\infty$ in \eqref{Eq:EstA1}
gives the limiting system
\begin{align}\label{Eq:TargetSysLim-x}
\frac{\partial_q}{\partial_q x_n}T(x_n)
&=
\left(
U_{\hat n} +
\frac{\delta_{n-1}\Phi_{n-1}}{x_n}
\right)T(x_n),\qquad
U_{\hat n}:=
\begin{pmatrix}
U_{\hat n\hat n}&0\\
0&0
\end{pmatrix}.
\end{align}
Proposition~\ref{Can:zero} gives its canonical fundamental solution
at $x_n=0$:
\begin{subequations}
\begin{align}
T_{x_n}^{[0]}(x_n)
&=
R_{x_n}^{[0]}(x_n)\cdot
x_n^{[\delta_{n-1}\Phi_{n-1}]^q},\\
R_{x_n}^{[0]}(x_n)
&=
I+O(x_n),
\qquad x_n\to0.
\end{align}
\end{subequations}
Proposition~\ref{Can:inf} gives its canonical fundamental solution
with formal monodromy $\boldsymbol{\alpha}$ at $x_n=\infty$:
\begin{subequations}
\begin{align}
\label{Eq:TxnInfNormal}
T_{x_n}^{[\infty]}(x_n)
&=
R_{x_n}^{[\infty]}(x_n)\cdot
x_n^{\boldsymbol{\alpha}}
\begin{pmatrix}
e_q\!\left(
q^{-\boldsymbol{\alpha}_{\hat n\hat n}}
U_{\hat n\hat n}x_n
\right)&0\\
0&1
\end{pmatrix},\\
R_{x_n}^{[\infty]}(x_n)
&=
I+O(x_n^{-1}),
\qquad x_n\to\infty.
\end{align}
\end{subequations}

\item The limit $x_n\to0$ in \eqref{Eq:EstA2}
gives the limiting system
\begin{align}\label{Eq:TargetSysLim-zeta}
\frac{\partial_q}{\partial_q\zeta_n}T(\zeta_n)
&=
\left(
E_n+\frac{\Phi_{n-1}}{\zeta_n}
\right)T(\zeta_n).
\end{align}
Proposition~\ref{Can:zero} gives its canonical fundamental solution
at $\zeta_n=0$:
\begin{subequations}
\begin{align}
T_{\zeta_n}^{[0]}(\zeta_n)
&=
R_{\zeta_n}^{[0]}(\zeta_n)\cdot
\zeta_n^{[\Phi_{n-1}]^q},\\
R_{\zeta_n}^{[0]}(\zeta_n)
&=
I+O(\zeta_n),
\qquad \zeta_n\to0.
\end{align}
\end{subequations}
Proposition~\ref{Can:inf} gives its formal fundamental solution
with formal monodromy
$[\delta_{n-1}\Phi_{n-1}]^q$ at $\zeta_n=\infty$:
\begin{subequations}
\begin{align}
T_{\zeta_n}^{[\infty]}(\zeta_n)
&=
R_{\zeta_n}^{[\infty]}(\zeta_n)\cdot
\zeta_n^{[\delta_{n-1}\Phi_{n-1}]^q}
\begin{pmatrix}
I&0\\
0&e_q(q^{-\alpha_n}\zeta_n)
\end{pmatrix},\\
R_{\zeta_n}^{[\infty]}(\zeta_n)
&=
I+\sum_{\nu=1}^{\infty}
R_{\zeta_n,\nu}^{[\infty]}\zeta_n^{-\nu}.
\end{align}
\end{subequations}
Proposition~\ref{Can:inf}
	shows that the last column
	$(R_{\zeta_n}^{[\infty]})_{\ast n}$
	is convergent when $|q|<1$, whereas the last row
	$(R_{\zeta_n}^{[\infty] -1})_{n\ast}$
	is convergent when $|q|>1$.

\end{itemize}

\begin{lem}\label{Lem:CanZetaInf}
For each fixed
$x_n\in
\widetilde{\mathbb C^\ast}
\setminus
\underset{1\leqslant k<n}{\bigcup}
\frac{q^{\alpha_k}}{(1-q)u_k}
q^{\mathbb Z}$,
define
\begin{align}\label{Eq:CanZetaInfDef}
\mathcal Y_{\zeta_n}^{[\infty]}(x_n,\zeta_n)
&:=
Y^{[\infty]}
\left(x_n,\frac{\zeta_n}{x_n}\right)
T_{x_n}^{[\infty]}(x_n)^{-1}
x_n^{[\delta_{n-1}\Phi_{n-1}]^q}.
\end{align}
Then $\mathcal Y_{\zeta_n}^{[\infty]}(x_n,\zeta_n)$
is a fundamental solution of the $\zeta_n$-equation
\eqref{Eq:TargetSysNew-zeta}. 
When the additional open condition \eqref{Cond:less1new} holds,
this solution has the following form:
\begin{subequations}
\begin{align}
\left(
	\frac{\zeta_n}{x_n}
\right)^{[\delta_{n-1}\Phi_{n-1}]^q}
\mathcal Y_{\zeta_n}^{[\infty]}(x_n,\zeta_n)
&=
\mathcal H_{\zeta_n}^{[\infty]}(x_n,\zeta_n)
\zeta_n^{[\delta_{n-1}\Phi_{n-1}]^q}
\begin{pmatrix}
I&0\\
0&e_q(q^{-\alpha_n}\zeta_n)
\end{pmatrix},
\label{Eq:CanZetaInfNormal}
\\
\left(
	\frac{\zeta_n}{x_n}
\right)^{
	-\mathrm{ad}[\delta_{n-1}\Phi_{n-1}]^q
}
\mathcal H_{\zeta_n}^{[\infty]}(x_n,\zeta_n)
&=
I+
O\!\left(
\zeta_n^{-\epsilon_\alpha}
\right),
\qquad
\zeta_n\to\infty.
\label{Eq:CanZetaInfAsym}
\end{align}
\end{subequations}
The estimate in \eqref{Eq:CanZetaInfAsym}
is locally uniform in $x_n$.
Moreover,
$\mathcal H_{\zeta_n}^{[\infty]}(x_n,\zeta_n)^{\pm1}$
are meromorphic in $\zeta_n$ on
$\widetilde{\mathbb C^\ast}$, and\begin{align}\label{Pole:CanZetaInfGauge}
\operatorname{Pole}
\left(
(\mathcal H_{\zeta_n}^{[\infty]})^{\pm1}
\right)
&\subseteq
\left\{(x_n,\zeta_n):
\frac{\zeta_n}{x_n}
\in
\operatorname{Pole}\left(\Phi_n\right) \cup
\{u_1,\ldots,u_{n-1}\}
q^{\mathbb Z\setminus\{0\}}
\right\}
\notag\\
&\quad\cup
\begin{cases}
\displaystyle
\left\{(x_n,\zeta_n):
x_n\in
\bigcup_{1\leqslant k<n}
\frac{q^{\alpha_k}}{(1-q)u_k}q^{\mathbb N+1}
\quad\text{or}\quad
\zeta_n\in
\frac{q^{\alpha_n}}{1-q}q^{\mathbb N+1}
\right\},
& |q|<1,
\\[8pt]
\displaystyle
\left\{(x_n,\zeta_n):
x_n\in
\bigcup_{1\leqslant k<n}
\frac{q^{\alpha_k}}{(1-q)u_k}q^{-\mathbb N}
\quad\text{or}\quad
\zeta_n\in
\frac{q^{\alpha_n}}{1-q}q^{-\mathbb N}
\right\},
& |q|>1.
\end{cases}
\end{align}
\end{lem}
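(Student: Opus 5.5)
Three things need to be shown for $\mathcal Y_{\zeta_n}^{[\infty]}$ defined in \eqref{Eq:CanZetaInfDef}: that it solves the $\zeta_n$-equation, that it has the normal form \eqref{Eq:CanZetaInfNormal} with the estimate \eqref{Eq:CanZetaInfAsym}, and that its gauge part has the pole set \eqref{Pole:CanZetaInfGauge}. I would treat them in that order.

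\emph{Solution property.} In the coordinates \eqref{Eq:Trans}, the $q$-shift $\zeta_n\mapsto q\zeta_n$ at fixed $x_n$ is the shift $u_n\mapsto qu_n$ at fixed $x$. By construction, $Y^{[\infty]}(x,u_n)$ solves the $u_n$-equation of \eqref{Eq:TargetSystem}. This is exactly the content of Lemma~\ref{Lem:qBasic}, applied with $\boldsymbol\beta=\boldsymbol\alpha$, which gives $T_nY^{[\infty]}=R(x)Y^{[\infty]}$. The right factor $T_{x_n}^{[\infty]}(x_n)^{-1}x_n^{[\delta_{n-1}\Phi_{n-1}]^q}$ depends only on $x_n$, so it is inert under the $\zeta_n$-shift. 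Conjugating by $(\zeta_n/x_n)^{[\delta_{n-1}\Phi_{n-1}]^q}$ is exactly how \eqref{Eq:TargetSysNew-zeta} was defined. Hence $\mathcal Y_{\zeta_n}^{[\infty]}$ solves that equation, and it is fundamental because every factor is invertible away from the excluded spirals, which are the zeros of $e_q(q^{-\alpha_k}u_kx_n)$.

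\emph{Normal form.} I would define $\mathcal H_{\zeta_n}^{[\infty]}$ directly from \eqref{Eq:CanZetaInfNormal}. Substitute $Y^{[\infty]}=H^{[\infty]}x^{\boldsymbol\alpha}e_q(q^{-\boldsymbol\alpha}Ux)$ and \eqref{Eq:TxnInfNormal}. The $e_q$ factors in the $\hat n$ block cancel, and the $n$-th entry becomes $e_q(q^{-\alpha_n}u_nx_n)=e_q(q^{-\alpha_n}\zeta_n)$. The powers of $x_n$ combine with $(\zeta_n/x_n)^{[\delta_{n-1}\Phi_{n-1}]^q}$. This yields
\[
\left(\frac{\zeta_n}{x_n}\right)^{-\operatorname{ad}[\delta_{n-1}\Phi_{n-1}]^q}\mathcal H_{\zeta_n}^{[\infty]}
= H^{[\infty]}\!\left(x_n,\tfrac{\zeta_n}{x_n}\right)\,x_n^{\boldsymbol\alpha}R_{x_n}^{[\infty]}(x_n)^{-1}x_n^{-\boldsymbol\alpha},
\]
up to rearranging the diagonal exponentials, which commute blockwise. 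The asymptotic \eqref{Eq:CanZetaInfAsym} then reduces to showing $H^{[\infty]}(x_n,u_n)\to R^{[\infty]}_{x_n}(x_n)$ as $u_n\to\infty$ with rate $u_n^{-\epsilon_\alpha}$, locally uniformly in $x_n$. This is where I would use \eqref{Eq:EstA1}. Both $H^{[\infty]}$ and $R_{x_n}^{[\infty]}$ are characterized by the recurrence \eqref{Eq:HinfRec}, equivalently by the shift relation $H(qx)=(I+(q-1)xA_1)H(x)(\cdots)^{-1}$ together with the normalization at $\infty$. The coefficient matrices differ by $O(x_n^{-1+\epsilon_\alpha}\zeta_n^{-\epsilon_\alpha})$. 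A Picard/Gronwall comparison of the two iterations, run from $x_n=\infty$ along the spiral with $|q|<1$ first and the other case via \eqref{Eq:qInverseH}, gives the difference bound. The twisted estimate \eqref{Eq:EstadA1} is what controls the conjugation by $(\zeta_n/x_n)^{\operatorname{ad}}$.

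This comparison is the main obstacle. One must check that the columns of $H^{[\infty]}$ converge uniformly, including the $n$-th column, where $u_n\to\infty$ makes the $\hat n$ and $n$ blocks decouple, so that the limit is $R_{x_n}^{[\infty]}$ in block form. I would first try working column by column, using the convergent-column structure from Proposition~\ref{Can:inf}. The bound $\Phi_n(u_n)_{\hat nn},\Phi_n(u_n)_{n\hat n}=O(u_n^{1-\varepsilon_\alpha})$, which follows from \eqref{Cond:less1new}, should then handle the off-diagonal blocks.

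\emph{Poles.} I would combine Proposition~\ref{Pro:SolY}, applied to $H^{[\infty]}$ at parameter $u_n=\zeta_n/x_n$ and to $R_{x_n}^{[\infty]}$, with the movable and fixed pole classification of $\Phi_n$ in Lemma~\ref{Lem:MovablePolesFromZeros}. The poles from \eqref{Pole:Hinf} and \eqref{Pole:Hinfinv} at $x_n\in\frac{q^{\alpha_k}}{(1-q)u_k}q^{\mathbb N+1}$ give the $x_n$-spirals in \eqref{Pole:CanZetaInfGauge}. For $k=n$ they give $\zeta_n\in\frac{q^{\alpha_n}}{1-q}q^{\mathbb N+1}$. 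The spectrum terms of $U^{-1}(I+(q-1)\Phi_n)$ are constant along the deformation by \eqref{Lem:Recalpha}, so they reduce to the same spirals. The remaining singularities occur where $\Phi_n$ itself, or the coefficients determining $H^{[\infty]}$, are singular in $u_n$; this is the set $\zeta_n/x_n\in\operatorname{Pole}(\Phi_n)\cup\{u_1,\ldots,u_{n-1}\}q^{\mathbb Z\setminus\{0\}}$. The $|q|>1$ case follows symmetrically by Corollary~\ref{Cor:Basic}.
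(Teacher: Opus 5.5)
Your argument for the solution property and your accounting of the pole set match the paper. There is a slip in the gauge identity. The correct relation is
\[
\Bigl(\frac{\zeta_n}{x_n}\Bigr)^{-\operatorname{ad}[\delta_{n-1}\Phi_{n-1}]^q}\mathcal H_{\zeta_n}^{[\infty]}(x_n,\zeta_n)
=H^{[\infty]}\Bigl(x_n,\frac{\zeta_n}{x_n}\Bigr)R_{x_n}^{[\infty]}(x_n)^{-1},
\]
with no conjugation by $x_n^{\boldsymbol\alpha}$. The factor $x^{\boldsymbol\alpha}$ cancels $x_n^{-\boldsymbol\alpha}$ directly. The leftover $\operatorname{diag}(I,e_q(q^{-\alpha_n}\zeta_n))$ passes through $R_{x_n}^{[\infty]}(x_n)^{-1}$ because the limiting coefficient $U_{\hat n}+\delta_{n-1}\Phi_{n-1}/x_n$ is block diagonal, hence so is $R_{x_n}^{[\infty]}$. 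Inside the $\hat n$ block, $x_n^{\boldsymbol\alpha}$ does not commute with $R_{x_n}^{[\infty]}$. So your version would not tend to $I$ even if $H^{[\infty]}\to R_{x_n}^{[\infty]}$.

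The genuine gap is the proof of \eqref{Eq:CanZetaInfAsym}: the comparison you propose cannot be run from $x_n=\infty$.
\begin{itemize}
\item $A_1$ is the coefficient of the shift $x_n\mapsto qx_n$ at fixed $\zeta_n$, which also sends $u_n\mapsto q^{-1}u_n$; this is why $V_{n;n}^{(n)}$ appears in \eqref{Eq:A1xz}. It is not the coefficient of the $x$-equation satisfied by $H^{[\infty]}(x,u_n)$ at fixed $u_n$.
\item Along $x_n\to\infty$ with $\zeta_n$ fixed, $u_n=\zeta_n/x_n\to0$. There $H^{[\infty]}$ has no normalization and $\Phi_n(u_n)$ is uncontrolled. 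Moreover, \eqref{Eq:EstA1}--\eqref{Eq:EstadA1} are only proved for small $x_n$ and large $\zeta_n$.
\item Even granting those estimates, a kernel of size $|x_n|^{-1+\epsilon_\alpha}|\zeta_n|^{-\epsilon_\alpha}$ is not $q$-Jackson integrable at $x_n=\infty$; compare \eqref{BasicqInt}, which needs an exponent below $-1$. It is integrable at $x_n=0$, which is exactly how the paper uses it, with $\int_0^{x_n}$, in Lemma~\ref{Lem:ZeroCanonicalStability}.
\item Fixing $u_n$ and comparing in $x$ does not help either, since the coefficient difference then contains the large term $u_nE_n$.
\end{itemize}

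What is missing is the paper's coefficient-level argument. At fixed large $u_n$, write $H^{[\infty]}(x,u_n)R_{x_n}^{[\infty]}(x)^{-1}=I+\sum_{\nu\geqslant1}F_\nu(u_n)x^{-\nu}$ and derive the recurrence
\[
UF_{\nu+1}-q^{-(\nu+1)}F_{\nu+1}U
=F_\nu\bigl([-\nu]_qI+q^{-\nu}\delta_{n-1}\Phi_{n-1}\bigr)-\Phi_n(u_n)F_\nu,
\qquad F_0=I.
\]
Then solve it blockwise. For $|q|<1$, the entries in the $n$-th row and column are obtained by dividing by $u_n-q^{-(\nu+1)}u_j$ or $u_j-q^{-(\nu+1)}u_n$. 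As the paper checks, this gains a factor of order $u_n^{-1}$ with constants independent of $\nu$.
\begin{itemize}
\item This gain absorbs the growth $\Phi_n(u_n)_{\hat nn},\Phi_n(u_n)_{n\hat n}=O(u_n^{1-\varepsilon_\alpha})$ implied by \eqref{Cond:less1new}.
\item The bound $\Phi_n(u_n)_{\hat n\hat n}-\Phi_{n-1}^{[n-1]}=O(u_n^{-\varepsilon_{n-1}})$ controls the $\hat n\hat n$ block.
\end{itemize}
Induction on $\nu$ gives, on each block, $M^\nu$ times a negative power of $u_n$. Summing for $|x|$ large yields $H^{[\infty]}\bigl(R_{x_n}^{[\infty]}\bigr)^{-1}=I+O(u_n^{-\epsilon_\alpha})$, and the case $|q|>1$ is analogous. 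Your remark about working column by column points in this direction, but without the $u_n^{-1}$ gain from these denominators the estimate does not close.
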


\begin{prf}
Since $Y^{[\infty]}(x,u_n)$ is a common fundamental solution of
the two equations in \eqref{Eq:TargetSystem}, it follows directly
from \eqref{Eq:Trans} and \eqref{Eq:CanZetaInfDef} that
$\mathcal Y_{\zeta_n}^{[\infty]}(x_n,\zeta_n)$ is a fundamental
solution of the $\zeta_n$-equation
\eqref{Eq:TargetSysNew-zeta}.

Substituting \eqref{Eq:YInfNormal} and
\eqref{Eq:TxnInfNormal} into \eqref{Eq:CanZetaInfDef} gives
\begin{align}
\left(\frac{\zeta_n}{x_n}\right)^{
-\operatorname{ad}[\delta_{n-1}\Phi_{n-1}]^q}
\mathcal H_{\zeta_n}^{[\infty]}(x_n,\zeta_n)
&=
H^{[\infty]}
\left(x_n,\frac{\zeta_n}{x_n}\right)
R_{x_n}^{[\infty]}(x_n)^{-1}.
\label{Eq:CanZetaInfWeightedIdentity}
\end{align}
The existence conditions in Proposition~\ref{Can:inf},
together with Proposition~\ref{Pro:SolY} applied to
$H^{[\infty]}$ and $R_{x_n}^{[\infty]}$,
give \eqref{Pole:CanZetaInfGauge}.

Write
\begin{align*}
H^{[\infty]}(x,u_n)
R_{x_n}^{[\infty]}(x)^{-1}
&=
I+\sum_{\nu=1}^{\infty}F_\nu(u_n)x^{-\nu}.
\end{align*}
Comparing the $x$-equation in \eqref{Eq:TargetSystem} with
\eqref{Eq:TargetSysLim-x}, one directly verifies that, for every
$\nu\geqslant0$,
\begin{align}\label{Eq:RecFun}
UF_{\nu+1}(u_n)
-q^{-(\nu+1)}F_{\nu+1}(u_n)U
&=
F_\nu(u_n)
\left(
[-\nu]_qI+
q^{-\nu}\delta_{n-1}\Phi_{n-1}
\right)
-\Phi_n(u_n)F_\nu(u_n).
\end{align}
Here we set $F_0=I$.
We first assume that $|q|<1$.
There exists a constant $C>0$,
	independent of $\nu$, such that
\begin{align}
	\left\|
		\frac{q^{-\nu}}
		{u_nI-q^{-(\nu+1)}U_{\hat n\hat n}}
	\right\|
	\leqslant C,\qquad
	\left\|
		\frac{1}
		{u_nI-q^{-(\nu+1)}U_{\hat n\hat n}}
	\right\|
	&\leqslant C|u_n|^{-1}.
\end{align}
Using \eqref{Lim:dk1ktok1} and
	the additional open condition \eqref{Cond:less1new},
	it follows by induction from \eqref{Eq:RecFun} that,
	for every $\nu\geqslant1$,
	the following estimates hold with a constant $M>0$ 
	independent of $\nu$:
\begin{align}
F_\nu(u_n)
&=
\begin{pmatrix}
	O\left( M^\nu u_n^{-\varepsilon_{n-1}}
	\right) &
	O\left( M^\nu u_n^{-\nu\varepsilon_\alpha}
	\right) \\
	O\left( M^\nu u_n^{-\varepsilon_\alpha}
	\right) &
	O\left( M^\nu u_n^{-1-(\nu-1)\varepsilon_\alpha}
	\right)
\end{pmatrix},\qquad
u_n\to\infty,
\end{align}
together with
\begin{align}
\Phi_n(u_n)_{\hat n n}F_\nu(u_n)_{n\hat n}
&=
O\left(M^\nu u_n^{-\varepsilon_{n-1}}\right),
&
\Phi_n(u_n)_{n\hat n}F_\nu(u_n)_{\hat n n}
&=
O\left(M^\nu u_n^{-\nu\varepsilon_\alpha}\right).
\end{align}
It follows that, for sufficiently large $x_n$,
\begin{align}
H^{[\infty]}(x_n,u_n)
R_{x_n}^{[\infty]}(x_n)^{-1}
&=
I+
\begin{pmatrix}
	O\left( u_n^{-\varepsilon_{n-1}}
		\right) &
	O\left( u_n^{-\varepsilon_\alpha}
		\right) \\
	O\left( u_n^{-\varepsilon_\alpha}
		\right) &
	O\left( u_n^{-1}
		\right)
\end{pmatrix}.
\end{align}
The case $|q|>1$ follows similarly.
Combining this with \eqref{Eq:CanZetaInfWeightedIdentity} and
$u_n=\zeta_n/x_n$ gives \eqref{Eq:CanZetaInfAsym}.
This completes the proof.
\end{prf}

Finally,
	we apply the $q$-Borel summation introduced in
	Section~\ref{Sect:qBorel} to
	the formal divergent series
	$R_{\zeta_n}^{[\infty]}(\zeta_n)$ above,
	and thereby obtain actual fundamental solutions of 
	\eqref{Eq:TargetSysLim-zeta} at $\zeta_n=\infty$.
What is particularly unusual here is that,
	in order to obtain the factorization
	required for our final formula,
	we do not use a single
	summation direction for the entire formal matrix.
Instead, to obtain the desired singularity structure,
	the summation directions are assigned after
	a suitable rearrangement of the matrix entries,
	as described below.
	
\begin{defi}\label{Def:ReassembledQBorelSum}
For a $q$-monodromy system
$(U,\Phi;W;\boldsymbol{\alpha})$, define its
\textbf{normalized central connection matrix} by
\begin{subequations}
\label{Def:NormalizedCentralConnection}
\begin{align}
\Omega(x;U,\Phi) &:=
Y^{[\infty]}(x;U,\Phi)^{-1}
Y^{[0]}(x;U,\Phi),\\
\Omega_d(x;U,\Phi) &:=
Y^{[\infty]}_d(x;U,\Phi)^{-1}
Y^{[0]}(x;U,\Phi),\qquad d\in\mathbb{C}^\ast.
\end{align}
\end{subequations}
For $\boldsymbol d=(
	d_1,\ldots, d_n) \in (\mathbb C^\ast)^n$
	and a formal matrix series $\widehat{F}(x)$,
	denote
\begin{subequations}
\begin{align}
\mathcal S_{\boldsymbol d}^{(+)}\widehat F &:=
\begin{pmatrix}
	\mathcal S_{ d_1 }\widehat F_{\ast 1} &
	\cdots &
	\mathcal S_{ d_n }\widehat F_{\ast n}
\end{pmatrix},
\label{Def:ColumnwiseQBorelSum}\\
\mathcal S_{\boldsymbol d}^{(-)}\widehat F &:=
\left(
	\mathcal S_{\boldsymbol d}^{(+)}
	(\widehat F^{-\top})
\right)^{-\top}.
\end{align}
\end{subequations}
When the $j$-th column is convergent, the choice of $d_j$ does
not affect its $q$-Borel sum. In this case, we write $d_j=\ast$.
Denote
\begin{subequations}
\begin{align}
\mathcal SR_{\zeta_n}^{[\infty]}(\zeta_n;x_n) &:=
\begin{cases}
\displaystyle
	\mathcal S_{\boldsymbol d}^{(+)}
	\left(
	R_{\zeta_n}^{[\infty]}(\zeta_n) \cdot
	\zeta_n^{[\delta_{n-1}\Phi_{n-1}]^q}
	\Omega(x_n)^{-1}
	\right)
	\Omega(x_n)
	\zeta_n^{-[\delta_{n-1}\Phi_{n-1}]^q},
	& |q|<1,
\\
\displaystyle
	\mathcal S_{\boldsymbol d}^{(-)}
	\left(
	R_{\zeta_n}^{[\infty]}(\zeta_n) \cdot
	\zeta_n^{[\delta_{n-1}\Phi_{n-1}]^q}
	\Omega(x_n)^{-1}
	\right)
	\Omega(x_n)
	\zeta_n^{-[\delta_{n-1}\Phi_{n-1}]^q},
& |q|>1,
\end{cases}
\label{eq:second-summed-direction}
\\
\mathcal ST_{\zeta_n}^{[\infty]}(\zeta_n;x_n)
&:=
\mathcal SR_{\zeta_n}^{[\infty]}(\zeta_n;x_n) \cdot
\zeta_n^{[\delta_{n-1}\Phi_{n-1}]^q}
\begin{pmatrix}
I&0\\
0&e_q(q^{-\alpha_n}\zeta_n)
\end{pmatrix},
\label{Eq:SummedZetaInfSolution}
\end{align}
\end{subequations}
where $\boldsymbol{d} = (
	-x_nu_1,\ldots,
	-x_nu_{n-1},\ast)$
and $\Omega(x_n) =
\Omega(x_n;U_{\hat n},\delta_{n-1}\Phi_{n-1})$.
\end{defi}

Notice that each entry of
	$\zeta_n^{[\delta_{n-1}\Phi_{n-1}]^q}$
	can be written as a finite sum in terms of
	$\zeta_n^\lambda(\log\zeta_n)^j$.
In \eqref{eq:second-summed-direction},
	the $q$-Borel summations
	are understood by first writing
	$\zeta_n^{[\delta_{n-1}\Phi_{n-1}]^q}$
entrywise as the finite sums described above, then forming the
matrix product
\[
R_{\zeta_n}^{[\infty]}(\zeta_n)
\zeta_n^{[\delta_{n-1}\Phi_{n-1}]^q}
\Omega(x_n)^{-1},
\]
	and finally taking
	the $q$-Borel sum of each formal series
	appearing in the entries.
Propositions~\ref{Pro:SolY} and
	\ref{Pro:qBorelSummability}
	show that \eqref{eq:second-summed-direction}
	is well defined
	for the values of $x_n$ considered below.

\begin{lem}\label{Lem:SummedZetaInf}
For each fixed
$x_n\in
\widetilde{\mathbb C^\ast}
\setminus
\left(
\underset{1\leqslant k<n}{\bigcup}
\frac{q^{\alpha_k}}
{(1-q)u_k}q^{\mathbb Z}
\cup
\underset{1\leqslant i,j<n}{\bigcup}
\frac{q^{\lambda_i^{(n-1)}}}
{(1-q)u_j}q^{\mathbb Z}
\right)$,
the matrix function
$\mathcal ST_{\zeta_n}^{[\infty]}(\zeta_n;x_n)$
defined in \eqref{Eq:SummedZetaInfSolution}
is a fundamental solution of
\eqref{Eq:TargetSysLim-zeta}.
The matrix function
$\mathcal SR_{\zeta_n}^{[\infty]}(\zeta_n;x_n)$
admits
$R_{\zeta_n}^{[\infty]}(\zeta_n)$
as its $q$-Gevrey asymptotic expansion of order one at
$\zeta_n=\infty$, away from
$\{x_nu_1,\ldots,x_nu_{n-1}\}q^{\mathbb Z}$.
In particular, for every $N\geqslant0$ we have
\begin{align*}
\mathcal SR_{\zeta_n}^{[\infty]}(\zeta_n;x_n)
&=
I+\sum_{\nu=1}^{N}
R_{\zeta_n,\nu}^{[\infty]}\zeta_n^{-\nu}
+O(\zeta_n^{-N-1}),
\qquad
\zeta_n\to\infty.
\end{align*}
Moreover,
$\mathcal SR_{\zeta_n}^{[\infty]}(\zeta_n;x_n)^{\pm1}$
are meromorphic in $\zeta_n$ on
$\widetilde{\mathbb C^\ast}$.
For every $1\leqslant k<n$, we have
\begin{subequations}\label{Pole:SummedZetaInf}
\begin{align}
\operatorname{Pole}_{\zeta_n}
\left(
\left(
\mathcal SR_{\zeta_n}^{[\infty]}(\zeta_n;x_n)
\zeta_n^{[\delta_{n-1}\Phi_{n-1}]^q}
\Omega(x_n)^{-1}
\right)_{\ast k}
\right)
&\subseteq
x_nu_kq^{\mathbb Z},
&& |q|<1,
\\
\operatorname{Pole}_{\zeta_n}
\left(
\left(
\left(
\mathcal SR_{\zeta_n}^{[\infty]}(\zeta_n;x_n)
\zeta_n^{[\delta_{n-1}\Phi_{n-1}]^q}
\Omega(x_n)^{-1}
\right)^{-1}
\right)_{k\ast}
\right)
&\subseteq
x_nu_kq^{\mathbb Z},
&& |q|>1,
\end{align}
\end{subequations}
and all these poles have order at most one.
\end{lem}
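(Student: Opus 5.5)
We treat the case $|q|<1$ in detail and obtain $|q|>1$ by the inverse--transpose duality \eqref{Eq:qInverseH} in Corollary~\ref{Cor:Basic}. That duality exchanges $\mathcal S^{(+)}$ and $\mathcal S^{(-)}$, and it maps the spirals $x_nu_kq^{\mathbb Z}$ to themselves.

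The starting point is the factorization
\[
G(\zeta_n):=R_{\zeta_n}^{[\infty]}(\zeta_n)\,\zeta_n^{[\delta_{n-1}\Phi_{n-1}]^q}\Omega(x_n)^{-1}.
\]
Here $\Omega(x_n)=\Omega(x_n;U_{\hat n},\delta_{n-1}\Phi_{n-1})$ is a constant matrix, for fixed $x_n$, on the relevant spiral. By Proposition~\ref{Pro:SolY} applied to the limiting $x_n$-system \eqref{Eq:TargetSysLim-x}, it is well defined and invertible precisely when $x_n$ avoids the excluded spirals $\frac{q^{\alpha_k}}{(1-q)u_k}q^{\mathbb Z}$ and $\frac{q^{\lambda_i^{(n-1)}}}{(1-q)u_j}q^{\mathbb Z}$. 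This is exactly where the hypothesis on $x_n$ enters.

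Next we check that each column of $G$ is $q$-Borel summable along the prescribed direction. The $k$-th column of $G$ is a finite combination of the series $(R^{[\infty]}_{\zeta_n})_{\ast j}$ multiplied by monomials $\zeta_n^{\lambda}(\log\zeta_n)^i$. Applying Proposition~\ref{Pro:qBorelSummability} to \eqref{Eq:TargetSysLim-zeta}, whose leading matrix $E_n$ has zero eigenvalues in the first $n-1$ slots, gives the singular spirals
\[
\Sigma=\tfrac{1}{q-1}\,\bigl(1+(q-1)\mathrm{Spec}\,\Phi_{n-1}^{[n-1]}\bigr)q^{\mathbb Z}=\tfrac{1}{q-1}\,q^{\lambda_i^{(n-1)}}q^{\mathbb Z}.
\]
The direction $d_k=-x_nu_k$ avoids $\Sigma$ by the hypothesis on $x_n$. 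The monomial factors commute with the summation, since they are $q$-constants times powers of $\zeta_n$ and the Borel sum is compatible with such multiplications. The last column is convergent, so $d_n=\ast$ is harmless. Hence $\mathcal S_{\boldsymbol d}^{(+)}G$ is well defined.

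We then show that $\mathcal ST_{\zeta_n}^{[\infty]}$ solves the system. Because $\Omega(x_n)$ is constant in $\zeta_n$, $\mathcal S_{\boldsymbol d}^{(+)}G\cdot\Omega(x_n)\,\mathrm{diag}(I,e_q(q^{-\alpha_n}\zeta_n))$ solves \eqref{Eq:TargetSysLim-zeta} column by column. This follows from \cite[Lemma~1.4, Remark~1.5]{dreyfus2015building}, since the Borel sum is a morphism of difference modules. The asymptotic expansion follows from \eqref{Eq:qGevreyAsymptotic} applied to each column away from $-d_kq^{\mathbb Z}=x_nu_kq^{\mathbb Z}$. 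We multiply back by $\Omega(x_n)\zeta_n^{-[\delta_{n-1}\Phi_{n-1}]^q}$ and note that the $\zeta_n^{\pm\lambda}$ factors cancel formally. This recovers the expansion $I+\sum R_{\zeta_n,\nu}^{[\infty]}\zeta_n^{-\nu}$ and gives fundamentality via $\det\to1$.

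Meromorphy and the pole bounds \eqref{Pole:SummedZetaInf} follow from Definition~\ref{Def:qBorel}: the poles of $\mathcal S_{d_k}$ lie on $-d_kq^{\mathbb Z}=x_nu_kq^{\mathbb Z}$ and are simple. For the inverse we use the determinant of the solution, which is explicitly $e_q$-type, so $\mathcal SR^{-1}$ is meromorphic.

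The main obstacle is justifying entrywise summation of $G$. The product mixes $\zeta_n^{\lambda}$ factors with divergent series, so it must be checked that the $q$-Borel transform of $\zeta_n^{\lambda}\widehat f$ keeps $Q$-exponential growth along $d^{-1}q^{\mathbb Z}$. To handle this, we would absorb $\zeta_n^\lambda$ into a theta quotient $\theta_q(q^\lambda\zeta_n c)/\theta_q(\zeta_n c)$, which is a $q$-constant multiple of $\zeta_n^{\lambda}$, and then reduce to the scalar-shifted summability of Proposition~\ref{Pro:qBorelSummability}.
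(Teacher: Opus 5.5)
Your proposal is correct and follows the paper's argument with more detail: the paper's proof invokes the morphism property of $q$-Borel summation from \cite{dreyfus2015building} together with Proposition~\ref{Pro:qBorelSummability}, and, as you note, the spirals $\frac{q^{\lambda_i^{(n-1)}}}{(1-q)u_j}q^{\mathbb Z}$ are excluded so that the directions $d_k=-x_nu_k$ avoid the singular set of that proposition (they play no role in the definedness or invertibility of $\Omega(x_n)$, which only requires $x_n\notin\frac{q^{\alpha_k}}{(1-q)u_k}q^{\mathbb Z}$, so your ``precisely'' claim about $\Omega(x_n)$ is inaccurate but not load-bearing). The ``main obstacle'' you flag does not arise: by the convention stated after Definition~\ref{Def:ReassembledQBorelSum}, only the power-series coefficients of the monomials $\zeta_n^{\lambda}(\log\zeta_n)^j$ are summed, so linearity of $\mathcal S_d$ gives the columnwise identity \eqref{eq:omega-En-columnwise-sum} directly, exhibiting each column of $\mathcal ST_{\zeta_n}^{[\infty]}\Omega(x_n)^{-1}$ as a column of the summed solution $T_{\zeta_n,-x_nu_k}^{[\infty]}\Omega(x_n)^{-1}$, and no theta-quotient substitution is needed.
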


\begin{prf}
By the standard morphism property of the
$q$-Borel summation
\cite{dreyfus2015building},
$\mathcal ST_{\zeta_n}^{[\infty]}(\zeta_n;x_n)$
is a fundamental solution of
\eqref{Eq:TargetSysLim-zeta}, and
$\mathcal SR_{\zeta_n}^{[\infty]}(\zeta_n;x_n)$
admits
$R_{\zeta_n}^{[\infty]}(\zeta_n)$
as its $q$-Gevrey asymptotic expansion of order one at
$\zeta_n=\infty$.
The remaining assertions follow directly from
Proposition~\ref{Pro:qBorelSummability}.
\end{prf}

\subsection{Factorization of the Canonical Fundamental Solutions}
\label{Sect:CanFact}

In this subsection,
we prove the following three factorization formulas for
the canonical fundamental solutions:
\begin{align}
Y^{[\infty]}(x,u_n) &=
\mathcal Y_{\zeta_n}^{[\infty]}(x_n,\zeta_n) \cdot
	x_n^{-[\delta_{n-1}\Phi_{n-1}]^q}
	T_{x_n}^{[\infty]}(x_n),
\tag{I}\label{Eq:FirstFactorization}\\
\mathcal Y_{x_n}^{[0]}(x_n,\zeta_n) \cdot
	\zeta_n^{-[\delta_{n-1}\Phi_{n-1}]^q}
	\mathcal ST_{\zeta_n}^{[\infty]}(\zeta_n;x_n) &=
\mathcal Y_{\zeta_n}^{[\infty]}(x_n,\zeta_n) \cdot
	x_n^{-[\delta_{n-1}\Phi_{n-1}]^q}
	T_{x_n}^{[0]}(x_n),
\tag{II}\label{Eq:SecondFactorization}\\
\mathcal Y_{x_n}^{[0]}(x_n,\zeta_n) \cdot
	\zeta_n^{-[\delta_{n-1}\Phi_{n-1}]^q}
	T_{\zeta_n}^{[0]}(\zeta_n)W_{n-1} &=
Y^{[0]}(x,u_n) \cdot W_n(u_n).
\tag{III}\label{Eq:ThirdFactorization}
\end{align}
\noindent
Identity \eqref{Eq:FirstFactorization}
follows directly from the definition \eqref{Eq:CanZetaInfDef}
in Lemma~\ref{Lem:CanZetaInf}.

We first note that the matrix functions on both sides of
\eqref{Eq:FirstFactorization}--\eqref{Eq:ThirdFactorization}
are common solutions of \eqref{Eq:TargetSysNew}.

\begin{lem}\label{Lem:SecondCommonSolution}
If $T(\zeta_n)$ is a solution of
\eqref{Eq:TargetSysLim-zeta}, then
$\mathcal Y_{x_n}^{[0]}(x_n,\zeta_n)
\zeta_n^{-[\delta_{n-1}\Phi_{n-1}]^q}
T(\zeta_n)$
is a common solution of \eqref{Eq:TargetSysNew}.
\end{lem}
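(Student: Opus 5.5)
The statement has two parts, one for each equation in \eqref{Eq:TargetSysNew}. Set
\[
\mathcal Z(x_n,\zeta_n):=\mathcal Y_{x_n}^{[0]}(x_n,\zeta_n)\,\zeta_n^{-[\delta_{n-1}\Phi_{n-1}]^q}T(\zeta_n).
\]
The $x_n$-equation \eqref{Eq:TargetSysNew-x} is immediate. For fixed $\zeta_n$, the factor $\zeta_n^{-[\delta_{n-1}\Phi_{n-1}]^q}T(\zeta_n)$ is independent of $x_n$. Since $\mathcal Y_{x_n}^{[0]}$ solves \eqref{Eq:TargetSysNew-x} by Lemma~\ref{Lem:CanXn0}, so does its right multiple $\mathcal Z$. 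The real content is therefore the $\zeta_n$-equation \eqref{Eq:TargetSysNew-zeta}.

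For the $\zeta_n$-equation I would use compatibility of \eqref{Eq:TargetSysNew}. The system comes from \eqref{Eq:TargetSystem}, which is compatible by construction, since its canonical solutions $Y^{[0]}(x,u_n)$ are common solutions by Lemma~\ref{Lem:qBasic} and Theorem~\ref{Thm:qIsoMD}. Hence for fixed $\zeta_n$ the $\zeta_n$-shift of a solution of the $x_n$-equation is again such a solution. Concretely, define the gauged solution and its shift ratio
\[
\widetilde{\mathcal Y}(x_n,\zeta_n):=\Bigl(\frac{\zeta_n}{x_n}\Bigr)^{[\delta_{n-1}\Phi_{n-1}]^q}\mathcal Y_{x_n}^{[0]}(x_n,\zeta_n),
\]
\[
M(x_n,\zeta_n):=\widetilde{\mathcal Y}(x_n,\zeta_n)^{-1}\bigl(I+(q-1)\zeta_nA_2(x_n,\zeta_n)\bigr)^{-1}\widetilde{\mathcal Y}(x_n,q\zeta_n).
\]
By compatibility, $M$ is a $q$-constant in $x_n$. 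The claim is equivalent to
\[
M(x_n,\zeta_n)=\zeta_n^{-[\delta_{n-1}\Phi_{n-1}]^q}\bigl(I+(q-1)(\zeta_nE_n+\Phi_{n-1})\bigr)^{-1}(q\zeta_n)^{[\delta_{n-1}\Phi_{n-1}]^q},
\]
because the limiting system \eqref{Eq:TargetSysLim-zeta} reads $T(q\zeta_n)=(I+(q-1)(\zeta_nE_n+\Phi_{n-1}))T(\zeta_n)$, and substituting this into $\mathcal Z$ reduces the $\zeta_n$-equation exactly to the displayed identity for $M$.

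To identify $M$, I would let $x_n\to0$ along a $q$-spiral with $\zeta_n$ fixed. Write out $\widetilde{\mathcal Y}$ using \eqref{Eq:Yxn0Y}:
\[
\widetilde{\mathcal Y}(x_n,\zeta_n)=\Bigl[\Bigl(\frac{\zeta_n}{x_n}\Bigr)^{\operatorname{ad}[\delta_{n-1}\Phi_{n-1}]^q}\mathcal H_{x_n}^{[0]}\Bigr]\zeta_n^{[\delta_{n-1}\Phi_{n-1}]^q}.
\]
By \eqref{Eq:Yxn0H} the bracket is $I+O(x_n^{\varepsilon_{n-1}})$, locally uniformly in $\zeta_n$, so it applies at both $\zeta_n$ and $q\zeta_n$. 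By \eqref{Eq:EstA2}, $A_2(x_n,\zeta_n)=E_n+\Phi_{n-1}/\zeta_n+O(x_n^{\varepsilon_{n-1}})$. Substituting both expansions into $M$ gives the target expression up to $o(1)$ as $x_n\to0$. Since $M$ is $q$-constant in $x_n$, it equals its limit, which proves the claim.

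I expect the main obstacle to be this limit step. The $x_n$-dependence must cancel exactly, in particular the factor $(\zeta_n/x_n)^{[\delta_{n-1}\Phi_{n-1}]^q}$, which shifts by $q^{[\delta_{n-1}\Phi_{n-1}]^q}$ under $\zeta_n\mapsto q\zeta_n$. This cancellation is the reason the gauge in \eqref{Eq:TargetSysNew-zeta} was chosen. One must also check that $x_n$-dependent logarithmic or power factors (from $x_n^{[\delta_{n-1}\Phi_{n-1}]^q}$) do not spoil the limit. They appear only through the conjugated bracket, which is controlled by \eqref{Eq:Yxn0H}, so this should be a bookkeeping check rather than a real difficulty.
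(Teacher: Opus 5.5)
Your proposal is correct and follows essentially the paper's proof. The paper also uses compatibility of \eqref{Eq:TargetSysNew} to show that the $\zeta_n$-shift quotient of $\bigl(\frac{\zeta_n}{x_n}\bigr)^{[\delta_{n-1}\Phi_{n-1}]^q}\mathcal Y_{x_n}^{[0]}$ is a $q$-constant in $x_n$, and then identifies it by letting $x_n\to0$ with \eqref{Eq:Yxn0H} and \eqref{Eq:EstA2}. The only difference is cosmetic: your $M$ is the inverse of the paper's quotient $\widetilde{\mathcal Y}(x_n,q\zeta_n)^{-1}\bigl(I+(q-1)\zeta_nA_2\bigr)\widetilde{\mathcal Y}(x_n,\zeta_n)$, so you must invert both shift matrices, which excludes $\zeta_n=q^{\alpha_n}/(1-q)$, whereas the paper's orientation avoids those inversions.
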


\begin{prf}
It suffices to verify that
$\mathcal Y_{x_n}^{[0]}(x_n,\zeta_n)
\zeta_n^{-[\delta_{n-1}\Phi_{n-1}]^q}
T(\zeta_n)$
satisfies \eqref{Eq:TargetSysNew-zeta}.
Denote the shift matrices of
\eqref{Eq:TargetSysNew-zeta} and
\eqref{Eq:TargetSysLim-zeta} by
\begin{align*}
\mathcal B_2(x_n,\zeta_n)
:=
I+(q-1)\zeta_nA_2(x_n,\zeta_n),\qquad
\mathcal B_\infty(\zeta_n)
:=
I+(q-1)(\zeta_nE_n+\Phi_{n-1}),
\end{align*}
respectively. 
The compatibility of the two equations in
\eqref{Eq:TargetSysNew} implies that
\begin{align}
&\left(
\left(\frac{q\zeta_n}{x_n}\right)^{
[\delta_{n-1}\Phi_{n-1}]^q}
\mathcal Y_{x_n}^{[0]}(x_n,q\zeta_n)
\right)^{-1}
\mathcal B_2(x_n,\zeta_n)
\left(\frac{\zeta_n}{x_n}\right)^{
[\delta_{n-1}\Phi_{n-1}]^q}
\mathcal Y_{x_n}^{[0]}(x_n,\zeta_n)
\label{Eq:SecondCommonShiftQuotient}
\end{align}
is a $q$-constant with respect to $x_n$.
By Lemma~\ref{Lem:CanXn0},
\begin{align*}
\left(\frac{\zeta_n}{x_n}\right)^{
	[\delta_{n-1}\Phi_{n-1}]^q}
	\mathcal Y_{x_n}^{[0]}(x_n,\zeta_n)
	\zeta_n^{-[\delta_{n-1}\Phi_{n-1}]^q}
	&=
	I+O(x_n^{\varepsilon_{n-1}}),
	\qquad x_n\to0.
\end{align*}
Together with \eqref{Eq:EstA2} in
	Lemma~\ref{Lem:TargetSysNew},
this shows that the limit of
\eqref{Eq:SecondCommonShiftQuotient} as $x_n\to0$ is
\begin{align}
	(q\zeta_n)^{-[\delta_{n-1}\Phi_{n-1}]^q}
	\mathcal B_\infty(\zeta_n)
	\zeta_n^{[\delta_{n-1}\Phi_{n-1}]^q}.
\label{Eq:SecondCommonShiftQuotient2}
\end{align}
Therefore, \eqref{Eq:SecondCommonShiftQuotient}
	is equal to \eqref{Eq:SecondCommonShiftQuotient2}.
A direct calculation completes the proof.
\end{prf}

To prove identity \eqref{Eq:SecondFactorization},
we need the following remainder estimate.
\begin{lem}\label{Lem:ZeroCanonicalStability}
For each fixed
$x_n\in
\widetilde{\mathbb C^\ast}
\setminus
\begin{cases}
\underset{1\leqslant k<n}{\bigcup}
\frac{q^{\alpha_k}}{(1-q)u_k}q^{-\mathbb N},
& |q|<1,
\\[8pt]
\underset{1\leqslant k<n}{\bigcup}
\frac{q^{\alpha_k}}{(1-q)u_k}q^{\mathbb N+1},
& |q|>1.
\end{cases}$,
under the additional open condition \eqref{Cond:less1new},
	we have
\begin{align}
	\mathcal H_{x_n}^{[0]}(x_n,\zeta_n)
	R_{x_n}^{[0]}(x_n)^{-1} &=
	I+O(
	\zeta_n^{-\epsilon_\alpha} ),
	\qquad
	\zeta_n\to\infty.
	\label{Eq:ZeroCanonicalStability}
\end{align}
\end{lem}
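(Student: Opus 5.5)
We compare the two $q$-difference equations in $x_n$ satisfied by $\mathcal H_{x_n}^{[0]}(x_n,\zeta_n)$ and $R_{x_n}^{[0]}(x_n)$. Set
\[
G(x_n,\zeta_n):=\mathcal H_{x_n}^{[0]}(x_n,\zeta_n)R_{x_n}^{[0]}(x_n)^{-1},
\qquad
B_\infty(x_n):=I+(q-1)\bigl(U_{\hat n}x_n+\delta_{n-1}\Phi_{n-1}\bigr).
\]
By \eqref{Eq:Yxn0Recurrence} and the analogous recurrence for $R_{x_n}^{[0]}$ (from Proposition~\ref{Can:zero} applied to \eqref{Eq:TargetSysLim-x}), both share the right factor $q^{-[\delta_{n-1}\Phi_{n-1}]^q}$. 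Hence
\[
G(qx_n,\zeta_n)=B(x_n,\zeta_n)\,G(x_n,\zeta_n)\,B_\infty(x_n)^{-1}.
\]
By \eqref{Eq:EstA1}, $B(x_n,\zeta_n)-B_\infty(x_n)=(q-1)x_n\bigl(A_1-U_{\hat n}-\delta_{n-1}\Phi_{n-1}/x_n\bigr)=O(|x_n|^{\epsilon_\alpha}|\zeta_n|^{-\epsilon_\alpha})$. The plan is to first establish \eqref{Eq:ZeroCanonicalStability} for $x_n$ in a small fixed disc, and then propagate it along the $q$-spiral using this recurrence.

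\emph{Step 1 (small $x_n$).} Fix $|x_n|\le r_0$. Write $G=I+\Delta$ and rewrite the recurrence in additive form:
\[
\Delta(qx_n)-B_\infty\Delta B_\infty^{-1}\cdot(\text{adjusted})=(B-B_\infty)(I+\Delta)B_\infty^{-1}.
\]
We do not solve this directly. Instead we use the weighted normalization \eqref{Eq:Yxn0H} and the weighted estimate \eqref{Eq:EstadA1}. Setting $u_n=\zeta_n/x_n$ and conjugating by $u_n^{\operatorname{ad}[\delta_{n-1}\Phi_{n-1}]^q}$, both $\mathcal H_{x_n}^{[0]}$ and $R_{x_n}^{[0]}$ satisfy Volterra-type $q$-Jackson integral equations from $x_n=0$:
\[
\mathcal H=I+\int_0^{x_n}(\cdots)\,\mathrm d_qt .
\]
Their coefficients differ by $O(|t|^{-1+\epsilon_\alpha}|\zeta_n|^{-\epsilon_\alpha})$. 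A Gronwall/Picard estimate on the $q$-spiral $x_nq^{\mathbb N}$, of the same type as in Lemma~\ref{Lem:Picard} and the proof of Lemma~\ref{Lem:CanXn0}, gives $\|G-I\|\le C|x_n|^{\epsilon_\alpha}|\zeta_n|^{-\epsilon_\alpha}$, uniformly for $|x_n|\le r_0$. The key point is that the $\zeta_n$-decay is exactly the gain recorded in \eqref{Eq:EstA1}--\eqref{Eq:EstadA1}, and summing over $t=x_nq^k$ converges because $\epsilon_\alpha>0$.

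\emph{Step 2 (propagation).} For general admissible $x_n$, write $x_n=q^{-N}y$ with $|y|\le r_0$ when $|q|<1$ (or the reverse when $|q|>1$), and iterate the recurrence $N$ times:
\[
G(x_n)=\prod B(\cdot)\,G(y)\prod B_\infty(\cdot)^{-1}.
\]
The excluded set in the statement is exactly where some $B_\infty(q^jy)$ is singular; these are the points $\frac{q^{\alpha_k}}{(1-q)u_k}$, coming from the zeros of the diagonal entries $1+(q-1)(u_kx+[\alpha_k]_q)$ together with the lower block. So for fixed $x_n$ the finitely many inverses $B_\infty^{-1}$ are bounded. Each $B(q^jy,\zeta_n)=B_\infty(q^jy)+O(\zeta_n^{-\epsilon_\alpha})$, and $N$ is fixed. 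Therefore
\[
G(x_n)=\prod B_\infty\,(I+O(\zeta_n^{-\epsilon_\alpha}))\prod B_\infty^{-1}+O(\zeta_n^{-\epsilon_\alpha})=I+O(\zeta_n^{-\epsilon_\alpha}).
\]

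\emph{Main obstacle.} The hard part is Step 1. The unweighted estimate \eqref{Eq:EstA1} alone does not control $\mathcal H_{x_n}^{[0]}$, which is normalized only after the conjugation \eqref{Eq:Yxn0H}. One must check that passing between the weighted frame (where $u_n=\zeta_n/x_n$ varies with $t$ along the Jackson sum) and the unweighted frame costs nothing worse than $\zeta_n^{-\epsilon_\alpha}$. This is where \eqref{Cond:less1new} and \eqref{Eq:EstadA1} enter: the off-diagonal blocks grow at most like $u_n^{1-\varepsilon_\alpha}$. I would first try the iteration in the weighted frame, then unconjugate at the end, using the shrinking bound $|u_n^{\pm\operatorname{ad}[\delta_{n-1}\Phi_{n-1}]^q}|\lesssim |u_n|^{1-\epsilon}$ balanced against the extra factors of $x_n$.
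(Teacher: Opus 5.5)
There is a genuine gap, and it is the step you flag as the main obstacle. Step~1 carries the whole content of the lemma, and neither mechanism you suggest for it works.

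For $G=\mathcal H_{x_n}^{[0]}(R_{x_n}^{[0]})^{-1}$, the recurrence $G(qx_n)=B\,G\,B_\infty^{-1}$ has homogeneous part $X\mapsto B_\infty XB_\infty^{-1}$, with $B_\infty(0)=q^{[\delta_{n-1}\Phi_{n-1}]^q}$. This is not close to the identity, and its iterates $q^{k\operatorname{ad}[\delta_{n-1}\Phi_{n-1}]^q}$ grow in off-diagonal spectral blocks. So there is no Volterra equation with small kernel for $G$. Having two integral equations whose coefficients differ by $O(|t|^{-1+\epsilon_\alpha}|\zeta_n|^{-\epsilon_\alpha})$ does not bound the difference of their solutions unless the common part is harmless.

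In the frame $(\zeta_n/t)^{\operatorname{ad}[\delta_{n-1}\Phi_{n-1}]^q}$ the common part is not harmless. The common term $U_{\hat n}t$ becomes $O(|t|^{\varepsilon_{n-1}}|\zeta_n|^{1-\varepsilon_{n-1}})$, and $(\zeta_n/t)^{\operatorname{ad}[\delta_{n-1}\Phi_{n-1}]^q}R_{x_n}^{[0]}(t)$ is not bounded uniformly in $\zeta_n$. A Gronwall argument there gives constants that grow with $\zeta_n$, never the decay $\zeta_n^{-\epsilon_\alpha}$. That frame only yields estimates locally uniform in $\zeta_n$, as in Lemma~\ref{Lem:CanXn0}.

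The missing idea is to pass to the quotient
\[
\mathcal G(x_n,\zeta_n):=T_{x_n}^{[0]}(x_n)^{-1}\mathcal Y_{x_n}^{[0]}(x_n,\zeta_n)=x_n^{-\operatorname{ad}[\delta_{n-1}\Phi_{n-1}]^q}\bigl(R_{x_n}^{[0]}(x_n)^{-1}\mathcal H_{x_n}^{[0]}(x_n,\zeta_n)\bigr),
\]
which is related to yours by $G=T_{x_n}^{[0]}\,\mathcal G\,(T_{x_n}^{[0]})^{-1}$. The $U_{\hat n}$ term cancels exactly, and $\mathcal G$ satisfies a genuine Volterra equation:
\[
\mathcal G(x_n)=I+\int_0^{x_n}T_{x_n}^{[0]}(qt)^{-1}\Bigl(A_1(t,\zeta_n)-U_{\hat n}-\frac{\delta_{n-1}\Phi_{n-1}}{t}\Bigr)T_{x_n}^{[0]}(t)\,\mathcal G(t)\,\mathrm d_qt .
\]
Moreover $\mathcal G\to I$ as $x_n\to0$, by \eqref{Eq:Yxn0H} and $R_{x_n}^{[0]}=I+O(x_n)$.

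Up to harmless factors, the kernel is $t^{-\operatorname{ad}[\delta_{n-1}\Phi_{n-1}]^q}$ applied to $A_1-U_{\hat n}-\delta_{n-1}\Phi_{n-1}/t$. In the spectral block $(i,j)$, let $\sigma_{ij}$ be the effective exponent of the eigenvalue difference along the $q$-spiral. The unweighted bound \eqref{Eq:EstA1} then costs an extra factor $|t|^{-\sigma_{ij}}$, and the weighted bound \eqref{Eq:EstadA1} an extra factor $|\zeta_n|^{-\sigma_{ij}}$. Neither suffices alone: the first can make the kernel non-integrable at $t=0$, and the second kills the decay in $\zeta_n$. But for $|t|<1<|\zeta_n|$ one of the two factors is at most $1$. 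This gives the kernel bound $C|t|^{-1+\epsilon_\alpha}|\zeta_n|^{-\epsilon_\alpha}$.

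Picard iteration then gives $\mathcal G=I+O(\zeta_n^{-\epsilon_\alpha})$. Conjugating by the fixed matrix $T_{x_n}^{[0]}(x_n)$ gives the lemma. For $|q|<1$ the Jackson sum only visits $x_nq^{\mathbb N}$, and this avoids the poles of $T_{x_n}^{[0]}$ precisely because $x_n$ lies outside the excluded set. So the argument works at every admissible $x_n$ directly, and your Step~2 is unnecessary.

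If you keep Step~2, its direction is reversed. For $x_n=q^{-N}y$ one has $G(x_n)=\prod B(\cdot,\zeta_n)^{-1}\,G(y)\prod B_\infty(\cdot)$, so it is the perturbed $B$ that must be inverted.
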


\begin{prf}
For each $x_n$ in the domain specified in the statement
and all sufficiently large $\zeta_n$, denote
\begin{align*}
G(x_n,\zeta_n)
&:=
x_n^{-\operatorname{ad}[\delta_{n-1}\Phi_{n-1}]^q}
\left(
	R_{x_n}^{[0]}(x_n)^{-1}
	\mathcal H_{x_n}^{[0]}(x_n,\zeta_n)
\right).
\end{align*}
Since the coefficient matrix in
\eqref{Eq:TargetSysLim-x} is block diagonal,
the uniqueness in Proposition~\ref{Can:zero} shows that
$R_{x_n}^{[0]}(x_n)$ is block diagonal.
Hence
$R_{x_n}^{[0]}(x_n)=I+O(x_n)$ and the shrinking condition
\eqref{Cond:less1} imply
\[
x_n^{-\operatorname{ad}[\delta_{n-1}\Phi_{n-1}]^q}
R_{x_n}^{[0]}(x_n)^{-1}
=
I+O(x_n^{\varepsilon_{n-1}}).
\]
Together with equation \eqref{Eq:Yxn0H} in
Lemma~\ref{Lem:CanXn0}, this gives
\begin{align}
G(x_n,\zeta_n)&\longrightarrow I,
\qquad x_n\to0,
\label{Lim:ZeroStabilityBoundary}
\end{align}
for each fixed $\zeta_n$.
The $x_n$-equation
	\eqref{Eq:TargetSysNew-x} and its limiting equation
	\eqref{Eq:TargetSysLim-x} give
	the following $q$-Jackson integral equation for $G$
	as a function of $x_n$:
\begin{align}
G(x_n,\zeta_n)
&=
I+
\int_0^{x_n}
T_{x_n}^{[0]}(qx_n)^{-1}
\left(
A_1(x_n,\zeta_n)
-
\left(
	U_{\hat n} +
	\frac{\delta_{n-1}\Phi_{n-1}}{x_n}
\right)
\right)
T_{x_n}^{[0]}(x_n)
G(x_n,\zeta_n)
\,\mathrm d_qx_n.
\label{Eq:ZeroStabilityPicardIntegral}
\end{align}
We now establish the required estimate for
	the kernel of the integral operator in 
	\eqref{Eq:ZeroStabilityPicardIntegral}.
The standard Picard iteration then gives the unique solution
satisfying \eqref{Lim:ZeroStabilityBoundary}.
The resulting solution has the asymptotic behavior required for
\eqref{Eq:ZeroCanonicalStability}.

As in the proof of Lemma~\ref{Lem:Picard}, denote the
Jordan--Chevalley decomposition of
$[\delta_{n-1}\Phi_{n-1}]^q$ and the spectral decomposition of
its semisimple part $\mathscr S$ by
\begin{align*}
[\delta_{n-1}\Phi_{n-1}]^q
&=
\mathscr S+\mathscr N
=
\lambda^{(n-1)}_1\mathscr P_1 + \cdots +
\lambda^{(n-1)}_{n-1}\mathscr P_{n-1}+
\lambda^{(n-1)}_{n}\mathscr P_{n}+
\mathscr N,
\end{align*}
where $\mathscr P_i$ denotes the spectral projection
corresponding to $\lambda_i^{(n-1)}$ for $1\leqslant i<n$,
and we further denote
\begin{align*}
\lambda_n^{(n-1)}:=\alpha_n,\qquad
\mathscr P_n:=E_n.
\end{align*}
Denote
\begin{align*}
\sigma_{ij}
:=
\operatorname{Re}(
	\lambda^{(n-1)}_i -
	\lambda^{(n-1)}_j)-
\frac{\arg q}{\ln|q|}
\operatorname{Im}(
	\lambda^{(n-1)}_i -
	\lambda^{(n-1)}_j).
\end{align*}
Applying \eqref{Eq:EstA1} and \eqref{Eq:EstadA1} gives
\begin{subequations}
\begin{align}
	\left\|
	\mathscr P_i
	\left(
	x_n^{-\operatorname{ad}[\delta_{n-1}\Phi_{n-1}]^q}
	\left(
	A_1(x_n,\zeta_n)
-
\left(
	U_{\hat n} +
	\frac{\delta_{n-1}\Phi_{n-1}}{x_n}
\right)
	\right)
	\right)
	\mathscr P_j
	\right\|
	&\leqslant
	C|x_n|^{-1+\epsilon_\alpha}
	|\zeta_n|^{-\epsilon_\alpha}
	|x_n|^{-\sigma_{ij}},
\label{Est:A1}\\
	\left\|
	\mathscr P_i
	\left(
	x_n^{-\operatorname{ad}[\delta_{n-1}\Phi_{n-1}]^q}
	\left(
	A_1(x_n,\zeta_n)
-
\left(
	U_{\hat n} +
	\frac{\delta_{n-1}\Phi_{n-1}}{x_n}
\right)
	\right)
	\right)
	\mathscr P_j
	\right\|
	&\leqslant
	C|x_n|^{-1+\epsilon_\alpha}
	|\zeta_n|^{-\epsilon_\alpha}
	|\zeta_n|^{-\sigma_{ij}}.
\label{Est:A1ad}
\end{align}
\end{subequations}
Since $0<|x_n|<1<|\zeta_n|$, we have
\begin{align*}
\min\left\{
|x_n|^{-\sigma_{ij}},
|\zeta_n|^{-\sigma_{ij}}
\right\}
\leqslant1.
\end{align*}
The logarithmic factors arising from $\mathscr N$ are absorbed
by slightly increasing $\varepsilon_{n-1}$ and
$\varepsilon_\alpha$, as permitted by the strict inequalities
in \eqref{Cond:less1} and \eqref{Cond:less1new}.
Taking the minimum of \eqref{Est:A1} and
\eqref{Est:A1ad},
	and summing over $1\leqslant i,j\leqslant n$, gives
\begin{subequations}
\begin{align}
\left\|
x_n^{-\operatorname{ad}[\delta_{n-1}\Phi_{n-1}]^q}
\left(
A_1(x_n,\zeta_n)
-
\left(
	U_{\hat n} +
	\frac{\delta_{n-1}\Phi_{n-1}}{x_n}
\right)
\right)
\right\|
&\leqslant
C|x_n|^{-1+\epsilon_\alpha}
|\zeta_n|^{-\epsilon_\alpha},
\label{Est:ZeroStabilityConjugatedA1}\\
\left\|
T_{x_n}^{[0]}(qx_n)^{-1}
\left(
A_1(x_n,\zeta_n)
-
\left(
	U_{\hat n} +
	\frac{\delta_{n-1}\Phi_{n-1}}{x_n}
\right)
\right)
T_{x_n}^{[0]}(x_n)
\right\|
&\leqslant
C|x_n|^{-1+\epsilon_\alpha}
|\zeta_n|^{-\epsilon_\alpha}.
\label{Est:ZeroStabilityConjugatedTA1}
\end{align}
\end{subequations}
For each fixed $x_n$ in the domain specified in the statement,
since the kernel of the integral operator in
\eqref{Eq:ZeroStabilityPicardIntegral} satisfies
\eqref{Est:ZeroStabilityConjugatedTA1},
the standard Picard iteration completes the proof.
\end{prf}

\begin{pro}\label{Lem:SecondFactorization}
Identity \eqref{Eq:SecondFactorization}
holds for
\begin{align*}
x_n\notin
\underset{1\leqslant k<n}{\bigcup}
\frac{q^{\alpha_k}}{(1-q)u_k}q^{\mathbb Z},\qquad
\zeta_n\notin
\frac{q^{\alpha_n}}{1-q}q^{\mathbb Z},\qquad
\frac{\zeta_n}{x_n}\notin
\operatorname{Pole}\left(\Phi_n\right)
\cup
\{u_1,\ldots,u_{n-1}\}
q^{\mathbb Z\setminus\{0\}}.
\end{align*}
\end{pro}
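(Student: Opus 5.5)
Both sides of \eqref{Eq:SecondFactorization} are common solutions of \eqref{Eq:TargetSysNew}: the left side by Lemma~\ref{Lem:SecondCommonSolution} applied to $T=\mathcal ST_{\zeta_n}^{[\infty]}$, which Lemma~\ref{Lem:SummedZetaInf} shows solves \eqref{Eq:TargetSysLim-zeta}; the right side because $\mathcal Y_{\zeta_n}^{[\infty]}$ solves both equations by construction (Lemma~\ref{Lem:CanZetaInf}) and $x_n^{-[\delta_{n-1}\Phi_{n-1}]^q}T_{x_n}^{[0]}(x_n)$ is, after the conjugation built into \eqref{Eq:TargetSysNew-zeta}, independent of $\zeta_n$. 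The quotient
\[
Q(x_n,\zeta_n):=
\Bigl(\mathcal Y_{\zeta_n}^{[\infty]}x_n^{-[\delta_{n-1}\Phi_{n-1}]^q}T_{x_n}^{[0]}\Bigr)^{-1}
\mathcal Y_{x_n}^{[0]}\zeta_n^{-[\delta_{n-1}\Phi_{n-1}]^q}\mathcal ST_{\zeta_n}^{[\infty]}
\]
is therefore a $q$-constant in both $x_n$ and $\zeta_n$. I aim to show $Q=I$.

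The plan is to write $Q$ in terms of the gauge parts. For fixed $x_n$, I would insert the normal forms \eqref{Eq:Yxn0Y} and \eqref{Eq:CanZetaInfNormal}, the definition of $\mathcal ST_{\zeta_n}^{[\infty]}$ in \eqref{Eq:SummedZetaInfSolution}, and $T_{x_n}^{[0]}=R_{x_n}^{[0]}x_n^{[\delta_{n-1}\Phi_{n-1}]^q}$. The exponential factors $\operatorname{diag}(I,e_q(q^{-\alpha_n}\zeta_n))$ and the powers $\zeta_n^{[\delta_{n-1}\Phi_{n-1}]^q}$ then sit on the outside, and $Q$ becomes conjugate to
\[
\mathcal H_{\zeta_n}^{[\infty]}(x_n,\zeta_n)^{-1}\,\Bigl(\tfrac{\zeta_n}{x_n}\Bigr)^{[\delta_{n-1}\Phi_{n-1}]^q}\mathcal H_{x_n}^{[0]}R_{x_n}^{[0]\,-1}\Bigl(\tfrac{\zeta_n}{x_n}\Bigr)^{-[\delta_{n-1}\Phi_{n-1}]^q}\cdot(\text{terms from }\mathcal SR_{\zeta_n}^{[\infty]}).
\]
Lemma~\ref{Lem:ZeroCanonicalStability} gives $\mathcal H_{x_n}^{[0]}R_{x_n}^{[0]-1}=I+O(\zeta_n^{-\epsilon_\alpha})$, though the conjugation by $(\zeta_n/x_n)^{\operatorname{ad}}$ needs care. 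Lemma~\ref{Lem:CanZetaInf} gives the weighted $\mathcal H_{\zeta_n}^{[\infty]}=I+O(\zeta_n^{-\epsilon_\alpha})$, and Lemma~\ref{Lem:SummedZetaInf} gives $\mathcal SR_{\zeta_n}^{[\infty]}=I+O(\zeta_n^{-1})$ away from the spirals $x_nu_kq^{\mathbb Z}$.

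Since $Q$ is $q$-constant in $\zeta_n$, each entry is an elliptic-type function on the $\zeta_n$-torus, twisted by the characters $\zeta_n^{\lambda}$ and $e_q$. I would compare the possible poles of both sides using \eqref{Pole:Yxn0H}, \eqref{Pole:CanZetaInfGauge} and \eqref{Pole:SummedZetaInf}. The reassembled summation directions $\boldsymbol d=(-x_nu_1,\ldots,-x_nu_{n-1},\ast)$ were chosen precisely so that column $k$ of the summed factor has poles only on $x_nu_kq^{\mathbb Z}$ (for $|q|<1$, and rows for $|q|>1$). These must match the theta-type factors coming from the first $n-1$ exponential blocks of $T_{x_n}^{[\infty]}$ hidden inside $\mathcal Y_{\zeta_n}^{[\infty]}$. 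Using the exclusions in the hypothesis, the remaining pole sets of the two sides should be disjoint, so $Q$ is holomorphic on the torus after removing the character twists. A holomorphic section of bounded growth with nontrivial character then vanishes, or is constant. Letting $\zeta_n\to\infty$ along a sequence avoiding the Stokes spirals then gives $Q=I$ entrywise.

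The main obstacle is the entries where the characters $\zeta_n^{\lambda_i-\lambda_j}$ do not decay and the error terms $O(\zeta_n^{-\epsilon_\alpha})$ are multiplied by growing powers. This is where the shrinking conditions \eqref{Cond:less1} and \eqref{Cond:less1new} must be used, with gaps below $1$ so that the products still tend to zero on a $q$-spiral. I would first establish the estimate under \eqref{Cond:less1new}. I would then remove that condition by analytic continuation in $\Phi_{n-1}$: both sides depend analytically on it by Lemma~\ref{Lem:RecStep1}, and the stated exclusion sets for $x_n$ and $\zeta_n$ ensure every factor is defined.
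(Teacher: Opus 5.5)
Your setup matches the paper's. $Q$ is a $q$-constant in both variables. After you pull the normal forms outside, Lemmas~\ref{Lem:CanZetaInf}, \ref{Lem:ZeroCanonicalStability} and \ref{Lem:SummedZetaInf} give $\operatorname{diag}(I,e_q(q^{-\alpha_n}\zeta_n))\,Q\,\operatorname{diag}(I,e_q(q^{-\alpha_n}\zeta_n)^{-1})=I+o(1)$ as $\zeta_n\to\infty$. The growth issue you call the main obstacle is already handled by the weighted estimates in those lemmas.

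\textbf{Gap 1: the limit $\zeta_n\to\infty$ does not give $Q=I$.} For $|q|<1$ the factor $e_q(q^{-\alpha_n}\zeta_n)$ tends to $0$ rapidly. So this estimate forces $Q_{\hat n n}=0$ and the diagonal blocks to be identities. The entries of $Q_{n\hat n}$, however, appear multiplied by $e_q$ and tend to zero \emph{whatever} $q$-constants they are. Letting $\zeta_n\to\infty$ therefore says nothing about them. For $|q|>1$ the same happens to $Q_{\hat n n}$.

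\textbf{Gap 2: $Q$ need not be holomorphic on the torus.} The spiral $x_nu_jq^{\mathbb Z}$ is a common pole locus of both sides of the $j$-th column. On the left it comes from the column summed in direction $-x_nu_j$; on the right from $H^{[\infty]}(x_n,\zeta_n/x_n)_{\ast j}$. Excluding $\zeta_n/x_n\in\{u_1,\ldots,u_{n-1}\}q^{\mathbb Z\setminus\{0\}}$ only restricts where the identity is asserted. It cannot remove poles of a meromorphic $q$-constant, and such poles are spiral-invariant.

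Write $Q_{n\hat n}=c\,\Omega(x_n)_{\hat n\hat n}$ with $c=(c_1,\ldots,c_{n-1})$. Pole comparison only shows that $c_j$ has at most simple poles on $x_nu_jq^{\mathbb Z}$. This, together with the monodromy relation $c_j(x_n\mathrm e^{2\pi\mathrm i},\zeta_n\mathrm e^{2\pi\mathrm i})=\mathrm e^{2\pi\mathrm i(\alpha_j-\alpha_n)}c_j(x_n,\zeta_n)$, is compatible with $c_j$ being a nonzero multiple of $\zeta_n^{\alpha_j-\alpha_n}\theta_q(-q^{\alpha_j-\alpha_n}\zeta_n/(x_nu_j))/\theta_q(-\zeta_n/(x_nu_j))$. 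So no growth or holomorphy argument can force $c_j=0$.

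\textbf{The missing idea is an extra point condition.} At $\zeta_n=q^{\alpha_n}/(1-q)$, the factor $e_q(q^{-\alpha_n}\zeta_n)$ multiplying $c_j$ in the $j$-th column comparison has a simple pole. For $x_n$ off a suitable countable union of spirals, three facts hold there. The left-hand side and the term $H^{[\infty]}_{\ast j}x_n^{\alpha_j}e_q(q^{-\alpha_j}u_jx_n)$ are holomorphic, and $H^{[\infty]}$ is invertible. Hence $c_j(x_n,q^{\alpha_n}/(1-q))=0$.

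Dividing $c_j$ by the theta quotient above gives an elliptic function with at most one simple pole, hence a constant, and the vanishing point makes it zero. For $|q|>1$ the same argument applies to $Q_{\hat n n}$. Without this step, or an equivalent normalization that pins down the Stokes-type block $c$, your argument does not reach $Q=I$.
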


\begin{prf}
Denote
$\Omega(x_n) :=
\Omega(x_n;U_{\hat n},\delta_{n-1}\Phi_{n-1})$.
Lemmas~\ref{Lem:SummedZetaInf} and
	\ref{Lem:SecondCommonSolution}
	show that there exists a
	$q$-constant matrix $C(x_n,\zeta_n)$, 
	such that
\begin{align}\label{II:step1}
\mathcal Y_{x_n}^{[0]}(x_n,\zeta_n)
\zeta_n^{-[\delta_{n-1}\Phi_{n-1}]^q}
\mathcal ST_{\zeta_n}^{[\infty]}(\zeta_n;x_n)
\Omega(x_n)^{-1}
&=
Y^{[\infty]}
\left(x_n,\frac{\zeta_n}{x_n}\right)
C(x_n,\zeta_n).
\end{align}
It remains to prove that $C(x_n,\zeta_n)=I$.

Since $\Omega(x_n)$ commutes with
$\begin{pmatrix}
I&0\\
0&e_q(q^{-\alpha_n}\zeta_n)
\end{pmatrix}$.
Substituting the canonical forms into \eqref{II:step1} gives
\begin{align*}
&
\begin{pmatrix}
I&0\\
0&e_q(q^{-\alpha_n}\zeta_n)
\end{pmatrix}
C(x_n,\zeta_n)
\begin{pmatrix}
I&0\\
0&e_q(q^{-\alpha_n}\zeta_n)^{-1}
\end{pmatrix}
\\
&\quad=
T_{x_n}^{[\infty]}(x_n)^{-1}
\left(
\left(
\frac{\zeta_n}{x_n}
\right)^{
-\operatorname{ad}[\delta_{n-1}\Phi_{n-1}]^q
}
\mathcal H_{\zeta_n}^{[\infty]}(x_n,\zeta_n)
\right)^{-1}
\\
&\qquad\quad\times
\left(
\mathcal H_{x_n}^{[0]}(x_n,\zeta_n)
R_{x_n}^{[0]}(x_n)^{-1}
\right)
T_{x_n}^{[0]}(x_n)
\\
&\qquad\quad\times
\left(
\zeta_n^{-[\delta_{n-1}\Phi_{n-1}]^q}
\mathcal SR_{\zeta_n}^{[\infty]}(\zeta_n;x_n)
\zeta_n^{[\delta_{n-1}\Phi_{n-1}]^q}
\right)
\Omega(x_n)^{-1}.
\end{align*}
By Lemma~\ref{Lem:CanZetaInf},
	Lemma~\ref{Lem:ZeroCanonicalStability}, and
	Lemma~\ref{Lem:SummedZetaInf},
	we obtain
\begin{align*}
&
\begin{pmatrix}
I&0\\
0&e_q(q^{-\alpha_n}\zeta_n)
\end{pmatrix}
C(x_n,\zeta_n)
\begin{pmatrix}
I&0\\
0&e_q(q^{-\alpha_n}\zeta_n)^{-1}
\end{pmatrix}
=
I+o(1).
\end{align*}
Since $C(x_n,\zeta_n)$ is a $q$-constant,
the growth of $e_q(q^{-\alpha_n}\zeta_n)$ gives
\begin{align*}
C(x_n,\zeta_n)
&=
\begin{cases}
\begin{pmatrix}
I&0\\
c(x_n,\zeta_n)&1
\end{pmatrix},
& |q|<1,
\\[10pt]
\begin{pmatrix}
I&c(x_n,\zeta_n)\\
0&1
\end{pmatrix},
& |q|>1.
\end{cases}
\end{align*}
It remains to prove that $c(x_n,\zeta_n)=0$.

Assume that $|q|<1$ in what follows.
Write
$c(x_n,\zeta_n)
=
(c_1(x_n,\zeta_n),\ldots,c_{n-1}(x_n,\zeta_n))$.
Comparing the $j$-th columns on both sides of
\eqref{II:step1}, we have
\begin{align}
&
\mathcal H_{x_n}^{[0]}(x_n,\zeta_n)
\left(
	\frac{\zeta_n}{x_n}
\right)^{-[\delta_{n-1}\Phi_{n-1}]^q}
\left(
\mathcal SR_{\zeta_n}^{[\infty]}(\zeta_n;x_n)
\zeta_n^{[\delta_{n-1}\Phi_{n-1}]^q}
\Omega(x_n)^{-1}
\right)_{\ast j}
\notag\\
&=
H^{[\infty]}
\left(x_n,\frac{\zeta_n}{x_n}\right)_{\ast j}
x_n^{\alpha_j}
e_q(q^{-\alpha_j}u_jx_n) +
c_j(x_n,\zeta_n)
H^{[\infty]}
\left(x_n,\frac{\zeta_n}{x_n}\right)_{\ast n}
x_n^{\alpha_n}
e_q(q^{-\alpha_n}\zeta_n).
\label{Eq:SecondComparisonColumn}
\end{align}
Fix
\begin{align*}
x_n\in\widetilde{\mathbb C^\ast}
\setminus
\Biggl(
&\bigcup_{1\leqslant k<n}
\left\{
	\frac{q^{\alpha_k}}{(1-q)u_k},
	\frac{q^{\alpha_n}}{(1-q)u_k}
\right\} q^{\mathbb Z}
\cup
\bigcup_{1\leqslant i,j<n}
\frac{q^{\lambda_i^{(n-1)}}}
{(1-q)u_j}q^{\mathbb Z} \cup
\frac{q^{\alpha_n}}{1-q}
\operatorname{Pole}(\Phi_n)^{-1}q^{-\mathbb N}
\Biggr).
\end{align*}
Since $c_j(x_n,\zeta_n)$ is a $q$-constant with respect to
both $x_n$ and $\zeta_n$, its poles and their orders are invariant
under $\zeta_n\mapsto q\zeta_n$.
We may therefore assume that both $x_n$ and
$\zeta_n/x_n$ are sufficiently large so that
$H^{[\infty]}
\left(x_n,\frac{\zeta_n}{x_n}\right)_{\ast n}
\neq0$.
By the analyticity of $\Phi_n(u_n)$ at $u_n=\infty$
and Lemmas~\ref{Lem:CanXn0} and
\ref{Lem:SummedZetaInf},
the possible poles of the left-hand side of
\eqref{Eq:SecondComparisonColumn}
lie on $x_nu_jq^{\mathbb Z}$ and have order at most one.
On the other hand, the column recurrence
\eqref{Eq:HinfRec} in the proof of
Proposition~\ref{Can:inf} shows that the possible poles of
$H^{[\infty]}
\left(x_n,\frac{\zeta_n}{x_n}\right)_{\ast j}$
also lie on $x_nu_jq^{\mathbb Z}$ and
have order at most one.
Since $e_q(q^{-\alpha_n}\zeta_n)$ has no zeros,
\eqref{Eq:SecondComparisonColumn} now shows that
\begin{align*}
\operatorname{Pole}_{\zeta_n}(c_j)
&\subseteq
x_nu_jq^{\mathbb Z},
\end{align*}
and every pole has order at most one.

By the choice of $x_n$,
equation
\eqref{Eq:SecondComparisonColumn} shows that
$c_j(x_n,\zeta_n)
H^{[\infty]}
\left(x_n,\frac{\zeta_n}{x_n}\right)_{\ast n}
x_n^{\alpha_n}
e_q(q^{-\alpha_n}\zeta_n)$
is holomorphic at
$\zeta_n = \frac{q^{\alpha_n}}{1-q}$.
Since $e_q(q^{-\alpha_n}\zeta_n)$ has a simple pole there and
$H^{[\infty]}
\left(x_n,\frac{\zeta_n}{x_n}\right)$ is invertible,
we obtain
\begin{align}\label{II:cis0}
c_j\left(x_n,\frac{q^{\alpha_n}}{1-q}\right)
&=0.
\end{align}
The monodromy relations of the fundamental solutions in
\eqref{II:step1} give
\begin{align*}
c_j(x_n\mathrm e^{2\pi\mathrm i},
\zeta_n\mathrm e^{2\pi\mathrm i})
&=
\mathrm e^{2\pi\mathrm i(\alpha_j-\alpha_n)}
c_j(x_n,\zeta_n).
\end{align*}
Hence
\begin{align*}
	c_j(x_n,\zeta_n) \cdot
	\frac{
		\theta_q\left(
		-\zeta_n/(x_nu_j)
		\right)
	}{
		\theta_q\left(
		-q^{\alpha_j-\alpha_n}\zeta_n/(x_nu_j)
		\right)
	}
	\zeta_n^{\alpha_n-\alpha_j}
\end{align*}
is a single-valued $q$-constant on
$\mathbb C^\ast/q^{\mathbb Z}$ with at most one simple pole.
It is therefore constant, and \eqref{II:cis0} shows that
$c_j=0$.

The case $|q|>1$ follows in the same way.
The analytic dependence on the initial data given by
Lemma~\ref{Lem:RecStep1}
extends the identity to all admissible initial data.
This completes the proof.
\end{prf}

\begin{pro}\label{Lem:ThirdFactorization}
Suppose in addition that $\Phi_{n-1}$ is
$q$-nonresonant.
Identity \eqref{Eq:ThirdFactorization}
holds for
\begin{align*}
x_n \notin
\begin{cases}
\displaystyle
\underset{1\leqslant k<n}{\bigcup}
\frac{q^{\alpha_k}}{(1-q)u_k}q^{-\mathbb N},
& |q|<1,\\
\displaystyle
\underset{1\leqslant k<n}{\bigcup}
\frac{q^{\alpha_k}}{(1-q)u_k}q^{\mathbb N+1},
& |q|>1,
\end{cases},\qquad
\zeta_n \notin
\begin{cases}
\displaystyle
\frac{q^{\alpha_n}}{1-q}q^{-\mathbb N},
& |q|<1,\\
\displaystyle
\frac{q^{\alpha_n}}{1-q}q^{\mathbb N+1},
& |q|>1,
\end{cases},\qquad
\frac{\zeta_n}{x_n} \notin
\operatorname{Pole}\left(\Phi_n\right).
\end{align*}
\end{pro}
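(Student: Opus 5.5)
We follow the scheme used for \eqref{Eq:SecondFactorization}. First, both sides of \eqref{Eq:ThirdFactorization} are common fundamental solutions of the system \eqref{Eq:TargetSysNew}. The right-hand side $Y^{[0]}(x,u_n)W_n(u_n)$ is such a solution because it is a fundamental solution of the $x$-equation. By Lemma~\ref{Lem:qBasic} and the definition of $W_n$ via \eqref{PicardWn}, it also solves the $u_n$-equation in \eqref{Eq:TargetSystem}: the shift by $T_n$ multiplies it on the left by the rational gauge $R(x)$, and by the proof of Lemma~\ref{Lem:qBasic} this gauge is exactly the $u_n$-equation coefficient. The left-hand side is a common solution by Lemma~\ref{Lem:SecondCommonSolution}, applied with $T(\zeta_n)=T_{\zeta_n}^{[0]}(\zeta_n)W_{n-1}$. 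Hence
\[
Y^{[0]}(x,u_n)W_n(u_n)=\mathcal Y_{x_n}^{[0]}(x_n,\zeta_n)\zeta_n^{-[\delta_{n-1}\Phi_{n-1}]^q}T_{\zeta_n}^{[0]}(\zeta_n)W_{n-1}\,D(x_n,\zeta_n),
\]
where $D$ is a $q$-constant in both $x_n$ and $\zeta_n$. The goal is to show $D=I$.

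Next we substitute the canonical forms. Set $u_n=\zeta_n/x_n$ and fix $\zeta_n$. Using \eqref{Lim:dk1WktoWk1}, the right-hand side becomes
\[
H^{[0]}(x,u_n)\,x_n^{[\Phi_n]^q}u_n^{[\Phi_n]^q}u_n^{-[\delta_{n-1}\Phi_{n-1}]^q}\bigl(W_{n-1}+O(u_n^{-\varepsilon_{n-1}})\bigr).
\]
Since $x_n^{[\Phi_n]^q}u_n^{[\Phi_n]^q}=\zeta_n^{[\Phi_n]^q}$, we then use \eqref{Lim:dk1PktoPk1}, written as $u_n^{-[\delta_{n-1}\Phi_{n-1}]^q}\zeta_n^{[\Phi_n]^q}u_n^{[\delta_{n-1}\Phi_{n-1}]^q}\to \zeta_n^{[\Phi_{n-1}]^q}$. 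On the left-hand side, \eqref{Eq:Yxn0H} gives
\[
\mathcal Y_{x_n}^{[0]}\zeta_n^{-[\delta_{n-1}\Phi_{n-1}]^q}=u_n^{-[\delta_{n-1}\Phi_{n-1}]^q}\bigl(I+O(x_n^{\varepsilon_{n-1}})\bigr).
\]
Multiplying both sides on the left by $u_n^{[\delta_{n-1}\Phi_{n-1}]^q}$ and letting $x_n\to0$ with $\zeta_n$ fixed (so $u_n\to\infty$), we expect the right-hand side to tend to $R_{\zeta_n}^{[0]}(\zeta_n)\zeta_n^{[\Phi_{n-1}]^q}W_{n-1}$. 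For this we need
\[
u_n^{\operatorname{ad}[\delta_{n-1}\Phi_{n-1}]^q}H^{[0]}(\zeta_n/u_n,u_n)\longrightarrow R_{\zeta_n}^{[0]}(\zeta_n).
\]
The left-hand side tends to $T_{\zeta_n}^{[0]}(\zeta_n)W_{n-1}D$. Comparing the two limits gives $D(x_n,\zeta_n)\to I$ as $x_n\to0$ along the spiral. Since $D$ is a $q$-constant in $x_n$, this forces $D\equiv I$.

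\textbf{The main obstacle} is the convergence of $u_n^{\operatorname{ad}[\delta_{n-1}\Phi_{n-1}]^q}H^{[0]}(\zeta_n/u_n,u_n)$ to $R^{[0]}_{\zeta_n}(\zeta_n)$. Two routes are available.
\begin{itemize}
\item \emph{Coefficient recursion.} Use the recurrence in the proof of Proposition~\ref{Can:zero}. After conjugation, the coefficient of $\zeta_n^\nu$ satisfies the recurrence for the limiting system \eqref{Eq:TargetSysLim-zeta} up to errors $O(u_n^{-\varepsilon_{n-1}})$, by \eqref{Lim:dk1PktoPk1} and the fact that $u_n^{\operatorname{ad}[\delta_{n-1}\Phi_{n-1}]^q}(U/u_n)\to E_n$ under the shrinking condition. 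The $q$-nonresonance of $\Phi_{n-1}$ makes the recurrence solvable uniformly, and the factor $|[\nu]!_q|^{-1}$ bound, uniform in $u_n$, gives locally uniform convergence in $\zeta_n$.
\item \emph{Picard argument.} Alternatively, work in the $\zeta_n$-equation \eqref{Eq:TargetSysNew-zeta} with the estimate \eqref{Eq:EstA2}, via a Picard argument as in Lemma~\ref{Lem:ZeroCanonicalStability}, exchanging the roles of $x_n$ and $\zeta_n$.
\end{itemize}
We would try the coefficient recursion first.

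Finally, we extend the identity to the stated domain. The poles of both sides are controlled by Lemma~\ref{Lem:CanXn0}, Proposition~\ref{Pro:SolY} and the poles of $e_q$, and the stated exclusions avoid exactly these. Meromorphic continuation from the region of small $x_n$ then gives the identity on the whole stated domain.
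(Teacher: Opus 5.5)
Your proposal is correct and is essentially the paper's proof. Both sides of \eqref{Eq:ThirdFactorization} are common fundamental solutions of \eqref{Eq:TargetSysNew}, so they differ by a matrix that is $q$-constant in $x_n$ and $\zeta_n$; this matrix is shown to be $I$ by letting $x_n\to0$ with $\zeta_n$ fixed, and the key input is exactly your first route, namely the scaled coefficient recursion for $u_n^{-\nu}u_n^{\operatorname{ad}[\delta_{n-1}\Phi_{n-1}]^q}H^{[0]}_\nu(u_n)$ with inverse operators bounded uniformly in $\nu$.

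There are two minor slips. After left multiplication by $u_n^{[\delta_{n-1}\Phi_{n-1}]^q}$, the conjugation you need (and which \eqref{Lim:dk1PktoPk1} supplies) is $u_n^{[\delta_{n-1}\Phi_{n-1}]^q}\zeta_n^{[\Phi_n]^q}u_n^{-[\delta_{n-1}\Phi_{n-1}]^q}\to\zeta_n^{[\Phi_{n-1}]^q}$; your display has the signs reversed. Also, for $|q|<1$ the recursion only gives convergence for small $|\zeta_n|$, which is still enough because the connecting matrix is also $q$-periodic in $\zeta_n$.
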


\begin{prf}
By Lemma~\ref{Lem:SecondCommonSolution},
the left-hand side of \eqref{Eq:ThirdFactorization}
is a common fundamental solution of
\eqref{Eq:TargetSysNew}.
The $q$-isomonodromy equations show that
$Y^{[0]}(x,u_n)W_n(u_n)$ is a common fundamental
solution of \eqref{Eq:TargetSystem}.
Under \eqref{Eq:Trans} it is therefore a common fundamental
solution of \eqref{Eq:TargetSysNew}.
Hence there is a $q$-constant matrix $C(x_n,\zeta_n)$
	such that
\begin{align}\label{III:Comparison}
\mathcal Y_{x_n}^{[0]}(x_n,\zeta_n)
\zeta_n^{-[\delta_{n-1}\Phi_{n-1}]^q}
T_{\zeta_n}^{[0]}(\zeta_n)W_{n-1}
&=
Y^{[0]}\left(x_n,\frac{\zeta_n}{x_n}\right)
W_n\left(\frac{\zeta_n}{x_n}\right)
C(x_n,\zeta_n).
\end{align}
It remains to prove that $C(x_n,\zeta_n)=I$.

Write
\begin{align*}
H^{[0]}(x,u_n)
=
I+\sum_{\nu=1}^{\infty}
H_\nu^{[0]}(u_n)x^\nu,\qquad
R_{\zeta_n}^{[0]}(\zeta_n)
=
I+\sum_{\nu=1}^{\infty}
R_{\zeta_n,\nu}^{[0]}\zeta_n^\nu.
\end{align*}
For $\nu\geqslant1$, denote
\begin{align*}
\widehat H_\nu(u_n)
&:=
u_n^{-\nu}
u_n^{\operatorname{ad}[\delta_{n-1}\Phi_{n-1}]^q}
H_\nu^{[0]}(u_n),\\
\widehat\Phi_n(u_n)
&:=
u_n^{\operatorname{ad}[\delta_{n-1}\Phi_{n-1}]^q}
\Phi_n(u_n),\\
\widehat U(u_n)
&:=
u_n^{-1}
u_n^{\operatorname{ad}[\delta_{n-1}\Phi_{n-1}]^q}U.
\end{align*}
The coefficient recurrence in the proof of
Proposition~\ref{Can:zero} gives
\begin{align}
\mathcal L_{\nu,\Psi}(X)
&:=
\frac{1}{q-1}
\left(
q^\nu X\bigl(I+(q-1)\Psi\bigr)
-
\bigl(I+(q-1)\Psi\bigr)X
\right),\\
\widehat H_\nu(u_n)
&=
\mathcal L_{\nu,\widehat\Phi_n(u_n)}^{-1}
\left(
\widehat U(u_n)\widehat H_{\nu-1}(u_n)
\right).
\label{III:ScaledRecurrence}
\end{align}
For both $|q|<1$ and $|q|>1$, one verifies directly that
	a constant $M_0>0$ can be chosen
	independently of $\nu$ and locally uniformly
	with respect to $q$-nonresonant $\Psi$
	such that
\begin{align*}
\left\|
\mathcal L_{\nu,\Psi}^{-1}
\right\|
&\leqslant M_0,
\qquad \nu\geqslant1.
\end{align*}
Equation~\eqref{Lim:dk1PktoPk1} gives
	$\widehat\Phi_n(u_n)=
	\Phi_{n-1} +
	O(u_n^{-\varepsilon_{n-1}})$.
The shrinking condition \eqref{Cond:less1} gives
	$\widehat U(u_n) =
	E_n +
	O(u_n^{-\varepsilon_{n-1}})$.
Since $\Phi_{n-1}$ is $q$-nonresonant,
	an induction based on \eqref{III:ScaledRecurrence}
	shows that a constant $M>0$ can be chosen
	independently of $\nu$ and uniformly
	for all sufficiently large $u_n$ such that
\begin{align*}
\widehat H_\nu(u_n)
&=
R_{\zeta_n,\nu}^{[0]}
+
O\left(
M^\nu u_n^{-\varepsilon_{n-1}}
\right),
\qquad \nu\geqslant1.
\end{align*}
It follows that
\begin{subequations}\label{III:ScaledHLimit}
\begin{align}
u_n^{\operatorname{ad}[\delta_{n-1}\Phi_{n-1}]^q}
H^{[0]}(x,u_n)
&=
R_{\zeta_n}^{[0]}(xu_n)
+
O(u_n^{-\varepsilon_{n-1}}),
\qquad u_n\to\infty, \\
\left(
\frac{\zeta_n}{x_n}
\right)^{\operatorname{ad}[\delta_{n-1}\Phi_{n-1}]^q}
H^{[0]}\left(
x_n,\frac{\zeta_n}{x_n}
\right)
&=
R_{\zeta_n}^{[0]}(\zeta_n)
+
O(x_n^{\varepsilon_{n-1}}),
\qquad x_n\to0,
\end{align}
\end{subequations}
locally uniformly for $\zeta_n$.

In the $(x_n,\zeta_n)$-coordinates,
	fix $\zeta_n$ and let $x_n\to0$.
Then $u_n=\zeta_n/x_n\to\infty$.
Equations \eqref{III:ScaledHLimit},
	\eqref{Lim:dk1PktoPk1} and
	\eqref{Lim:dk1WktoWk1} give
\begin{subequations}
\begin{align}
\nonumber
u_n^{\mathrm{ad}[\delta_{n-1}\Phi_{n-1}]^q}
\left(
	Y^{[0]}(x,u_n)
	u_n^{[\Phi_n(u_n)]^q}
\right) &=
\left(
u_n^{\operatorname{ad}[\delta_{n-1}\Phi_{n-1}]^q}
H^{[0]}(x,u_n)
\right)
\zeta_n^{
u_n^{\operatorname{ad}[\delta_{n-1}\Phi_{n-1}]^q}
[\Phi_n(u_n)]^q} \\
&\longrightarrow
R_{\zeta_n}^{[0]}(\zeta_n)
\zeta_n^{[\Phi_{n-1}]^q}
=
T_{\zeta_n}^{[0]}(\zeta_n),\\
u_n^{[\delta_{n-1}\Phi_{n-1}]^q}
Y^{[0]}\left(
x_n,\frac{\zeta_n}{x_n}
\right)
W_n\left(\frac{\zeta_n}{x_n}\right)
&\longrightarrow
T_{\zeta_n}^{[0]}(\zeta_n)W_{n-1}.
\label{III:RightBoundary}
\end{align}
\end{subequations}
Equation~\eqref{Eq:Yxn0H} in
Lemma~\ref{Lem:CanXn0} also gives
\begin{align}
u_n^{[\delta_{n-1}\Phi_{n-1}]^q}
\mathcal Y_{x_n}^{[0]}(x_n,\zeta_n)
\zeta_n^{-[\delta_{n-1}\Phi_{n-1}]^q}
&=
u_n^{\operatorname{ad}[\delta_{n-1}\Phi_{n-1}]^q}
\mathcal H_{x_n}^{[0]}(x_n,\zeta_n)
\longrightarrow I.
\label{III:LeftBoundary}
\end{align}
Multiplying \eqref{III:Comparison} on the left by
$u_n^{[\delta_{n-1}\Phi_{n-1}]^q}$
and applying \eqref{III:RightBoundary} and
\eqref{III:LeftBoundary}, we obtain
\begin{align*}
\lim_{x_n\to0}C(x_n,\zeta_n)
&=
I.
\end{align*}
Since $C(x_n,\zeta_n)$ is a $q$-constant
with respect to $x_n$, we have
$C(x_n,\zeta_n)=I$
for every sufficiently small admissible $\zeta_n$.

The identity extends to the full common domain by
meromorphic continuation.
The analytic dependence on the initial data given by
Lemma~\ref{Lem:RecStep1}
extends the identity to all admissible initial data.
This completes the proof.
\end{prf}

\subsection{Factorization of the Central Connection Matrix}
\label{Sect:CenFact}

In this subsection, we apply the three factorization formulas
\eqref{Eq:FirstFactorization}--\eqref{Eq:ThirdFactorization}
to obtain a one-step factorization of the central connection matrix.
Iterating this factorization yields an explicit formula in terms of
the initial data $(\Phi_0,W_0)$.

For convenience,
	we work below only in the $(x,u_n)$-coordinates.
Denote the (normalized) central connection matrix of
	the system \eqref{Eq:TargetSysLim-zeta} by
\begin{subequations}
\label{Def:ZetaNormalizedCentralConnection}
\begin{align}
\Omega_{\boldsymbol d}(xu_n;E_n,\Phi_{n-1})
&:=
\mathcal ST_{\zeta_n}^{[\infty]}(\zeta_n;x_n)^{-1} \cdot
T_{\zeta_n}^{[0]}(\zeta_n),\\
C_{\boldsymbol d}(xu_n;E_n,\Phi_{n-1};W_{n-1})
&:=
\Omega_{\boldsymbol d}(xu_n;E_n,\Phi_{n-1})\cdot
W_{n-1},
\end{align}
\end{subequations}
where
$\boldsymbol d=(-x u_1,\ldots,-x u_{n-1},\ast)$
is as in Definition~\ref{Def:ReassembledQBorelSum}.

\begin{cor}\label{Cor:CentralConnectionFirstStep}
The following identity holds on the maximal natural domain
on which both sides are defined:
\begin{subequations}
\label{Eq:CentralConnectionFirstStep}
\begin{align}
\lim_{u_n\to\infty}
\Omega(x;U,\Phi_n)
u_n^{[\Phi_n]^q}
u_n^{-[\delta_{n-1}\Phi_{n-1}]^q}
&=
\Omega(x;U_{\hat n},\delta_{n-1}\Phi_{n-1})
\Omega_{\boldsymbol d}(xu_n;E_n,\Phi_{n-1}),
\\
C(x;U,\Phi_n;W_n)
&=
\Omega(x;U_{\hat n},\delta_{n-1}\Phi_{n-1})
C_{\boldsymbol d}(xu_n;E_n,\Phi_{n-1};W_{n-1}).
\end{align}
\end{subequations}
\end{cor}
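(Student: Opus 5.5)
We obtain Corollary~\ref{Cor:CentralConnectionFirstStep} by chaining the three factorizations \eqref{Eq:FirstFactorization}--\eqref{Eq:ThirdFactorization}. Everything is written in the $(x_n,\zeta_n)=(x,xu_n)$ coordinates, and we work on the common domain of Propositions~\ref{Lem:SecondFactorization} and~\ref{Lem:ThirdFactorization}, first assuming that $\Phi_{n-1}$ is $q$-nonresonant and that the open condition \eqref{Cond:less1new} holds. By definition,
\[
C(x;U,\Phi_n;W_n)=Y^{[\infty]}(x,u_n)^{-1}Y^{[0]}(x,u_n)W_n(u_n).
\]
We substitute \eqref{Eq:ThirdFactorization} for the factor $Y^{[0]}W_n$. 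This rewrites it as $\mathcal Y^{[0]}_{x_n}\zeta_n^{-[\delta_{n-1}\Phi_{n-1}]^q}T^{[0]}_{\zeta_n}W_{n-1}$.

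Next, inserting $\mathcal ST^{[\infty]}_{\zeta_n}(\mathcal ST^{[\infty]}_{\zeta_n})^{-1}$ and using \eqref{Def:ZetaNormalizedCentralConnection}, this becomes
\[
\mathcal Y^{[0]}_{x_n}\zeta_n^{-[\delta_{n-1}\Phi_{n-1}]^q}\mathcal ST^{[\infty]}_{\zeta_n}\cdot C_{\boldsymbol d}(xu_n;E_n,\Phi_{n-1};W_{n-1}).
\]
By \eqref{Eq:SecondFactorization}, the first factor equals $\mathcal Y^{[\infty]}_{\zeta_n}x_n^{-[\delta_{n-1}\Phi_{n-1}]^q}T^{[0]}_{x_n}(x_n)$. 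Finally, \eqref{Eq:FirstFactorization} gives
\[
Y^{[\infty]}(x,u_n)^{-1}\mathcal Y^{[\infty]}_{\zeta_n}x_n^{-[\delta_{n-1}\Phi_{n-1}]^q}=T^{[\infty]}_{x_n}(x_n)^{-1}.
\]
Hence the whole product telescopes to
\[
T^{[\infty]}_{x_n}(x_n)^{-1}T^{[0]}_{x_n}(x_n)\,C_{\boldsymbol d}=\Omega(x;U_{\hat n},\delta_{n-1}\Phi_{n-1})\,C_{\boldsymbol d},
\]
using that $T^{[\infty]}_{x_n}$ is exactly the canonical solution with formal monodromy $\boldsymbol\alpha$ of \eqref{Eq:TargetSysLim-x}. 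This is the second identity.

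For the first identity we use the same chain, but without the factors $W_n$ and $W_{n-1}$. We write $Y^{[0]}(x,u_n)W_n=Y^{[0]}(x,u_n)u_n^{[\Phi_n]^q}u_n^{-[\delta_{n-1}\Phi_{n-1}]^q}\cdot X(u_n)$, where $X(u_n)=u_n^{[\delta_{n-1}\Phi_{n-1}]^q}u_n^{-[\Phi_n]^q}W_n\to W_{n-1}$ by \eqref{Lim:dk1WktoWk1}. From the second identity,
\[
\Omega(x;U,\Phi_n)u_n^{[\Phi_n]^q}u_n^{-[\delta_{n-1}\Phi_{n-1}]^q}=\Omega(x;U_{\hat n},\delta_{n-1}\Phi_{n-1})\,\Omega_{\boldsymbol d}\,W_{n-1}X(u_n)^{-1}.
\]
Letting $u_n\to\infty$ with $x$ fixed gives the first identity, interpreting $\Omega_{\boldsymbol d}(xu_n;\cdot)$ as a $q$-constant in $\zeta_n=xu_n$ along the $q$-spiral. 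Here we must check two things: the composite is genuinely $q$-constant in $u_n$ (Theorem~\ref{Thm:qIsoMD}), and $\Omega_{\boldsymbol d}$ depends on $xu_n$ only through its spiral class. This is what gives the limit its meaning.

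\textbf{Main obstacle.} The delicate point is domains. Each factorization holds only off certain half-spirals and pole sets, and the summation directions $\boldsymbol d=(-xu_1,\ldots,-xu_{n-1},\ast)$ vary with $x$. We must show these exclusion sets have nonempty common complement, and that the identity, once proved there, extends by meromorphic continuation in $x$ to the maximal natural domain. We then remove the auxiliary assumptions (nonresonance of $\Phi_{n-1}$ and \eqref{Cond:less1new}) via the analytic dependence on $(\Phi_{n-1},W_{n-1})$ in Lemma~\ref{Lem:RecStep1}, as in the proofs of Propositions~\ref{Lem:SecondFactorization} and~\ref{Lem:ThirdFactorization}.
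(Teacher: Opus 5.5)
Your proposal is correct and matches the paper's proof, which simply says the identity follows by chaining \eqref{Eq:FirstFactorization}--\eqref{Eq:ThirdFactorization}. Those factorizations telescope to $T^{[\infty]}_{x_n}(x_n)^{-1}T^{[0]}_{x_n}(x_n)\,C_{\boldsymbol d}$ exactly as you show, and \eqref{Lim:dk1WktoWk1} then gives the first identity; your appeal to Theorem~\ref{Thm:qIsoMD} is harmless but not needed, since the second identity already shows that the $u_n$-dependence enters only through the $q$-constant $C_{\boldsymbol d}(xu_n)$.
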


\begin{prf}
The result follows directly from
\eqref{Eq:FirstFactorization}--\eqref{Eq:ThirdFactorization}
and \eqref{Lim:dk1WktoWk1}.
\end{prf}

In what follows,
	let $(\Phi(u),W(u);\boldsymbol{\alpha})$ be
	the $q$-isomonodromic deformation constructed in
	Theorem~\ref{Thm:AsymBehavior} from
	the initial data $(\Phi_0,W_0;\boldsymbol{\alpha})$
	and the choices of
	$[\delta_1\Phi_0]^q,\ldots,
	[\delta_n\Phi_0]^q=[\Phi_0]^q$.
Assume further that $\Phi_0^{[k]}$ is $q$-nonresonant
	for every $1\leqslant k\leqslant n$.
Next we choose
	$W_{0;1},\ldots,W_{0;n}\in\mathrm{GL}_n(\mathbb C)$
	recursively as follows.
Denote $\Phi_{0;0}:=\Phi_0$ and
\begin{align}
	\Phi_{0;k-1}
	&:=
	(W_{0;1}\cdots W_{0;k-1})^{-1}\cdot
	\Phi_0\cdot
	(W_{0;1}\cdots W_{0;k-1}),
\end{align}
and choose the branch of
$[\delta_k\Phi_{0;k-1}]^q$ by
\begin{align}
	[\delta_k\Phi_{0;k-1}]^q
	&:=
	(W_{0;1}\cdots W_{0;k-1})^{-1} \cdot
	[\delta_k\Phi_0]^q \cdot
	(W_{0;1}\cdots W_{0;k-1}).
\end{align}
For $1\leqslant k\leqslant n$, choose
	$W_{0;k} =
	\diag(W_{0;k}^{[k]},I_{n-k})$
	such that $W_{0;k}$ conjugates
	$[\delta_k\Phi_{0;k-1}]^q$
	to Jordan normal form, and
\begin{align}
	W_{0;n}^{-1} \cdot
	[\Phi_{0;n-1}]^q \cdot
	W_{0;n} =
	W_0^{-1} \cdot
	[\Phi_0]^q \cdot
	W_0.
\end{align}
Finally, denote
\begin{align}
\mathscr W_0
&:=
(W_{0;1}\cdots W_{0;n})^{-1}W_0.
\end{align}
We obtain the following theorem.

\begin{thm}\label{Thm:qCaterpillar}
For $|q|<1$ we have
\begin{subequations}\label{Eq:qCaterpillarl1}
\begin{align}
\label{Eq:qCaterpillarl1O}
	\lim_{u_2\to\infty}\cdots\lim_{u_n\to\infty}&
	\left(
	\mathop{\prod}
	\limits_{k=2}^{\overset{\longrightarrow}{n}}
	u_k^{[\delta_{k-1}\Phi_{k-1}(u)]^q}
	u_k^{-[\delta_k\Phi_k(u)]^q}
	\right)
	\Omega(x;U,\Phi(u))^{-1}
	\notag\\
	&=
	\sum_{j=1}^{n}
	\left(
	\mathop{\prod}
	\limits_{k=1}^{\overset{\longleftarrow}{n}}
	\Omega_{-xu_j}
	(xu_k;E_k,\delta_k\Phi_0)^{-1}
	\right)E_j,
\\
\label{Eq:qCaterpillarl1C}
	C(x;U,\Phi(u);W(u))^{-1}
	&=
	\mathscr W_0^{-1}
	\sum_{j=1}^{n}
	\left(
	\mathop{\prod}
	\limits_{k=1}^{\overset{\longleftarrow}{n}}
	C_{-xu_j}
	(xu_k;E_k,\delta_k\Phi_{0;k-1};W_{0;k})^{-1}
	\right)
	E_j.
\end{align}
\end{subequations}
For $|q|>1$ we have
\begin{subequations}\label{Eq:qCaterpillarg1}
\begin{align}
\label{Eq:qCaterpillarg1O}
	\lim_{u_2\to\infty}\cdots\lim_{u_n\to\infty}&
	\Omega(x;U,\Phi(u))
	\left(
	\mathop{\prod}
	\limits_{k=2}^{\overset{\longleftarrow}{n}}
	u_k^{[\delta_k\Phi_k(u)]^q}
	u_k^{-[\delta_{k-1}\Phi_{k-1}(u)]^q}
	\right)
	\notag\\
	&=
	\sum_{j=1}^{n}
	E_j
	\left(
	\mathop{\prod}
	\limits_{k=1}^{\overset{\longrightarrow}{n}}
	\Omega_{-xu_j}(xu_k;E_k,\delta_k\Phi_0)
	\right),
\\
\label{Eq:qCaterpillarg1C}
	C(x;U,\Phi(u);W(u))
	&=
	\sum_{j=1}^{n}
	E_j
	\left(
	\mathop{\prod}
	\limits_{k=1}^{\overset{\longrightarrow}{n}}
	C_{-xu_j}
	(xu_k;E_k,\delta_k\Phi_{0;k-1};W_{0;k})
	\right)
	\mathscr W_0.
\end{align}
\end{subequations}
The limits in \eqref{Eq:qCaterpillarl1O} and
\eqref{Eq:qCaterpillarg1O} are taken along $q$-spirals.
\end{thm}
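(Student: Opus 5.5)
The plan is to prove Theorem~\ref{Thm:qCaterpillar} by induction on the rank, iterating the one-step factorization of Corollary~\ref{Cor:CentralConnectionFirstStep} along the recursive construction of Theorem~\ref{Thm:AsymBehavior}. We treat $|q|<1$ first. The case $|q|>1$ then follows by the inverse-transpose duality \eqref{Eq:CDuality} of Proposition~\ref{Pro:CTransformation} together with Corollary~\ref{Cor:Basic}; this duality interchanges the column-wise summation $\mathcal S^{(+)}$ with the row-wise summation $\mathcal S^{(-)}$ and reverses the order of products, which explains the two forms \eqref{Eq:qCaterpillarl1} and \eqref{Eq:qCaterpillarg1}.

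\textbf{Step 1: one level of the recursion.} At level $k$ we apply Corollary~\ref{Cor:CentralConnectionFirstStep} to the rank-$k$ system obtained from Corollary~\ref{Cor:Reck}, with $n$ replaced by $k$. Since $\Phi_k$ agrees with $\delta_k\Phi_k$ in the upper-left block and is extended trivially below, this gives
\[
\Omega(x;U,\Phi_k)\,u_k^{[\Phi_k]^q}u_k^{-[\delta_{k-1}\Phi_{k-1}]^q}\to\Omega(x;U_{\hat k},\delta_{k-1}\Phi_{k-1})\,\Omega_{\boldsymbol d}(xu_k;E_k,\Phi_{k-1}).
\]
The left factor is again a normalized central connection matrix, now with $u_k$ removed and $\Phi$ replaced by $\delta_{k-1}\Phi_{k-1}$. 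Iterating $k=n,n-1,\ldots,2$ and taking limits in this order produces the ordered product in \eqref{Eq:qCaterpillarl1O}. At level $1$ we have $V_{1;1}=I$, and the factor is an explicit diagonal rank-one system. At each stage the asymptotic relations \eqref{Lem:PWkm1} and \eqref{Thm:Asymptotics} let us replace $\Phi_{k-1}(u)$ by its limiting leading term, which is conjugate to $\delta_k\Phi_0$. This accounts for the normalizing products $u_k^{[\delta_{k-1}\Phi_{k-1}]^q}u_k^{-[\delta_k\Phi_k]^q}$.

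\textbf{Step 2: the summation directions.} In $\Omega_{\boldsymbol d}$ the direction is $\boldsymbol d=(-xu_1,\ldots,-xu_{k-1},\ast)$, which is assigned column by column as in Definition~\ref{Def:ReassembledQBorelSum}. We will show that after inverting and multiplying the factors, the $j$-th column of the total product only involves the summations along $-xu_j$. This should follow from the pole localization \eqref{Pole:SummedZetaInf}: column $j$ of each summed factor has poles only on $xu_jq^{\mathbb Z}$, so each factor acts column-wise. The result is the formula $\sum_j\bigl(\prod_k\Omega_{-xu_j}(xu_k;\ldots)^{-1}\bigr)E_j$, with single-direction sums $\Omega_{-xu_j}$ in each column.

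\textbf{Step 3: passing from $\Omega$ to $C$.} We insert the $W$ factors using \eqref{Lim:dk1WktoWk1}, \eqref{Lem:Wkm1} and the Jordan normalizations $W_{0;k}$. Because $\Phi_{0;k-1}$ is obtained by conjugating with $W_{0;1}\cdots W_{0;k-1}$, each $C_{-xu_j}(xu_k;E_k,\delta_k\Phi_{0;k-1};W_{0;k})$ absorbs exactly one $W_{0;k}$. The remaining factor is $\mathscr W_0$. Since $C(x;U,\Phi(u);W(u))$ is a $q$-constant in each $u_k$ by Theorem~\ref{Thm:qIsoMD}, the limiting identity holds without limits. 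Finally, analytic dependence on $(\Phi_0,W_0)$ from Theorem~\ref{Thm:AsymBehavior} removes the auxiliary open condition \eqref{Cond:less1new} and the genericity assumptions.

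\textbf{Main obstacle.} The hardest step is Step 2: showing that the column-wise re-summed factors compose consistently across levels, so that one can interchange the iterated limits with the $q$-Borel sums and obtain a single direction $-xu_j$ per column. I would first try to prove this by comparing poles in $\zeta_k$ column by column, using Lemma~\ref{Lem:SummedZetaInf} and the block-diagonal structure of the leading factor $\Omega(x;U_{\hat k},\cdot)$.
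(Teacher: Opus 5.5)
Your skeleton matches the paper's proof. It is a descending induction that applies the inverted form of Corollary~\ref{Cor:CentralConnectionFirstStep} at each rank. It uses \eqref{Lem:PWkm1} and conjugation covariance to replace $\delta_k\Phi_i(u)$ by $\delta_k\Phi_{i-1}(u)$ in the factors already produced, and it handles $|q|>1$ by the $q\leftrightarrow q^{-1}$ duality.

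The gap is in Step~2, which you call the main obstacle and propose to settle by comparing poles. That argument cannot work as stated, for two reasons.
\begin{itemize}
\item \eqref{Pole:SummedZetaInf} concerns the columns of $\mathcal SR^{[\infty]}_{\zeta_n}\,\zeta_n^{[\delta_{n-1}\Phi_{n-1}]^q}\,\Omega(x)^{-1}$, where $\Omega(x):=\Omega(x;U_{\hat n},\delta_{n-1}\Phi_{n-1})$. It does not concern the factor $\Omega_{\boldsymbol d}(xu_n;E_n,\Phi_{n-1})$ alone. It was already consumed in proving \eqref{Eq:SecondFactorization}. Knowing where poles lie gives no identity between a reassembled sum and single-direction sums.
\item The claim that ``each factor acts column-wise'' is false in general. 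Unwinding the definition gives $\mathcal ST^{[\infty]}_{\zeta_n}=\sum_{j'}T^{[\infty]}_{\zeta_n,-xu_{j'}}\,\Omega(x)^{-1}E_{j'}\,\Omega(x)$. Since $\Omega(x)$ is not diagonal, $\Omega_{\boldsymbol d}(xu_n;E_n,\Phi_{n-1})^{-1}E_j$ mixes all directions and is not $\Omega_{-xu_j}(xu_n;E_n,\Phi_{n-1})^{-1}E_j$.
\end{itemize}

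What you are missing is that the column structure is \emph{built into} Definition~\ref{Def:ReassembledQBorelSum}. There $\mathcal S^{(+)}_{\boldsymbol d}$ is applied after right multiplication by the lower-level matrix $\Omega(x)^{-1}$, which does not depend on $\zeta_n$. So, directly from the definition,
\[
\mathcal ST^{[\infty]}_{\zeta_n}(\zeta_n;x)\,\Omega(x)^{-1}=\sum_{j=1}^{n}T^{[\infty]}_{\zeta_n,-xu_j}(\zeta_n)\,\Omega(x)^{-1}E_j ,
\]
where $T^{[\infty]}_{\zeta_n,d}$ is the whole matrix summed along $d$. The last column is convergent, so its direction does not matter. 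Left-multiplying by $T^{[0]}_{\zeta_n}(\zeta_n)^{-1}$ and inserting the result into the inverted first identity of Corollary~\ref{Cor:CentralConnectionFirstStep} gives the one-step formula
\[
\lim_{u_n\to\infty}u_n^{[\delta_{n-1}\Phi_{n-1}]^q}u_n^{-[\Phi_n]^q}\Omega(x;U,\Phi_n)^{-1}=\sum_{j=1}^{n}\Omega_{-xu_j}(xu_n;E_n,\Phi_{n-1})^{-1}\,\Omega(x;U_{\hat n},\delta_{n-1}\Phi_{n-1})^{-1}E_j .
\]

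Consistency across levels is then bookkeeping rather than analysis.
\begin{itemize}
\item State the induction hypothesis at level $i$ as $\sum_j\bigl(\prod_{k=i+1}^{\overset{\longleftarrow}{n}}\Omega_{-xu_j}(xu_k;E_k,\delta_k\Phi_i(u))^{-1}\bigr)\,\Omega(x;U_{\{1,\ldots,i\}},\delta_i\Phi_i(u))^{-1}E_j$. This uses the truncated $U_{\{1,\ldots,i\}}$ and $\delta_i\Phi_i$, not $U$ and $\Phi_k$ as in your Step~1.
\item Apply the one-step formula at rank $i$ to the last factor. Since $E_{j'}E_j=0$ for $j'\neq j$, the trailing $E_j$ kills every summand except the one with direction $-xu_j$.
\item The only limit interchange needed is continuity of the already-summed factors in $\delta_k\Phi_i(u)$ under the conjugation from \eqref{Lem:PWkm1}. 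A direction $-xu_j$ only moves within its own $q$-spiral as $u_j\to\infty$ along a $q$-spiral, and the $q$-Borel sum depends only on that spiral.
\end{itemize}
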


\begin{prf}
Without loss of generality, it suffices to prove
\eqref{Eq:qCaterpillarl1O}.
For every admissible direction $d$, denote by
$T_{\zeta_n,d}^{[\infty]}(\zeta_n)$
the canonical fundamental solution obtained from
$T_{\zeta_n}^{[\infty]}(\zeta_n)$ by
$q$-Borel summation in the direction $d$.
Definition~\ref{Def:ReassembledQBorelSum} gives
\begin{align}
&
\mathcal ST_{\zeta_n}^{[\infty]}(\zeta_n;x)
\Omega(x;U_{\hat n},\delta_{n-1}\Phi_{n-1})^{-1}
=
\sum_{j=1}^{n}
T_{\zeta_n,-xu_j}^{[\infty]}(\zeta_n) \cdot
\Omega(x;U_{\hat n},\delta_{n-1}\Phi_{n-1})^{-1}
E_j.
\label{eq:omega-En-columnwise-sum}
\end{align}
Since the last column of
$T_{\zeta_n}^{[\infty]}(\zeta_n)$ is convergent,
the choice of its corresponding direction has no effect.
For convenience, we take it to be $-xu_n$.

Multiplying \eqref{eq:omega-En-columnwise-sum} on the left by
$T_{\zeta_n}^{[0]}(\zeta_n)^{-1}$
and applying
Corollary~\ref{Cor:CentralConnectionFirstStep} gives
\begin{align}
&
\lim_{u_n\to\infty}
u_n^{[\delta_{n-1}\Phi_{n-1}]^q}
u_n^{-[\Phi_n]^q}
\Omega(x;U,\Phi_n)^{-1} =
\sum_{j=1}^{n}
\Omega_{-xu_j}
(xu_n;E_n,\Phi_{n-1})^{-1}
\Omega(x;U_{\hat n},\delta_{n-1}\Phi_{n-1})^{-1}
E_j.
\label{eq:omega-En-recursive-closed-form}
\end{align}
For $1\leqslant i\leqslant n$, denote
$U_{\{1,\ldots,i\}}:=\sum_{k=1}^{i}u_kE_k$.
We prove by descending induction on $i$ that
\begin{align}
&
\lim_{u_{i+1}\to\infty}\cdots\lim_{u_n\to\infty}
\left(
\mathop{\prod}
\limits_{k=i+1}^{\overset{\longrightarrow}{n}}
u_k^{[\delta_{k-1}\Phi_{k-1}(u)]^q}
u_k^{-[\delta_k\Phi_k(u)]^q}
\right)
\Omega(x;U,\Phi_n(u))^{-1}
\notag\\
&\quad=
\sum_{j=1}^{n}
\left(
\mathop{\prod}
\limits_{k=i+1}^{\overset{\longleftarrow}{n}}
\Omega_{-xu_j}
(xu_k;E_k,\delta_k\Phi_i(u))^{-1}
\right)
\Omega(x;U_{\{1,\ldots,i\}},\delta_i\Phi_i(u))^{-1}E_j.
\label{eq:qcat-complete-inverse-expanded}
\end{align}
Equation~\eqref{eq:omega-En-recursive-closed-form}
gives the induction base $i=n-1$.

Suppose that \eqref{eq:qcat-complete-inverse-expanded}
	holds for some $1\leqslant i\leqslant n-1$.
For every $i<k\leqslant n$ and
$1\leqslant j\leqslant k$, the asymptotics
\eqref{Lem:PWkm1} in Corollary~\ref{Cor:Reck}
and the conjugation covariance of the canonical fundamental
solutions give
\begin{align}
&
\lim_{u_i\to\infty}
u_i^{\operatorname{ad}[\delta_{i-1}\Phi_{i-1}(u)]^q}
u_i^{-\operatorname{ad}[\delta_i\Phi_i(u)]^q}
\Omega_{-xu_j}
(xu_k;E_k,\delta_k\Phi_i(u))^{-1} =
\Omega_{-xu_j}
(xu_k;E_k,\delta_k\Phi_{i-1}(u))^{-1}.
\label{eq:qcat-later-factor-limit}
\end{align}
Equation~\eqref{eq:omega-En-recursive-closed-form},
with $n$ replaced by $i$, also gives
\begin{align}
&
\lim_{u_i\to\infty}
u_i^{[\delta_{i-1}\Phi_{i-1}(u)]^q}
u_i^{-[\delta_i\Phi_i(u)]^q}
\Omega
\left(
x;U_{\{1,\ldots,i\}},\delta_i\Phi_i(u)
\right)^{-1}
\notag\\
&\quad=
\sum_{j=1}^{n}
\Omega_{-xu_j}
(xu_i;E_i,\delta_i\Phi_{i-1}(u))^{-1}
\Omega
\left(
x;U_{\{1,\ldots,i-1\}},
\delta_{i-1}\Phi_{i-1}(u)
\right)^{-1}
E_j.
\label{eq:qcat-lower-step}
\end{align}
Combining these identities with
\eqref{eq:qcat-complete-inverse-expanded}
gives the same formula with $i$ replaced by $i-1$.
This completes the induction.
Taking $i=0$ gives \eqref{Eq:qCaterpillarl1O}.
Applying the $q\leftrightarrow q^{-1}$ duality gives
\eqref{Eq:qCaterpillarg1O}.
\end{prf}

We have thus obtained the explicit formula for
the central connection matrix in terms of the initial data.
Assume further that $\Phi_0^{[k]}$ is diagonalizable and
	$W_{0;k}$ diagonalizes $[\delta_k\Phi_{0;k-1}]^q$
	for every $1\leqslant k\leqslant n$.
The formulas in \cite{lin_ma_xu_qstokes_2024} then
	give explicit expressions for
	$C_{-xu_j}(xu_k;E_k,\delta_k\Phi_{0;k-1};W_{0;k})$.

\section{Applications}
\label{Sect:Applications}

In this section,
	we present two applications of
	Theorems~\ref{Thm:AsymBehavior}
	and~\ref{Thm:qCaterpillar}.
In Section~\ref{Sect:InvGen},
	we construct the inverse monodromy map and prove that
	the shrinking solutions constructed in
	Theorem~\ref{Thm:AsymBehavior}
	form a generic family .
In Section~\ref{Sect:Conflu},
	we show that the mathematical objects constructed in this paper
	are natural in a certain sense.
In the differential limit as $q\to1$,
	they converge to the corresponding objects
	in the differential setting.

\subsection{Inverse Monodromy and Genericity}
\label{Sect:InvGen}

In this subsection, we prove that
	the shrinking solutions constructed in
	Theorem~\ref{Thm:AsymBehavior} from
	the initial data $(\Phi_0,W_0;\boldsymbol{\alpha})$
	form a generic family of
	$q$-isomonodromic deformations.
For convenience,
	we assume throughout this subsection that $|q|<1$.
The $|q|>1$ case follows from the
	$q\leftrightarrow q^{-1}$ duality.

To construct the inverse monodromy map from
	the $q$-monodromy data to
	the initial data $(\Phi_0,W_0)$,
	we introduce new coordinates for $\Phi_0$ that
	are more convenient than
	the standard entry coordinates.
For each $1\leqslant k\leqslant n$,
	fix a branch of $[\Phi_0^{[k]}]^q$ and an ordering
\[
\mathrm{Spec}\bigl([\Phi_0^{[k]}]^q\bigr)
=
\{\lambda_1^{(k)},\ldots,\lambda_k^{(k)}\}.
\]
Introduce the
\textbf{Gelfand--Zeitlin eigenvalue--minor coordinates}
$(\lambda_i^{(k)},\eta_j^{(m)})_{
\substack{
	1\leqslant i \leqslant k \leqslant n\\
	1\leqslant j\leqslant m \leqslant n-1}}$,
where
\begin{align*}
\eta_j^{(m)}
:=
\frac{
\det(
\Phi_0-[\lambda_j^{(m)}]_qI
)^{1,\ldots,m}_{1,\ldots,m-1,m+1}
}{
\displaystyle
\prod_{\substack{1\leqslant s\leqslant m\\s\neq j}}
\bigl(
[\lambda_s^{(m)}]_q-[\lambda_j^{(m)}]_q
\bigr)
},\qquad
1\leqslant j\leqslant m \leqslant n-1.
\end{align*}
The following genericity conditions on $\Phi_0$ will be used below:
\begin{subequations}\label{GenP0}
\begin{itemize}
\item For every $1\leqslant k\leqslant n$,
	the upper-left $k\times k$ submatrix
	$\Phi_0^{[k]}$ is $q$-nonresonant,
	and its spectrum is simple:
\begin{align}\label{GenCond:1}
	q^{\lambda_i^{(k)}}/
	q^{\lambda_j^{(k)}}
	\notin q^{\mathbb Z},
	\qquad
	1\leqslant i,j\leqslant k,\quad
	i\neq j.
\end{align}

\item For every $2\leqslant k\leqslant n$,
	the spectra at levels $k-1$ and $k$ are $q$-disjoint:
\begin{align}\label{GenCond:2}
	q^{\lambda_i^{(k)}}/
	q^{\lambda_j^{(k-1)}}
	\notin q^{\mathbb Z},
	\qquad
	1\leqslant i\leqslant k,\quad
	1\leqslant j\leqslant k-1.
\end{align}

\item All the minor coordinates are nonzero:
\begin{align}\label{GenCond:3}
	\eta_j^{(m)}\neq0,
	\qquad
	1\leqslant j\leqslant m<n.
\end{align}

\end{itemize}
\end{subequations}
If the genericity conditions
	\eqref{GenP0} hold,
	then the matrices $W_{0;k}$ required in
	Theorem~\ref{Thm:qCaterpillar}
	may be chosen as follows.
For $1\leqslant k\leqslant n$,
denote
\begin{subequations}\label{Eq:W0kEigenminor}
\begin{align}
W_{0;k}
&=
\operatorname{diag}\bigl(
W_{0;k}^{[k]},I_{n-k}
\bigr),
\\
(W_{0;k})_{ij}
&=
\begin{cases}
\displaystyle
\frac{
\eta_i^{(k-1)}
}{
[\lambda_j^{(k)}]_q-
[\lambda_i^{(k-1)}]_q
},
&
1\leqslant i \leqslant k-1, \
1\leqslant j\leqslant k
\\
1, &
i=k, \
1\leqslant j\leqslant k
\end{cases}.
\end{align}
\end{subequations}
In this case, $\mathscr W_0$ is diagonal,
and we have
\begin{subequations}\label{Eq:EMtoEntry}
\begin{align}
\Phi_0
&=
W_{0;1}\cdots W_{0;n} \cdot
\operatorname{diag}(
[\lambda_1^{(n)}]_q,\ldots,
[\lambda_n^{(n)}]_q) \cdot
W_{0;n}^{-1}\cdots W_{0;1}^{-1},\\
W_0
&=
W_{0;1}\cdots W_{0;n} \cdot \mathscr W_0.
\end{align}
\end{subequations}
The formulas \eqref{Eq:W0kEigenminor} and
	\eqref{Eq:EMtoEntry}
	give explicit formulas for
	the entry coordinates of $(\Phi_0,W_0)$ in terms of
	the eigenvalue--minor coordinates of $\Phi_0$
	and $\mathscr W_0$.

We also need to determine the structure of
	the space of $q$-monodromy data.
The space of $(C(x),\Lambda;\boldsymbol{\alpha})$
	itself has dimension $n^2+n$,
	of which $n$ dimensions arise from
	the choice of $W$.
However, the parameters themselves have dimension $n^2+2n$,
	so $n$ further constraints are required.
These are precisely the following conditions
\eqref{Eq:CThetaDeterminantConditions} and
\eqref{Eq:CThetaBalance}.

\begin{lem}\label{Lem:CThetaStructure}
Let $(U,\Phi;W;\boldsymbol{\alpha})$ be a
$q$-monodromy system.
Suppose that its $q$-monodromy data
$(C(x),\Lambda;\boldsymbol{\alpha})$ satisfy
\[
\Lambda
=
\operatorname{diag}(\lambda_1,\ldots,\lambda_n).
\]
Then the following relations hold.
\begin{subequations}
\begin{enumerate}
\item There exists a unique constant matrix
$K=K(U,\Phi;\boldsymbol{\alpha},W)$ such that
\begin{align}
\bigl(C(x)^{-1}\bigr)_{ij}
&=
K_{ij} \cdot
\frac{
\theta_q\bigl(
-q^{-\lambda_i}(1-q)xu_j
\bigr)
}{
\theta_q\bigl(
-q^{-\alpha_j}(1-q)xu_j
\bigr)
}
\bigl((1-q)xu_j\bigr)^{
\alpha_j-\lambda_i}.
\label{Eq:CThetaStructure}
\end{align}

\item For every $1\leqslant s\leqslant n$, we have
\begin{align}
\det\left(
K_{ij} \cdot
\theta_q\left(
- q^{\alpha_s-\lambda_i}
\frac{u_j}{u_s}
\right)
u_j^{-\lambda_i}
\right)_{1\leqslant i,j\leqslant n}
&=0.
\label{Eq:CThetaDeterminantConditions}
\end{align}

\item The formal monodromy satisfies
\begin{align}
\prod_{j=1}^n q^{\alpha_j}
&=
\prod_{i=1}^n q^{\lambda_i}.
\label{Eq:CThetaBalance}
\end{align}
\end{enumerate}
\end{subequations}
Moreover, at least one of the $n$ identities in
\eqref{Eq:CThetaDeterminantConditions} is redundant.
\end{lem}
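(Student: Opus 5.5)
The plan is to read everything off the factorized form of $C(x)^{-1}$. With $\boldsymbol\beta=\boldsymbol\alpha$ and $W^{-1}[\Phi]^qW=\Lambda$ diagonal, \eqref{Def:Cbeta} gives
\[
C(x)^{-1}=x^{-\Lambda}\,W^{-1}H^{[0]}(x)^{-1}H^{[\infty]}(x)\,x^{\boldsymbol\alpha}e_q(q^{-\boldsymbol\alpha}Ux).
\]
Hence $(C(x)^{-1})_{ij}=x^{\alpha_j-\lambda_i}\,e_q(q^{-\alpha_j}u_jx)\,\bigl(W^{-1}H^{[0]}(x)^{-1}H^{[\infty]}(x)_{\ast j}\bigr)_i$. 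I will carry out the argument for $|q|<1$, as in the rest of this subsection; the case $|q|>1$ follows from \eqref{Eq:CDuality}.

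For part 1, I first locate the poles of this entry in $\mathbb C^\ast$ using Proposition~\ref{Pro:SolY}.
\begin{itemize}
\item $H^{[0]}(x)^{-1}$ has no poles.
\item $H^{[\infty]}(x)_{\ast j}$ has poles only on $\frac{1}{1-q}u_j^{-1}q^{\alpha_j}q^{\mathbb N+1}$.
\item By \eqref{Eq:eqisqP}, $e_q(q^{-\alpha_j}u_jx)=1/\bigl((1-q)q^{-\alpha_j}u_jx;q\bigr)_\infty$ has simple poles on $\frac{q^{\alpha_j}}{(1-q)u_j}q^{-\mathbb N}$.
\end{itemize}
So all poles lie on the single spiral $\frac{q^{\alpha_j}}{(1-q)u_j}q^{\mathbb Z}$. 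Since $(C^{-1})_{ij}$ is a $q$-constant, the poles are simple along the whole spiral; I would check this at one point of the spiral and propagate it by $q$-invariance.

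Next I divide by the explicit factor in \eqref{Eq:CThetaStructure}, using the identity from Definition~\ref{Def:q} that $x^{a-b}\theta_q(q^ax)/\theta_q(q^bx)$ is a $q$-constant. This produces a $q$-constant $g_{ij}(x)$. It is single-valued, because the monodromy factors $x^{\alpha_j-\lambda_i}$ cancel. The zeros of $\theta_q\bigl(-q^{-\alpha_j}(1-q)xu_j\bigr)$ lie exactly on the pole spiral above, so they cancel those simple poles. The only remaining possible poles come from the zeros of $\theta_q\bigl(-q^{-\lambda_i}(1-q)xu_j\bigr)$, which form one spiral of simple zeros. Thus $g_{ij}$ is an elliptic function on $\mathbb C^\ast/q^{\mathbb Z}$ with at most one simple pole, hence constant; call it $K_{ij}$. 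Uniqueness is immediate, since the theta ratio is not identically zero.

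For part 3, I compute $\det C(x)^{-1}$ directly. From \eqref{Det:inf} with $\boldsymbol\beta=\boldsymbol\alpha$, the spectra of $U^{-1}(I+(q-1)\Phi)$ and $U^{-1}q^{\boldsymbol\alpha}$ coincide by \eqref{Eq:Defalpha}, so $\det H^{[\infty]}\equiv1$. Also \eqref{Det:zero} gives $\det H^{[0]}(x)^{-1}=\prod_s\bigl((1-q)q^{-\alpha_s}u_sx;q\bigr)_\infty$, which cancels $\det e_q(q^{-\boldsymbol\alpha}Ux)$ exactly. Hence
\[
\det C(x)^{-1}=\det(W)^{-1}\,x^{\sum_j\alpha_j-\sum_i\lambda_i}.
\]
This is a nonzero $q$-constant, so $q^{\sum\alpha_j-\sum\lambda_i}=1$, which is \eqref{Eq:CThetaBalance}.

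For part 2, I compute the same determinant from \eqref{Eq:CThetaStructure}. The row factors $((1-q)x)^{-\lambda_i}$ and the column factors $((1-q)xu_j)^{\alpha_j}$ pull out, leaving
\[
\det C(x)^{-1}=(\text{monomial})\cdot\frac{\det\Bigl(K_{ij}\,\theta_q\bigl(-q^{-\lambda_i}(1-q)xu_j\bigr)\,u_j^{-\lambda_i}\Bigr)}{\prod_j\theta_q\bigl(-q^{-\alpha_j}(1-q)xu_j\bigr)}.
\]
Since the left side has no poles, the numerator must vanish at each zero of the denominator. I evaluate at $(1-q)xu_s=q^{\alpha_s}$, where the $s$-th denominator factor vanishes. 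There $\theta_q\bigl(-q^{-\lambda_i}(1-q)xu_j\bigr)=\theta_q\bigl(-q^{\alpha_s-\lambda_i}u_j/u_s\bigr)$, which gives exactly \eqref{Eq:CThetaDeterminantConditions}.

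For the redundancy claim, I use that the numerator $N(x)$ is a theta function of degree $n$: it has $n$ zeros modulo $q^{\mathbb Z}$, and their product is fixed by its quasi-periodicity factor, which involves $\prod_j u_j$ and $\prod_i q^{\lambda_i}$. If $N$ vanishes at $n-1$ of the points $q^{\alpha_s}/((1-q)u_s)$, then together with \eqref{Eq:CThetaBalance} the last zero is forced. This step, pinning down the quasi-periodicity factor of $N$ and checking that no degenerate cancellation occurs, is the main obstacle. The pole-simplicity check in part 1 is the other place that needs care.
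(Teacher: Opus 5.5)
Your proposal is correct and follows the paper's proof in all essentials. You find $K$ by the same pole-location and one-simple-pole elliptic-function argument, and you obtain \eqref{Eq:CThetaDeterminantConditions} from the same two evaluations of $\det C(x)^{-1}$ specialized at $x=q^{\alpha_s}/((1-q)u_s)$.

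The only differences are cosmetic. The paper gets \eqref{Eq:CThetaBalance} directly from $\det(I+(q-1)\Phi)=\prod_i q^{\lambda_i}=\prod_j q^{\alpha_j}$. For redundancy, it notes that $N(x)/\prod_j\theta_q\bigl(-q^{-\alpha_j}(1-q)xu_j\bigr)$ is $q$-invariant by \eqref{Eq:CThetaBalance}, so $n-1$ vanishing conditions leave at most one simple pole and force it to be constant. This is equivalent to your zero-product argument, and the quasi-periodicity factor you flagged as the main obstacle is just $N(qx)=\prod_i q^{\lambda_i}\prod_j\bigl(-(1-q)xu_j\bigr)^{-1}N(x)$.
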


\begin{prf}
Since $|q|<1$,
Proposition~\ref{Pro:SolY} shows that
$H^{[0]}(x)^{-1}$ is holomorphic on $\mathbb C^\ast$.
By Lemma~\ref{Lem:BG} and Proposition~\ref{Pro:SolY},
the possible poles of the $j$-th column of
$H^{[\infty]}(x)$ lie on
$\frac{q^{\alpha_j}}{(1-q)u_j}q^{\mathbb N+1}$,
and every pole has order at most one. Moreover,
$e_q(q^{-\alpha_j}u_jx)$ has simple poles on
$\frac{q^{\alpha_j}}{(1-q)u_j}q^{-\mathbb N}$.
Consequently, the possible poles of
$\bigl(C(x)^{-1}\bigr)_{ij}$ lie on
$\frac{q^{\alpha_j}}{(1-q)u_j}q^{\mathbb Z}$,
and every pole has order at most one.
The single-valuedness of
$H^{[0]}(x)^{-1}$ and $H^{[\infty]}(x)$ gives
\begin{align*}
\bigl(C(x\mathrm e^{2\pi\mathrm i})^{-1}\bigr)_{ij}
&=
\mathrm e^{2\pi\mathrm i(\alpha_j-\lambda_i)}
\bigl(C(x)^{-1}\bigr)_{ij}.
\end{align*}
This monodromy relation shows that
\begin{align*}
\bigl(C(x)^{-1}\bigr)_{ij}
\frac{
	\theta_q\bigl(
	-q^{-\alpha_j}(1-q)xu_j
	\bigr)
}{
	\theta_q\bigl(
	-q^{-\lambda_i}(1-q)xu_j
	\bigr)
}
\bigl((1-q)xu_j\bigr)^{
\lambda_i-\alpha_j}
\end{align*}
is a single-valued $q$-constant,
and has at most one simple pole
$\frac{q^{\lambda_i}}{(1-q)u_j} q^{\mathbb Z}$
on $\mathbb C^\ast/q^{\mathbb Z}$.
It is therefore constant.
Denoting this constant by $K_{ij}$ proves
\eqref{Eq:CThetaStructure}.

Applying \eqref{Det:H} in Proposition~\ref{Pro:SolY}
and \eqref{Eq:CThetaStructure}, we obtain
\begin{align*}
\det C(x)^{-1}
&=
\frac{1}{\det W}
x^{\sum_{j=1}^n\alpha_j-\sum_{i=1}^n\lambda_i}
\\
&=
x^{\sum_{j=1}^n\alpha_j-\sum_{i=1}^n\lambda_i}
\prod_{j=1}^n
\frac{
\bigl((1-q)u_j\bigr)^{\alpha_j}
}{
\theta_q\bigl(
-q^{-\alpha_j}(1-q)xu_j
\bigr)
}
\\
&\quad\times
\det\left(
K_{ij} \cdot
\theta_q\bigl(
-q^{-\lambda_i}(1-q)xu_j
\bigr)
\bigl((1-q)u_j\bigr)^{-\lambda_i}
\right)_{1\leqslant i,j\leqslant n},
\end{align*}
and we have
\begin{align*}
&
\det\left(
K_{ij} \cdot
\theta_q\bigl(
-q^{-\lambda_i}(1-q)xu_j
\bigr) 
\bigl((1-q)u_j\bigr)^{-\lambda_i}
\right)_{1\leqslant i,j\leqslant n} =
\frac{1}{\det W}
\prod_{j=1}^n
\frac{
	\theta_q\bigl(
	-q^{-\alpha_j}(1-q)xu_j
	\bigr)
}{
	\bigl((1-q)u_j\bigr)^{\alpha_j}
}.
\end{align*}
Taking
$x=\frac{q^{\alpha_s}}{(1-q)u_s}$
in this identity gives
\eqref{Eq:CThetaDeterminantConditions}.

Finally,
\[
\prod_{i=1}^nq^{\lambda_i}
=
\det\bigl(I+(q-1)\Phi\bigr)
=
\det U \cdot
\prod_{j=1}^n
u_j^{-1}q^{\alpha_j}
=
\prod_{j=1}^nq^{\alpha_j}.
\]
This proves \eqref{Eq:CThetaBalance}.
Equation~\eqref{Eq:CThetaBalance} also shows that
for every constant matrix
$\widetilde K=(\widetilde K_{ij})$,
the function
\begin{align*}
F(x)
&:=
\det\left(
\widetilde K_{ij} \cdot
\theta_q\bigl(
-q^{-\lambda_i}(1-q)xu_j
\bigr)
\bigl((1-q)u_j\bigr)^{-\lambda_i}
\right)_{1\leqslant i,j\leqslant n}
\prod_{j=1}^n
\theta_q\bigl(
-q^{-\alpha_j}(1-q)xu_j
\bigr)^{-1}.
\end{align*}
is a single-valued $q$-constant.
If two of the points
	$u_1^{-1}q^{\alpha_1},\ldots,u_n^{-1}q^{\alpha_n}$
	lie on the same $q$-spiral,
	two of the $n$ identities in 
	\eqref{Eq:CThetaDeterminantConditions} are equivalent.
Otherwise, any $n-1$ of these identities imply that
	$F(x)$ has at most one simple pole
	on $\mathbb C^\ast/q^{\mathbb Z}$,
	and hence is constant.
This gives the same conclusion.
\end{prf}  

\noindent
The conditions
	\eqref{Eq:CThetaDeterminantConditions} on
	the constant matrix $K$
	are invariant under $q$-shifts of
	$q^{\alpha_i}$, $q^{\lambda_i}$, and $u_i$.

The formulas in \cite{lin_ma_xu_qstokes_2024}
give the following result.

\begin{pro}\label{Pro:OneStepConnectionEntries}
Suppose that \eqref{GenP0} hold,
	and let $W_{0;k}$ be chosen as in
	\eqref{Eq:W0kEigenminor}.
Write
\begin{align*}
C_{-xu_s}
(xu_k;E_k,\delta_k\Phi_{0;k-1};W_{0;k})
&=
\operatorname{diag}(C,I_{n-k}),\qquad
1\leqslant k,s\leqslant n.
\end{align*}
\begin{subequations}
For $1\leqslant i\leqslant k$ and
$1\leqslant j \leqslant k-1$, we have
\begin{align}
\left(C^{-1}\right)_{ij}
&=
\frac{
[\lambda_i^{(k)}]_q-
[\lambda_j^{(k-1)}]_q
}{
\eta_j^{(k-1)}
}
\prod_{\substack{1\leqslant\ell\leqslant k\\
\ell\neq i}}
\frac{
\Gamma_q\bigl(
\lambda_\ell^{(k)}-\lambda_i^{(k)}
\bigr)
}{
\Gamma_q\bigl(
\lambda_\ell^{(k)}-\lambda_j^{(k-1)}
\bigr)
}
\prod_{\substack{1\leqslant\ell \leqslant k-1\\
\ell\neq j}}
\frac{
\Gamma_q\bigl(
\lambda_\ell^{(k-1)}-\lambda_j^{(k-1)}
\bigr)
}{
\Gamma_q\bigl(
\lambda_\ell^{(k-1)}-\lambda_i^{(k)}
\bigr)
}
\notag\\
&\quad\times
\frac{
\theta_q\bigl(
-q^{\lambda_j^{(k-1)}-\lambda_i^{(k)}}u_k/u_s
\bigr)
}{
\theta_q(-u_k/u_s)
}
\frac{
\theta_q\bigl(
-q^{-\lambda_i^{(k)}}(1-q)xu_s
\bigr)
}{
\theta_q\bigl(
-q^{-\lambda_j^{(k-1)}}(1-q)xu_s
\bigr)
}
\bigl((1-q)xu_k\bigr)^{
\lambda_j^{(k-1)}-\lambda_i^{(k)}}.
\label{Eq:OneStepConnectionFirstColumns}
\end{align}
For $1\leqslant i\leqslant k$, we have
\begin{align}
\left(C^{-1}\right)_{ik}
&=
\frac{
\displaystyle
\prod_{\substack{1\leqslant\ell\leqslant k\\
\ell\neq i}}
\Gamma_q\bigl(
\lambda_\ell^{(k)}-\lambda_i^{(k)}
\bigr)
}{
\displaystyle
\prod_{1\leqslant\ell\leqslant k-1}
\Gamma_q\bigl(
\lambda_\ell^{(k-1)}-\lambda_i^{(k)}
\bigr)
}
\frac{
\theta_q\bigl(
-q^{-\lambda_i^{(k)}}(1-q)xu_k
\bigr)
}{
\theta_q\bigl(
-q^{-\alpha_k}(1-q)xu_k
\bigr)
}
\bigl((1-q)xu_k\bigr)^{
\alpha_k-\lambda_i^{(k)}}.
\label{Eq:OneStepConnectionLastColumn}
\end{align}
\end{subequations}
\end{pro}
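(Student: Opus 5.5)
The plan is to reduce the one-step connection matrix to an explicit confluent basic hypergeometric computation for the limiting system \eqref{Eq:TargetSysLim-zeta} with $n$ replaced by $k$. Since $C_{-xu_s}(xu_k;E_k,\delta_k\Phi_{0;k-1};W_{0;k})$ is block diagonal with trivial lower block, only the $k\times k$ system
\[
\frac{\mathrm d_q}{\mathrm d_q\zeta}T(\zeta)=\Bigl(E_{k}+\frac{\Psi}{\zeta}\Bigr)T(\zeta),\qquad
\Psi=\begin{pmatrix}\operatorname{diag}([\lambda^{(k-1)}_1]_q,\ldots,[\lambda^{(k-1)}_{k-1}]_q)& b\\ c& d\end{pmatrix}
\]
needs to be studied. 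Here the upper-left block is diagonal because $W_{0;1}\cdots W_{0;k-1}$ diagonalizes $[\Phi_0^{[k-1]}]^q$.

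First, I would compute $\Psi$ in eigenvalue–minor coordinates from \eqref{Eq:W0kEigenminor}. By construction, $W_{0;k}^{[k]}$ has columns $\bigl(\eta_i^{(k-1)}/([\lambda_j^{(k)}]_q-[\lambda_i^{(k-1)}]_q)\bigr)_i$ over $1$. These are exactly the eigenvectors of an arrowhead matrix whose eigenvalues are $[\lambda^{(k)}_j]_q$, with products $b_ic_i$ fixed by residues of the secular equation. The genericity \eqref{GenCond:1}--\eqref{GenCond:3} guarantees that this matrix is well defined and diagonalizable.

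Next, I would solve the system at $\zeta=0$ and $\zeta=\infty$ by scalar reduction. Eliminating the first $k-1$ components leaves the last row satisfying a scalar $q$-difference equation of order $k$, of ${}_k\varphi_{k-1}$ (at $0$) type. Its solutions are
\[
T^{[0]}_{kj}\propto \zeta^{\lambda_j^{(k)}}\,{}_{k}\varphi_{k-1}\Bigl(\{q^{\lambda_j^{(k)}-\lambda^{(k-1)}_\ell+1}\};\{q^{\lambda_j^{(k)}-\lambda_m^{(k)}+1}\};(1-q)\zeta\Bigr)
\]
with the other rows following from the componentwise equations, exactly as in the rank-two example of Section~\ref{Sect:ExampCan2}. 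At $\infty$, the convergent column $k$ carries the factor $e_q(q^{-\alpha_k}\zeta)$. The divergent columns $j<k$ are ${}_k\varphi_{k-2}$-type series in $\zeta^{-1}$, which are to be $q$-Borel summed along $-xu_s$ as in Definition~\ref{Def:ReassembledQBorelSum}.

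The connection coefficients then come from the standard connection formulas for ${}_k\varphi_{k-1}$ (the Watson/Gasper–Rahman connection of the $q$-hypergeometric series from $0$ to $\infty$) together with the $q$-Borel–Laplace evaluation. The results are ratios of infinite $q$-Pochhammer symbols, which I would convert into the $\Gamma_q$ products shown using $\Gamma_q(x)=(1-q)^{1-x}(q;q)_\infty/(q^x;q)_\infty$. The theta quotients encode the $q$-constant dependence on $x$, and their structure is forced a priori by the argument of Lemma~\ref{Lem:CThetaStructure}: poles lie on prescribed spirals and the quasi-periodicity is known. Consequently only the constant $K_{ij}$ has to be determined. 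I would obtain $K_{ij}$ by evaluating residues at a convenient point, for example at the poles $\zeta=q^{\lambda_j^{(k-1)}}u_k/((1-q)u_s)\cdot q^{\mathbb Z}$, where the summed column matches a single term of the Borel sum. The factor $([\lambda_i^{(k)}]_q-[\lambda_j^{(k-1)}]_q)/\eta^{(k-1)}_j$ comes from the entries of $(W^{[k]}_{0;k})^{-1}$.

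The main obstacle is the divergent columns $j<k$. Their $q$-Borel sum along $-xu_s$ with $s\ne k$ introduces the extra factor $\theta_q(-q^{\lambda_j^{(k-1)}-\lambda_i^{(k)}}u_k/u_s)/\theta_q(-u_k/u_s)$, and pinning down its normalization requires a careful $q$-Laplace computation. Rather than redoing this, I would cite the explicit formulas of \cite{lin_ma_xu_qstokes_2024} for exactly this confluent system, after checking that their normalization of $W$ matches \eqref{Eq:W0kEigenminor} and their formal monodromy matches $[\delta_{k-1}\Phi_{0;k-1}]^q$. I would then verify consistency with Proposition~\ref{Pro:EntryC2} in the case $k=2$.
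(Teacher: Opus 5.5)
Your proposal ends up resting on the same thing the paper's proof does: the paper's entire argument is to import the explicit one-step connection formulas of \cite{lin_ma_xu_qstokes_2024} for the confluent system $\frac{\mathrm d_q}{\mathrm d_q\zeta}T=(E_k+\delta_k\Phi_{0;k-1}/\zeta)T$ with $W_{0;k}$ as in \eqref{Eq:W0kEigenminor}, and you likewise cite those formulas after matching normalizations, so this is essentially the paper's route. The hypergeometric sketch before the citation is optional and heuristic as written. The series at $0$ is a ${}_k\varphi_{k-1}$ with a vanishing numerator parameter, so Watson-type connection formulas degenerate and the $q$-Borel--Laplace step carries all the content. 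Also, Proposition~\ref{Pro:EntryC2} assumes $u_1,u_2\neq0$, so your $k=2$ consistency check needs a degeneration $u_1\to0$ rather than a direct comparison.
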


We now reconstruct the initial data from the
	$q$-monodromy data.
Fix $\lambda_\ast^{(n)}$, $\boldsymbol{\alpha}$, and
	$u_1,\ldots,u_n$.
We assume throughout that
	\eqref{GenP0} hold.
Denote
\begin{subequations}
\begin{align}
\eta^{(n)}
&:=
\mathscr W_0
=
\diag(
\eta^{(n)}_1,\ldots,\eta^{(n)}_n),
\\
	\eta^{(s-1)} &:=
	\diag(\eta^{(s-1)}_1,\ldots,\eta^{(s-1)}_{s-1},
	1,\ldots,1),\\
	\mathscr K_d^{(s)}(
	\lambda_\ast^{(s)},\lambda_\ast^{(s-1)}) &:=
	\diag(
	\mathscr K_d^{(s)}(
	\lambda_\ast^{(s)},\lambda_\ast^{(s-1)})^{[s]},
	1,\ldots,1).
\end{align}
\end{subequations}
The entries of $\mathscr K_d^{(s)}$ are given by
\begin{subequations}
\begin{align}
(\mathscr K_d^{(s)})_{ij}
&:=
\left(
[\lambda_i^{(s)}]_q-
[\lambda_j^{(s-1)}]_q
\right)
\prod_{\substack{1\leqslant\ell\leqslant s\\
\ell\neq i}}
\frac{
\Gamma_q\bigl(
\lambda_\ell^{(s)}-\lambda_i^{(s)}
\bigr)
}{
\Gamma_q\bigl(
\lambda_\ell^{(s)}-\lambda_j^{(s-1)}
\bigr)
}
\prod_{\substack{1\leqslant\ell\leqslant s-1\\
\ell\neq j}}
\frac{
\Gamma_q\bigl(
\lambda_\ell^{(s-1)}-\lambda_j^{(s-1)}
\bigr)
}{
\Gamma_q\bigl(
\lambda_\ell^{(s-1)}-\lambda_i^{(s)}
\bigr)
}
\notag\\
&\quad\times
\frac{
\theta_q\bigl(
-q^{\lambda_j^{(s-1)}-\lambda_i^{(s)}}d
\bigr)
}{
\theta_q(-d)
}
d^{\lambda_j^{(s-1)}-\lambda_i^{(s)}},
\qquad
1\leqslant i\leqslant s, \quad
1\leqslant j \leqslant s-1,
\\
(\mathscr K_d^{(s)})_{is}
&:=
\frac{
\displaystyle
\prod_{\substack{1\leqslant\ell\leqslant s\\
\ell\neq i}}
\Gamma_q\bigl(
\lambda_\ell^{(s)}-\lambda_i^{(s)}
\bigr)
}{
\displaystyle
\prod_{1\leqslant\ell\leqslant s-1}
\Gamma_q\bigl(
\lambda_\ell^{(s-1)}-\lambda_i^{(s)}
\bigr)
},
\qquad
1\leqslant i\leqslant s.
\end{align}
\end{subequations}

\begin{cor}\label{Cor:ConnectionFromEigenminorCoordinates}
Suppose that $\Phi_0$ satisfies
\eqref{GenP0}.
Choose $W_{0;s}$ as in
\eqref{Eq:W0kEigenminor}
and fix the choices of $\lambda_i^{(s)}$.
Let $K$ be the constant matrix in
\eqref{Eq:CThetaStructure}
for the $q$-monodromy system constructed from these
initial data in Theorem~\ref{Thm:AsymBehavior}.
Then $K$ is given by
\begin{align}
	KE_j
	&=
	\left(
	\mathop{\prod}
	\limits_{s=j}^{\overset{\longleftarrow}{n}}
	(\eta^{(s)})^{-1} \cdot
	\mathscr K_{u_s/u_j}^{(s)}
	\bigl(
	\lambda_\ast^{(s)},\lambda_\ast^{(s-1)}
	\bigr)
	\right)E_j,
	\qquad
	1\leqslant j\leqslant n.
\label{Eq:KColumnFromEigenminorCoordinates}
\end{align}
\end{cor}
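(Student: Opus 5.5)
The plan is to start from formula \eqref{Eq:qCaterpillarl1C} of Theorem~\ref{Thm:qCaterpillar} (case $|q|<1$). Under \eqref{GenP0} and the choice \eqref{Eq:W0kEigenminor}, the matrix $\mathscr W_0=\eta^{(n)}$ is diagonal, so the $j$-th column of $C(x)^{-1}$ reads
\[
C(x)^{-1}E_j=(\eta^{(n)})^{-1}\Bigl(\prod_{k=1}^{\overset{\longleftarrow}{n}}C_{-xu_j}(xu_k;E_k,\delta_k\Phi_{0;k-1};W_{0;k})^{-1}\Bigr)E_j .
\]
Each factor is block diagonal, $\operatorname{diag}(C,I_{n-k})$, and acts nontrivially only on the first $k$ coordinates. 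The vector $E_j$ is therefore fixed by all factors with $k<j$. For $k=j$, only the last column of the $j$-th factor is picked out, and this column is \eqref{Eq:OneStepConnectionLastColumn}. For $k>j$, the columns used come from \eqref{Eq:OneStepConnectionFirstColumns} with $s=j$. This explains why the product runs from $s=j$ to $n$ in \eqref{Eq:KColumnFromEigenminorCoordinates}.

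The next step is to match the one-step entries with $(\eta^{(s)})^{-1}\mathscr K^{(s)}_{u_s/u_j}$ up to $x$-dependent scalar factors, and then show that these factors telescope. In \eqref{Eq:OneStepConnectionFirstColumns} with $s=j$ and level $k$, the factor $1/\eta^{(k-1)}_{i_{k-1}}$ comes from the column index $i_{k-1}$. Since $(\eta^{(k-1)})^{-1}$ is diagonal, this is the same as placing $(\eta^{(k-1)})^{-1}$ to the left of the level-$(k-1)$ factor in the product. Shifting these diagonal matrices one level and combining them with the outer $(\eta^{(n)})^{-1}$ gives the ordered product $\prod_s(\eta^{(s)})^{-1}\mathscr K^{(s)}$.

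The $x$-dependent pieces are handled as follows. Write $(1-q)xu_k=(1-q)xu_j\cdot(u_k/u_j)$. Each factor $((1-q)xu_k)^{\lambda^{(k-1)}_{i_{k-1}}-\lambda^{(k)}_{i_k}}$ then splits into the power $(u_k/u_j)^{\cdots}$, which is absorbed in $\mathscr K^{(k)}_{u_k/u_j}$, and a power of $(1-q)xu_j$. Along any path $\boldsymbol i\in\mathcal P_{rj}$, the powers of $(1-q)xu_j$ telescope to $((1-q)xu_j)^{\alpha_j-\lambda^{(n)}_r}$. Likewise the theta ratios $\theta_q(-q^{-\lambda^{(k)}_{i_k}}(1-q)xu_j)/\theta_q(-q^{-\lambda^{(k-1)}_{i_{k-1}}}(1-q)xu_j)$ telescope, together with the $k=j$ factor, to the ratio $\theta_q(-q^{-\lambda^{(n)}_r}(1-q)xu_j)/\theta_q(-q^{-\alpha_j}(1-q)xu_j)$. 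The prefactor that survives has exactly the shape of \eqref{Eq:CThetaStructure} with $\lambda_r=\lambda^{(n)}_r$. Since $K$ is unique by Lemma~\ref{Lem:CThetaStructure}, reading off the coefficient gives \eqref{Eq:KColumnFromEigenminorCoordinates}.

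The main obstacle I expect is the bookkeeping needed to check that the conventions agree. Concretely, three things must be verified:
\begin{itemize}
\item the ordering of $\Lambda$ induced by the $W_{0;k}$ matches the index $r$ of $\lambda^{(n)}_r$;
\item the branch and ordering choices of $\lambda^{(k)}_i$ are consistent across levels;
\item the theta factor $\theta_q(-q^{\lambda^{(k-1)}-\lambda^{(k)}}u_k/u_j)/\theta_q(-u_k/u_j)$ carries exactly the extra power $(u_k/u_j)^{\lambda^{(k-1)}-\lambda^{(k)}}$ built into $\mathscr K$.
\end{itemize}
For $k=j$ one must also use $u_s/u_j=1$. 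Note that $\theta_q(-1)=0$, so the direct specialization is singular. I would avoid it by using the separate definition of the last column $(\mathscr K^{(s)})_{is}$, which contains no theta factor.
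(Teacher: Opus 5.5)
Your proposal is correct and is the same argument as the paper's, whose proof is the one-line instruction to substitute Proposition~\ref{Pro:OneStepConnectionEntries} into \eqref{Eq:qCaterpillarl1C} and read off $K$ via \eqref{Eq:CThetaStructure}. You have made explicit what the paper leaves implicit: the triangular action of the block-diagonal factors on $E_j$, the relocation of the diagonal factors $(\eta^{(k-1)})^{-1}$, the telescoping of the theta ratios and of the powers of $(1-q)xu_j$, and the fact that only the $d$-independent last column of $\mathscr K^{(j)}_{u_j/u_j}$ enters, which avoids $\theta_q(-1)=0$.
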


\begin{prf}
Substitute the formulas in
Proposition~\ref{Pro:OneStepConnectionEntries}
into Theorem~\ref{Thm:qCaterpillar}.
\end{prf}

\begin{lem}\label{Lem:InverseMonodromy}
Let $(K,\Lambda;\boldsymbol{\alpha})$ be a generic triple
satisfying \eqref{Eq:CThetaDeterminantConditions} and
\eqref{Eq:CThetaBalance}.
Then there exists a unique shrinking solution
with $q$-monodromy data
$(C(x),\Lambda;\boldsymbol{\alpha})$
satisfying \eqref{Eq:CThetaStructure}.
\end{lem}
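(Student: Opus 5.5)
The proof will invert the explicit formula of Corollary~\ref{Cor:ConnectionFromEigenminorCoordinates}. Existence comes from solving \eqref{Eq:KColumnFromEigenminorCoordinates} for the Gelfand--Zeitlin eigenvalue--minor coordinates and the diagonal matrix $\mathscr W_0=\eta^{(n)}$. Uniqueness then follows from Proposition~\ref{Pro:qUni}.

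\textbf{Recovering the coordinates column by column.} The spectrum $\lambda_\ast^{(n)}$ is given by $\Lambda$. Put $j=n$ in \eqref{Eq:KColumnFromEigenminorCoordinates}: the product has a single factor, so $KE_n=(\eta^{(n)})^{-1}\mathscr K^{(n)}_{1}E_n$. The entries $(\mathscr K^{(n)})_{in}$ depend only on $\lambda_\ast^{(n)}$ and $\lambda_\ast^{(n-1)}$, so this column relates $\eta^{(n)}_i$ to the level-$(n-1)$ spectrum. Next take $j=n-1$. The factor $\mathscr K^{(n-1)}_{1}$ contributes only its last column, which depends on $\lambda^{(n-1)}_\ast,\lambda^{(n-2)}_\ast$. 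The factor $(\eta^{(n-1)})^{-1}$ then enters through $\mathscr K^{(n)}_{u_n/u_{n-1}}$.

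I would argue by descending induction on the level $s$. The quantities $\lambda^{(s-1)}_\ast$ and $\eta^{(s-1)}_\ast$ should be read off from ratios of entries of $K$ in a suitable column. The $\Gamma_q$ and $\theta_q$ factors of $\mathscr K^{(s)}$ have explicit dependence on $\lambda^{(s-1)}_i$, and the minors $\eta^{(s-1)}_i$ enter linearly and diagonally. Concretely, the $\theta_q$ factors make $q^{\lambda^{(s-1)}_\ast}$ appear as the zero loci of certain theta-combinations. These loci can be identified through the determinant conditions \eqref{Eq:CThetaDeterminantConditions}: for each $s$, the degeneracy at the point $x=q^{\alpha_s}/((1-q)u_s)$ encodes a characteristic-polynomial-type relation whose roots are $q^{\lambda^{(s-1)}_i}$. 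The normalization \eqref{Thm:defalpha} gives $\prod_i q^{\lambda^{(s)}_i}/\prod_i q^{\lambda^{(s-1)}_i}=q^{\alpha_s}$, which fixes the remaining $q$-shift ambiguity. That is consistent with \eqref{Eq:CThetaBalance} and the count of $n$ constraints noted before Lemma~\ref{Lem:CThetaStructure}.

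Genericity of $(K,\Lambda;\boldsymbol\alpha)$ will be used in three ways: to guarantee \eqref{GenP0} for the recovered coordinates, to give nonvanishing of the $\Gamma_q$ factors so that the linear solves are possible, and, via a dimension count ($n^2+2n$ parameters minus $n$ constraints), to place the data in an open dense image. With the coordinates in hand, \eqref{Eq:W0kEigenminor} and \eqref{Eq:EMtoEntry} define $(\Phi_0,W_0)$. We must also choose branches $\lambda^{(s)}_i$ so that the shrinking condition \eqref{Cond:Shrinking} holds. This is possible because each $\lambda^{(s)}_i$ is only determined modulo $\frac{2\pi \mathrm i}{\ln q}\mathbb Z$ by $q^{\lambda}$, and the shrinking strip has width $<1$ in the real direction after the $\arg q$ tilt. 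Generically each $q^{\lambda}$ therefore admits a representative with all pairwise differences inside the strip, although this needs an argument that integer shifts can be applied uniformly. Theorem~\ref{Thm:AsymBehavior} then produces a shrinking solution, and Corollary~\ref{Cor:ConnectionFromEigenminorCoordinates} shows that its matrix $K$ is the given one. By \eqref{Eq:CThetaStructure}, its $C(x)$ equals the prescribed one.

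\textbf{Uniqueness.} Suppose two shrinking solutions have the same data. Then at some common generic $u$ where \eqref{NotUniPhi} holds, Proposition~\ref{Pro:qUni} gives equal $(\Phi,W)$. By the uniqueness part of Theorem~\ref{Thm:qiso}, the two deformations coincide, and by the uniqueness in Theorem~\ref{Thm:AsymBehavior} their leading terms coincide as well.

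I expect the main obstacle to be the inductive extraction step: showing that the nonlinear system \eqref{Eq:KColumnFromEigenminorCoordinates} determines $\lambda^{(s-1)}_\ast$ uniquely up to the allowed branch choices. I would first handle $n=2$ explicitly using Proposition~\ref{Pro:EntryC2}, then mimic that computation level by level.
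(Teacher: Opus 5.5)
Your architecture is the paper's: invert Corollary~\ref{Cor:ConnectionFromEigenminorCoordinates} one level at a time, fix representatives, rebuild $(\Phi_0,W_0)$ from \eqref{Eq:W0kEigenminor}--\eqref{Eq:EMtoEntry}, and get uniqueness from Proposition~\ref{Pro:qUni}. However, the step you defer as the ``main obstacle'' is the whole content of the existence proof, and the proposal does not supply it.

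First, the equations for $\lambda^{(n-1)}_\ast$ are not ``ratios of entries of $K$ in a suitable column''. A column $KE_j$ with $j<n$ involves the unknowns of every level $j-1,\dots,n-1$, so the lower levels must be eliminated. Every factor $(\eta^{(s)})^{-1}\mathscr K^{(s)}$ with $s<n$ is block diagonal with a trailing identity block. Hence $\bigl((\mathscr K^{(n)}_{u_n/u_j})^{-1}\eta^{(n)}KE_j\bigr)_{nj}=0$ for $1\leq j\leq n-1$, where $\eta^{(n)}_i=(\mathscr K^{(n)})_{in}/K_{in}$ is read off the last column. These are $n-1$ transcendental equations in the $n-1$ unknowns $\lambda^{(n-1)}_\ast$ alone.

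Second, and this is the missing idea, you must show that this system has a solution. This is exactly where \eqref{Eq:CThetaDeterminantConditions} enters.
\begin{itemize}
\item The $q$-Gamma reflection formula turns each equation into the vanishing of a determinant of theta ratios.
\item The elliptic partial fraction formula turns that into a linear equation in the values $H(q^{\lambda^{(n)}_1}),\dots,H(q^{\lambda^{(n)}_n})$ of $H(z)=\prod_r\theta_q(-z/q^{\lambda^{(n-1)}_r})$. Its zero-product is $w=q^{-\alpha_n}\prod_k q^{\lambda^{(n)}_k}$, as \eqref{Thm:defalpha} requires.
\item For this $w$, the coefficient matrix of the $n$ equations $j=1,\dots,n$ agrees with the matrix in \eqref{Eq:CThetaDeterminantConditions} for $s=n$, up to row and column rescalings via $\theta_q(x^{-1})=x^{-1}\theta_q(x)$. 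That condition therefore gives a nonzero null vector.
\item Theta functions of order $n-1$ with fixed zero-product form an $(n-1)$-dimensional space and satisfy the $j=n$ equation identically. So, generically, the null vector is the value vector of a unique such $H$ up to scale, and the zeros of $H$ are the spirals $q^{\lambda^{(n-1)}_r}q^{\mathbb Z}$.
\end{itemize}
One then sets $K^{(n-1)}E_j:=(\mathscr K^{(n)}_{u_n/u_j})^{-1}\eta^{(n)}KE_j$ and checks that its upper-left block satisfies the rank-$(n-1)$ hypotheses. That check is what makes your descending induction run.

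Your proposed substitutes do not close this gap. The determinant condition is a single scalar constraint on $(K,\Lambda;\boldsymbol{\alpha})$, not ``a characteristic-polynomial-type relation whose roots are $q^{\lambda^{(s-1)}_i}$''. Those roots appear only after the null vector is interpolated by a theta function. Working out $n=2$ and mimicking it gives no guidance. There the only unknown is fixed by balance alone, $q^{\lambda^{(1)}_1}=q^{\alpha_1}$, and the one remaining equation is just the constraint itself; the coupled problem first appears at $n=3$. Nor can a dimension count place generic data in the image of the forward map. That would require the map to be locally biholomorphic, which is what the explicit inversion proves.

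Two smaller points.
\begin{itemize}
\item Balance alone does not fix the integer shifts of the individual $\lambda^{(s-1)}_r$, but balance together with shrinking does. Suppose two admissible choices differ by integers $m_r$ with $\sum_r m_r=0$, not all zero. Then some $m_r\geq1$ and some $m_{r'}\leq-1$, so the tilted difference of that pair shifts by at least $2$, and both choices cannot satisfy \eqref{Cond:Shrinking}. Existence follows by taking representatives in a sliding half-open unit window.
\item Your closing claim that the leading terms of the two solutions coincide is neither justified nor needed. The recovered $(\Phi_0,W_0)$ depend on branch choices, and only the shrinking solution itself is unique.
\end{itemize}
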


\begin{prf}
We first outline the construction.
Write $K^{(n)}:=K$ and
$\lambda_i^{(n)}:=\lambda_i$.
Once $K^{(s)}$ has been constructed,
we determine $\lambda_\ast^{(s-1)}$ by solving a system
of equations.
The $s$-th column of $K^{(s)}$ then determines
$\eta^{(s)}$.
We use it to construct $K^{(s-1)}$.
Repeating this step recursively recovers the
eigenvalue--minor coordinates of $\Phi_0$.

We first determine $\lambda_\ast^{(n-1)}$.
For $1\leqslant i\leqslant n$, denote
\begin{align*}
g_i^{(n)}(\lambda_\ast^{(n)},\lambda_\ast^{(n-1)})
&:=
\prod_{\substack{1\leqslant\ell\leqslant n\\
\ell\neq i}}
	\Gamma_q\bigl(
	\lambda_\ell^{(n)}-\lambda_i^{(n)}
	\bigr)
\prod_{1\leqslant\ell \leqslant n-1}
	\Gamma_q\bigl(
	\lambda_\ell^{(n-1)}-\lambda_i^{(n)}
	\bigr)^{-1}.
\end{align*}
The formula
	\eqref{Eq:KColumnFromEigenminorCoordinates} in 
	Corollary~\ref{Cor:ConnectionFromEigenminorCoordinates}
	naturally suggests the following system of equations.
\begin{subequations}
\begin{align}
\label{Eq:EtaRecovery}
\eta_i^{(n)}
&=
(K_{in})^{-1} \cdot
g_i^{(n)}(\lambda_\ast^{(n)},\lambda_\ast^{(n-1)}),\\
\left(
\left(
\mathscr K_{u_n/u_j}^{(n)}
\bigl(
\lambda_\ast^{(n)},\lambda_\ast^{(n-1)}
\bigr)
\right)^{-1}
\begin{pmatrix}
\displaystyle
\frac{K_{1j}}{K_{1n}} \cdot
g_1^{(n)}(\lambda_\ast^{(n)},\lambda_\ast^{(n-1)})
\\
\vdots
\\
\displaystyle
\frac{K_{nj}}{K_{nn}} \cdot
g_n^{(n)}(\lambda_\ast^{(n)},\lambda_\ast^{(n-1)})
\end{pmatrix}
\right)_n
&= 0,\qquad
1\leqslant j\leqslant n-1.
\label{Eq:FirstSpectralRecovery}
\end{align}
\end{subequations}
The $q$-Gamma reflection formula shows that
\eqref{Eq:FirstSpectralRecovery} is equivalent to
\begin{subequations}
\begin{align}
\label{Eq:FirstSpectralRecovery2}
\det\begin{pmatrix}
	\displaystyle
	\frac{
	\theta_q\left(
	-\frac{q^{\lambda_1^{(n-1)}}u_n}
	{q^{\lambda_1^{(n)}}u_j}
	\right)
	}{
	\theta_q\left(
	-\frac{q^{\lambda_1^{(n-1)}}}
	{q^{\lambda_1^{(n)}}}
	\right)
	}
	&
	\cdots
	&
	\displaystyle
	\frac{
	\theta_q\left(
	-\frac{q^{\lambda_{n-1}^{(n-1)}}u_n}
	{q^{\lambda_1^{(n)}}u_j}
	\right)
	}{
	\theta_q\left(
	-\frac{q^{\lambda_{n-1}^{(n-1)}}}
	{q^{\lambda_1^{(n)}}}
	\right)
	}
	&
	\displaystyle
	\left(\frac{u_n}{u_j}\right)^{\lambda_1^{(n)}}
	\frac{K_{1j}}{K_{1n}}
	\\
	\vdots
	&
	\ddots
	&
	\vdots
	&
	\vdots
	\\
	\displaystyle
	\frac{
	\theta_q\left(
	-\frac{q^{\lambda_1^{(n-1)}}u_n}
	{q^{\lambda_n^{(n)}}u_j}
	\right)
	}{
	\theta_q\left(
	-\frac{q^{\lambda_1^{(n-1)}}}
	{q^{\lambda_n^{(n)}}}
	\right)
	}
	&
	\cdots
	&
	\displaystyle
	\frac{
	\theta_q\left(
	-\frac{q^{\lambda_{n-1}^{(n-1)}}u_n}
	{q^{\lambda_n^{(n)}}u_j}
	\right)
	}{
	\theta_q\left(
	-\frac{q^{\lambda_{n-1}^{(n-1)}}}
	{q^{\lambda_n^{(n)}}}
	\right)
	}
	&
	\displaystyle
	\left(\frac{u_n}{u_j}\right)^{\lambda_n^{(n)}}
	\frac{K_{nj}}{K_{nn}}
\end{pmatrix}
&=0,\\
\label{Eq:FirstSpectralRecoveryGen}
\det\begin{pmatrix}
	\displaystyle
	\frac{
	\theta_q\left(
	-\frac{q^{\lambda_1^{(n-1)}}u_n}
	{q^{\lambda_1^{(n)}}u_j}
	\right)
	}{
	\theta_q\left(
	-\frac{q^{\lambda_1^{(n-1)}}}
	{q^{\lambda_1^{(n)}}}
	\right)
	}
	&
	\cdots
	&
	\displaystyle
	\frac{
	\theta_q\left(
	-\frac{q^{\lambda_{n-1}^{(n-1)}}u_n}
	{q^{\lambda_1^{(n)}}u_j}
	\right)
	}{
	\theta_q\left(
	-\frac{q^{\lambda_{n-1}^{(n-1)}}}
	{q^{\lambda_1^{(n)}}}
	\right)
	}
	&
	\displaystyle
	\left(\frac{u_n}{u_j}\right)^{\lambda_1^{(n)}}
	\\
	\vdots
	&
	\ddots
	&
	\vdots
	&
	\vdots
	\\
	\displaystyle
	\frac{
	\theta_q\left(
	-\frac{q^{\lambda_1^{(n-1)}}u_n}
	{q^{\lambda_n^{(n)}}u_j}
	\right)
	}{
	\theta_q\left(
	-\frac{q^{\lambda_1^{(n-1)}}}
	{q^{\lambda_n^{(n)}}}
	\right)
	}
	&
	\cdots
	&
	\displaystyle
	\frac{
	\theta_q\left(
	-\frac{q^{\lambda_{n-1}^{(n-1)}}u_n}
	{q^{\lambda_n^{(n)}}u_j}
	\right)
	}{
	\theta_q\left(
	-\frac{q^{\lambda_{n-1}^{(n-1)}}}
	{q^{\lambda_n^{(n)}}}
	\right)
	}
	&
	\displaystyle
	\left(\frac{u_n}{u_j}\right)^{\lambda_n^{(n)}}
\end{pmatrix}
&\neq 0.
\end{align}
\end{subequations}
To solve \eqref{Eq:FirstSpectralRecovery2}, denote
\begin{align*}
H(z):=
\prod_{r=1}^{n-1}
\theta_q\left(
-\frac{z}{q^{\lambda_r^{(n-1)}}}
\right),\qquad
w:=
\prod_{r=1}^{n-1}
q^{\lambda_r^{(n-1)}}.
\end{align*}
On the locus defined by
	\eqref{Eq:FirstSpectralRecoveryGen},
	the elliptic partial fraction formula
	\cite{rosengren_elliptic_root_systems_2004}
	shows that \eqref{Eq:FirstSpectralRecovery2}
	is equivalent to
\begin{align}
\sum_{i=1}^{n}
\frac{
u_n^{\lambda_i^{(n)}}
}{
K_{in}
\displaystyle
\prod_{\substack{1\leqslant k\leqslant n\\k\neq i}}
\theta_q\left(
-\frac{q^{\lambda_i^{(n)}}}
{q^{\lambda_k^{(n)}}}
\right)
}\cdot
K_{ij}
\theta_q\left(
-\frac{
q^{\lambda_i^{(n)}}u_n
}{
\left(\prod_{k=1}^{n}q^{\lambda_k^{(n)}}
\right)
u_j
}w
\right)
u_j^{-\lambda_i^{(n)}} \cdot
H\bigl(q^{\lambda_i^{(n)}}\bigr)
&=0.
\label{Eq:FirstSpectralThetaEvaluation}
\end{align}
Condition~\eqref{Eq:CThetaDeterminantConditions}
	with $s=n$ guarantees that
	for $w=q^{-\alpha_n}
	\prod_{k=1}^{n}q^{\lambda_k^{(n)}}$,
	equation~\eqref{Eq:FirstSpectralThetaEvaluation}
	has a nonzero solution as
	a homogeneous linear system in
	$H(q^{\lambda_1^{(n)}}),\ldots,
	H(q^{\lambda_n^{(n)}})$.
Since theta functions $H(z)$ of order $n-1$ on
	$\mathbb C^\ast/q^{\mathbb Z}$
	form a space of dimension $n-1$
	and satisfy the $j=n$ case of
	\eqref{Eq:FirstSpectralThetaEvaluation},
	this nonzero solution determines
	a theta function $H(z)$ of order $n-1$
	up to a nonzero scalar.
Writing its zeros as
	$q^{\lambda_1^{(n-1)}}q^{\mathbb Z},\ldots,
	q^{\lambda_{n-1}^{(n-1)}}q^{\mathbb Z}$
	gives a partial solution of
\eqref{Eq:FirstSpectralRecovery2}.
Moreover,
\begin{align}
\prod_{r=1}^{n-1}q^{\lambda_r^{(n-1)}}
&=
q^{-\alpha_n}
\prod_{k=1}^{n}q^{\lambda_k^{(n)}}.
\label{Eq:FirstSpectralBalance}
\end{align}
For generic data, these $n-1$ $q$-spirals admit
a unique choice of representatives satisfying
\eqref{Eq:FirstSpectralBalance} and the shrinking condition
\eqref{Cond:Shrinking}.
This recovers
$q^{\lambda_1^{(n-1)}},\ldots,
q^{\lambda_{n-1}^{(n-1)}}$.

In particular,
	\eqref{Eq:EtaRecovery} determines $\eta^{(n)}$.
Define $K^{(n-1)}$ by
\begin{align*}
	K^{(n-1)}E_j
	&:=
	\left(
	\mathscr K_{u_n/u_j}^{(n)}
	\bigl(
	\lambda_\ast^{(n)},\lambda_\ast^{(n-1)}
	\bigr)
	\right)^{-1}
	\eta^{(n)}K^{(n)}E_j,
	\qquad
	1\leqslant j\leqslant n-1.
\end{align*}
One can directly verify that the upper-left $(n-1)\times(n-1)$
	submatrix of $K^{(n-1)}$ satisfies the same initial conditions
	for the recursion, so we can continue the recursive construction
	until $s=1$.
Condition~\eqref{Eq:CThetaBalance} then gives
	$q^{\lambda_1^{(1)}}=q^{\alpha_1}$.
Thus we obtain the required initial data $(\Phi_0,W_0)$.
Although the choices of $\lambda_i^{(k)}$
	are not canonical
	and the recovered initial data need not be unique,
	Proposition~\ref{Pro:qUni} shows that
	the shrinking solution is unique.
\end{prf}

\begin{thm}\label{Thm:GenericShrinking}
The shrinking solutions constructed in
	Theorem~\ref{Thm:AsymBehavior}
	form a generic family of
	$q$-isomonodromic deformations.
\end{thm}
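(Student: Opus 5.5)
The proof will combine the inverse monodromy construction of Lemma~\ref{Lem:InverseMonodromy} with the uniqueness statement Proposition~\ref{Pro:qUni} and the global uniqueness part of Theorem~\ref{Thm:qiso}. Here ``generic family'' means the following: there is a Zariski-dense open subset of admissible $q$-monodromy systems $(U^0,\Phi^0;W^0;\boldsymbol{\alpha})$ such that every $q$-isomonodromic deformation through such a point coincides, on the lattice $u^0_1q^{\mathbb Z}\times\cdots\times u^0_nq^{\mathbb Z}$, with a shrinking solution.

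First, fix a generic $q$-monodromy system $(U^0,\Phi^0;W^0;\boldsymbol{\alpha})$ with $\Lambda$ diagonal. The genericity requirements are: the separation condition \eqref{Cond:GlobalSep}, the regularity condition \eqref{NotUniPhi} along the lattice, and simple spectrum of $[\Phi^0]^q$. By Theorem~\ref{Thm:qiso}, there is then a unique $q$-isomonodromic deformation $(\Phi(u),W(u);\boldsymbol{\alpha})$ through this point. Its data $(C(x),\Lambda;\boldsymbol{\alpha})$ are constant along the lattice. Lemma~\ref{Lem:CThetaStructure} then attaches a constant matrix $K$ to these data, and the triple $(K,\Lambda;\boldsymbol{\alpha})$ automatically satisfies \eqref{Eq:CThetaDeterminantConditions} and \eqref{Eq:CThetaBalance}.

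Second, I check that the map $(U^0,\Phi^0,W^0)\mapsto(K,\Lambda;\boldsymbol{\alpha})$ sends a dense open set into the generic locus of Lemma~\ref{Lem:InverseMonodromy}. That locus is defined by the nonvanishing of finitely many analytic expressions, namely \eqref{Eq:FirstSpectralRecoveryGen}, the conditions \eqref{GenP0} on the recovered coordinates, and the shrinking condition \eqref{Cond:Shrinking} for the representatives. Corollary~\ref{Cor:ConnectionFromEigenminorCoordinates} gives $K$ analytically in terms of the eigenvalue--minor coordinates, and shrinking solutions already realize an open set of such $K$. Hence the relevant conditions hold on an open set of data, and by analyticity of the monodromy map in $(\Phi^0,W^0)$ their failure locus is a proper analytic subset. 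Lemma~\ref{Lem:InverseMonodromy} then supplies a shrinking solution $(\widetilde\Phi(u),\widetilde W(u);\boldsymbol{\alpha})$ whose $q$-monodromy data are again $(C(x),\Lambda;\boldsymbol{\alpha})$.

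Third, at $u=u^0$ both $(\Phi^0,W^0)$ and $(\widetilde\Phi(u^0),\widetilde W(u^0))$ have the same $C(x)$, the same $\Lambda$ and the same formal exponents. This gives \eqref{UniC} and \eqref{UniW}. Since the formal exponents are identical, the two spectra of $U^{-1}(I+(q-1)\Phi)$ coincide, and \eqref{NotUniPhi} reduces to the generic separation condition. Proposition~\ref{Pro:qUni} therefore gives $\widetilde\Phi(u^0)=\Phi^0$ and $\widetilde W(u^0)=W^0$. The uniqueness in Theorem~\ref{Thm:qiso} then yields $\widetilde\Phi=\Phi$ and $\widetilde W=W$ on the whole lattice. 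The continuous extension of the shrinking solution in $u$ provides the analytic interpolation.

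The main obstacle is the second step: showing that the genericity hypotheses of Lemma~\ref{Lem:InverseMonodromy} are open and nonempty in terms of the original data. Beyond this, the recovered $\lambda^{(k)}$ must be chosen with representatives satisfying the strict shrinking inequality. My approach is to reduce this to the fact that, on the image of the shrinking family, the forward map of Corollary~\ref{Cor:ConnectionFromEigenminorCoordinates} composed with the inverse construction is the identity. This gives a local dominance (dimension count $n^2+n$ on both sides) of the monodromy map restricted to shrinking solutions, and with it density of the image.
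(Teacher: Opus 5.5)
Your proposal is correct and takes the same route as the paper, whose proof simply combines Lemma~\ref{Lem:InverseMonodromy} (every generic triple $(K,\Lambda;\boldsymbol{\alpha})$ is realized by a shrinking solution) with the uniqueness of Proposition~\ref{Pro:qUni}; your Steps~1 and~3 spell this out, including the propagation from $u^0$ to the whole lattice via the uniqueness in Theorem~\ref{Thm:qiso}. Your Step~2 makes explicit the genericity transfer that the paper leaves implicit, with two small caveats. First, the shrinking-representative condition is real-analytic rather than complex-analytic, so its failure locus is nowhere dense but not a proper analytic subset. Second, density of the image of the shrinking family is already the content of Lemma~\ref{Lem:InverseMonodromy}; Step~2 only needs one point of the generic locus in the image, which your identity-theorem argument supplies.
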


\begin{prf}
By Lemma~\ref{Lem:InverseMonodromy},
	every generic set of
	$q$-monodromy data is realized by a shrinking solution 
	constructed in Theorem~\ref{Thm:AsymBehavior}.
Proposition~\ref{Pro:qUni} shows that the $q$-monodromy data
	determine the corresponding $q$-isomonodromic deformation
	uniquely on the generic locus.
Hence every generic $q$-isomonodromic deformation
	is obtained by this construction.
\end{prf}

\subsection{Differential Limit}
\label{Sect:Conflu}

In this subsection,
	we show that the objects considered in this paper
	return to the differential case as $q\to1$.
More precisely,
	the canonical fundamental solutions of
	the linear $q$-difference system
	\eqref{qLinear} considered here
	should return to
	the canonical fundamental solutions
	of the following linear differential system
	as $q\to1$.
\begin{align}\label{dLinear}
	\frac{\mathrm{d}}{\mathrm{d} x}Y(x)
	= \left(U + \frac{\Phi}{x} \right) \cdot Y(x).
\end{align}
The $q$-isomonodromic deformations
	studied in this paper
	should also return to
	the corresponding isomonodromic deformations
	of the system above:
\begin{subequations}\label{diso}
\begin{align}
\frac{\partial}{\partial u_k}\Phi(u)
	&=
	[\operatorname{ad}_U^{-1}
	\operatorname{ad}_{E_k}\Phi(u), \Phi(u)],\\
\frac{\partial}{\partial u_k}W(u)
	&=
	\operatorname{ad}_U^{-1}
	\operatorname{ad}_{E_k}\Phi(u)\cdot W(u).
\end{align}
\end{subequations}
Here $k=1,\ldots,n$, and
	$\operatorname{ad}_U^{-1}$ denotes
	the inverse of
	$\operatorname{ad}_U=[U,\cdot]$
	on the subspace of matrices
	with vanishing diagonal blocks.
	
In \cite{tang_boundary_nodate},
	we similarly introduced the notion of
	a shrinking solution
	in the differential setting,
	proved that these solutions also form a generic family,
	and gave a formula for their monodromy data.
In fact,
	much of the technical foundation and motivation
	for the present paper
	comes from that work.
\begin{defi}\label{def:dif}
Denote
\begin{align}
	\delta^\mathrm{d}\Phi
	:=
	\diag(\Phi_{11},\ldots,\Phi_{nn}).
\end{align}
\begin{subequations}
Denote the \textbf{canonical fundamental solution}
	of system~\eqref{dLinear} at $x=0$ by
\begin{align}
	Y^{[0]}(x;U,\Phi)
	&=
	H^{[0]}(x;U,\Phi)\cdot
	x^\Phi.
\end{align}
For each admissible direction $d$,
	denote the \textbf{canonical fundamental solution}
	of system~\eqref{dLinear} at $x=\infty$
	in direction $d$ by
\begin{align}
	Y_d^{[\infty]}(x;U,\Phi)
	&=
	H_d^{[\infty]}(x;U,\Phi)\cdot
	x^{\delta^\mathrm{d}\Phi}\e^{Ux}.
\end{align}
\end{subequations}
Let $W$ be an invertible matrix such that
\begin{align}
	W^{-1}\Phi W=\Lambda.
\end{align}
The corresponding \textbf{central connection matrix}
	is defined by
\begin{align}
	C_d(U,\Phi;W)
	&:=
	Y_d^{[\infty]}(x;U,\Phi)^{-1}\cdot
	Y^{[0]}(x;U,\Phi)W.
\end{align}
If $\Phi(u)$ and $W(u)$ satisfy
	the differential isomonodromy equations~\eqref{diso},
	we call
	$(\Phi(u),W(u);\delta^\mathrm{d}\Phi)$
	a \textbf{differential isomonodromic deformation}.
For each admissible direction $d$,
	its \textbf{monodromy data} are defined by
\begin{align}
	\mathrm{MD}^{\mathrm d}_d(U,\Phi;W)
	&:=
	\bigl(
	C_d(U,\Phi;W),
	\Lambda;
	\delta^\mathrm{d}\Phi
	\bigr).
\end{align}
\end{defi}

We now turn to the differential limit as $q\to1$.
Fix $0<\epsilon<\frac{\pi}{2}$.
We shall use the following two increasingly
	restrictive conditions on the approach $q\to1$.
\begin{subequations}
\begin{align}
	|\arg(1-q)|<\frac{\pi}{2}-\epsilon
	\qquad &\text{or}\qquad
	|\arg(q-1)|<\frac{\pi}{2}-\epsilon,
\label{qreg}\\
	\arg(1-q)\ln|1-q| \longrightarrow 0
	\qquad &\text{or}\qquad
	\arg(q-1)\ln|q-1|\longrightarrow 0.
\label{qthetaradin}
\end{align}
\end{subequations}
These conditions play different roles.
Condition~\eqref{qreg} allows $q\to 1$
	with nonzero argument.
Along any such approach for which
	$\frac{\arg q}{\ln|q|}\to c$,
	the discrete $q$-spiral $q^{\mathbb Z}$ has the logarithmic spiral
	$\mathrm{e}^{(1+\mathrm{i}c)\mathbb{R}}$
	as its continuum limit.
Accordingly, the asymptotic behavior of the limiting differential
	isomonodromic deformation is naturally understood along this
	logarithmic spiral.
The same phenomenon also appears in the critical
	behavior of Painlev\'e~VI solutions
	\cite{guzzetti_elliptic_2001}.
Under the stronger condition~\eqref{qthetaradin},
	the rescaled $q$-spiral
	$\frac{1}{|1-q|}q^{\mathbb Z}$
	converges to $\mathbb R_{>0}$.
	
\begin{lem}\label{Lem:qIsoConfluence}
Suppose that \eqref{qreg} holds.
Let
	$\Phi_0^{\mathrm d}\in
	\mathrm{Mat}_{n\times n}(\mathbb C)$
	and
	$W_0^{\mathrm d}\in\mathrm{GL}_n(\mathbb C)$.
Suppose further that
\begin{align}
	\lim_{q\to1}
	\Phi_0(q)=\Phi_0^{\mathrm d},\qquad
	\lim_{q\to1}
	W_0(q)=W_0^{\mathrm d}.
\end{align}
For each $1\leqslant k\leqslant n$,
	fix a branch of $[\Phi_0(q)^{[k]}]^q$
	and denote its eigenvalues by
\begin{align}
	\operatorname{Spec}\bigl(
	[\Phi_0(q)^{[k]}]^q\bigr)
	=
	\{
	\lambda_1^{(k)}(q),\ldots,
	\lambda_k^{(k)}(q)
	\},
\end{align}
	in such a way that
\begin{align}
	\lim_{q\to1}
	[\Phi_0(q)^{[k]}]^q
	=
	(\Phi_0^{\mathrm d})^{[k]},\qquad
	\lim_{q\to1}
	\lambda_i^{(k)}(q)
	=:
	\lambda_i^{(k)},
	\qquad 1\leqslant i\leqslant k.
\end{align}
For each $2\leqslant k\leqslant n$,
	assume that there exists
	$0<\varepsilon_{k-1}<1$ independent of $q$
	such that the following
	shrinking condition holds:
\begin{align}\label{Cond:qShrinking}
	\underset{1\leqslant i,j\leqslant k-1}{\mathrm{max}}
	\left|
	\mathrm{Re}\bigl(
		\lambda_i^{(k-1)}(q)-
		\lambda_j^{(k-1)}(q)
	\bigr)
	-
	\frac{\mathrm{arg}q}{\mathrm{ln}|q|}
	\mathrm{Im}\bigl(
		\lambda_i^{(k-1)}(q)-
		\lambda_j^{(k-1)}(q)
	\bigr)
	\right|
	<
	1-\varepsilon_{k-1}.
\end{align}
Choose
$\boldsymbol{\alpha}(q)
=\operatorname{diag}(\alpha_1(q),\ldots,\alpha_n(q))$
such that
\begin{align}\label{Thm:qdefalpha}
	q^{\alpha_k(q)}
	&=
	\frac{
		\det\bigl(I+(q-1)\Phi_0(q)^{[k]}\bigr)
	}{
		\det\bigl(I+(q-1)\Phi_0(q)^{[k-1]}\bigr)
	},
	\qquad
	\lim_{q\to 1} \alpha_k(q) =
	(\Phi_0^{\mathrm d})_{kk},\qquad
	k=1,\ldots,n.
\end{align}
Then the $q$-isomonodromic deformation
	$(\Phi(u;q),W(u;q);\boldsymbol{\alpha}(q))$
	constructed in Theorem~\ref{Thm:AsymBehavior}
	satisfies
\begin{align}
	\lim_{q\to 1}
	(\Phi(u;q),W(u;q);\boldsymbol{\alpha}(q)) =
	(\Phi^{\mathrm d}(u),W^{\mathrm d}(u);
	\delta^{\mathrm d}\Phi^{\mathrm d})
\end{align}
	locally uniformly with respect to $u$.
Here
	$(\Phi^{\mathrm d}(u),W^{\mathrm d}(u);
	\delta^{\mathrm d}\Phi^{\mathrm d})$
	is the differential isomonodromic deformation
	constructed from
	$(\Phi_0^{\mathrm d},W_0^{\mathrm d};
	\delta^{\mathrm d}\Phi_0^{\mathrm d})$.
\end{lem}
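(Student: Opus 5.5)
The plan is to follow the recursive construction of Theorem~\ref{Thm:AsymBehavior} step by step. At each level we pass to the limit $q\to1$ inside the Picard iteration of Lemma~\ref{Lem:Picard}, with estimates uniform in $q$, and then identify the limit with the differential shrinking solution of \cite{tang_boundary_nodate}, which is characterized by the same kind of asymptotic data.

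The first task is to show that every ingredient of the $(k,n)$-equations has a differential limit. Rewrite \eqref{qkjisoP}--\eqref{qkjisoW} as $q$-derivative equations
\[
\frac{\partial_q}{\partial_q u_j}\Phi_k=\Bigl[\frac{V_{k;j}^{(n)}-I}{(q-1)u_j},\Phi_k\Bigr]V_{k;j}^{(n)\,-1},\qquad
\frac{\partial_q}{\partial_q u_j}W_k=\frac{V_{k;j}^{(n)}-I}{(q-1)u_j}\,W_k .
\]
Expand the entries \eqref{Exp:Vk} of $V_j$ at $q=1$. Using $q^{-\alpha_k}=1+O(q-1)$, one sees that $(V_j-I)/((q-1)u_j)$ converges, locally uniformly in $(u,\Phi)$ away from $u_i=u_j$, to $q^{\alpha}$-free expressions: the $(j,\hat j)$ and $(\hat j,j)$ blocks tend to $\Phi_{j\hat j}(u_j-U_{\hat j\hat j})^{-1}$ and $-(u_j-U_{\hat j\hat j})^{-1}\Phi_{\hat jj}$ respectively, and the diagonal part tends to $\alpha_j$. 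After removing the diagonal factor $q^{-\boldsymbol\alpha E_j}$, this is exactly $\operatorname{ad}_U^{-1}\operatorname{ad}_{E_j}\Phi$ as in \eqref{diso}. In the same way, $[A]^q\to A$, $u^{[A]^q}\to u^A$, and $\delta_k\to$ the differential projection onto $(\Phi^{[k]},\Phi_{k+1,k+1},\dots,\Phi_{nn})$. The last claim uses \eqref{Thm:qdefalpha}: $[\alpha_s(A)]_q\to A_{ss}-A_{s,[s-1]}(A^{[s-1]})^{-1}\cdots$, and it is nonsingular.

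The second step is to prove equicontinuity. Inspect the proof of Lemma~\ref{Lem:Picard} as applied in Lemma~\ref{Lem:RecStep1}, and check that the constants $K_0,K_1,K_{11},K_{12},K_A,K_{u_0}$ and the threshold $N$ can be chosen uniformly for $q$ in a sector \eqref{qreg} near $1$. The factor $|1-q|/|1-|q||$ in \eqref{BasicqInt} stays bounded precisely because \eqref{qreg} holds. The estimate \eqref{Const:Ku0} needs the real part of $\lambda_i-\lambda_j$ measured along the spiral to stay below $1-\varepsilon_{k-1}$, and this is exactly the uniform hypothesis \eqref{Cond:qShrinking}. The $q$-Jackson integrals along $u_0q^{\mathbb Z}$ are Riemann sums of the corresponding integrals along the limiting logarithmic spiral. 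The Picard iterates therefore converge uniformly in $q$, each iterate converges as $q\to1$ to the corresponding differential Picard iterate, and the two limits can be interchanged. The conclusion is that $\Phi_n(u_n;q)\to\Phi_n^{\mathrm d}(u_n)$, and the same holds for $W_n$ via \eqref{Eq:AdWnun}.

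The third step extends this through the recursion. The limit of the $(k,n)$-deformation is a solution of the differential $(k,n)$-equations, that is, of the differential isomonodromy equations in $u_1,\dots,u_k$, and it has the limiting asymptotics of \eqref{Lem:PWkm1}. Uniqueness of the differential shrinking solution with given leading term $(\Phi_0^{\mathrm d},W_0^{\mathrm d})$, from \cite{tang_boundary_nodate}, identifies the limit. It also shows that the limit is independent of the sequence $q\to1$, which gives convergence of the whole family. Continuity in $u$ off the spiral follows from the analytic extension and the compatibility statements in Corollary~\ref{Cor:Reck}. Local uniformity in $u$ then follows from the uniform Picard bounds together with the fact that the remaining shifts \eqref{ExpclitPhiW} are rational with convergent coefficients.

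The main obstacle I expect is the uniformity in step two. As $q\to1$ the lattice $u_0q^{\mathbb Z}$ becomes dense, and the operators $u^{\operatorname{ad}[A]^q}$ have moduli controlled by \eqref{Abs:uPower}, where $\arg q/\ln|q|$ may tend to a nonzero limit $c$. The constant $K_{u_0}$ in \eqref{Est:PicardSpectral} has to be bounded independently of $q$ through \eqref{qreg}. My first attempt would be to compare each $q$-Jackson sum with $\int|t|^{-1-\varepsilon'}|dt|$ along the spiral $e^{(1+ic)\mathbb R}$, using monotonicity of the summands.
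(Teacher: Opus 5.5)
Your proposal is essentially the paper's proof: \eqref{qreg} bounds $|1-q|/|1-|q||$ by $\csc\epsilon$ and $|\arg q/\ln|q||$ by $\cot\epsilon$ in the limit, which together with \eqref{Cond:qShrinking} makes the Picard estimates of Lemma~\ref{Lem:Picard} uniform in $q$; the $q$-Jackson integral equations then converge along the limiting logarithmic spirals to the differential ones, and the argument is run recursively through Corollary~\ref{Cor:Reck}. A few details need fixing, none of which affects the strategy: the limit of $(V_{k;j}^{(n)}-I)/((q-1)u_j)$ has $(\hat j,j)$ block $+(u_jI-U_{\hat j\hat j})^{-1}\Phi_{\hat jj}$ and zero diagonal; $[\alpha_s(A)]_q=A_{ss}-(q-1)A_{s,[s-1]}(I+(q-1)A^{[s-1]})^{-1}A_{[s-1],s}\to A_{ss}$, with no Schur-complement term; and since reaching a fixed finite $u$ from the Picard region takes a number of $q$-shifts growing like $1/|\ln|q||$, local uniformity in $u$ requires a discrete Gronwall-type stability bound for the shifts, not just convergence of their rational coefficients.
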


\begin{prf}
It follows from \eqref{qreg} that
\begin{align}
	\limsup_{q\to1}
	\frac{|1-q|}{|1-|q||}
	&\leqslant
	\csc\epsilon,
	\qquad
	\limsup_{q\to1}
	\left|
	\frac{\mathrm{arg}q}
	{\mathrm{ln}|q|}
	\right|
	\leqslant
	\cot\epsilon..
\end{align}
Together with the convergence assumptions
	and \eqref{Cond:qShrinking},
	these bounds make the Picard estimates in
	Lemma~\ref{Lem:Picard}
	uniform as $q\to1$.
The $q$-Jackson integral equations
	converge along the same logarithmic spirals
	to the corresponding integral equations
	defining the differential solutions
	on these logarithmic spirals.
Applying this argument recursively in
	Corollary~\ref{Cor:Reck}
	proves the result.
\end{prf}

The following limit formulas are well known
\cite{sauloy_basic_2024}.
\begin{lem}\label{Lem:qto1}
If \eqref{qreg} holds,
	then the following limits
	hold locally uniformly:
\begin{subequations}
\begin{align}\label{gammaqlimit}
	\lim_{q\to 1}
	\Gamma_q(z) &= \Gamma(z),\qquad
	z\in\mathbb C\setminus\mathbb Z_{\leqslant0},\\
	\lim_{q\to1}
	\frac{(q^a z;q)_\infty}{(z;q)_\infty}
	&=
	(1-z)^{-a},\qquad
	z\in\mathbb C\setminus
	\left\{
	z\in\mathbb C:
	|z|\geqslant1,\
	\frac{
		|\operatorname{arg}z|
	}{
		\ln|z|
	}
	\leqslant
	\cot\epsilon\,
	\right\},
	\quad a\in\mathbb{C},
\end{align}
	where $\arg(1-z) \in(-\pi,\pi)$.
If \eqref{qthetaradin} holds,
	then the following limits
	hold locally uniformly:
\begin{align}\label{thetaqlimitin}
	\lim_{q\to1}
	(1-q)^a
	\frac{
		\theta_q((1-q)q^az)
	}{
		\theta_q((1-q)z)
	}
	=
	z^{-a},\qquad
	\arg z\in(-\pi,\pi).
\end{align}
\end{subequations}
\end{lem}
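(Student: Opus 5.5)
The three limits in Lemma~\ref{Lem:qto1} are scalar statements and I would prove each separately by a logarithmic expansion, under the hypotheses stated there.

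\textbf{The $q$-Gamma limit \eqref{gammaqlimit}.} For $|q|<1$, write
\[
\log\Gamma_q(z)=(1-z)\log(1-q)+\sum_{k\geqslant0}\bigl(\log(1-q^{k+1})-\log(1-q^{k+z})\bigr).
\]
Regroup this as a Riemann-type sum in the variable $t=k|\ln q|$. Condition~\eqref{qreg} keeps $\arg q/\ln|q|$ bounded, so $q^{k}$ runs along a logarithmic spiral that decays uniformly. The standard argument is Euler--Maclaurin, or dominated convergence applied to $\sum_k\bigl(\log(1-q^{k+1})-\log(1-q^{k+z})\bigr)$ compared with Gauss's product for $\Gamma$. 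It gives convergence to $\log\Gamma(z)$, locally uniformly away from the poles $\mathbb Z_{\leqslant0}$. The case $|q|>1$ reduces to this one through $\Gamma_{q^{-1}}(x)=q^{-\binom{x-1}{2}}\Gamma_q(x)$ and the definition for $|q|>1$.

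\textbf{The Pochhammer ratio.} Write
\[
\log\frac{(q^az;q)_\infty}{(z;q)_\infty}=\sum_{k\geqslant0}\bigl(\log(1-q^{a+k}z)-\log(1-q^kz)\bigr)\approx -\,a\ln q\sum_k \frac{q^kz}{1-q^kz}.
\]
The last sum is a Riemann sum for $-a\int_0^1\frac{z\,dw}{1-wz}=a\log(1-z)$, where $w=q^k$ runs along the limiting spiral. The excluded region is precisely the set where that spiral path from $0$ to $1$, scaled by $z$, could pass through the singular point $1/z$. On its complement the integrand is uniformly bounded, so the error terms are $O(|1-q|)$ uniformly.

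\textbf{The theta ratio \eqref{thetaqlimitin}, the main obstacle.} Here $\theta_q(x)=\sum q^{\binom k2}x^k$ is evaluated at $x=(1-q)z$, which tends to $0$, so the series is not dominated by a few terms. I would apply the Jacobi triple product and the modular transformation for $\theta_q$: with $q=e^{-\tau}$, $\tau\to0$, Poisson summation gives
\[
\theta_q(x)\sim\sqrt{2\pi/\tau}\,\exp\!\Bigl(\frac{(\ln x-\tau/2)^2}{2\tau}\Bigr)\bigl(1+\text{exponentially small}\bigr).
\]
Taking the ratio at $x$ and at $q^a x$ leaves the Gaussian exponents; their difference is $-a\ln x+O(\tau)$, which produces $x^{-a}$ up to powers of $q$ tending to $1$. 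The factor $(1-q)^a$ then cancels the $(1-q)$ inside $x$, leaving $z^{-a}$.

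The exponentially small correction is controlled by terms of the form $\exp\bigl(-4\pi^2/\tau\pm 2\pi i\ln x/\tau\bigr)$. With $\ln x\approx\ln|1-q|+i\arg(\cdots)$, these terms are uniformly negligible only if $\arg(1-q)\ln|1-q|/|\tau|$ stays below $2\pi^2/|\tau|$. This is guaranteed by \eqref{qthetaradin} together with the branch choice $\arg z\in(-\pi,\pi)$. Checking this bookkeeping of the imaginary parts is the delicate step.
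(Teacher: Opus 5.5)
The paper gives no argument for this lemma beyond citing \cite{sauloy_basic_2024}, so your proposal is a genuinely different, self-contained route. You use Euler's product for $\Gamma_q$, a Riemann sum along the limiting logarithmic spiral for the Pochhammer ratio, and Jacobi's imaginary transformation (Poisson summation in $k$) for $\theta_q$. The strategy is sound. It also shows exactly where \eqref{qreg} enters, through the uniform bound $|1-q^{s}|\geqslant c\min(1,|s\ln q|)$ and the geometry of the excluded set, and why the stronger \eqref{qthetaradin} is needed for $\theta_q$. As written, however, three steps are wrong and must be repaired.

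\textbf{The $\Gamma_q$ limit.} Dominated convergence cannot be applied to $\sum_{k\geqslant0}\bigl(\log(1-q^{k+1})-\log(1-q^{k+z})\bigr)$ itself, as your sketch literally says. Its termwise limits $\log\frac{k+1}{k+z}\sim\frac{1-z}{k}$ are not summable, and this divergence is exactly what compensates $(1-z)\log(1-q)$. First write $(1-q)^{1-z}=\prod_{k\geqslant1}\bigl(\frac{1-q^{k+1}}{1-q^{k}}\bigr)^{z-1}$. Then the $k$-th logarithm is the second difference $z\,g(k+1)-(z-1)g(k)-g(k+z)$ of $g(s)=\log(1-q^{s})$. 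This is $O(k^{-2})$ uniformly under \eqref{qreg}, because $|g''(s)|\leqslant C|s|^{-2}$. The termwise limits multiply to Euler's product $\frac1z\prod_{k\geqslant1}(1+\frac1k)^{z}(1+\frac zk)^{-1}=\Gamma(z)$.

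\textbf{The Pochhammer ratio.} You have an orientation error. As $k$ increases, $w=q^k$ runs from $1$ to $0$ and $q^{k+1}-q^k\approx q^k\ln q$. Hence $-a\ln q\sum_k\frac{q^kz}{1-q^kz}\to-a\int_1^0\frac{z\,\mathrm{d}w}{1-wz}=-a\log(1-z)$, whereas your value $a\log(1-z)$ would give $(1-z)^{a}$. You should also explain why the principal branch appears. The sum produces $\log(1-wz)$ continued along the spiral, and one deforms the spiral parameter $c$ to $0$. This is legitimate because the excluded set contains every spiral $\{\mathrm{e}^{(1+\mathrm{i}c')t}:t\geqslant0\}$ with $|c'|\leqslant\cot\epsilon$. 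The case $|q|>1$ follows from $(y;q)_\infty=1/(q^{-1}y;q^{-1})_\infty$.

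\textbf{The $\theta_q$ limit.} With $q=\mathrm{e}^{-\tau}$ and $x=\mathrm{e}^{\xi}$ the transformation reads
\[\theta_q(x)=\sqrt{2\pi/\tau}\,\mathrm{e}^{(\xi+\tau/2)^2/(2\tau)}\sum_{m\in\mathbb Z}(-1)^m\mathrm{e}^{(-2\pi^2m^2+2\pi\mathrm{i}m\xi)/\tau}.\]
The sign $+\tau/2$ is forced by $\theta_q(qx)=x^{-1}\theta_q(x)$. So the dual terms are $\mathrm{e}^{(-2\pi^2\pm2\pi\mathrm{i}\xi)/\tau}$, not $\mathrm{e}^{-4\pi^2/\tau\pm\cdots}$. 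Take $\xi=\ln(1-q)+\ln z$, which is also the choice making $(1-q)^a\mathrm{e}^{-a\xi}=z^{-a}$ exact. With $\psi=\arg\tau$, the terms $m\neq0$ are exponentially small provided
\[\bigl|(\arg(1-q)+\arg z)\cos\psi-(\ln|1-q|+\ln|z|)\sin\psi\bigr|\leqslant\pi\cos\psi-\eta\]
for some $\eta>0$. Your criterion omits the $\arg z$ term and is not the operative one. If $\arg(1-q)\ln|1-q|\to c\neq0$, the left side tends to $|\arg z-c|$, so the formula holds only for $|\arg z-c|<\pi$. Beyond that a neighbouring branch dominates and the ratio tends to $z^{-a}\mathrm{e}^{\mp2\pi\mathrm{i}a}$. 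Hence \eqref{qthetaradin} is precisely what yields the full range $|\arg z|<\pi$. With this bookkeeping your argument closes.
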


\begin{cor}\label{Cor:CentralConnectionConfluence}
Under the assumptions and notation of
	Lemma~\ref{Lem:qIsoConfluence},
	suppose further that \eqref{qthetaradin} holds.
Then
\begin{subequations}\label{Eq:QConConflu}
\begin{align}
\lim_{q\to1}
	C(x;U,\Phi(u;q);
	\boldsymbol{\alpha}(q),W(u;q))^{-1}
	&=
	\sum_{j=1}^{n}
	C_{-\arg(-u_j)}
	(U,\Phi^{\mathrm d}(u);W^{\mathrm d}(u))^{-1}
	E_j,
	\qquad |q|<1,\\
\lim_{q\to1}
	C(x;U,\Phi(u;q);
	\boldsymbol{\alpha}(q),W(u;q))
	&=
	\sum_{j=1}^{n}
	E_j
	C_{-\arg(-u_j)}
	(U,\Phi^{\mathrm d}(u);W^{\mathrm d}(u)),
	\qquad |q|>1.
\end{align}
\end{subequations}
Here,
	for each $1\leqslant j\leqslant n$,
	the branch of $-\arg(-u_j)$ is chosen such that
\begin{align}
	\left|
		-\arg(-u_j) - \arg x
	\right| < \pi.
\end{align}
Both limits are locally uniform with respect to $(x,u)$,
	provided that all the directions $-\arg(-u_j)$
	are admissible
	and the above branch choices remain fixed.
\end{cor}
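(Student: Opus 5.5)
The plan is to obtain the limit from the explicit factorized formula of Theorem~\ref{Thm:qCaterpillar} rather than from the divergent irregular solutions directly. We treat $|q|<1$; the case $|q|>1$ then follows from the inverse-transpose duality \eqref{Eq:CDuality}. By Lemma~\ref{Lem:qIsoConfluence}, $(\Phi(u;q),W(u;q);\boldsymbol{\alpha}(q))$ converges to $(\Phi^{\mathrm d}(u),W^{\mathrm d}(u);\delta^{\mathrm d}\Phi^{\mathrm d})$, with both built from limiting leading terms $(\Phi_0(q),W_0(q))\to(\Phi_0^{\mathrm d},W_0^{\mathrm d})$.

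First, we work on the open dense set where $\Phi_0(q)$ satisfies the genericity conditions \eqref{GenP0} uniformly in $q$. There we choose $W_{0;k}$ as in \eqref{Eq:W0kEigenminor}, so that $\mathscr W_0=\eta^{(n)}$ is diagonal. Corollary~\ref{Cor:ConnectionFromEigenminorCoordinates} and Lemma~\ref{Lem:CThetaStructure} then express every entry of $C(x)^{-1}$ as a finite sum over paths $\boldsymbol{i}\in\mathcal P_{rj}$. Each summand is a product of $q$-Gamma values in the Gelfand--Zeitlin coordinates $(\lambda_i^{(k)}(q),\eta_j^{(m)}(q))$, theta-ratios of the form $\theta_q(-q^{a}u_k/u_j)/\theta_q(-u_k/u_j)$ times $(u_k/u_j)^{a}$, and one factor $\theta_q(-q^{-\lambda}(1-q)xu_j)/\theta_q(-q^{-\alpha_j}(1-q)xu_j)\cdot((1-q)xu_j)^{\alpha_j-\lambda}$. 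This is exactly formula \eqref{Eq:ExplicitQMonodromyEntry}.

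Second, we pass to the limit term by term using Lemma~\ref{Lem:qto1}. The coordinates converge, since $[\lambda]_q\to\lambda$ and the minors $\eta$ are continuous in $\Phi_0$; by \eqref{gammaqlimit}, $\Gamma_q\to\Gamma$. For the theta ratios we apply \eqref{thetaqlimitin}, which requires \eqref{qthetaradin}, after writing $-u_k/u_j$ and $-(1-q)xu_j$ appropriately. The factor in $x$ tends to $(-xu_j)^{\lambda-\alpha_j}$ times the compensating power, giving $(-xu_j)^{\alpha_j-\lambda}\cdot$ (branch). The ratios in $u_k/u_j$ tend to $(-u_k/u_j)^{\lambda_{i_{k-1}}^{(k-1)}-\lambda_{i_k}^{(k)}}$ times the power already present. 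The branch of $\arg(-xu_j)$, namely $\arg x-(-\arg(-u_j))\in(-\pi,\pi)$, is what singles out the Stokes sector $d=-\arg(-u_j)$ for the $j$-th column. The resulting expression is the explicit differential formula for $(C_d^{-1})_{rj}$ from \cite{tang_boundary_nodate,xu_closure_2021}, which holds for the differential shrinking solution with the same leading term. Alternatively, we can avoid quoting that formula by taking the limit in the one-step factors $C_{-xu_j}(xu_k;E_k,\dots)$ of Theorem~\ref{Thm:qCaterpillar}, identifying each with the differential one-step connection matrix in direction $-\arg(-u_j)$, and using the differential caterpillar factorization.

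Third, we extend from the generic set to all data under the hypotheses. Both sides are analytic in $(\Phi_0,W_0)$, by the analytic dependence in Theorem~\ref{Thm:AsymBehavior} and its differential analogue, and locally uniform estimates allow continuation of the limit identity. Local uniformity in $(x,u)$ follows from the local uniformity in Lemma~\ref{Lem:qto1} and Lemma~\ref{Lem:qIsoConfluence}, provided the directions stay admissible; admissibility keeps the Gamma arguments away from poles and the theta arguments away from the excluded cut.

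The main obstacle is the second step's branch bookkeeping. We must check that the limits of the theta quotients along the discrete spiral $q^{\mathbb Z}$ produce exactly the powers $(-u_k/u_j)^{\cdot}$ and $(-xu_j)^{\cdot}$ with the branch dictated by $|{-\arg(-u_j)}-\arg x|<\pi$. We must also check that this matches the differential Stokes-sector convention, rather than a neighbouring sector differing by an $\mathrm e^{2\pi\mathrm i(\cdot)}$ factor. We would verify this first in the rank-two case, using Proposition~\ref{Pro:EntryC2}, before running the general path-sum argument.
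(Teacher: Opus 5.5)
Your proposal follows essentially the paper's argument. The paper substitutes the limits of Lemma~\ref{Lem:qto1} into the factorization of Theorem~\ref{Thm:qCaterpillar} and compares the result with the differential factorization in \cite[Section~4]{tang_boundary_nodate}, which is your alternative route; your main route via \eqref{Eq:ExplicitQMonodromyEntry} is the same computation multiplied out, and your analytic continuation beyond the genericity conditions \eqref{GenP0} is an extra step the paper omits.

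The one step you should check rather than assert is the reduction of $|q|>1$ through \eqref{Eq:CDuality}. That duality replaces $U$ by $-U$, so it yields the directions $-\arg(u_j)$ with $|{-\arg u_j}-\arg x|<\pi$, not $-\arg(-u_j)$. This agrees with the fact that the $x$-poles of row $j$ in \eqref{Eq:ExplicitQMonodromyEntryGreater} lie on $-\frac{q^{\alpha_j}}{(q-1)u_j}q^{\mathbb Z}$, which concentrates along $xu_j<0$ as $q\to1$. You must reconcile this with the direction $-\arg(-u_j)$ written in the statement.
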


\begin{prf}
In \cite[Section~4]{tang_boundary_nodate},
	the corresponding
	differential central connection matrices
	are similarly expressed in terms of
	$\Phi_0^{\mathrm d}$ and $W_0^{\mathrm d}$.
Applying Theorem~\ref{Thm:qCaterpillar}
	and substituting the limit formulas
	in Lemma~\ref{Lem:qto1}
	gives \eqref{Eq:QConConflu}.
\end{prf}

\begin{lem}\label{Lem:InverseMonodromyConfluence}
Suppose that \eqref{qreg} holds.
Let $(U,\Phi;W)$ be a generic triple such that
\begin{align}\label{Eq:lemq1}
	W^{-1}\Phi W
	&=
	\Lambda
	:=
	\operatorname{diag}
	\bigl(
	\lambda_1^{(n)},\ldots,\lambda_n^{(n)}
	\bigr).
\end{align}
Choose a branch $[\Lambda]^q$ and formal monodromy
	$\boldsymbol{\alpha}(q)
	=
	\diag(\alpha_1(q),\ldots,\alpha_n(q))$
	such that
\begin{subequations}\label{Eq:lemq2}
\begin{alignat}{2}
	[\Lambda]^q
	&=
	\operatorname{diag}
	\bigl(
	\lambda_1^{(n)}(q),\ldots,
	\lambda_n^{(n)}(q)
	\bigr),&\qquad
	\lim_{q\to 1}
	[\Lambda]^q &=
	\Lambda,\\
	\operatorname{Spec}\bigl(
	U^{-1}(I+(q-1)\Phi)
	\bigr)
	&=
	\bigl\{
	u_1^{-1}q^{\alpha_1(q)},\ldots,
	u_n^{-1}q^{\alpha_n(q)}
	\bigr\},&\qquad
	\lim_{q\to 1}
	\boldsymbol{\alpha}(q) &=
	\delta^{\mathrm{d}}\Phi.
\end{alignat}
\end{subequations}
Let $(\Phi_0(q),W_0(q))$ be the
	asymptotic leading term of the
	$q$-isomonodromic deformation through $(U,\Phi;W)$,
	and let
	$(\Phi_0^{\mathrm d},W_0^{\mathrm d})$ be the
	corresponding asymptotic leading term of the
	differential isomonodromic deformation through
	$(U,\Phi;W)$.
Then
\begin{align}
\lim_{q\to 1}
	\bigl(
	\Phi_0(q),W_0(q);
	\boldsymbol{\alpha}(q)
	\bigr)
	&=
	\bigl(
	\Phi_0^{\mathrm d},W_0^{\mathrm d};
	\delta^{\mathrm d}\Phi_0^{\mathrm d}
	\bigr).
\end{align}
\end{lem}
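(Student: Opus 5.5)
The plan is to pass to the limit in the monodromy data rather than work directly with the leading terms. By Theorem~\ref{Thm:GenericShrinking}, for each $q$ near $1$ the $q$-isomonodromic deformation through the generic triple $(U,\Phi;W)$ is a shrinking solution with leading term $(\Phi_0(q),W_0(q))$. Lemma~\ref{Lem:InverseMonodromy} then recovers this leading term, in Gelfand--Zeitlin eigenvalue--minor coordinates, from the constant matrix $K(q)=K(U,\Phi;\boldsymbol{\alpha}(q),W)$ of Lemma~\ref{Lem:CThetaStructure}. The differential leading term $(\Phi_0^{\mathrm d},W_0^{\mathrm d})$ is recovered from the differential central connection matrix by the analogous procedure of \cite{tang_boundary_nodate}. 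So it suffices to show two things: $K(q)$, suitably normalized, converges to the corresponding differential connection data; and each step of the recursive inversion in Lemma~\ref{Lem:InverseMonodromy} converges to the corresponding differential inversion step.

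\textbf{Step 1.} I would first show that $\boldsymbol{\alpha}(q)\to\delta^{\mathrm d}\Phi$ and $[\Lambda]^q\to\Lambda$; both are built into the hypotheses~\eqref{Eq:lemq2}. Next I would show that the normalized canonical solutions $Y^{[0]}(x;U,\Phi)W$ and $Y^{[\infty]}(x;U,\Phi;\boldsymbol{\alpha}(q))$ converge to their differential counterparts. For $Y^{[0]}$ this follows from the recurrence in Proposition~\ref{Can:zero}, since $[\nu]_q\to\nu$ and $x^{[\Phi]^q}\to x^\Phi$. For $Y^{[\infty]}$ I would avoid the divergent-series issue as follows. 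Compare $C(x;\ldots)^{-1}$ with the entrywise theta structure~\eqref{Eq:CThetaStructure}, and use the limit~\eqref{thetaqlimitin} together with $e_q\to\exp$. This should identify $\lim_{q\to1}K(q)_{ij}$, after removing explicit factors $(1-q)^{\alpha_j-\lambda_i}$, with the $(i,j)$ entry of the differential connection matrix in direction $-\arg(-u_j)$, as in Corollary~\ref{Cor:CentralConnectionConfluence}.

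Corollary~\ref{Cor:CentralConnectionConfluence} already gives this convergence for shrinking solutions whose leading terms converge. I would therefore use a bootstrapping argument. The map $(\Phi_0,W_0)\mapsto K$ is given explicitly by Corollary~\ref{Cor:ConnectionFromEigenminorCoordinates} through products of $\Gamma_q$ and $\theta_q$. By Lemma~\ref{Lem:qto1}, these converge locally uniformly to the differential formula of \cite{tang_boundary_nodate}, which is the corresponding product of $\Gamma$-functions and powers.

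\textbf{Step 2.} Invert locally uniformly. On the generic locus the differential inverse map $K^{\mathrm d}\mapsto(\Phi_0^{\mathrm d},W_0^{\mathrm d})$ is locally biholomorphic, and the $q$-maps $F_q:(\lambda,\eta)\mapsto K$ converge locally uniformly, together with their derivatives, to $F_1$. By Hurwitz's theorem or the inverse function theorem, the $q$-preimages of $K(q)$ near the differential preimage exist and converge. The shrinking condition~\eqref{Cond:Shrinking} together with the balance~\eqref{Eq:FirstSpectralBalance} picks, among the representatives of the $q$-spirals $q^{\lambda^{(s-1)}_i}q^{\mathbb Z}$, exactly the branch converging to the differential eigenvalue. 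Under~\eqref{qreg} distinct spiral representatives stay separated, because $|\arg q/\ln|q||\le\cot\epsilon$. Uniqueness in Lemma~\ref{Lem:InverseMonodromy} and Proposition~\ref{Pro:qUni} identifies this preimage with $(\Phi_0(q),W_0(q))$. The eigenvalue--minor formulas~\eqref{Eq:W0kEigenminor}--\eqref{Eq:EMtoEntry} are continuous in $q$, so $(\Phi_0(q),W_0(q))\to(\Phi_0^{\mathrm d},W_0^{\mathrm d})$. The relation $q^{\alpha_k}=\det(I+(q-1)\Phi_0^{[k]})/\det(I+(q-1)\Phi_0^{[k-1]})$ then converges to $\alpha_k=(\Phi_0^{\mathrm d})_{kk}$.

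The main obstacle is Step 1. Corollary~\ref{Cor:CentralConnectionConfluence} uses the stronger condition~\eqref{qthetaradin}, while only~\eqref{qreg} is assumed here. To get around this I would work with the $q$-constant-free matrix $K$ and the theta-free equations~\eqref{Eq:FirstSpectralThetaEvaluation} at the evaluation points $x=q^{\alpha_s}/((1-q)u_s)$, where only ratios $\theta_q(-q^{a}u_j/u_s)/\theta_q(-u_j/u_s)$ with $u$-dependence appear. I would then check that these ratios converge under~\eqref{qreg} along the logarithmic spiral, possibly after rescaling $u$ onto the spiral. If this fails, I would instead prove convergence of $\Phi_0(q)$ directly: take the asymptotic limits in Theorem~\ref{Thm:Asymptotics}, and interchange $q\to1$ with $u_k\to\infty$ using the $q$-uniform Picard bounds from the proof of Lemma~\ref{Lem:qIsoConfluence}.
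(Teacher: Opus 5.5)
Your route has a genuine gap at Step~1, and the inversion in Step~2 depends on it.

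\textbf{The gap.} To invert the forward map $F_q:(\lambda,\eta)\mapsto K$ near the differential preimage, you need the target point $K(q)=K(U,\Phi;\boldsymbol{\alpha}(q),W)$ to converge to the differential connection data. This is the $q$-monodromy data of the \emph{fixed} system, and nothing available gives its convergence.

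\begin{itemize}
\item Proving it directly means proving confluence of $Y^{[\infty]}(x;U,\Phi;\boldsymbol{\alpha}(q))$ at an irregular singularity. This is exactly the analysis the paper avoids. Comparing $C(x)^{-1}$ with \eqref{Eq:CThetaStructure} does not help, because it presupposes that $C(x)^{-1}$ converges.
\item Your ``bootstrapping'' and the theta-ratio workaround at the evaluation points only show that the \emph{map} $F_q$ (or its inverse formulas) converges to $F_1$. They say nothing about the \emph{point} $K(q)$.
\item Corollary~\ref{Cor:CentralConnectionConfluence} applies only to shrinking solutions whose leading terms are already known to converge, and it needs \eqref{qthetaradin}. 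In the paper, confluence of the connection data for the fixed system (Theorem~\ref{Thm:qto1Confluence}) is deduced \emph{from} the present lemma, so invoking it here is circular.
\item Your fallback of interchanging $q\to1$ with $u_k\to\infty$ lacks an a~priori localization. The $q$-uniform Picard bounds behind Lemma~\ref{Lem:qIsoConfluence} hold only for leading terms in a fixed neighborhood of $(\Phi_0^{\mathrm d},W_0^{\mathrm d})$. You have no reason to know beforehand that $(\Phi_0(q),W_0(q))$ lies there.
\end{itemize}

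\textbf{The missing idea: invert the evaluation map instead of the monodromy map.}
\begin{itemize}
\item Let $\mathcal E_q$ send a leading term $(\Phi_0,W_0)$ to the value at the fixed $U$ of the shrinking solution it generates, and define $\mathcal E_{\mathrm d}$ in the same way for differential shrinking solutions.
\item Lemma~\ref{Lem:qIsoConfluence}, which needs only \eqref{qreg}, gives $\mathcal E_q\to\mathcal E_{\mathrm d}$ locally uniformly on a neighborhood $\mathcal U$ of $(\Phi_0^{\mathrm d},W_0^{\mathrm d})$.
\item The differential monodromy formula of \cite{tang_boundary_nodate} makes $\mathcal E_{\mathrm d}$ locally biholomorphic there.
\item The inverse construction of Lemma~\ref{Lem:InverseMonodromy} makes $\mathcal E_q$ locally biholomorphic. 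Uniqueness (Theorem~\ref{Thm:qiso}, Proposition~\ref{Pro:qUni}) identifies the preimage with the leading term of the deformation through $(U,\Phi;W)$.
\end{itemize}

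The key point is that the target $(\Phi,W)$ does not depend on $q$, so no convergence of the target is needed. Your Hurwitz/inverse-function step, applied to $\mathcal E_q$, directly gives
\[
(\Phi_0(q),W_0(q))=\mathcal E_q^{-1}(\Phi,W)\longrightarrow\mathcal E_{\mathrm d}^{-1}(\Phi,W)=(\Phi_0^{\mathrm d},W_0^{\mathrm d}).
\]
Because the preimage is produced inside $\mathcal U$, this also supplies the localization your fallback is missing. The statement about $\boldsymbol{\alpha}(q)$ then follows from the hypothesis $\boldsymbol{\alpha}(q)\to\delta^{\mathrm d}\Phi$. The paper adds that $\delta^{\mathrm d}\Phi=\delta^{\mathrm d}\Phi_0^{\mathrm d}$, since the differential flow preserves the diagonal; your determinant argument reaches the same conclusion.
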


\begin{prf}
For fixed $U$, let $\mathcal E_q$ denote the analytic map
	from an asymptotic leading term
	$(\Phi_0,W_0)$ of a shrinking solution
	to its value at $U$.
Define $\mathcal E_{\mathrm d}$ in the same way
	for differential shrinking solutions.
Then
\begin{align*}
	\mathcal E_q(\Phi_0(q),W_0(q))
	&=
	\mathcal E_{\mathrm d}
	(\Phi_0^{\mathrm d},W_0^{\mathrm d})
	=
	(\Phi,W).
\end{align*}
Choose a sufficiently small neighborhood
$\mathcal U$ of
$(\Phi_0^{\mathrm d},W_0^{\mathrm d})$
	on which the shrinking construction is defined.
Under condition~\eqref{qreg},
	Lemma~\ref{Lem:qIsoConfluence} gives
\begin{align*}
	\mathcal E_q\longrightarrow\mathcal E_{\mathrm d}
\end{align*}
	locally uniformly on $\mathcal U$.
The differential monodromy formula
	in \cite[Section~5]{tang_boundary_nodate}
	shows that $\mathcal E_{\mathrm d}$
	is locally biholomorphic at
	$(\Phi_0^{\mathrm d},W_0^{\mathrm d})$.
For a suitable choice of representatives
	$\lambda_i^{(k)}$ at all intermediate levels,
	the inverse construction in
	Lemma~\ref{Lem:InverseMonodromy}
	shows that $\mathcal E_q$
	is also locally biholomorphic.
After shrinking $\mathcal U$ if necessary,
	we have
\begin{align*}
	(\Phi_0(q),W_0(q)) =
	\mathcal E_q^{-1}(\Phi,W)
	&\longrightarrow
	\mathcal E_{\mathrm d}^{-1}(\Phi,W) =
	(\Phi_0^{\mathrm d},W_0^{\mathrm d}).
\end{align*}
The differential isomonodromy equations
	preserve the diagonal part of $\Phi$.
Thus
	$\delta^{\mathrm d}\Phi
	=\delta^{\mathrm d}\Phi_0^{\mathrm d}$,
	and the assumed limit
	$\boldsymbol{\alpha}(q)\to\delta^{\mathrm d}\Phi$
	gives
	$\boldsymbol{\alpha}(q)
	\to\delta^{\mathrm d}\Phi_0^{\mathrm d}$.
\end{prf}

\begin{thm}\label{Thm:qto1Confluence}
Suppose that \eqref{qreg} holds.
Let $(U,\Phi;W)$ be a fixed generic triple satisfying
	\eqref{Eq:lemq1}.
Choose a branch $[\Lambda]^q$ and formal monodromy
	$\boldsymbol{\alpha}(q)$ such that
	\eqref{Eq:lemq2} holds.
Let
	$(\Phi(u;q),W(u;q);\boldsymbol{\alpha}(q))$
	be the $q$-isomonodromic deformation
	through $(U,\Phi;W)$,
	and let
	$(\Phi^{\mathrm d}(u),W^{\mathrm d}(u);
	\delta^{\mathrm d}\Phi^{\mathrm d})$
	be the differential isomonodromic deformation
	through the same initial value.
Then the $q$-isomonodromic deformation satisfies
\begin{align}\label{Eq:Thmqisoto1}
\lim_{q\to1}
	(\Phi(u;q),W(u;q);\boldsymbol{\alpha}(q))
	&=
	(\Phi^{\mathrm d}(u),W^{\mathrm d}(u);
	\delta^{\mathrm d}\Phi^{\mathrm d}),
\end{align}
	locally uniformly with respect to $u$.
Suppose further that \eqref{qthetaradin} holds.
Then
\begin{subequations}\label{Eq:ThmCaninfto1}
\begin{align}
\lim_{q\to1}
	Y^{[\infty]}
	(x;U,\Phi;\boldsymbol{\alpha}(q))
	&=
	\sum_{j=1}^{n}
	Y_{-\arg(-u_j)}^{[\infty]}
	(x;U,\Phi)E_j,
	\qquad |q|<1,\\
\lim_{q\to1}
	Y^{[\infty]}
	(x;U,\Phi;\boldsymbol{\alpha}(q))^{-1}
	&=
	\sum_{j=1}^{n}
	E_j
	Y_{-\arg(-u_j)}^{[\infty]}
	(x;U,\Phi)^{-1},
	\qquad |q|>1.
\end{align}
\end{subequations}
Here, for each $1\leqslant j\leqslant n$,
	the branch of $-\arg(-u_j)$ is chosen such that
\begin{align}
	\left|
	-\arg(-u_j)-\arg x
	\right|<\pi.
\end{align}
Both limits are locally uniform with respect to $x$,
	provided that all the directions $-\arg(-u_j)$
	are admissible
	and the above branch choices remain fixed.
\end{thm}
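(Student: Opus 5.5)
The first limit \eqref{Eq:Thmqisoto1} will follow by transporting the initial value to asymptotic leading terms and back. By Lemma~\ref{Lem:InverseMonodromyConfluence}, the leading terms $(\Phi_0(q),W_0(q);\boldsymbol{\alpha}(q))$ of the $q$-isomonodromic deformation through $(U,\Phi;W)$ converge to the differential leading terms $(\Phi_0^{\mathrm d},W_0^{\mathrm d};\delta^{\mathrm d}\Phi_0^{\mathrm d})$. Genericity of $(U,\Phi;W)$, via Theorem~\ref{Thm:GenericShrinking}, ensures that for $q$ near $1$ the deformation is a shrinking solution. We then have to check that the shrinking condition \eqref{Cond:qShrinking} holds with $\varepsilon_{k-1}$ independent of $q$. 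This follows from the convergence $\lambda_i^{(k)}(q)\to\lambda_i^{(k)}$ together with the bound $\limsup|\arg q/\ln|q||\leqslant\cot\epsilon$ furnished by \eqref{qreg}, provided the representatives are chosen as in the proof of Lemma~\ref{Lem:InverseMonodromyConfluence} so that the differential shrinking condition holds strictly. With this in hand, Lemma~\ref{Lem:qIsoConfluence} applies and gives $\Phi(u;q)\to\Phi^{\mathrm d}(u)$ and $W(u;q)\to W^{\mathrm d}(u)$ locally uniformly in $u$. Uniqueness of the differential deformation through $(U,\Phi;W)$ identifies the limit.

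For the canonical solutions at infinity we do not attack the divergent confluence directly. Instead we write
\[
Y^{[\infty]}(x;U,\Phi;\boldsymbol{\alpha}(q))=Y^{[0]}(x;U,\Phi)\,W\,C(x;U,\Phi;\boldsymbol{\alpha}(q),W)^{-1}.
\]
Take the case $|q|<1$. The factor $Y^{[0]}(x;U,\Phi)W=H^{[0]}(x)W x^{[\Lambda]^q}$ converges to the differential $Y^{[0]}W$ locally uniformly, by Sauloy's regular-singular confluence. Concretely, the recurrence in the proof of Proposition~\ref{Can:zero} converges coefficientwise to the differential recurrence, with bounds $CR^\nu/|[\nu]!_q|$ that are uniform under \eqref{qreg}. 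Corollary~\ref{Cor:CentralConnectionConfluence}, applied at the point $u$ equal to the given $U$, then gives
\[
C^{-1}\to\sum_j C_{-\arg(-u_j)}^{-1}E_j.
\]
Multiplying the two limits, we obtain
\[
\sum_j Y^{[0]}W\,C_{-\arg(-u_j)}^{-1}E_j=\sum_j Y^{[\infty]}_{-\arg(-u_j)}E_j,
\]
by the definition of the differential central connection matrix. The case $|q|>1$ is handled either by the symmetric argument with $Y^{[\infty]\,-1}=C\,W^{-1}Y^{[0]\,-1}$ and the second formula of \eqref{Eq:QConConflu}, or by the duality \eqref{Eq:CDuality}.

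The main obstacle is the interaction between branches and uniformity. Corollary~\ref{Cor:CentralConnectionConfluence} is stated for the deformation built from leading terms, so its hypotheses must be verified here from the convergence of the leading terms obtained in the first step. The branch of $x^{[\Lambda]^q}$ must match the differential $x^\Lambda$ on the sector $|{-\arg(-u_j)}-\arg x|<\pi$, and the directions must stay admissible so that the limits in Lemma~\ref{Lem:qto1}, in particular \eqref{thetaqlimitin} under \eqref{qthetaradin}, hold locally uniformly in $x$. I would first fix $x$ in a compact set away from the rescaled $q$-spirals of poles and check that these collapse onto the Stokes rays as $q\to1$; local uniformity then follows.
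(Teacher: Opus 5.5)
Your proposal is correct and follows the paper's proof. The first limit comes from feeding the convergent leading terms of Lemma~\ref{Lem:InverseMonodromyConfluence} into Lemma~\ref{Lem:qIsoConfluence}; the second comes from writing $Y^{[\infty]}=Y^{[0]}W\,C^{-1}$ (resp.\ $Y^{[\infty]\,-1}=C\,W^{-1}Y^{[0]\,-1}$ for $|q|>1$), then combining the confluence of the canonical solution at $x=0$ with Corollary~\ref{Cor:CentralConnectionConfluence}. Your explicit checks on the $q$-uniform shrinking condition and on branches and admissibility are more careful than the paper's terse argument, but they follow the same route.
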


\begin{prf}
Applying Lemma~\ref{Lem:qIsoConfluence}
	to the asymptotic leading terms obtained in
	Lemma~\ref{Lem:InverseMonodromyConfluence}
	gives \eqref{Eq:Thmqisoto1}.
One verifies that the $q$-canonical fundamental solution at $x=0$
	converges to the corresponding
	differential canonical fundamental solution.
Corollary~\ref{Cor:CentralConnectionConfluence}
	then gives \eqref{Eq:ThmCaninfto1}.
\end{prf}

\section*{Acknowledgements}
The authors would like to thank Yiming Ma
	for helpful discussions.
The authors are supported by
	the National Key Research and Development Program of China
	(No.~2021YFA1002000).

\bibliography{20250906.bib}
\bibliographystyle{plain}

\Addresses

\end{document}